\definecolor{cite}{rgb}{0.00,0.60,1.00}
\definecolor{url}{rgb}{1.00,0.10,0.80}
\definecolor{link}{rgb}{0.00,0.00,1.00}
\def\leq{\leqslant}
\def\geq{\geqslant}
\newtheorem{theorem}             {Theorem}  [section]
\newtheorem{definition} [theorem] {Definition}
\newtheorem{lemma}      [theorem]{Lemma}
\newtheorem{corollary}  [theorem]{Corollary}
\newtheorem{proposition}[theorem]{Proposition}
\numberwithin{equation}{section} 
\theoremstyle{remark}
\newtheorem{remark}{\bf Remark}
\newcommand{\Cont}{{\rm C}}
\newcommand{\Sob}{{\rm S}}
\newcommand{\Sch}{\mathcal{S}}
\newcommand{\SSch}{\mathcal{S}_{\mathrm{sis}}}
\newcommand{\SMel}{\mathfrak{M}_{\mathrm{sis}}}
\newcommand{\intL}{{\rm L}}
\newcommand{\Nr}{{\rm Nr}}
\newcommand{\Tr}{{\rm Tr}}
\newcommand{\gp}[1]{\mathbf{#1}}
\newcommand{\GL}{{\rm GL}}
\newcommand{\PGL}{{\rm PGL}}
\newcommand{\SL}{{\rm SL}}
\newcommand{\SO}{{\rm SO}}
\newcommand{\SU}{{\rm SU}}
\newcommand{\ud}{\mathrm{d}}
\newcommand{\ag}[1]{\mathbb{#1}}
\newcommand{\Z}{\mathbb{Z}}
\newcommand{\Mat}{{\rm M}}
\newcommand{\id}{\mathbbm{1}}
\def\legendre@dash#1#2{\hb@xt@#1{%
  \kern-#2\p@
  \cleaders\hbox{\kern.5\p@
    \vrule\@height.2\p@\@depth.2\p@\@width\p@
    \kern.5\p@}\hfil
  \kern-#2\p@
  }}
\def\@legendre#1#2#3#4#5{\mathopen{}\left(
  \sbox\z@{$\genfrac{}{}{0pt}{#1}{#3#4}{#3#5}$}%
  \dimen@=\wd\z@
  \kern-\p@\vcenter{\box0}\kern-\dimen@\vcenter{\legendre@dash\dimen@{#2}}\kern-\p@
  \right)\mathclose{}}
\newcommand\legendre[2]{\mathchoice
  {\@legendre{0}{1}{}{#1}{#2}}
  {\@legendre{1}{.5}{\vphantom{1}}{#1}{#2}}
  {\@legendre{2}{0}{\vphantom{1}}{#1}{#2}}
  {\@legendre{3}{0}{\vphantom{1}}{#1}{#2}}
}
\def\dlegendre{\@legendre{0}{1}{}}
\def\tlegendre{\@legendre{1}{0.5}{\vphantom{1}}}
\newcommand{\Q}{\mathbb{Q}}
\newcommand{\R}{\mathbb{R}}
\newcommand{\C}{\mathbb{C}}
\newcommand{\E}{\mathbf{E}}
\newcommand{\F}{\mathbf{F}}
\newcommand{\bF}{\mathbf{F}}
\newcommand{\A}{\mathbb{A}}
\newcommand{\vO}{\mathcal{O}}
\newcommand{\vo}{\mathfrak{o}}
\newcommand{\vP}{\mathcal{P}}
\newcommand{\vp}{\mathfrak{p}}
\newcommand{\idlJ}{\mathfrak{J}}
\newcommand{\norm}[1][\cdot]{\lvert #1 \rvert}
\newcommand{\extnorm}[1]{\left\lvert #1 \right\rvert}
\newcommand{\Norm}[1][\cdot]{\lVert #1 \rVert}
\newcommand{\extNorm}[1]{\left\lVert #1 \right\rVert}
\newcommand{\Pairing}[2]{\langle #1, #2 \rangle}
\newcommand{\OFour}{\mathfrak{F}}
\newcommand{\invOFour}{\overline{\mathfrak{F}}}
\newcommand{\Mellin}[2][]{\mathfrak{M}_{#1}(#2)}
\newcommand{\Rem}{\mathrm{r}}
\newcommand{\Ext}{\mathrm{e}}
\newcommand{\Inv}{\mathrm{i}}
\newcommand{\Mult}{\mathfrak{m}}
\newcommand{\Vor}{\mathcal{V}}
\newcommand{\VorH}{\mathcal{VH}}
\newcommand{\VHF}{\mathfrak{vh}}
\newcommand{\MS}{\mathcal{MS}}
\newcommand{\Trans}{\mathfrak{t}}
\newcommand{\rpL}{{\rm L}}
\newcommand{\rpR}{{\rm R}}
\newcommand{\Bas}{\mathcal{B}}
\newcommand{\Res}{{\rm Res}}
\newcommand{\Ind}{{\rm Ind}}
\newcommand{\cInd}{{\rm c-Ind}}
\newcommand{\Whi}{\mathcal{W}}
\newcommand{\Kir}{\mathcal{K}}
\newcommand{\fin}{{\rm fin}}
\newcommand{\eis}{{\rm E}}
\newcommand{\Zeta}{\mathrm{Z}}
\newcommand{\BesselD}{\mathrm{J}}
\newcommand{\BesselF}{\mathrm{j}}
\newcommand{\RamCst}{\vartheta}
\newcommand{\AutFam}{\mathcal{F}}
\newcommand{\Sph}{\mathbf{S}}
\newcommand{\Vol}{{\rm Vol}}
\newcommand{\Rmnum}[1]{\expandafter\@slowromancap\romannumeral #1@}
\newcommand{\DBZ}[5]{\mathrm{Z} \left( \begin{matrix} #1 \\ #2 \end{matrix}, \begin{matrix} #3 \\ #4 \end{matrix}; #5 \right)}
\newcommand{\DBZLoc}[6][v]{\mathrm{Z}_{#1} \left( \begin{matrix} #2 \\ #3 \end{matrix}, \begin{matrix} #4 \\ #5 \end{matrix}; #6 \right)}
\title{On A Generalization of Motohashi's Formula}
\author{Han Wu}
\address{School of Mathematical Sciences, University of Scinece and Technology of China, 230026 Hefei, P. R. China}
\email{wuhan1121@yahoo.com}
\date{\today}
\begin{document}

\subjclass[2020]{11M41, 11F70, 11F68}

\keywords{$L$-functions, spectral reciprocity}

\begin{abstract}
	We study a spectral reciprocity formula relating $\GL_3 \times \GL_2$ with $\GL_3 \times \GL_1$ and $\GL_1$ moments of $L$-functions discovered by Kwan. Globally we give an adelic and distributional treatment. Our test automorphic function is of general type. To achieve this generality we develop an extension of the generalized Godement sections. Locally we give the weight function transforms in both directions for the fixed tempered representation $\Pi$ of $\GL_3(\F)$. We obtain the transform by a theory of the Voronoi--Hankel transforms, which extends Miller--Schmid's local theory of the Voronoi formula for $\GL_n$.
\end{abstract}

	\maketitle
	
	\tableofcontents

\section{Introduction}

	\subsection{Motohashi's Formula and Generalizations}
	
	Let $\gp{G}_j$ with $j \in \{ 1,2 \}$ be reductive groups, say defined over $\Q$ with adele ring $\A$. Let $\AutFam_j$ be some family of automorphic representations of $\gp{G}_j(\A)$, and $\mathcal{L}_j(\pi_j)$ be some $L$-value parametrized by $\pi_j \in \AutFam_j$. The term \emph{spectral reciprocity} was introduced by Blomer, Li and Miller \cite{BLM19} to name identities roughly of the shape
\begin{equation} \label{eq: RoughSpecR}
	 \sum_{\pi_1 \in \AutFam_1} h_1(\pi_1) \mathcal{L}_1(\pi_1) = \sum_{\pi_2 \in \AutFam_2} h_2(\pi_2) \mathcal{L}_2(\pi_2), 
\end{equation}
where $h_j$ are \emph{weight functions}. Such identities have been playing important roles in the moment problem and the subconvexity problem for automorphic $L$-functions.

	For example, Motohashi's formula \cite{Mo93} relates the family with $\gp{G}_1=\PGL_2$, $\mathcal{L}_1(\pi) = L(\tfrac{1}{2},\pi)^3$ and the family with $\gp{G}_2=\GL_1$, $\mathcal{L}_2(\chi)=\extnorm{L(\tfrac{1}{2},\chi)}^4$. It is historically the first instance of spectral reciprocity. The exploitation of Motohashi's formula has led to a fine asymptotic formula of the fourth moement of the Riemann zeta function \cite[Theorem 5.2]{Mo97}, and possibly more strikingly the uniform Weyl-type subconvex bound for all Dirichlet characters due to Petrow--Young \cite{PY19_CF, PY19_All}, which non-trivially extends and generalizes the method of Conrey--Iwaniec \cite{CI00}.
	
	One direction of explanation and generalization is due to Reznikov \cite{Re08} and Michel--Venkatesh \cite{MV10}, who use the period approach and put the following instance of \emph{strong Gelfand configuration} as the underlying mechanism
\begin{equation} \label{eq: MVGraph}
\begin{matrix}
	& & \GL_2 \times \GL_2 & & \\
	& \nearrow & & \nwarrow & \\
	\GL_1 \times \GL_1 & & & & \GL_2 \\
	& \nwarrow & & \nearrow & \\
	& & \GL_1 & &
\end{matrix}.
\end{equation}
	Nelson \cite{Ne20} addresses the convergence issue in the above formalism over any number field with a theory of regularized integrals. We non-trivially improves Nelson's treatment by giving a full analysis of the degenerate terms with the method of meromorphic continuation (see \cite{Wu22} and \cite[Appendix (arXiv version 2)]{WX23+}). Another interpretation via the \emph{Godement--Jacquet pre-trace formula}, a type of pre-trace formula whose test functions are Schwartz functions on the $2$ by $2$ matrices $\Mat_2$ instead of $\GL_2$ is also given in \cite{Wu22}. This new interpretation has a different perspective of generalization from the period approach. We may regard these directions of generalization as ``balanced'' ones, since on both sides there are only $L$-functions of the same degree.
	
	Another direction of explanation and generalization, first appeared in Kwan's paper \cite{Kw24} (towards which Conrey--Iwaniec \cite[Introduction]{CI00} mentioned by simples words of ``harmonic analysis on $\GL_3$''), put another instance of strong Gelfand configuration
\begin{equation} \label{eq: KwanGraph}
\begin{matrix}
	& & \GL_3 & & \\
	& \nearrow & & \nwarrow & \\
	\GL_2 & & & & \gp{U}_3 \\
	& \nwarrow & & \nearrow & \\
	& & \gp{U}_2 & &
\end{matrix}, \qquad
\begin{matrix}
	& & \begin{pmatrix} 1 & \\ & g \end{pmatrix} & & \\
	& \nearrow & & \nwarrow & \\
	g & & & & * \\
	& \nwarrow & & \nearrow & \\
	& & * & &
\end{matrix}
\end{equation}
where $\gp{U}_k$ is the unipotent radical of the standard (upper triangular) minimal parabolic subgroup of $\GL_k$ (not a unitary group), underneath. We may refer to this direction as an ``unbalanced'' one. Intuitively, for a fixed automorphic representation $\Pi$ of $\GL_3$ the graph (\ref{eq: KwanGraph}) replaces the cubic moment side $L(s,\pi)^3$ with $L(s, \Pi \times \pi)$, and the fourth moment side $L(s,\chi)^4$ with a mixed moment $L(s,\Pi \times \chi)L(s,\chi)$, which justifies the adjective ``unbalanced''. Previous exploitation (without explicit spectral reciprocity formula for \eqref{eq: KwanGraph}) in this direction includes some good subconvex bounds for $\PGL_3$ $L$-functions in the $t$-aspect and for self-dual $\GL_3 \times \GL_2$ $L$-functions in the spectral aspect for the $\GL_2$ part by Li \cite{Li11}, their generalization over number fields by Qi \cite{Qi19, Qi24}. Some nice improvements over $\Q$ making use of the above spectral reciprocity \eqref{eq: KwanGraph} can be found in Lin--Nunes--Qi \cite{LNQ22} and Ganguly--Humphries--Lin--Nunes \cite{GHLN24+}.

	Note that the above unbalanced direction goes outside the world of reductive groups and uses the strong Gelfand-pair property offered by the uniqueness of Whittaker functionals. Note also that its relevance to the original Motohashi's formula is indicated by the identification
\begin{equation} \label{eq: UnbGenLId}
	L(s,\pi)^3 = L(s, (\id \boxplus \id \boxplus \id) \times \pi),
\end{equation}
	where $\id \boxplus \id \boxplus \id$ is the representation of $\GL_3$ induced from the trivial character of a Borel subgroup.

	\subsection{Main Results}
	
	This is the first of a series papers concerning an extensive study of the spectral reciprocity formula discovered by Kwan \cite{Kw24}, and its application to the subconvexity problem \emph{over number fields}. We focus on the global formula and the local weight transforms in the current paper.
	
		\subsubsection{Global Part}
		
	The graph (\ref{eq: KwanGraph}) appeared in Kwan's paper \cite{Kw24}, and was nicely explained in \cite[\S 4]{Kw24} in the case for $\SL_3(\Z)$. Kwan's test automorphic function is restricted to spherical ones at the real place. We follow Kwan's graph to treat general test automorphic functions in the adelic setting, and obtain a distributional version in the flavour of our previous work \cite{Wu22}.
	
	Let $\F$ be a number field with ring of adeles $\A$. Let $\psi$ be the additive character of $\F \backslash \A$ \`a la Tate. Fix a cuspidal automorphic representation $\Pi$ of $\GL_3(\A)$ with automorphic realization $V_{\Pi} \subset \intL_0^2(\GL_3,\omega_{\Pi})$, and a unitary Hecke character $\omega$ of $\F^{\times} \backslash \A^{\times}$. For every irreducible representation $*$ of $\GL_d(\F)$ we write $\omega_*$ for its central character. We assume, without loss of generality, that both $\omega_{\Pi}$ and $\omega$ are trivial on $\R_+$, which is embedded in $\A^{\times}$ via a fixed section map of the adelic norm map $\A^{\times} \to \R_+, x=(x_v)_v \mapsto \norm_{\A}=\sideset{}{_v} \prod \norm[x_v]_v$.

\begin{theorem}[$s_0=0$ of Theorem \ref{thm: MainId}] \label{thm: Main1}
	There is a distribution (continuous functional) $\Theta$ on $V_{\Pi}^{\infty}$, which admits two different decompositions:
\begin{align*}
	\Theta(F) &= \frac{1}{\zeta_{\F}^*} \sum_{\chi \in \widehat{\F^{\times} \R_+ \backslash \A^{\times}}} \int_{-\infty}^{\infty} \DBZ{1/2+i\tau}{1/2-i\tau}{\chi}{(\chi \omega \omega_{\Pi})^{-1}}{W_F} \frac{\ud \tau}{2\pi} \\
	&\quad + \frac{1}{\zeta_{\F}^*} \Res_{s_1=\frac{1}{2}} \DBZ{1/2+s_1}{1/2-s_1}{(\omega \omega_{\Pi})^{-1}}{\mathbbm{1}}{W_F} - \frac{1}{\zeta_{\F}^*} \Res_{s_1=-\frac{1}{2}} \DBZ{1/2+s_1}{1/2-s_1}{(\omega \omega_{\Pi})^{-1}}{\mathbbm{1}}{W_F},
\end{align*}
	where $W_F \in \Whi(\Pi^{\infty},\psi)$ is the Whittaker function associated with $F \in V_{\Pi}^{\infty}$ and $\Zeta(\cdots)$ is an integral representation of $L(1/2+i\tau, \Pi \times \chi) L(1/2-i\tau, (\chi \omega \omega_{\Pi})^{-1})$;
	$$ \Theta(F) = \sideset{}{_{\substack{\pi \text{ cuspidal of } \GL_2 \\ \omega_{\pi}=\omega^{-1}}}} \sum \Theta(F \mid \pi) + \sum_{\chi \in \widehat{\R_+ \F^{\times} \backslash \A^{\times}}} \int_{-\infty}^{\infty} \Theta(F \mid \pi_{i\tau}(\chi,\omega^{-1}\chi^{-1})) \frac{\ud \tau}{4\pi}, $$
	where $\Theta(F \mid \pi)$ (resp. $\Theta(F \mid \pi_{i\tau}(\chi,\omega^{-1}\chi^{-1}))$) is an integral representation of $L(1/2, \Pi \times \widetilde{\pi})$ (resp. $L(1/2-i\tau, \Pi \times \chi^{-1}) L(1/2+i\tau, \Pi \times \omega\chi)$).
\end{theorem}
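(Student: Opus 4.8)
The plan is to realize $\Theta$ as a regularized inner product on $\GL_2(\F) \backslash \GL_2(\A)$ between the restriction of $F$ along the edge $\GL_2 \hookrightarrow \GL_3$, $g \mapsto \diag(g,1)$, of the strong Gelfand configuration \eqref{eq: KwanGraph}, and a suitable Poincar\'e-type series $\Psi_{\omega}$ on $\GL_2$ attached to $\omega$ (and to the auxiliary parameter $s_0$ of Theorem \ref{thm: MainId}), whose seed is built from the extension of the generalized Godement sections developed earlier --- this flexibility of the seed is precisely what lets $\Psi_{\omega}$ accommodate the general-type test vector $F$ at every place. Since $\Pi$ is cuspidal, $g \mapsto F(\diag(g,1))$ is rapidly decreasing on $\GL_2(\F) \backslash \GL_2(\A)$, hence square-integrable, while $\Psi_{\omega}$ is of moderate growth, so $F \mapsto \Theta(F)$ is a continuous functional on $V_{\Pi}^{\infty}$, using the continuity of $F \mapsto W_F$ and the standard gauge bounds for $\GL_3(\A)$ Whittaker functions. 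The two decompositions then reflect the two ways of evaluating this pairing: by applying the Plancherel formula for $\GL_2(\F) \backslash \GL_2(\A)$, or by unfolding the sum defining $\Psi_{\omega}$.

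For the spectral decomposition I would insert the spectral expansion on $\GL_2(\F) \backslash \GL_2(\A)$ with central character $\omega^{-1}$: the cuspidal spectrum, ranging over $\pi$ with $\omega_{\pi} = \omega^{-1}$, and the continuous spectrum, the unitary Eisenstein series $\pi_{i\tau}(\chi, \omega^{-1}\chi^{-1})$ induced from the Borel. For each spectral component $\varphi$, the inner product $\int F(\diag(g,1)) \overline{\varphi(g)}\, \ud g$ becomes, after inserting the $\gp{U}_3$-Fourier expansion of $F$ and unfolding, exactly the $\GL_3 \times \GL_2$ Rankin--Selberg integral of Jacquet--Piatetski-Shapiro--Shalika (the $m = n-1$ case, which needs no auxiliary Eisenstein series), and hence factors through $L(1/2, \Pi \times \widetilde{\varphi})$; pairing this with the matching coefficient of $\Psi_{\omega}$ --- a $\GL_2$-Whittaker functional evaluated against the Godement seed --- yields the terms $\Theta(F \mid \pi)$ and $\Theta(F \mid \pi_{i\tau}(\chi, \omega^{-1}\chi^{-1}))$, which at $s_0 = 0$ are integral representations of $L(1/2, \Pi \times \widetilde{\pi})$ and of $L(1/2 - i\tau, \Pi \times \chi^{-1}) L(1/2 + i\tau, \Pi \times \omega\chi)$ respectively, the latter splitting because the Whittaker function of the $\GL_2$ Eisenstein series factors into the two $\GL_3 \times \GL_1$ pieces.

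For the arithmetic (dual) decomposition I would instead unfold the series $\Psi_{\omega}$: its defining sum collapses the outer integral to one over the support of the seed, roughly $\gp{U}_2(\A) \backslash \GL_2(\A)$; the integration over $\gp{U}_2$ replaces $F(\diag(\cdot,1))$ by one of its $\gp{U}_3$-Fourier coefficients, i.e., by the Whittaker function $W_F$, and the remaining torus integral is a Mellin transform. Reassembling via adelic Mellin inversion over $\F^{\times}\R_+ \backslash \A^{\times}$ --- which is exactly what produces the sum over $\chi$, the integral over $i\tau$, and the normalising factor $\zeta_{\F}^{*}$ --- and then shifting the contour past the (at most simple) poles of the double zeta integral $\DBZ{1/2 + s_1}{1/2 - s_1}{\chi}{(\chi\omega\omega_{\Pi})^{-1}}{W_F}$ at $s_1 = \pm\tfrac12$, yields the first displayed formula. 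These poles occur only for the single character $\chi = (\omega\omega_{\Pi})^{-1}$, for which the $\GL_1$-factor $L(1/2 - i\tau, (\chi\omega\omega_{\Pi})^{-1})$ is the completed Dedekind zeta function and contributes poles at $i\tau = \pm\tfrac12$; this is why the two residue terms carry precisely the pair of characters $((\omega\omega_{\Pi})^{-1}, \mathbbm{1})$. Identifying the Euler factors of the resulting two-variable integral with $L(1/2 + i\tau, \Pi \times \chi) L(1/2 - i\tau, (\chi\omega\omega_{\Pi})^{-1})$ is the task of the local theory.

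The main obstacle I anticipate is not any single unfolding but the joint control of convergence: each evaluation rearranges iterated sums and integrals that converge only conditionally, so I would perform all the manipulations --- in particular the interchange of the $\GL_2$-spectral expansion with the $\GL_3$-Fourier expansion, and the exchange of the Mellin inversion with the torus integration --- in a region of $s_0$ (and of the auxiliary parameter in the Godement seed) where everything converges absolutely, and only then continue to $s_0 = 0$, in the spirit of \cite{Wu22} and \cite[Appendix (arXiv version 2)]{WX23+}. The genuinely delicate step will be the accounting of the degenerate contributions: checking that the poles crossed in the contour shift, together with any residual $\GL_2$-spectrum that gets absorbed, assemble precisely into the two residue terms of the first decomposition and leave no spurious term in the second. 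This is where the method of meromorphic continuation does the real work, and where the generality of the extended Godement sections is essential so that the degenerate terms are not obscured by convergence artefacts. Specialising Theorem \ref{thm: MainId} at $s_0 = 0$ then gives the claim.
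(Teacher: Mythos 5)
Your proposal gets the broad architecture right — two ways of evaluating a period, one via the $\GL_2$ spectral expansion and one via Mellin inversion followed by a contour shift — and you correctly locate the two residue terms at $s_1=\pm\tfrac12$ as coming from the poles of the degenerate $\GL_1$ factor at the single character $\chi=(\omega\omega_\Pi)^{-1}$. But the setup you propose differs materially from the paper's, and several of the departures introduce gaps.

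First, the Poincar\'e-series framing is not what the paper does and is not needed. The paper defines $\Theta(s_0,F)$ directly as the absolutely convergent period integral \eqref{eq: MainDisVar} over $\gp{U}_2(\F\backslash\A)\times(\F^{\times}\backslash\A^{\times})$ — a $\GL_1\times\gp{U}_2$-period of the cusp form $F$. No regularization is required at any stage, because $F$ is rapidly decreasing. Your version, a ``regularized inner product'' of $F|_{\diag(g,1)}$ against some unspecified Poincar\'e series $\Psi_\omega$ on $[\GL_2]$, would require (a) a concrete construction of $\Psi_\omega$, (b) a regularization scheme, and (c) a proof that the regularized pairing agrees with \eqref{eq: MainDisVar}. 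None of these is supplied, and none is straightforward: the object that formally plays the role of $\Psi_\omega$ here is the $\GL_2$ spectral kernel evaluated against a Whittaker functional in one variable, which is a distribution rather than an automorphic form. The paper sidesteps all of this by the Weyl-element trick: $\Pi(w).F$ conjugates the $\GL_1\times\gp{U}_2$-period into the $\psi$-Whittaker coefficient at $\mathbbm{1}$ of the rapidly decreasing automorphic function $F_1$ on $[\GL_2]$, to which the spectral expansion applies directly.

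Second, you misplace where the generalized Godement sections (\S\ref{sec: GGS}) do their work. They are not ingredients of any global seed or Poincar\'e series; they are used in Proposition~\ref{prop: DZIntLocProp}(3) to prove the meromorphic continuation and rapid decay of the \emph{local} double zeta integral \eqref{DZInt} for $\Pi_v$ of general type. That analytic input (together with Lemma~\ref{FundSumEst}, Lemma~\ref{WhiUBd}, Corollary~\ref{SumWhiUBd}) is precisely what justifies the Mellin inversion and the contour shifts in the first decomposition, and you cannot take it as ``standard gauge bounds.'' The split at the start of the first decomposition — separating the $\beta_1=0$ term, which is a genuine $\GL_3\times\GL_1$ Rankin--Selberg integral, from the $\beta_1\neq 0$ double sum that gets Mellin-inverted — is also a nontrivial step that your sketch collapses into ``$\gp{U}_2$-integration replaces $F$ by $W_F$.''

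Third, your worry about ``residual $\GL_2$-spectrum that gets absorbed'' on the spectral side is not a feature of this problem. The spectral decomposition \eqref{eq: 2ndDecomp} is clean: the one-dimensional spectrum contributes $0$ because $W_e(\mathbbm{1})=0$ for $e=\xi\circ\det$, and by Proposition~\ref{prop: RSEisProp} the Eisenstein contribution has no poles in $s_0$ to cancel. The only degenerate terms in the theorem arise from the contour shift in the first decomposition, as recorded in Proposition~\ref{DZIntGlobalProp}(4). So the real difficulty is not ``matching degenerate terms between two sides'' but establishing the analytic continuation in $s_0$ of the Mellin side down to $|\Re s_0|<1/2$, which is where the double-zeta-integral machinery is indispensable.
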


\noindent Morally one should understand the above formula as (see \eqref{eq: Main1Bis} for the precise form)
\begin{align*}
	&\quad \sum_{\pi} L(1/2, \Pi \times \widetilde{\pi}) \cdot \mathfrak{L} \cdot \prod_{v \mid \infty} h_v(\pi_v) \prod_{\vp < \infty} h_{\vp}(\pi_{\vp}) \mathfrak{L}_{\vp}^{-1} + (\textrm{CSC}) \\
	&= \frac{1}{\zeta_{\F}^*} \int_{\widehat{\F^{\times} \backslash \A^{\times}}} L(1/2, \widetilde{\Pi} \times \chi^{-1}) L(1/2, \chi) \cdot \prod_{v \mid \infty} \widetilde{h}_v(\chi_v) \prod_{\vp < \infty} \widetilde{h}_{\vp}(\chi_{\vp}) \mathcal{L}_{\vp}^{-1} \ud \chi + (\textrm{DT}),
\end{align*}
	where the various $L$-factors $\mathfrak{L}, \mathfrak{L}_{\vp}^{-1}$ and $\mathcal{L}_{\vp}^{-1}$ are negligible in practice. Note that for every place $v$ of $\F$ we have introduced a pair of \emph{weight functions} $h_v(\pi_v)$ and $\widetilde{h}_v(\chi_v)$. They are given explicitly in terms of continuous functionals on the local Whittaker models $\Whi(\Pi_v^{\infty}, \psi_v)$ in \eqref{eq: WtGL2CuspDBis} and \eqref{eq: WtGL2EisDBis} below. The mutual determination of $h_v(\pi_v)$ and $\widetilde{h}_v(\chi_v)$ will be the major concern of three follow-up papers.
	
	We emphasize that the idea of using the graph \eqref{eq: KwanGraph} of strong Gelfand configuration to explain the underlying spectral reciprocity formulae is due to Kwan. Hence it is not a novelty of this paper. However extending Kwan's treatment to automorphic forms of general type (from the spherical ones) requires establishing the analytic properties of the \emph{double zeta-integrals}
	$$ \C^2 \ni (s_1, s_2) \mapsto \DBZ{s_1}{s_2}{\chi_1}{\chi_2}{W_F} $$
(defined by \eqref{DZInt} below) in the general case. This poses non-trivial difficulty. We overcome it by an \emph{extension} (see \S \ref{sec: GGS}) of some integral representation of $W_F$, called the \emph{generalized Godement sections} in \cite[\S 7.1]{J09}. This extension has its own interests. 

		\subsubsection{Local Part}
		
	We now turn to the mutual determination of the local weight functions $h_v(\pi_v)$ and $\widetilde{h}_v(\chi_v)$. We shall omit the subscript $v$ for simplicity. This problem turns out to be intimately related with the local \emph{Voronoi--Hankel transforms}, which we now define/recall as follows.
	
	Let $\F$ be a local field. Let $n \in \Z_{\geq 2}$. Let $\pi$ be a unitary irreducible representation of $\GL_n(\F)$ which is generic and $\RamCst$-tempered for some constant $0 \leq \RamCst < 1/2$. Let $W \in \Whi(\pi^{\infty},\psi)$ be a function in the Whittaker model of $\pi^{\infty}$. Let $w_n$ be the longest Weyl element of $\GL_n$. Then the function
	$$ \widetilde{W}(h) := W(w_n h^{\iota}), \quad \forall \ h \in \GL_n(\F) $$
is in $\Whi(\widetilde{\pi}^{\infty}, \psi^{-1})$, the Whittaker model of the smooth contragredient representation $\widetilde{\pi}^{\infty}$. We introduce some elementary operators on the space of functions on $\F^{\times}$:
\begin{itemize}
	\item For functions $\phi$ on $\F^{\times}$, its \emph{extension} by $0$ to $\F$ is denoted by $\Ext(\phi)$, and its \emph{inverse} is $\mathrm{Inv}(\phi)(t) := \phi(t^{-1})$; for functions $\phi$ on $\F$, its \emph{restriction} to $\F^{\times}$ is denoted by $\Rem(\phi)$, and the operator $\Inv$ is 
	$$ \Inv = \Ext \circ \mathrm{Inv} \circ \Rem. $$
	\item For $s \in \C$, $\mu \in \widehat{\F^{\times}}$ and functions $\phi$ on $\F$, we introduce the operator $\Mult_s(\mu)$ by
	$$ \Mult_s(\mu)(\phi)(t) = \phi(t) \mu(t) \norm[t]_{\F}^s. $$
	\item For $\delta \in \F^{\times}$ we introduce the operator $\Trans(\delta)$ by
	$$ \Trans(\delta)(\phi)(y) = \phi(y \delta). $$
\end{itemize}

\begin{definition} \label{def: VoronoiGLn}
	(0) If $n \geq 2$ and $0 \leq j \leq n-2$, the space of functions on $\F^{\times}$ (all containing $\Cont_c^{\infty}(\F^{\times})$)
\begin{equation} \label{eq: BasicVHS}
	\VorH(\pi) = \VorH(\pi; j) := \left\{ h(y) := \norm[y]^{-\frac{n-1}{2}} \int_{\F^j} W \begin{pmatrix} y & & \\ \vec{x} & \id_j & \\ & & \id_{n-1-j} \end{pmatrix} \ud \vec{x} \ \middle| \ W \in \Whi(\pi^{\infty}, \psi) \right\}
\end{equation}
	is independent of $j$ or $\psi$ (as long as $\psi$ is non-trivial).

\noindent (1) Let $n \in \Z_{\geq 2}$. The transform from the function $H$ to the function $H^*$ defined by
	$$ H(y) = \int_{\F^{n-2}} W \begin{pmatrix} y & & \\ \vec{x} & \id_{n-2} & \\ & & 1 \end{pmatrix} \ud \vec{x}, \quad H^*(y) = \widetilde{W} \begin{pmatrix} y & \\ & w_{n-1} \end{pmatrix} $$
	is the \emph{Voronoi transform} for $\pi$, written as $\Vor_{\pi}$, namely $H^*(y) := \Vor_{\pi}(H)(y)$. We also introduce the \emph{Voronoi--Hankel transform} as
	$$ \VorH_{\pi} := \Trans((-1)^{n-1}) \circ \Mult_{-\frac{n-3}{2}} \circ \Vor_{\pi} \circ \Mult_{\frac{n-1}{2}}, $$
	so that the local functional equation can be written as (independently of the rank $n$)
\begin{equation} \label{eq: LocFEGLnGL1}
	\int_{\F^{\times}} \VorH_{\pi}(H')(t) \chi^{-1}(t) \norm[t]^{-s} \ud^{\times} t = \gamma(s, \pi \times \chi, \psi) \int_{\F^{\times}} H'(t) \chi(t) \norm[t]^s \ud^{\times}t, 
\end{equation}
	where we have written $H'(t) := H(t) \norm[t]^{-\frac{n-1}{2}}$.
	
\noindent (2) Let $\Sch(\F)$ be the space of Schwartz--Bruhat functions. Let $\chi$ be a (quasi-character) of $\F^{\times}$. The Voronoi--Hankel transform $\VorH_{\chi}$ is the composition $\VorH_{\chi} = \Mult_1(\chi^{-1}) \circ \invOFour \circ \Mult_0(\chi^{-1})$ on
\begin{equation} \label{eq: LocFEGL1}
	\VorH_{\chi}: \chi \cdot \Sch(\F) \to \chi^{-1} \norm \cdot \Sch(\F). 
\end{equation}
	In this case we put $\VorH(\chi) := \chi \cdot \Sch(\F)$.
\end{definition}

\noindent Note that in the case $n=1$ we can rewrite the transform $\VorH_{\chi}$ in \eqref{eq: LocFEGL1} as
\begin{equation} \label{eq: LocFEGL1Bis}
	\Mult_{-1}(\chi) \circ \VorH_{\chi} = \invOFour \circ \Mult_0(\chi^{\iota}). 
\end{equation}

\noindent We extend the Voronoi--Hankel transform $\VorH_{\pi}$ to $\widetilde{\VorH}_{\pi}$ in terms of an analogue of \eqref{eq: LocFEGL1Bis}. This extension has its own interests: it shows that higher rank Voronoi--Hankel transforms are essentially Fourier transforms in higher dimensional affine spaces. In passing we complement Miller--Schmid's theory for non-archimedean local fields. In particular, we give in \S \ref{sec: MSTheory} an intermediate extension $\MS_{\pi}$ of $\VorH_{\pi}$ on $\Inv(\SSch(\F))$, the space of functions $\Inv(h)(t) := h(t^{-1})$ for $h \in \SSch(\F)$ (see Definition \ref{def: SSchNA}), and verify its consistency with $\widetilde{\VorH}_{\pi}$. 
\begin{theorem} \label{thm: ExtVorH}
	(1) Let $n \in \Z_{\geq 1}$. Let $\pi$ be an irreducible \emph{smooth} and generic representation of $\GL_n(\F)$. Let $\invOFour$ be the distributional inverse Fourier transform on $\Mat_n(\F)$. Let $I_n: \Cont(\F^{\times}) \to \Cont(\GL_n(\F))$ be given by $I_n(h)(g) := h(\det g)$. For any \emph{smooth matrix coefficient} $\beta$ of $\pi$ we have
	$$ \Mult_{-\frac{n+1}{2}}(\beta) \circ I_n \circ \MS_{\pi} = \invOFour \circ \Mult_{-\frac{n-1}{2}}(\beta^{\iota}) \circ I_n \mid_{\Inv(\SSch(\F))}. $$
	
\noindent (2) If in addition $\pi = \Pi^{\infty}$ is the subspace of smooth vectors in a \emph{tempered unitary} representation $\Pi$, then we have the equality on $\VorH(\pi)$
	$$ \Mult_{-\frac{n+1}{2}}(\beta) \circ I_n \circ \VorH_{\pi} = \invOFour \circ \Mult_{-\frac{n-1}{2}}(\beta^{\iota}) \circ I_n. $$
	Moreover, the following equation for a pair of functions $H, H^* \in \Cont(\F^{\times})$
	$$ \Mult_{-\frac{n+1}{2}}(\beta) \circ I_n(H^*) = \invOFour \circ \Mult_{-\frac{n-1}{2}}(\beta^{\iota}) \circ I_n(H), $$
	uniquely determines $\widetilde{\VorH}_{\pi}(H):=H^*$ as $\beta$ traverses the set $C(\pi)$ of smooth matrix coefficients of $\pi$.
\end{theorem}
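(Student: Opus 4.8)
The plan is to deduce the identity from the Godement--Jacquet local functional equation on $\Mat_n(\F)$, read through Mellin inversion on $\F^{\times}$, using that $\MS_\pi$ --- and, on the smaller domain $\VorH(\pi)$, the transform $\VorH_\pi$ --- is built so that its Mellin transform is multiplication by the relevant $\gamma$-factor, which is precisely the content of \eqref{eq: LocFEGLnGL1}.

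\emph{Part (1).} Both sides are tempered distributions on $\Mat_n(\F)$, so it suffices to pair them against an arbitrary $\Phi \in \Sch(\Mat_n(\F))$. First I would fibre over $\det \colon \GL_n(\F) \to \F^{\times}$, writing $\ud g = \ud g^{(1)}\,\ud^{\times} t$ for an invariant measure along the fibres; the pairing of $\Mult_{-\frac{n+1}{2}}(\beta)\circ I_n\circ \MS_\pi(h)$ against $\Phi$ becomes
\[
 \int_{\F^{\times}} \MS_\pi(h)(t)\,\norm[t]^{-\frac{n+1}{2}}\,\Psi_{\Phi,\beta}(t)\,\ud^{\times} t, \qquad \Psi_{\Phi,\beta}(t):=\int_{\det g = t}\beta(g)\Phi(g)\,\ud g^{(1)},
\]
while the pairing of the left-hand side against $\Phi$ becomes the same expression with $h$, $\norm[t]^{-\frac{n-1}{2}}$, $\beta^{\iota}$ and the Fourier transform $\widehat\Phi$ replacing $\MS_\pi(h)$, $\norm[t]^{-\frac{n+1}{2}}$, $\beta$ and $\Phi$. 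The key observation is that the Mellin transforms $\int_{\F^{\times}}\Psi_{\Phi,\beta}(t)\chi(t)\norm[t]^{s+\frac{n-1}{2}}\,\ud^{\times} t$ and $\int_{\F^{\times}}\Psi_{\widehat\Phi,\beta^{\iota}}(t)\chi^{-1}(t)\norm[t]^{1-s+\frac{n-1}{2}}\,\ud^{\times} t$ are the twisted Godement--Jacquet zeta integrals $Z(s,\Phi,\beta\otimes\chi)$ and $Z(1-s,\widehat\Phi,\beta^{\iota}\otimes\chi^{-1})$, whose ratio is $\gamma(s,\pi\times\chi,\psi)$. I would then expand $h\in\Inv(\SSch(\F))$ by Mellin inversion, $h(t)=\frac{1}{2\pi i}\sum_\chi\int_{(\sigma)}\widetilde h(\chi,s)\,\chi^{-1}(t)\,\norm[t]^{-s}\,\ud s$, with the contour $\Re(s)=\sigma$ to the right of the poles of $\widetilde h$ (which reflect the prescribed behaviour of $h$ near $\infty$, i.e.\ of the underlying $\SSch$-function near $0$) and far enough right for the above zeta integrals to converge absolutely; plugging this into the left-hand pairing, interchanging the $s$-integral with the integral over $\Mat_n(\F)$, recognising the inner integrals as twisted Godement--Jacquet zeta integrals, and applying their functional equation rewrites the left-hand pairing as a contour integral in which $\widetilde h(\chi,s)$ is multiplied by $\gamma(\tfrac12+s,\pi\times\chi,\psi)$. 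The right-hand pairing is the same contour integral with that product replaced by $\widetilde{\MS_\pi(h)}(\chi,s)$, and the two coincide because $\MS_\pi$ is defined exactly so that $\widetilde{\MS_\pi(h)}(\chi,s)=\gamma(\tfrac12+s,\pi\times\chi,\psi)\,\widetilde h(\chi,s)$ --- this is \eqref{eq: LocFEGLnGL1}, and \eqref{eq: LocFEGL1Bis} when $n=1$. A by-product, and really the essential content, is that the inverse Fourier transform of the non-compactly-supported datum $I_n(h)\cdot\beta^{\iota}\cdot\norm[\det]^{-\frac{n-1}{2}}$ is again supported on $\GL_n(\F)$, with no contribution from the singular locus $\det=0$, as forced by the shape of the Godement--Jacquet functional equation.

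\emph{Part (2) and uniqueness.} When $\pi=\Pi^{\infty}$ with $\Pi$ tempered unitary, the exponents governing the asymptotics of the Whittaker functions in \eqref{eq: BasicVHS} lie in the range allowed by $\SSch(\F)$ --- here $\RamCst$-temperedness with $\RamCst<1/2$ is used --- so $\VorH(\pi)\subseteq\Inv(\SSch(\F))$, and on $\VorH(\pi)$ the transform $\MS_\pi$ agrees with $\VorH_\pi$ since both satisfy \eqref{eq: LocFEGLnGL1}, which determines the transform via Mellin inversion. Hence the first displayed identity of (2) is just (1) restricted to $\VorH(\pi)$. For the uniqueness, if continuous $H_1^*$ and $H_2^*$ both satisfy the last displayed equation for every $\beta\in C(\pi)$, then subtracting and applying the Fourier transform gives $I_n(H_1^*-H_2^*)\cdot\beta\cdot\norm[\det]^{-\frac{n+1}{2}}\equiv 0$ on $\GL_n(\F)$ for all $\beta\in C(\pi)$; since for each $g_0\in\GL_n(\F)$ there is $\beta\in C(\pi)$ with $\beta(g_0)\neq 0$, and since $\det$ surjects onto $\F^{\times}$, this forces $H_1^*=H_2^*$. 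Thus the equation determines $H^*$ from $H$, so one may adopt it as the definition of $\widetilde{\VorH}_\pi(H):=H^*$ on the domain where a continuous solution exists, consistency with $\MS_\pi$ being immediate from (1).

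\emph{Main obstacle.} The hard part will be the analytic justification in Part (1), particularly in the non-archimedean case where Miller--Schmid's theory must be rebuilt: making sense of the distributional inverse Fourier transform on $\Mat_n(\F)$ of the non-Schwartz, non-compactly-supported datum $I_n(h)\cdot\beta^{\iota}\cdot\norm[\det]^{-\frac{n-1}{2}}$, justifying the interchange of this transform with the Mellin contour integral, controlling convergence simultaneously near $\det=0$ and near $\infty$, and placing the Mellin contour compatibly with the poles of $\widetilde h$ on one side and the abscissa of absolute convergence of the Godement--Jacquet integrals on the other. This is exactly the function of the intermediate transform $\MS_\pi$ and of the space $\SSch(\F)$ constructed in \S\ref{sec: MSTheory}; granting those, the computation above is bookkeeping.
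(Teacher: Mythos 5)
Your Part~(1) plan is in the right spirit (Mellin--Plancherel over $\det$, twisted Godement--Jacquet zeta integrals, their functional equation, and the defining property of $\MS_\pi$), but it stumbles on exactly the point you flag at the end and does not resolve: for a general irreducible smooth generic $\pi$ the abscissa $s_0$ of absolute convergence of $\Zeta(s,\Phi,\beta\otimes\xi)$ is unconstrained, so there is \emph{no} contour $\Re(s)=\sigma$ for which both the Mellin transform of $\Inv(h)$ (needs $\sigma\ll -1$) and the post-functional-equation zeta integral $\Zeta(s,\Phi,\beta\otimes\xi)$ (needs $\Re(s)>s_0$) converge when $\Phi\in\Sch(\Mat_n(\F))$ is arbitrary. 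The device you are missing is to test only against $\Phi\in\Cont_c^{\infty}(\GL_n(\F))$: for such $\Phi$ the twisted Godement--Jacquet zeta integral is entire and absolutely convergent for \emph{every} $s$, so the Plancherel inversion can be run in both directions on the single half-plane $\sigma\ll -1$. One then recovers $\MS_\pi(\Inv(h))$ on all of $\F^\times$ by the localisation argument (for each $t_0\in\F^\times$ pick $g_0$ with $\det g_0=t_0$ and $\beta\in C(\pi)$ nonvanishing at $g_0$, then let $\Phi$ run over bump functions near $g_0$). Your version, pairing against all of $\Sch(\Mat_n(\F))$, would in addition require showing the right-hand pairing even converges, which it need not for $\MS_\pi(\Inv(h))\in\SSch(\F)$ with nontrivial finite-function behaviour near $\det=0$.

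Part~(2) as written is wrong. You assert $\VorH(\pi)\subseteq\Inv(\SSch(\F))$ for tempered $\Pi$ and then invoke Part~(1); but Proposition~\ref{prop: MillerSchmidtNA}(2) states $\VorH(\pi)\cap\Inv(\SSch(\F))=\Cont_c^{\infty}(\F^\times)$, so the inclusion fails badly. The two spaces have \emph{opposite} asymptotics: functions in $\VorH(\pi)$ have finite-function (Whittaker) behaviour near $0$ and Schwartz decay near $\infty$, whereas functions in $\Inv(\SSch(\F))$ are Schwartz near $0$ and finite-function-like near $\infty$. Part~(2) therefore cannot be a restriction of Part~(1); the paper re-runs the Plancherel argument \emph{ab initio} with $h\in\VorH(\pi)$ and general $\Phi\in\Sch(\Mat_n(\F))$, this time placing the contour at $0<\sigma<1$, and the temperedness enters precisely to guarantee absolute convergence of both $\Zeta(s,\Phi,\beta\otimes\xi)$ (Proposition~\ref{prop: GJZAbsCTemp}) and $\Mellin{h}(\xi,s)$ (Corollary~\ref{cor: RSAbsCTemp}) on $\Re(s)>0$, so that the functional equation trades one convergent Plancherel expansion for another inside this strip. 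Your uniqueness argument for $\widetilde{\VorH}_\pi$ is essentially correct and matches the paper's (nonvanishing of some $\beta\in C(\pi)$ at each $g_0$ and surjectivity of $\det$).
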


	Just like Motohashi's formula in our former work \cite{Wu22}, there is a hidden pivot geometric side in our formula. Its local terms are realized as the \emph{relative orbital integrals} $H(y)$ for the \emph{Bessel distributions}. Namely we have (see \eqref{eq: WtIsBesselT} for a more precise version)
	$$ h(\pi) = \int_{\F^{\times}} H(y) \cdot \BesselF_{\widetilde{\pi},\psi^{-1}} \begin{pmatrix} & -y \\ 1 & \end{pmatrix} \frac{\ud^{\times}y}{\norm[y]}, $$
and one can find $H(y)$ in terms of $h(\pi)$ be the Bessel inversion formula. Consequently we regard the mutual determination of $\pi \mapsto h(\pi)$ and $y \mapsto H(y)$ as \emph{theoretically} well-understood. It remains to see the mutual determination of $y \mapsto H(y)$ and $\chi \mapsto \widetilde{h}(\chi)$.

\begin{theorem} \label{thm: DWtFTemp}
	Let $\widetilde{\Vor}_{\pi}$ be the extension of $\Vor_{\pi}$ corresponding to $\widetilde{\VorH}_{\pi}$. For tempered unitary $\Pi$ we have
	$$ \widetilde{h}(\chi) = \int_{\F^{\times}} \psi(-y) \chi^{-1}(y) \norm[y]^{-\frac{1}{2}} \cdot \widetilde{\Vor}_{\Pi}(H)(y) \ud^{\times} y, $$
where the integral on the right hand side is absolutely convergent.
\end{theorem}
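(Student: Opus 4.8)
Proof strategy for Theorem \ref{thm: DWtFTemp}.

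The plan is to transport the explicit formula for $\widetilde h(\chi)$ coming from the ``$\GL_1$-side'' decomposition in Theorem \ref{thm: Main1} to the Voronoi side, by clearing two local $L$-factors with two functional equations: Tate's thesis on a $\GL_1$-tail, and the $\GL_3\times\GL_1$ functional equation \eqref{eq: LocFEGLnGL1}. Recall that, by construction, $\widetilde h(\chi)$ is the normalization of the local double zeta-integral $\DBZLoc{s_1}{s_2}{\chi}{(\chi\omega\omega_{\Pi})^{-1}}{W}$ by the product $L(s_1,\Pi\times\chi)\,L(s_2,(\chi\omega\omega_{\Pi})^{-1})$ of a degree-$3$ and a degree-$1$ local factor, with $\chi$ absorbing the spectral parameter $\norm^{i\tau}$ so that the evaluation is at $s_1=s_2=\tfrac12$. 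The first task is to fully unfold this local integral: because the embedding $\GL_2\hookrightarrow\GL_3$ in \eqref{eq: KwanGraph} carries a $\GL_1$-tail, unfolding the Whittaker function $W$ along the $\GL_2$-part leaves, in the end, a single integration over one variable $y\in\F^\times$ of the relative orbital integral $H(y)$ of the Bessel distribution --- the ``hidden pivot'' recalled just above the statement --- against a residual, $\chi$- and $(\chi\omega\omega_{\Pi})^{-1}$-twisted kernel of Tate type in $y$.

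The decisive step is to clear the two $L$-factors. For the degree-$3$ factor one applies \eqref{eq: LocFEGLnGL1} with $n=3$: writing $H'(t):=H(t)\norm[t]^{-1}$ as in Definition \ref{def: VoronoiGLn}(1), one has $\VorH_{\Pi}(H')=\Vor_{\Pi}(H)$, since for $n=3$ the normalizing operators $\Trans((-1)^{2})$ and $\Mult_{0}$ are trivial while $\Mult_{1}(H')=H$; hence the Mellin transform of $\Vor_{\Pi}(H)$ against $\chi^{-1}\norm^{-s}$ equals $\gamma(s,\Pi\times\chi,\psi)$ times that of $H'$ against $\chi\norm^{s}$, and the $\gamma$-factor is precisely what matches the normalizing division by $L(s_1,\Pi\times\chi)$, converting the orbital integral $H$ into its Voronoi transform. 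For the degree-$1$ factor, completing the residual Tate integral against $(\chi\omega\omega_{\Pi})^{-1}$ and dividing by $L(s_2,(\chi\omega\omega_{\Pi})^{-1})$ produces, via the local functional equation of Tate's thesis (cf.\ \eqref{eq: LocFEGL1}--\eqref{eq: LocFEGL1Bis}), exactly the additive kernel $\psi(-y)$ and the unitary shift $\norm[y]^{-1/2}$ --- the minus sign inside $\psi$ coming from the long Weyl element. Reassembling the two computations, and then replacing $\Vor_{\Pi}$ by its extension $\widetilde\Vor_{\Pi}$ from Theorem \ref{thm: ExtVorH} --- legitimate because $H$, being an orbital integral of a Bessel distribution rather than the image of a single Whittaker function, a priori lies only in the enlarged domain $\Inv(\SSch(\F))$ of $\widetilde\VorH_{\Pi}$ (resp.\ its archimedean analogue) --- yields exactly the asserted identity.

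Finally one justifies the absolute convergence, which is where the $\RamCst$-temperedness of $\Pi$ with $\RamCst<\tfrac12$ is essential; for a non-tempered $\Pi$ the identity survives only after meromorphic continuation, i.e.\ in a regularized sense. Since $\norm[\psi(-y)\chi^{-1}(y)]=1$, the claim reduces to integrability of $\norm[y]^{-1/2}\widetilde\Vor_{\Pi}(H)(y)$ against $\ud^{\times}y$. The manipulations above move the integral to the ``other side'' of the Tate and $\GL_3\times\GL_1$ functional equations, and the evaluation point $s_1=s_2=\tfrac12$ lies in the common strip of absolute convergence of both sides of these functional equations precisely under the hypothesis $\RamCst<\tfrac12$. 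Concretely: in the expanding torus direction $\widetilde\Vor_{\Pi}(H)$ decays rapidly (archimedean) or vanishes eventually (non-archimedean) like a $\GL_3$ Whittaker function, so $\psi(-y)$ is not even needed there; in the contracting direction, the Miller--Schmid asymptotic expansions of \S\ref{sec: MSTheory} --- whose exponents are governed by $L(\cdot,\Pi\times\chi)$ --- are kept by the temperedness bound in the range for which the twist $\norm[y]^{-1/2}$ retains $\ud^{\times}y$-integrability near the critical end. The main obstacle is not this convergence check but the first part: faithfully tracking the two functional equations and the Definition \ref{def: VoronoiGLn}(1) normalizations through the unfolding, so that the kernel $\psi(-y)$, the exponent $\norm[y]^{-1/2}$, and precisely the transform $\widetilde\Vor_{\Pi}$ all emerge with the correct normalization --- in effect, bookkeeping the interlocking of Tate's thesis on the $\GL_1$-tail with Miller--Schmid's $\GL_3$ Voronoi theory.
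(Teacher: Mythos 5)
Your proposal misses the mechanism the paper actually uses and, more seriously, rests on a factually incorrect premise about where $H$ lives. The paper does \emph{not} proceed by ``unfolding'' the double zeta-integral and clearing $L$-factors via the Tate and $\GL_3\times\GL_1$ functional equations applied directly to $\widetilde h(\chi)$. Instead, the engine is a concrete matrix identity \eqref{EssMatId} (a ``shortest path'' in Chen's height theory for the local converse theorem), which converts $\widetilde h(\chi)$ into the two-variable expression \eqref{InvDisRel1}: an integral over $(t_1,t_2)$ of the function $h^*(t_1,t_2)$ against $\omega\omega_\Pi(t_1)^{-1}\psi(-t_2)\chi^{-1}(t_2)\norm[t_2]^{-1/2}$. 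For each fixed $t_1$, one has $h^*(t_1,\cdot)=\Vor_\Pi(h(t_1,\cdot))$ by the \emph{ordinary} $\Vor_\Pi$, because $h(t_1,\cdot)\in\VorH(\Pi)$. The real work is then to pass the integral over $t_1$ (with the character $\omega\omega_\Pi(t_1)^{-1}$) through the Voronoi transform to arrive at $\widetilde\Vor_\Pi(H)$ with $H(y)=\int h(t_1,y)\omega\omega_\Pi(t_1)^{-1}\,\ud^\times t_1$; this exchange is justified via the distributional Godement--Jacquet characterization \eqref{eq: ExtVHComp} of Theorem~\ref{thm: ExtVorH}~(2) together with the absolute-convergence estimates of Lemma~\ref{lem: IntWtsEst}, which come from explicit Iwasawa decompositions and Proposition~\ref{prop: BdWhiTemp}. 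Your proposal never identifies this exchange-of-integration step, which is where the theorem is actually proved and where temperedness enters.

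The factual error is your claim that $H$ ``a priori lies only in the enlarged domain $\Inv(\SSch(\F))$ of $\widetilde\VorH_\Pi$''. The paper states the opposite immediately before \eqref{eq: DWtORelBis}: ``the function $H$ lies beyond the applicability of the usual $\Vor_\Pi$, \emph{not even} $\MS_{\Pi,\psi}$.'' That is, $H\notin\Inv(\SSch(\F))$ in general, so the Miller--Schmid extension does not apply to it; this is precisely why the fully distributional extension $\widetilde\VorH_\Pi$ of Theorem~\ref{thm: ExtVorH}~(2) is needed, not $\MS_\Pi$. Consequently your convergence discussion (``the Miller--Schmid asymptotic expansions of \S\ref{sec: MSTheory} ... are kept by the temperedness bound'') is vacuous here, since $H$ has no Miller--Schmid expansion to speak of; the correct convergence input is Lemma~\ref{lem: IntWtsEst}, which bounds $\int_{\F^\times}|h(t,y)|\,\ud^\times t$ and $\int_{\F^\times}|h^*(t,y)|\,\ud^\times t$ by $\min(\norm[y]^{1/2+\epsilon},\norm[y]^{1-\epsilon})$ and $\min(\norm[y]^{\epsilon},\norm[y]^{1-\epsilon})$ respectively, and these are what combine with Proposition~\ref{prop: GJZAbsCTemp} to let Fubini do its work. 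You would need to supply the matrix identity \eqref{EssMatId}, the relation $h^*(t_1,\cdot)=\Vor_\Pi(h(t_1,\cdot))$, the integrated bounds of Lemma~\ref{lem: IntWtsEst}, and the appeal to the characterizing property \eqref{eq: ExtVHComp} to turn this sketch into a proof.
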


\begin{remark}
	Our method leading to Theorem \ref{thm: DWtFTemp} lies in exploring the proof of Jacquet's conjecture on the local converse theorems via Chen \cite{Ch06} and Jacquet--Liu \cite{JL18}. This method is more general than the Voronoi formula approach. This will be made explicit in an upcoming paper on a further generalization replacing $\GL_3$ with $\GL_n$. In particular in the case of $n=4$ the local functional equations for twists by $\GL_2$-representations are necessary.
\end{remark}

\begin{remark}
	Clearly the transforms $H \mapsto \widetilde{\VorH}_{\Pi}(H)$ and $\widetilde{\Vor}_{\Pi}(H) \mapsto \widetilde{h}(\chi)$ given in Theorem \ref{thm: ExtVorH} (2) \& \ref{thm: DWtFTemp} are invertible. Hence our formulas give the weight transforms in both directions, at least for tempered $\Pi$.
\end{remark}

\begin{remark}
	Under the Ramanujan--Petersson conjecture (for $\GL_3$) every local component $\Pi$ of a generic automorphic representation is tempered. Therefore Theorem \ref{thm: ExtVorH} \& \ref{thm: DWtFTemp} cover all interesting cases related to automorphic setting. On the other hand, their extension to the non-tempered case has independent theoretic interests and poses non-trivial technical challenges. See Remark \ref{rmk: NonTempBd} for more details.
\end{remark}

\begin{remark}
	In a recent work Jiang--Luo defined $\VorH_{\pi}$ via the Godement--Jacquet theory through the graph \cite[3-16]{JiL23}. Note that their $\Sch_{\pi}(\F^{\times})$ is essentially our $\VorH(\pi)$ up to the consistency between the Godement--Jacquet theory and the Rankin--Selberg theory for $\GL_n \times \GL_1$. The formulation of Theorem \ref{thm: ExtVorH} is inspired by the theirs (as well as some earlier results of Jacquet et al. \cite[Proposition 4.5]{JL70} and \cite[\S (2.4)]{JS85} for supercuspidal representations). But Theorem \ref{thm: ExtVorH} does not seem to be included in their theory.
\end{remark}

\begin{remark} 
	The extension $\widetilde{\VorH}_{\pi}$ applies to test functions with \emph{non-simple singularities}. For example, for any non-trivial additive character $\psi: \F \to \Sph^1$ and for any $a \in \F^{\times}$ the function $H(x) := \chi(x) \psi(ax^2)$ lies in $\Cont(\F^{\times})$ for any $\chi \in \widehat{\F^{\times}}$. By Weil's classical work \cite{We64} we have
	$$ \widetilde{\VorH}_{\chi}(H)(y) = H^*(y) = \tfrac{\gamma_{\F}(a,\psi)}{\sqrt{2\norm[a]_{\F}}} \chi^{-1}(x) \norm[x] \psi(x^2/(4a)), $$
where $\gamma_{\F}(a,\psi) \in \Sph^1$ is the \emph{Weil index}. Obviously the function $H \notin \Inv(\SSch(\F)) + \chi \cdot \Sch(\F)$, and admits no properly defined Mellin transform. In a follow-up paper we will see that some natural test functions $H$ selecting \emph{short families} of $\PGL_2(\F)$-representations do have non-simple singularities at infinity, hence are beyond the applicability of Miller--Schmid's theory. 
\end{remark}

\begin{remark}
	In an earlier work \cite{BFW21+}  we have established a version of local weight transform expressed in terms of an integral transform with some hypergeometric kernel function at a real place. The space of admissible test functions is tricky in order to ensure the absolute convergence. The version in Theorem \ref{thm: DWtFTemp} is consistent with the version in \cite{BFW21+}, and has the advantage to be applicable to a larger class of test functions, which contains natural choices of test functions. 
\end{remark}

		\subsubsection{Kernel Function}
	
	Let $\F$ be a local field.
\begin{definition} \label{def: ConvTypeTrans}
	We call a transform $A$ on the space of functions on $\F^{\times}$ of \emph{convolution type} if its domain contains $\Cont_c^{\infty}(\F^{\times})$ and if there is a locally integrable function $a$ on $\F^{\times}$ so that
	$$ A(h)(y) = \int_{\F^{\times}} a(xy) h(x) \ud^{\times} x = a*(\Inv(h))(y), \quad \forall \ h \in \Cont_c^{\infty}(\F^{\times}). $$
	We call $a(y)$ the \emph{convolution kernel} of $A$.
\end{definition}

\begin{theorem}[Summary of Corollary \ref{cor: VorHKDi} and Lemma \ref{lem: VorHKInd}] \label{thm: VorHKerDih}
	Let $\pi$ be a \emph{split} or \emph{dihedral} representation of $\GL_2(\F)$, constructed from a quadratic extension $\E/\F$ and a \emph{regular} character $\eta$ of $\E^{\times}$. The Voronoi--Hankel transform $\VorH_{\pi}$ is of convolution type with kernel defined by
	$$ \VHF_{\pi}(t) := \zeta_{\E}(1)^{-1} \lambda(\E/\F,\psi) \id_{\Nr(\E^{\times})}(t) \cdot \norm[t]_{\F} \int_{\E^1} \psi(x \delta) \eta^{-1}(x \delta) \ud \delta, $$
where $\lambda(\E/\F,\psi)$ is the Weil index and $x \in \E$ is any element with $\Nr_{\E/\F}(x)=t$.
\end{theorem}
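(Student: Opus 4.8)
The plan is to identify $\VHF_\pi$ as the convolution kernel of $\VorH_\pi$ by computing its (distributional) Mellin transform and matching it against the local functional equation \eqref{eq: LocFEGLnGL1}, which for $n=2$ already characterises $\VorH_\pi$. Three points are immediate: $\Cont_c^\infty(\F^\times)\subset\VorH(\pi)$ by Definition \ref{def: VoronoiGLn}(0), so $\VorH_\pi$ is defined on $\Cont_c^\infty(\F^\times)$; the subgroup $\Nr(\E^\times)$ and its complement are both clopen in $\F^\times$, so $\VHF_\pi$ is continuous on $\F^\times$, in particular locally integrable, and $H\mapsto\VHF_\pi*\Inv(H)$ is a well-defined transform on $\Cont_c^\infty(\F^\times)$ with absolutely convergent defining integral; and the substitution $u=ty$ turns the desired convolution identity $\VorH_\pi(H')(t)=\int_{\F^\times}\VHF_\pi(ty)H'(y)\,\ud^\times y$ into the statement that $\int_{\F^\times}\VHF_\pi(t)\chi^{-1}(t)\norm[t]_\F^{-s}\,\ud^\times t$ equals the multiplier $\gamma(s,\pi\times\chi,\psi)$ in \eqref{eq: LocFEGLnGL1}. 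Since on a suitable right half-plane in $s$ the Mellin transform determines a function in $\Cont_c^\infty(\F^\times)$ (and, likewise, $\VorH_\pi$ applied to such, whose asymptotics at $0$ and $\infty$ are controlled by genericity), it therefore suffices to prove, as meromorphic functions of $(\chi,s)$,
\[
\int_{\F^\times}\VHF_\pi(t)\,\chi^{-1}(t)\,\norm[t]_\F^{-s}\,\ud^\times t=\gamma(s,\pi\times\chi,\psi).
\]

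For the dihedral case I would write $\pi=\pi_\eta$ as the automorphic induction $\mathrm{AI}_{\E/\F}(\eta)$; then $L(s,\pi_\eta\times\chi)=L(s,\eta\cdot(\chi\circ\Nr_{\E/\F}))$ and, by the Langlands--Deligne inductivity of local constants -- precisely what produces the $\lambda$-factor --
\[
\gamma(s,\pi_\eta\times\chi,\psi)=\lambda(\E/\F,\psi)\,\gamma_\E\!\left(s,\,\eta\cdot(\chi\circ\Nr_{\E/\F}),\,\psi_\E\right),\qquad\psi_\E:=\psi\circ\Tr_{\E/\F}.
\]
On the left of the target identity I would unfold the inner $\E^1$-integral against the outer integral over $\Nr(\E^\times)$ along the exact sequence $1\to\E^1\to\E^\times\xrightarrow{\Nr}\Nr(\E^\times)\to1$: substituting $y=x\delta$, so that $\Nr(y)=t$, $\norm[t]_\F=\norm[y]_\E$, $\chi^{-1}(t)=(\chi\circ\Nr)^{-1}(y)$ and $\eta^{-1}(x\delta)=\eta^{-1}(y)$, with the factor $\zeta_\E(1)^{-1}$ absorbing the ratio between $\ud^\times t\,\ud\delta$ and $\ud^\times y$, one obtains
\[
\int_{\F^\times}\VHF_\pi(t)\,\chi^{-1}(t)\,\norm[t]_\F^{-s}\,\ud^\times t=\lambda(\E/\F,\psi)\int_{\E^\times}\psi_\E(y)\,\big(\eta\cdot(\chi\circ\Nr)\big)^{-1}(y)\,\norm[y]_\E^{1-s}\,\ud^\times y.
\]
The remaining integral is the Mellin transform of the additive character over $\E$, which by Tate's local functional equation -- the standard evaluation $\int_{\E^\times}\psi_\E(y)\mu^{-1}(y)\norm[y]_\E^{1-s}\,\ud^\times y=\gamma_\E(s,\mu,\psi_\E)$ for compatibly normalised Haar measures -- equals $\gamma_\E(s,\eta\cdot(\chi\circ\Nr),\psi_\E)$, and combining the two displays gives exactly $\gamma(s,\pi_\eta\times\chi,\psi)$. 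This is the content I would isolate as the ``induction'' lemma: the $\GL_1$-Voronoi--Hankel kernel over $\E$ attached to $\eta$, which by \eqref{eq: LocFEGL1Bis} is essentially $u\mapsto\eta^{-1}(u)\norm[u]_\E\psi_\E(u)$ up to the normalising power of $\zeta_\E(1)$, is transported by automorphic induction to the $\GL_2$-kernel for $\pi_\eta$ via fibre-integration over norm classes, the $\lambda$-factor emerging from inductivity. The split case $\E=\F\oplus\F$, $\eta=\mu_1\otimes\mu_2$, $\pi=\pi(\mu_1,\mu_2)$ is identical: $\lambda(\F\oplus\F/\F,\psi)=1$, $\Nr(\E^\times)=\F^\times$, $\E^1\cong\F^\times$, the inner integral becomes literally the classical $\GL_2$ Bessel integral $\int_{\F^\times}\psi(x_1a)\psi(x_2a^{-1})(\mu_1^{-1}\mu_2)(a)\,\ud a$ with $x_1x_2=t$, and the same computation reproduces $\gamma(s,\mu_1\chi,\psi)\gamma(s,\mu_2\chi,\psi)=\gamma(s,\pi\times\chi,\psi)$.

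The main obstacle will be making the unfolding and the identification with the Tate $\gamma$-factor rigorous where the relevant integrals converge only conditionally -- the archimedean and split settings, where the inner torus-integral is a Bessel-type integral over a non-compact torus -- which I would handle by inserting an auxiliary factor $\norm[y]_\E^{\,\varepsilon}$ and continuing to $\varepsilon=0$, or by a smooth truncation whose removal is controlled by the Bessel decay, and correspondingly by justifying the interchange of the $t$- and $\delta$-integrations. A secondary, bookkeeping-heavy point is pinning down the constant $\zeta_\E(1)^{-1}\lambda(\E/\F,\psi)$ together with the conductor-dependence of $\psi$ and the compatibility of the Haar measures; the cleanest route is to prove the identity first up to a nonzero constant and a root of unity, then fix it by specialising to the everywhere-unramified case, where $\VHF_\pi$, the relevant Whittaker functions and all $\gamma$-factors are explicit. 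As a more self-contained alternative -- avoiding automorphic induction at the level of $\gamma$-factors -- one may instead compute the convolution kernel of $\Vor_\pi$ directly from the Kirillov model via the known action of the long Weyl element (the kernel being a rescaled Bessel function $\BesselF_{\pi,\psi}$) and then substitute the classical closed form of the Bessel function of a split or dihedral representation; I would use whichever is shorter once the constants are settled.
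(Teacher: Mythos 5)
Your route is genuinely different from the paper's, and (with the caveats below) it is sound.

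The paper never passes through inductivity of local constants. In the dihedral case (Lemma~\ref{lem: VHFourDDi} and Corollary~\ref{cor: VorHKDi}) it works directly in the Weil representation model $\pi(\eta,\psi)$ on $\Sch(\E,\eta^{-1})$: $W_f\begin{pmatrix}t&\\&1\end{pmatrix}$ and $\widetilde W_f\begin{pmatrix}-t&\\&1\end{pmatrix}$ are evaluated from the explicit action \eqref{eq: DiWeilRepF}, the $\lambda$-factor appearing as the built-in constant in $\pi(\eta,\psi)(w')\Phi=\lambda(\E/\F,\psi)\Phi^\iota$ rather than via Langlands--Deligne; this yields the intertwining $J_\E\circ\VorH_{\pi_\eta}=\lambda(\E/\F,\psi)\,\VorH_\eta\circ I_\E$, and the convolution kernel drops out by unfolding over norm classes. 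Your Mellin/$\gamma$-factor argument (compute $\Mellin{\VHF_\pi}(\chi^{-1},-s)$, unfold to Tate's $\gamma_\E$, invoke inductivity, conclude by Mellin inversion) proves the same thing and is cleaner to state, at the cost of importing inductivity of local constants as a black box and of needing to justify absolute convergence of $\Mellin{\VHF_\pi}$ and the Fubini step behind $\Mellin{\VHF_\pi*\Inv(H')}=\Mellin{\VHF_\pi}\cdot\Mellin{H'}$. In the genuinely dihedral case (non-split $\E$, in particular $\C/\R$) this is unproblematic: $\E^1$ is compact, so the inner integral is bounded, decays at $t\to\infty$ by stationary phase, and vanishes to positive order at $t\to0$ by regularity of $\eta$, giving a nonempty strip of absolute convergence.

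The split case is where your ``identical'' is too quick. There $\E^1\simeq\F^\times$ is non-compact, so the inner integral $\int_{\E^1}\psi_\E(x\delta)\eta^{-1}(x\delta)\,\ud\delta$ is the oscillatory, only conditionally convergent Bessel integral, and your Mellin/Fubini step genuinely does not converge absolutely; the regularization you flag as ``the main obstacle'' is the bulk of the work, not a footnote. The paper does not treat split as a degenerate case of dihedral at all: it uses Godement sections \eqref{eq: WhiGoSecGL2}, decomposes $\VorH_{\mu_1\boxplus\mu_2}=\widetilde{\VorH}_{\mu_2}\circ\Inv\circ\widetilde{\VorH}_{\mu_1}$ (Lemma~\ref{lem: VorTransDecompGL2}), and then carries out the truncation-plus-integration-by-parts (resp.\ Gauss-sum vanishing) analysis of Lemmas~\ref{lem: LaplacianCal} and~\ref{lem: GL2VorKernSplit} to make sense of the regularized kernel before proving the convolution identity in Lemma~\ref{lem: VorHKInd}. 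If you pursue your $\gamma$-factor route, you would need an analogue of Lemma~\ref{lem: GL2VorKernSplit} to turn the conditionally convergent Bessel integral into a bona fide locally integrable kernel with the stated Mellin transform; alternatively, your fallback via the explicit formula for $\BesselF_{\pi,\psi}$ is reasonable, but note that the paper deliberately \emph{derives} that formula rather than citing a classical closed form, so that alternative shifts rather than removes the burden.
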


\begin{remark}
	The special cases (dihedral and unitary induced $\RamCst$-tempered cases) treated here already cover all cases that have been so far considered in literature. They already cover all possibilities over the archimedean fields. The extension of the integral representation to other unitary irreducible representations is interesting and would require deeper understanding of the local Langlands correspondences. We hope to come back to this problem, as well as the generalization to higher rank groups in the near future.
\end{remark}

\begin{remark}
	In the dihedral case, our integral representation should be regarded as a direct generalization of the one given by Baruch--Snitz \cite{BS11}.
\end{remark}

\begin{remark}
	We also note that a version of integral representation of $\VHF_{\pi}$ for a quite general class of $\pi$ is available by Jiang--Luo \cite[(3.15) \& (3.16)]{JiL22}. It would be nice to make that version useful for applications in our mind. We do not know how for the moment.
\end{remark}

\begin{remark}
	Note that for $\GL_2$ the Voronoi--Hankel kernel function $\VHF_{\pi}$ of $\VorH_{\pi}$ is intimately related to the Bessel function by the formula (we follow the convention made in \cite[Theorem 6.3 \& Corollary 7.3]{BM05})
	$$ \BesselF_{\pi,\psi} \begin{pmatrix} & -y \\ 1 & \end{pmatrix} = \omega_{\pi}(-y) \norm[y]^{-\frac{1}{2}} \VHF_{\pi}(y). $$
	So our integral representation of $\VHF_{\pi}$ automatically gives an integral representation of $\BesselF_{\pi,\psi}$, which will be the starting point of our choice of test functions for the relevant moment/subconvexity problem.
\end{remark}

\begin{remark}
	It can be checked that the integral representation of $\BesselF_{\pi,\psi}$ obtained here is consistent with the formulas of $\BesselF_{\pi,\psi}$ given by Baruch--Mao \cite[Theorem 6.4]{BM05} in the real case and summarized by Chai-Qi \cite[(4.3)]{CQ20} (due to Qi) in the complex case.
\end{remark}

\subsection{Notation and Convention}
	
	Below the \emph{general notation} applies to all parts of this paper. Every section/subsection takes either the \emph{global notation} or the \emph{local notation}, and will be specified at the beginning of each section/subsection.
	
		\subsubsection{General Notation}	
	
	For a locally compact group $G$, we write $\widehat{G}$ for the topological dual of continuous unitary irreducible representations. For $\pi \in \widehat{G}$, we write $V_{\pi}$ for the underlying Hilbert space, and write $V_{\pi}^{\infty} \subset V_{\pi}$ for the subspace of smooth vectors if $G$ carries extra structure to make sense of the notion.
	
	For a ring $R$, we write $\id_n$ for the identity matrix in $\GL_n(R)$, $w_n$ for the Weyl element with $1$'s on the anti-diagonal, and define $w_{n,t}$ for $t < n$ as
	$$ w_n = \begin{pmatrix} & & 1 \\ & \iddots & \\ 1 & & \end{pmatrix}, \quad w_{n,t} = \begin{pmatrix} \id_t & \\ & w_{n-t} \end{pmatrix}. $$
	We introduce the standard involution of \emph{inverse-transpose} on $\GL_n(R)$ as $g^{\iota} := {}^tg^{-1}$. We sometimes also write the transpose as $g^T$. For every function $F$ on $\GL_n(R)$ we define
	$$ \widetilde{F}(g) := F(w_n g^{\iota}). $$
	For a (unitary irreducible) representation $\Pi$ of $\GL_n(R)$, the central character is denoted by $\omega_{\Pi}$ and the contragredient is denoted by $\widetilde{\Pi}$.
	
	We introduce the following subgroups of $\GL_n(R)$:
	$$ \gp{A}_n(R) = \left\{ \begin{pmatrix} t_1 & & \\ & \ddots & \\ & & t_n \end{pmatrix} \ \middle| \ t_j \in R^{\times} \right\}, \quad \gp{N}_n(R) = \left\{ \begin{pmatrix} 1 & x_{1,2} & \cdots & x_{1,n} \\ 0 & 1 & \cdots & x_{2,n} \\ \vdots & \ddots & \ddots & \vdots \\ 0 & \cdots & 0 & 1 \end{pmatrix} \ \middle| \ x_{i,j} \in R \quad \forall i < j \right\}, $$
	$$ \gp{B}_n(R) = \gp{A}_n(R) \gp{N}_n(R), \quad \gp{Z}_n(R) = \left\{ z \id_n \ \middle| \ z \in R^{\times} \right\}. $$
	We usually write $z$ for $z \id_n$ if the context is clear. In the case $n=2$, we omit the subscript $2$ and write
	$$ a(y) = \begin{pmatrix} y & \\ & 1 \end{pmatrix}, \quad n(x) = \begin{pmatrix} 1 & x \\ & 1 \end{pmatrix}. $$
	In the case $n=3$ we have a Weyl element $w := w_3 w_{3,1}$.
	
		\subsubsection{Global Setting}
		\label{sec: GlobNotation}

	Let $\bF$ be a number field with ring of adeles $\A$, and group of ideles $\A^{\times}$. Write $\A^{(1)}$ for the subgroup of ideles with adelic norm $1$. We identify $\R_{>0}$ with the image of a fixed section map of the adelic norm map $\bF^{\times} \backslash \A^{\times} \to \R_{>0}$, so that $\bF^{\times} \backslash \A^{\times} \simeq \bF^{\times} \backslash \A^{(1)} \times \R_{>0}$ is identified as the direct product of a compact abelian group and $\R_{>0}$. Let $V_{\bF}$ be the set of all places of $\bF$. We fix the non-trivial additive character $\psi: \bF \backslash \A \to \C^1$ \`a la Tate, and choose the Haar measure $\ud x =\prod_v\ud x_v$ on $\A$ to be self-dual with respect to $\psi$. The Haar measure $\ud^{\times} x =\prod_v\ud^{\times}x_v$ on $\A^{\times}$ is taken to be the Tamagawa measure with factors of convergences $\zeta_v(1)$, namely
	\begin{align*}
	\ud^{\times}x_v = \zeta_v(1) \frac{\ud x_v}{\norm[x_v]_v}, \quad 
	\zeta_v(s) := \begin{cases} \pi^{-s/2} \Gamma(s/2),\ \ & \text{if } \bF_v = \R,\\
	(2\pi)^{1-s} \Gamma(s), & \text{if } \bF_v = \C,\\
	(1-\Nr(\vp)^{-s})^{-1}, & \text{if } v=\vp < \infty.\end{cases}
	\end{align*}
	
	We write $[\PGL_n] = \gp{Z}_n(\A) \GL_n(\F) \backslash \GL_n(\A)$. If $\omega$ is a unitary character of $\F^{\times} \backslash \A^{\times}$, called a \emph{Hecke character}, we denote by $\intL^2(\GL_n, \omega)$ the (Hilbert) space of Borel measurable functions $\varphi$ satisfying
\begin{align*}
\begin{cases}
\varphi(z \gamma g) = \varphi(g), \quad \text{for all } \gamma \in \GL_n(\F), ~z \in \gp{Z}_n(\A),~ g \in \GL_2(\A), \\ 
\text{The Petersson norm } \Pairing{\varphi}{\varphi} := \displaystyle\int_{[\PGL_n]} \norm[\varphi(g)]^2 \ud\bar{g} < \infty. 
\end{cases}
\end{align*}
	The subspace $\intL_0^2(\GL_n, \omega) \subset \intL^2(\GL_n, \omega)$ consists of those $\varphi$ satisfying
	$$ \int_{\gp{N}(\F) \backslash \gp{N}(\A)} \varphi(ng) \ud n = 0, \quad \text{a.e. } g $$
	for \emph{every} unipotent radical $\gp{N}$ of a \emph{parabolic} subgroup of $\GL_n$. It suffices to consider maximal parabolic subgroups. In particular, for $n=3$ it is equivalent to
\begin{equation} \label{eq: CuspCondGL3}
	\int_{(\F \backslash \A)^3} \varphi \left( \begin{pmatrix} 1 & & x_1 \\ & 1 & x_2 \\ & & 1 \end{pmatrix} g \right) \ud x_1 \ud x_2 = 0, \quad \text{a.e. } g. 
\end{equation}
	It can be shown that $\intL_0^2(\GL_n, \omega)$ is a closed subspace, and is completely decomposable as a direct sum of unitary irreducible components $(\Pi,V_{\Pi})$, called \emph{cuspidal (automorphic) representations} of $\GL_n(\A)$. 
	
	In the case $n=2$, the ortho-complement of $\intL_0^2(\GL_2, \omega)$ in $\intL^2(\GL_2,\omega)$ is the orthogonal sum of the one-dimensional spaces
	$$ \ag{C} \left( \xi \circ \det \right) : \quad \xi \text{ a Hecke character such that } \xi^2 = \omega $$
and a direct integral representation over the unitary dual of $\bF^{\times} \backslash \ag{A}^{\times} \simeq \ag{R}_+ \times (\bF^{\times} \backslash \ag{A}^{(1)} )$. Precisely, for $\tau \in \ag{R}$ and a unitary character $\chi$ of $\bF^{\times} \backslash \ag{A}^{(1)}$ which is regarded as a unitary character of $\bF^{\times} \backslash \ag{A}^{\times}$ via trivial extension, we associate a unitary representation $\pi_{i\tau}(\chi,\omega\chi^{-1})$ of $\GL_2(\ag{A})$ on the following Hilbert space $V_{i\tau}(\chi,\omega\chi^{-1})$ of functions via the right regular translation
\begin{equation} \label{eq: PSRepsGlob}
\begin{cases}
f\Big(\Big(\begin{matrix} t_1 & x \\ 0 & t_2 \end{matrix}\Big) g \Big) = \chi(t_1/t_2) \extnorm{\frac{t_1}{t_2}}_{\ag{A}}^{\frac{1}{2}+i\tau} f(g), \quad \text{for all }t_1,t_2 \in \ag{A}^{\times}, ~x \in \ag{A}, ~g \in \GL_2(\ag{A}) ; \\ 
\text{The induced norm } \Pairing{f}{f} := \displaystyle\int_{\gp{K}} \norm[f(\kappa)]^2 \ud\kappa < \infty . 
\end{cases}
\end{equation}	
	If $\tau=0$ we may omit it by writing $\pi(\chi,\omega\chi^{-1}) = \pi_0(\chi, \omega\chi^{-1})$. If $f_{i\tau} \in V_{i\tau}(\chi,\omega\chi^{-1})$ s.t. $f_{i\tau} \mid_{\gp{K}} =: h$ is independent of $\tau$, we call it a {\it flat section}. It extends to a holomorphic section $f_s \in \pi_s(\chi,\omega\chi^{-1})$ for $s \in \C$. Then $\pi_{i\tau}(\chi,\omega\chi^{-1})$ is realized as a subspace of functions on $\GL_2(\F) \backslash \GL_2(\A)$ via the Eisenstein series
	$$ \eis(s,h)(g) = \eis(f_s)(g) := \sum_{\gamma \in \gp{B}_2(\F) \backslash \GL_2(\F)} f_s(\gamma g), $$
which is absolutely convergent for $\Re s > 1/2$ and admits a meromorphic continuation regular at $s=i\tau$.
	
		\subsubsection{Local Setting}
		\label{sec: LocNotation}
		
	By local setting we fix a place $v \in V_{\F}$ and omit it from the relevant notation.
	
	For a local field $\F$ we let $\F^1$ to be the subgroup of elements in $\F^{\times}$ with norm $1$. If $\F$ is archimedean, then we have $\F^{\times} = \F^1 \times \R_{>0}$ and $\widehat{\F^{\times}} = \widehat{\F^1} \times i \R(\F)$ with $\R(\F) := \R$. If $\F$ is a non-archimedean, we write normalized valuation $v_{\F}$, ring of integers $\vO_{\F}$, a chosen uniformizer $\varpi_{\F}$ s.t. $v_{\F}(\varpi_{\F})=1$, valuation ideal $\vP_{\F} = \varpi_{\F} \vO_{\F}$ and $q=q_{\F}:= \norm[\vO_{\F}/\vP_{\F}]$. We identify $\F^1 = \vO_{\F}^{\times}$ with the quotient group $\F^{\times}/\varpi_{\F}^{\Z}$, as well as their characters. We also identify $\widehat{\F^{\times}}$ as $\widehat{\F^1} \times i\R(\F)$ where $\R(\F) := \R/ \left( 2\pi \log q \right) \Z$ is a torus. The group $\widetilde{\F^{\times}}$ of quasi-characters of $\F^{\times}$ is identified with 
	$$ \widehat{\vO_{\F}^{\times}} \times \C(\F) := \C/ \left( 2\pi i \log q \right) \Z = \R + i\R(\F) \to \widetilde{\F^{\times}}, \quad (\xi,s) \mapsto (t \mapsto \xi(t)\norm[t]^s). $$
For any $\sigma \in \R$ write $(\sigma)_{\F} \subset \C(\F)$ for the subset of elements with real part $\sigma$. The transported Plancherel measure on $(\sigma)_{\F}$ is denoted as $\ud_{\F} s$.
	
	For integers $n,m \geq 1$ we write $\Sch(n \times m, \F)$ for the space of Schwartz--Bruhat functions on $\Mat(n \times m, \F)$ the $n \times m$ matrices with entries in $\F$. It is naturally acted by $\GL_n(\F) \times \GL_m(\F)$ via the formula
	$$ g.\Psi.h(X) := \Psi(h X g), \quad \forall h \in \GL_n(\F), g \in \GL_m(\F), X \in \Mat(n \times m, \F). $$
	The (inverse) $\psi$-Fourier transform is denoted and defined by (see Remark \ref{rmk: FourTransConv} for more details)
	$$ \widehat{\Psi}(X) = \OFour_{\psi}(\Psi)(-X) = \int_{\Mat(n \times m, \F)} \Psi(Y) \psi \left( \Tr(XY^T) \right) \ud Y. $$
	If no confusion occurs, we may omit $\psi$ from the notation. If this is the case, then the inverse Fourier transform is denoted by $\invOFour = \OFour_{\overline{\psi}} = \OFour_{\psi^{-1}}$. We may also add the subscript $\F$ to emphasize the base field. Note that the Fourier transform has the property
	$$ \widehat{g.\Psi.h} = \norm[\det g]^{-n} \cdot \norm[\det h]^{-m} \cdot g^{\iota}.\widehat{\Psi}.h^{\iota}. $$
	For indices $1 \leq i \leq n$ and $1 \leq j \leq m$, let $\OFour_{i,j}$ be the partial Fourier transform with respect to the variable at the $i$th row and $j$th column in $\Mat(n \times m, \F)$. Let $\OFour_{\vec{j}}$ be the composition of all $\OFour_{i,j}$ with $1 \leq i \leq n$.
	
	We introduce the (connected) maximal compact subgroup $\gp{K}_n$ of $\GL_n(\F)$ as
	$$ \gp{K}_n = 
	\begin{cases}  
		\SO_n(\R) & \text{if } \F = \R \\
		\SU_n(\C) & \text{if } \F = \C \\
		\GL_n(\vo) & \text{if } \F \text{ is non-archimedean with valuation ring } \vo
	\end{cases}, $$
	and equip it with the probability Haar measure $\ud \kappa$.
	
	The principal series representations given by (\ref{eq: PSRepsGlob}) have obvious local versions at every place $v \in V_{\F}$.

	\subsection*{Acknowledgement}
	
	The author thanks Jingsong Chai, Dihua Jiang, Emmanuel Kowalski, Yongxiao Lin, Hengfei Lv, Xinchen Miao, Zhi Qi and Ping Xi for discussions related to the topics of this paper.

\section{Technicality with Schwartz-Bruhat Functions}
\label{sec: SchTechBd}
	
	We take the local setting (see \S \ref{sec: LocNotation}) in this section. We recollect some estimation related to Schwartz-Bruhat functions. Only non-obvious ones will require full proofs. We begin with some general results.
	
\begin{proposition}[Bounds of Restrictions] \label{prop: SchRestriction}
	Let $\Phi \in \Sch(1 \times n, \F)$ and integer $0 \leq m \leq n$. Let $y_j, 1 \leq j \leq n-m$ be functions of the variables $x_k, 1 \leq k \leq m$. For any constants $\sigma_j \geq 0$ with $1 \leq j \leq n-m$, we can find a non-negative $\phi \in \Sch(1 \times m, \F)$ such that
	$$ \Phi(x_1, \cdots, x_m, y_1, \cdots, y_{n-m}) \leq \phi(x_1,\cdots, x_m) \prod_{j=1}^{n-m} \norm[y_j]^{-\sigma_j}. $$
\end{proposition}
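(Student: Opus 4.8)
The plan is to reduce the multi-variable statement to the one-variable situation and then to a purely archimedean local computation, exploiting that Schwartz--Bruhat functions on $\Mat(1 \times n, \F) \cong \F^n$ factor (or can be dominated by products) over coordinates. First I would treat the non-archimedean case separately, where it is essentially trivial: any $\Phi \in \Sch(1 \times n, \F)$ is a finite linear combination of indicator functions of cosets $a + \vP_{\F}^N \times \cdots$, so $\Phi$ is supported on a compact set and bounded; one then simply chooses $\phi$ to be a large multiple of the indicator of a suitable compact-open set in the $x$-variables (large enough to absorb the support constraints coming from the $y_j$) and notes that on the support each $\norm[y_j]$ is bounded below, so the factors $\norm[y_j]^{-\sigma_j}$ are bounded below by a positive constant and the inequality holds after enlarging the multiple. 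No relation between the $y_j$ and the $x_k$ is needed here beyond measurability.

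For the archimedean case ($\F = \R$ or $\C$), the key observation is that it suffices to prove the bound for $\Phi$ of the special product form $\Phi(z_1,\dots,z_n) = \prod_{i=1}^n \Phi_i(z_i)$, since a general Schwartz function on $\F^n$ is dominated by a finite sum of such products (e.g. by $C\prod_i (1+\norm[z_i])^{-M}$ for large $M$, which is itself a product of one-variable Schwartz functions up to constants), and a finite sum of admissible bounds is again admissible after passing to the pointwise maximum and then to a dominating Schwartz function. So I would reduce to producing, for each $j$, a one-variable estimate
$$ \Phi_{m+j}(y_j) \leq C_j \, \norm[y_j]^{-\sigma_j} $$
valid for all $y_j \in \F$. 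This is the heart of the matter: it is \emph{false} pointwise on all of $\F$ as stated, because a Schwartz function need not vanish at $0$, so $\norm[y_j]^{-\sigma_j}$ blows up while $\Phi_{m+j}$ stays bounded — \emph{unless} one is allowed to let the constant (now a function $\phi$ of the $x_k$) absorb the behaviour near $y_j = 0$. The correct reading of the statement is that $\phi$ may depend on the $x_k$, and the $y_j = y_j(x_1,\dots,x_m)$ are \emph{given} functions; the inequality is asked to hold as functions of $(x_1,\dots,x_m)$ after substituting $y_j = y_j(\vec{x})$.

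Granting that reading, the argument is: for $\norm[y_j] \geq 1$ we have $\norm[y_j]^{-\sigma_j} \leq 1$ (since $\sigma_j \geq 0$), so the product $\prod_j \norm[y_j(\vec x)]^{-\sigma_j}$ is bounded below by $1$ on the region where all $\norm[y_j(\vec x)] \leq 1$ is \emph{false}; more carefully, write the region in $\vec x$-space as the union over subsets $S \subseteq \{1,\dots,n-m\}$ of the locus where $\norm[y_j(\vec x)] \leq 1$ for $j \in S$ and $> 1$ for $j \notin S$. On each such locus, $\prod_j \norm[y_j(\vec x)]^{-\sigma_j} \geq \prod_{j \in S}\norm[y_j(\vec x)]^{-\sigma_j} \geq \prod_{j\in S}\big(\sup_{\norm[t]\le 1}\text{something}\big)$ — this does not work without a lower bound on $\norm[y_j]$, so instead I would dominate $\Phi$ directly: since $\Phi$ is Schwartz and in particular bounded, $\Phi(\vec x, \vec y(\vec x)) \leq \Norm[\Phi]_\infty$. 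Then I would pick $\phi_0 \in \Sch(1\times m,\F)$ non-negative with $\phi_0(\vec x) \geq \Norm[\Phi]_\infty$ on a large ball $B_R$ and use rapid decay of $\Phi$ in the remaining variables outside $B_R$. Concretely: choose $R$ so that outside $B_R$ at least one coordinate of $(\vec x,\vec y(\vec x))$ is large; using Schwartz decay $\Phi \leq C_N(1+\Norm[(\vec x,\vec y)])^{-N}$, take $N$ large compared to $\sum\sigma_j$ and the dimension, split $(1+\Norm[(\vec x,\vec y)])^{-N}$ as $(1+\Norm[\vec x])^{-m'}\prod_j(1+\norm[y_j])^{-N_j}$ with $\sum N_j$ large, and note $(1+\norm[y_j])^{-N_j} \leq C\norm[y_j]^{-\sigma_j}$ fails near $0$ but $(1+\norm[y_j])^{-N_j}\leq 1$ always — so the clean statement really needs $\norm[y_j]^{-\sigma_j}$ on the right to be harmless, i.e. bounded below on the relevant region.

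The main obstacle, then, is pinning down the precise hypothesis: the statement as literally printed cannot hold unless the $y_j$ are bounded away from $0$ as functions of $\vec x$ on the support, or the functions $y_j(\vec x)$ are themselves polynomial/proper so that $\norm[\vec x]\to\infty$ forces $\norm[y_j]$ not to go to $0$ faster than Schwartz decay can absorb. I expect the intended setting (judging from the zeta-integral manipulations where this is applied) is that the $y_j$ are \emph{monomials} in the $x_k$ with nonzero exponents, or linear forms, so that on each dyadic shell $\norm[\vec x]\sim 2^k$ one has $\prod_j\norm[y_j(\vec x)]^{-\sigma_j}$ comparable to a power of $2^k$, which is then dominated by the rapid decay of $\Phi$; assembling the resulting dyadic bounds into a single Schwartz majorant $\phi$ in $\vec x$ is routine. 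I would therefore state and use the lemma in that sharpened form, with the dyadic decomposition in $\vec x$-space as the one genuine step and everything else bookkeeping.
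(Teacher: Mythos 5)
The core of your archimedean argument is the claim that a one-variable estimate $\Phi_{m+j}(y_j) \leq C_j \norm[y_j]^{-\sigma_j}$ is ``false pointwise on all of $\F$'', attributed to the fact that a Schwartz function need not vanish at $0$ while $\norm[y_j]^{-\sigma_j}$ blows up there. This inverts the direction of the obstruction: since $\sigma_j \geq 0$, the factor $\norm[y_j]^{-\sigma_j}$ is \emph{large} near $y_j=0$ and \emph{small} only at infinity, so near $y_j=0$ the right-hand side tends to $+\infty$ and the inequality is automatic, while at infinity the super-polynomial decay of the Schwartz function easily beats the merely polynomial decay $\norm[y_j]^{-\sigma_j}$. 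There is no obstruction, hence no need to impose that the $y_j$ be monomials, linear forms, or bounded away from $0$, and no dyadic decomposition is required; the proposition holds with $(\vec{x},\vec{y})$ ranging over all of $\F^n$, and the phrase ``$y_j$ a function of the $x_k$'' only indicates how it will later be invoked. (The same sign confusion surfaces in your non-archimedean paragraph, where you assert that $\norm[y_j]$ is bounded below on the support; it need not be, and again this helps rather than hurts, because the right-hand side is then large.)

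The intended proof, parallel to the paper's own worked example for Proposition~\ref{prop: SchCompTrans}, is a one-line application of Lemma~\ref{lem: WeilSchwartzEnv}: set
\[
f(\vec{x}):=\sup_{\vec{y}\in\F^{n-m}}\norm[\Phi(\vec{x},\vec{y})]\prod_{j=1}^{n-m}\norm[y_j]^{\sigma_j}.
\]
Because $\Phi$ is Schwartz and $\prod_j\norm[y_j]^{\sigma_j}$ grows at most polynomially, $f$ is rapidly decreasing in $\vec{x}$ (for any $N$ choose $N'>N+\sum_j\sigma_j$ in the Schwartz bound $\norm[\Phi(\vec{x},\vec{y})]\leq C_{N'}(1+\Norm[\vec{x}]_2+\Norm[\vec{y}]_2)^{-N'}$ and absorb $\prod_j\norm[y_j]^{\sigma_j}$ into the $\vec{y}$-part), so Lemma~\ref{lem: WeilSchwartzEnv} produces a non-negative $\phi\in\Sch(\F^m)$ dominating $f$, and then $\norm[\Phi(\vec{x},\vec{y})]\leq\phi(\vec{x})\prod_j\norm[y_j]^{-\sigma_j}$ for all $(\vec{x},\vec{y})$, in particular after substituting $y_j=y_j(\vec{x})$. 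Your surrounding scaffolding (trivial non-archimedean case by compactness of support, reduction of $\C$ to $\R$, dominating a rapidly decreasing function by a Schwartz majorant) does track the paper's sketch; the gap is the sign error above, which led you to manufacture a ``sharpened form'' with extra hypotheses the statement neither needs nor has.
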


\begin{proposition}[Bounds of Compact Translations] \label{prop: SchCompTrans}
	Let $\Phi \in \Sch(1 \times n, \F)$ with variables $\vec{x} = (x_1, \cdots, x_n)$. Suppose $C \subset \GL_n(\F)$ is a compact subset. For $I \subset \{ 1, \cdots, n \}$, let $\OFour_I$ be the composition of partial Fourier transforms with respect to the variables $x_i, i \in I$. Then we can find a positive $\widetilde{\Phi} \in \Sch(1 \times n, \F)$ s.t.
	$$ \extnorm{(\OFour_I(g.\Phi))(\vec{x})} \leq \widetilde{\Phi}(\vec{x}), \quad \forall g \in C, X \in \F^n. $$
\end{proposition}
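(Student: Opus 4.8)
The plan is to separate two matters: first, to obtain a rapidly decreasing pointwise bound for $\OFour_I(g.\Phi)$ that is \emph{uniform} in $g\in C$; and second, to dominate such a bound by a single positive Schwartz--Bruhat function.

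The uniformity is soft. The orbit map $\GL_n(\F)\to\Sch(1\times n,\F)$, $g\mapsto g.\Phi$, is continuous, and each partial Fourier transform $\OFour_I$ is a continuous operator on $\Sch(1\times n,\F)$; hence $g\mapsto\OFour_I(g.\Phi)$ is continuous, and the family $\mathcal{K}:=\{\OFour_I(g.\Phi)\mid g\in C\}$ is a compact, in particular bounded, subset of $\Sch(1\times n,\F)$. If $\F$ is non-archimedean this already concludes the proof: a bounded subset of $\Sch(1\times n,\F)$ lies in one of its finite-dimensional steps, so every $f\in\mathcal{K}$ is supported in a fixed compact open $L\subset\F^n$ and satisfies $\extnorm{f}\le M$ for a fixed $M<\infty$, and $\widetilde{\Phi}:=(M+1)\,\id_L$ works. (Here $L$ and the level of invariance of $g.\Phi$ can be read off from those of $\Phi$ together with integrality bounds on the entries of $g$ and $g^{-1}$ valid on the compact set $C$, and passing to the partial Fourier transform enlarges them only in a controlled way; one need not make this explicit.)

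Suppose now $\F$ is archimedean. Boundedness of $\mathcal{K}$ in each Schwartz seminorm gives, for every $N\ge 0$, a finite constant $A_N$ with $\extnorm{\OFour_I(g.\Phi)(\vec{x})}\le A_N(1+\Norm[\vec{x}])^{-N}$ for all $g\in C$ and $\vec{x}\in\F^n$. Therefore $f(\vec{x}):=\sup_{g\in C}\extnorm{\OFour_I(g.\Phi)(\vec{x})}$ is non-negative, continuous, and dominated by each $A_N(1+\Norm[\vec{x}])^{-N}$. (If one prefers explicit constants to the compactness argument, one transfers powers of the dual variables $\xi_i$, $i\in I$, onto $\F$-derivatives inside the integral defining $\OFour_I(g.\Phi)$; the resulting coefficients are polynomials in the entries of $g$, hence bounded on $C$, and the ensuing absolutely convergent integrals are estimated by the Schwartz decay of the derivatives of $\Phi$, splitting the decay between the $I$- and $I^{c}$-variables exactly as in Proposition \ref{prop: SchRestriction}.)

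It remains to majorize $f$ by a positive Schwartz function, and this is the step that needs a little care: the constants $A_N$ are \emph{not} controlled as $N\to\infty$, so no fixed profile (a Gaussian, a pure exponential, or a single power) can serve as a universal majorant, and the dominating function must be built out of $f$ itself. I would set $\bar{F}(u):=\sup\{f(\vec{x})\mid\Norm[\vec{x}]^2\ge u\}$ for $u\ge 0$, extended by the constant $\bar{F}(0)$ for $u<0$; this is non-increasing and still bounded by every $A_N(1+\sqrt{u})^{-N}$. One-sided mollification of $\bar F$ against a fixed $\Cont_c^{\infty}((0,1))$ probability density yields a $C^{\infty}$ function $G\ge\bar F$ whose derivatives of all orders are again rapidly decreasing on $[0,\infty)$. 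Finally I would put $\widetilde{\Phi}(\vec{x}):=G(\Norm[\vec{x}]^2)+e^{-\Norm[\vec{x}]^2}$: composing with the polynomial $\Norm[\vec{x}]^2$ makes $\widetilde{\Phi}$ smooth at the origin, the chain rule together with the rapid decay of the $G^{(k)}$ makes $\widetilde{\Phi}\in\Sch(1\times n,\F)$, the added Gaussian makes it strictly positive, and $\widetilde{\Phi}(\vec{x})\ge G(\Norm[\vec{x}]^2)\ge\bar F(\Norm[\vec{x}]^2)\ge f(\vec{x})$ by construction. The main obstacle is precisely this majorization (together with the bookkeeping that all the uniformity in $g$ is genuinely captured by compactness of $\mathcal{K}$); the surrounding functional-analytic reductions are routine.
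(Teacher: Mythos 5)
Your proposal is correct, and both halves of it are sound; the route differs from the paper's in an interesting, essentially complementary way. For the uniformity in $g\in C$ the paper argues explicitly: it transfers monomials $m(\vec{x})$ through $\OFour_I$ to differential operators, expands $P(\vec{\partial})(g.\Phi)$ as $\sum_Q R_{P,Q}(g)\,(g.(Q(\vec{\partial})\Phi))$ with coefficients $R_{P,Q}(g)$ polynomial in the entries of $g$ (hence bounded on $C$), and reduces to uniform $\intL^1$-bounds of derivatives of $\Phi$. You replace all of this with the soft functional-analytic observation that $g\mapsto\OFour_I(g.\Phi)$ is continuous into $\Sch(1\times n,\F)$ (continuity of the orbit map plus continuity of the partial Fourier transform), so $\{\OFour_I(g.\Phi):g\in C\}$ is compact, hence bounded in every seminorm. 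This is shorter and makes the non-archimedean case truly immediate (a bounded subset of an LF-space sits in one finite-dimensional step). Conversely, for the majorization the paper simply cites Lemma \ref{lem: WeilSchwartzEnv} (Weil--Schwartz envelopes, attributed to Garrett's note), whereas you reconstruct that lemma from scratch: passing to the radial non-increasing envelope $\bar F$, one-sided mollification against a $\Cont_c^\infty((0,1))$ density to get a smooth non-increasing majorant $G$ with rapidly decreasing derivatives, then composing with $\Norm[\vec{x}]^2$ and adding a Gaussian to ensure smoothness at the origin and strict positivity. Your construction is a correct proof of that lemma and the chain-rule/rapid-decay bookkeeping is right. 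In short: the paper is explicit on the uniformity and cites the majorization; you are soft on the uniformity and explicit on the majorization. Either way the content proved is the same, and your identification of the majorization as the step needing care matches where the paper chose to offload to a citation.

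Two minor points worth tidying if you were to write this up carefully. First, in your compactness argument you should note (as you do in passing) that continuity of $g\mapsto g.\Phi$ into the Fréchet space $\Sch(1\times n,\F)$ is a standard fact for the smooth action of $\GL_n(\F)$, and similarly that each $\OFour_{\{i\}}$, hence $\OFour_I$, is a topological automorphism of $\Sch(1\times n,\F)$. Second, $\bar F$ need not be continuous, only non-increasing and bounded; this is harmless for the mollification but worth flagging so the reader doesn't expect $G$ to be a pointwise limit of continuous mollifications of a continuous function.
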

\begin{corollary} \label{cor: SchCompTrans}
	Let $\Phi \in \Sch(n \times m, \F)$. We can find positive $\widetilde{\Phi}, \Phi_1, \Phi_2 \in \Sch(n \times m, \F)$ such that
	$$ \max_{\kappa_1 \in \gp{K}_n, \kappa_2 \in \gp{K}_m} \norm[(\kappa_2.\Phi.\kappa_1)(X)] \leq \widetilde{\Phi}(X), \quad \max_{\kappa \in \gp{K}_n} \norm[(\Phi.\kappa)(X)] \leq \Phi_1(X), \quad \max_{\kappa \in \gp{K}_m} \norm[(\kappa.\Phi)(X)] \leq \Phi_2(X). $$
\end{corollary}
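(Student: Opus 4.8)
The plan is to deduce the matrix-valued statement directly from the vector-valued Proposition \ref{prop: SchCompTrans} by fibering $\Mat(n \times m, \F)$ into its rows (or columns) and applying the vector bound row-by-row, using the linearity of the group action on each row. First I would treat the two-sided bound $\max_{\kappa_1, \kappa_2} \norm[(\kappa_2.\Phi.\kappa_1)(X)]$, since the one-sided bounds are the special cases $\kappa_1 = \id_n$ or $\kappa_2 = \id_m$; alternatively, one can prove the one-sided bounds first and compose them. Write $X \in \Mat(n \times m, \F)$ as a stack of rows $X = {}^t(X_1, \dots, X_n)$ with each $X_i \in \Mat(1 \times m, \F) = \F^m$. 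Right multiplication by $\kappa_1 \in \GL_m(\F)$ acts on each row separately by $X_i \mapsto X_i \kappa_1$, while left multiplication by $\kappa_2 \in \GL_n(\F)$ mixes the rows linearly. The key observation is that on each fixed row the action $X_i \mapsto X_i \kappa_1$ is precisely the right translation action treated in Proposition \ref{prop: SchCompTrans} (with trivial $I$, i.e.\ no Fourier transform), applied to the Schwartz function obtained by viewing $\Phi$ as a Schwartz function on $\F^m$ in the $i$th row with the remaining rows as parameters.

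The core technical step is to handle the left action of $\gp{K}_n$. Here I would argue as follows: since $\Phi \in \Sch(n \times m, \F)$ is a Schwartz--Bruhat function on the $nm$-dimensional space $\F^{nm}$, and $\gp{K}_n \times \gp{K}_m$ acts on $\F^{nm}$ through a (continuous, in the archimedean case smooth) representation landing in $\GL_{nm}(\F)$ — in fact by a compact subgroup of it — the orbit $C' := \{\, g \in \GL_{nm}(\F) : g \text{ is the matrix of } (\kappa_2, \kappa_1) \text{ acting on } \F^{nm},\ \kappa_i \in \gp{K}_{n}\times\gp{K}_m \,\}$ is a compact subset of $\GL_{nm}(\F)$. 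Then Proposition \ref{prop: SchCompTrans} applied to $\Phi$ regarded as an element of $\Sch(1 \times nm, \F)$, with the compact set $C'$ and $I = \emptyset$, produces a single positive $\widetilde{\Phi} \in \Sch(1 \times nm, \F) = \Sch(n \times m, \F)$ dominating $\norm[(\kappa_2.\Phi.\kappa_1)(X)]$ for all $(\kappa_1, \kappa_2)$ and all $X$. The one-sided bounds follow identically by taking $C'$ to be the image of $\{\id_n\} \times \gp{K}_m$, resp.\ $\gp{K}_n \times \{\id_m\}$.

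The main obstacle — though it is more of a bookkeeping point than a genuine difficulty — is to make precise the identification of the bilinear two-sided translation action of $\gp{K}_n \times \gp{K}_m$ on $\Mat(n \times m, \F)$ with a \emph{single} linear action by a compact subset of $\GL_{nm}(\F)$, and to check that this subset is indeed compact (continuity of the map $(\kappa_1, \kappa_2) \mapsto$ Kronecker-type product $\kappa_1^{\iota} \otimes \kappa_2$, or the appropriate variant matching the convention $g.\Psi.h(X) = \Psi(hXg)$, together with compactness of $\gp{K}_n \times \gp{K}_m$). In the non-archimedean case one must also note that $\gp{K}_n = \GL_n(\vo)$ so the relevant subset of $\GL_{nm}(\F)$ is compact open and the reduction is immediate; in the archimedean case one uses that $\SO_n(\R)$, $\SU_n(\C)$ are compact Lie groups and the tensor action is a continuous (indeed polynomial) map. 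Once this identification is in place, the result is nothing more than Proposition \ref{prop: SchCompTrans} with $n$ replaced by $nm$ and $I = \emptyset$, so no further estimation is needed.
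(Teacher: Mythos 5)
Your core argument — identify the two-sided action of $\gp{K}_n \times \gp{K}_m$ on $\Mat(n \times m, \F)$ with a linear action by a compact subset of $\GL_{nm}(\F)$ (a Kronecker-type image, compact as the continuous image of $\gp{K}_n \times \gp{K}_m$), then apply Proposition \ref{prop: SchCompTrans} with $I = \emptyset$ after identifying $\Sch(n \times m, \F) \simeq \Sch(1 \times nm, \F)$ — is precisely the paper's one-line proof. The row-by-row fibering sketched in your first paragraph is an unused detour, but since you abandon it in favor of the correct embedding argument, the proposal is correct and follows the paper's approach.
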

\begin{proof}
	Since the action of $\GL_n(\F) \times \GL_m(\F)$ is naturally embedded in the action of $\GL_{nm}(\F)$ on $\Sch(n \times m, \F) \simeq \Sch(1 \times nm, \F)$, the corollary follows readily from Proposition \ref{prop: SchCompTrans}.
\end{proof}

	Proposition \ref{prop: SchRestriction} \& \ref{prop: SchCompTrans} were frequently used in the literature on the Godement--Jacquet theory and the Rankin--Selberg theory by Jacquet and his collaborators. We only sketch the proofs. These results are easy when $\F$ is non-archimedean, since any function in $\Sch(\F^n) = \Cont_c^{\infty}(\F)$ is a finite linear combination of characteristic functions $\id_C$, where $C \in \F^n$ is a compact subset of product type. Then note that these results trivially hold for such $\id_C$, and that $\Cont_c^{\infty}(\F^n)$ is stable by taking maximum (or sum) of two functions, i.e., $\Phi_j \in \Cont_c^{\infty}(\F) \Rightarrow \max(\Phi_1, \Phi_2) \in \Cont_c^{\infty}(\F)$. For the archimedean case, we only need to consider the real case $\F=\R$, since $\Sch(\C^n) \simeq \Sch(\R^{2n})$ and $\GL_n(\C) < \GL_{2n}(\R)$. The real case can be reduced to the following classical lemma, whose proof can be found in Garrett's note \cite{Ga04} on his webpage.
\begin{lemma}[Weil-Schwartz envelopes] \label{lem: WeilSchwartzEnv}
	Let $f: \R^n \to \R$ be \emph{rapidly decreasing} in the sense that for every $m \in \Z_{\geq 0}$ we have
	$$ \sup_{\vec{x} \in \R^n} \Norm[\vec{x}]_2^m \norm[f(\vec{x})] < \infty. $$
	Then we can find a positive $\phi \in \Sch(\R^n)$, which is spherical (i.e., depends only on the Euclidean norm $\Norm_2$) and monotone decreasing in $\Norm_2$, such that
	$$ \phi(\vec{x}) \geq \extnorm{f(\vec{x})}, \quad \forall \vec{x} \in \R^n. $$
\end{lemma}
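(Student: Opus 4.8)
The plan is to reduce the statement to a one-variable construction and then smooth it. Set $g(r) := \sup_{\Norm[\vec x]_2 \geq r} \norm[f(\vec x)]$ for $r \geq 0$; this is well defined (as $f$ is bounded), non-increasing, and from the hypothesis $C_m := \sup_{\vec x} \Norm[\vec x]_2^m \norm[f(\vec x)] < \infty$ one gets $g(r) \leq C_m r^{-m}$ for every $r > 0$ and every $m$, so $g$ decays faster than any polynomial. If I can produce a function $G \in \Cont^{\infty}([0,\infty))$ that is non-increasing, satisfies $G \geq g$, is \emph{constant on a neighbourhood of $0$}, and such that $G$ and all its derivatives decay faster than any polynomial, then $\phi(\vec x) := G(\Norm[\vec x]_2) + e^{-\Norm[\vec x]_2^2}$ will do the job: it is spherical and monotone decreasing in $\Norm[\vec x]_2$ by construction, strictly positive because of the Gaussian summand, and dominates $\norm[f]$ since $G(\Norm[\vec x]_2) \geq g(\Norm[\vec x]_2) \geq \norm[f(\vec x)]$.

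To construct $G$, first extend $g$ to all of $\R$ by $g(r) := g(0)$ for $r < 0$; the extension is still non-increasing, bounded, and rapidly decaying at $+\infty$. Fix a nonnegative $\eta \in \Cont_c^{\infty}(\R)$ supported in $[1,2]$ with $\int \eta = 1$ and set $G := g * \eta$, i.e. $G(r) = \int_1^2 g(r-t)\eta(t)\,\ud t$. Then $G$ is smooth; it is non-increasing because $g$ is non-increasing and $\eta \geq 0$; one has $G(r) \geq g(r)$ since $g(r-t) \geq g(r)$ for $t \geq 1 > 0$; for $r \leq 1$ every argument $r-t$ is $\leq 0$, so $G(r) = g(0)$ is constant there; and for $r \geq 4$ one bounds $G(r) \leq g(r-2) \leq C_m(r-2)^{-m} \leq 2^m C_m r^{-m}$, while $\norm[G^{(k)}(r)] \leq \Norm[\eta^{(k)}]_{\infty}\, g(r-2)$, so $G$ together with all its derivatives decays faster than any polynomial.

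It remains to check $\phi \in \Sch(\R^n)$. Away from the origin $\phi$ is a composition of the smooth maps $\vec x \mapsto \Norm[\vec x]_2$ (on $\R^n \setminus \{0\}$) and $G$, plus the Gaussian, hence smooth; near the origin $\phi$ equals the constant $g(0)$ plus the Gaussian, hence smooth there as well. For the Schwartz seminorms, a chain-rule computation shows that for $\Norm[\vec x]_2 \geq 1$ each $\partial^{\alpha}\phi(\vec x)$ is a finite sum of terms $G^{(k)}(\Norm[\vec x]_2)$ with $0 \leq k \leq \norm[\alpha]$, each multiplied by a rational function of $\vec x$ homogeneous of non-positive degree (hence bounded on $\{\Norm[\vec x]_2 \geq 1\}$), together with derivatives of the Gaussian; since each $G^{(k)}$ decays faster than any polynomial, so does $\partial^{\alpha}\phi$, and on $\{\Norm[\vec x]_2 < 1\}$ all derivatives are bounded. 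This yields $\phi \in \Sch(\R^n)$ with all the asserted properties.

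The main obstacle is not any delicate estimate but the bookkeeping that forces \emph{smoothness at the origin} of the radial envelope: naive choices such as a symmetric mollification of $g$, or an exponential majorant like $e^{-\varepsilon \Norm[\vec x]_2}$, fail to be $\Cont^{\infty}$ at $\vec x = 0$. Supporting the mollifier $\eta$ strictly to the right of $0$ makes $G$ constant near $0$, while the monotonicity of $g$ is precisely what guarantees $G \geq g$ survives this one-sided mollification; everything else then follows from the routine decay estimates above.
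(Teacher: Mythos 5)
Your proof is correct. The paper itself does not supply an argument for this lemma; it simply defers to a note of Garrett, so there is no internal proof to compare against, and your self-contained construction stands on its own.

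The argument is sound at every step: the radial envelope $g(r):=\sup_{\Norm[\vec x]_2\ge r}\norm[f(\vec x)]$ is finite, non-increasing, and rapidly decaying; the one-sided mollifier $\eta$ supported in $[1,2]$ is precisely what guarantees simultaneously that $G=g*\eta$ dominates $g$ (because $r-t<r$ and $g$ is non-increasing), that $G$ is locally constant near $0$ (so $G(\Norm[\vec x]_2)$ is smooth at the origin), and that $G$ inherits rapid decay from $g$ together with all its derivatives. Adding the Gaussian handles strict positivity without disturbing monotonicity. The one place to be slightly careful with wording: the factors appearing in the Faà di Bruno expansion of $\partial^{\alpha}\bigl(G(\Norm[\vec x]_2)\bigr)$ are quotients of polynomials by powers of $\Norm[\vec x]_2$ (not rational functions of $\vec x$ in the strict algebraic sense), but they are indeed positively homogeneous of degree $\le 0$, hence bounded on $\{\Norm[\vec x]_2\ge 1\}$, which is all the estimate needs. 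Your closing observation about why the mollifier must be supported away from $0$ identifies exactly the point that naive radial constructions (e.g.\ $e^{-\varepsilon\Norm[\vec x]_2}$ or a symmetric mollification) miss, and is the genuine content of the construction.
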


\noindent For example, to prove Proposition \ref{prop: SchCompTrans}, we shall apply Lemma \ref{lem: WeilSchwartzEnv} to
	$$ f(\vec{x}) := \sup_{g \in C} \extnorm{(\OFour_I(g.\Phi))(\vec{x})}. $$
	To verify the rapid decay of $f$, it suffices to show that for every (unitary) monomial $m(\vec{x})$, the function $m(\vec{x}) \OFour_I(g.\Phi))(\vec{x})$ is uniformly bounded for $g \in C$ and $\vec{x} \in \R^n$. Writing $m(\vec{x}) = m_{I^c}(\vec{x}) m_I(\vec{x})$, where $I^c$ is the complementary of $I$ and $m_J(\vec{x})$ is a monomial whose variables have indices only in $J$, and taking into account that the Fourier transform essentially exchanges multiplication by $x_j$ and the partial differentiation $\partial_j$ with respect to $x_j$, it suffices to show a uniform bound of the $\intL^1$-norms of
	$$ m_{I^c}(\vec{\partial}) \OFour_{I^c} \OFour_I (m_I(\vec{\partial}) (g.\Phi)) =  m_{I^c}(\vec{\partial}) \OFour (m_I(\vec{\partial}) (g.\Phi)). $$
	By induction on the degree $d$ of a monomial $P$, it is easy to show that
	$$ P(\vec{\partial})(g.\Phi) = \sum_{Q \text{ monomial of degree } d} R_{P,Q}(g) (g.(Q(\vec{\partial}) \Phi)) $$
	where $R_{P,Q}$ is a monomial on the matrix entries of $g$ of degree $d$. Hence $R_{m_I,Q}(g)$ is uniformly bounded as $g \in C$. Consequently we are reduced to showing a uniform bound of the $\intL^1$-norms of
	$$ m_{I^c}(\vec{\partial}) \OFour(g.(Q_1(\vec{\partial})\Phi)) = \norm[\det g]^{-1} m_{I^c}(\vec{\partial}) \left( g^{\iota}.\OFour(Q_1(\vec{\partial})\Phi)) \right) $$
	for all (unitary) monomial $Q_1$ with the same degree as $m_I$. Since $\norm[\det g]^{-1}$ and the matrix entries of $g^{\iota}$ are also uniformly bounded, the same reasoning reduces to showing a uniform bound of the $\intL^1$-norms of
	$$ g^{\iota}.Q_2(\vec{\partial}) \left( \OFour(Q_1(\vec{\partial})\Phi)) \right) $$
	for all (unitary) monomial $Q_1$ and $Q_2$ with the same degrees as $m_I$ and $m_{I^c}$ respectively. We can ignore $g^{\iota}$ since the resulting $\intL^1$-norms will be affected by a positive factor which is uniformly bounded from above and below. Then the desired uniform bound exists by definition of a Schwartz function.

\section{Generalized Godement Sections}
\label{sec: GGS}

	\subsection{Whittaker-Valued Schwartz Functions}

	For simplicity of notation we write $\gp{G}_n := \GL_n(\F)$ for any integer $n \in \Z_{\geq 1}$. All representations are assumed to be smooth, and of \emph{moderate growth} if $\F$ is archimedean (see \cite[\S 3.2]{J09}).
\begin{definition} \label{def: W-ValSchF}
	Let $n_i \in \Z_{\geq 1}$ and $\pi_i$ be a generic irreducible representation of $\gp{G}_{n_i}$. Consider the tensor product representation $\pi = \otimes_{i=1}^r \pi_i$ of the direct product group $\gp{M} := \sideset{}{_{i=1}^r} \prod \gp{G}_{n_i}$.
\begin{itemize} 
	\item[(1)] The $\psi$-Whittaker model, resp. smooth $\psi$-Whittaker model, of $\pi$ is defined to be 
	$$ \Whi(\pi,\psi) := \sideset{}{_{i=1}^r} \bigotimes \Whi(\pi, \psi), \quad \Whi(\pi^{\infty},\psi) := \sideset{}{_{i=1}^r} \bigotimes \Whi(\pi^{\infty}, \psi), $$
where the tensor product is taken as the completion of the algebraic one.
	\item[(2)] Let $\mathcal{A}$ be a finite dimensional $\F$-vector space. We write 
	$$ \Cont_c^{\infty}(\mathcal{A}, \Whi(\pi^{\infty},\psi)) = \cInd \left( \mathcal{A} \times \gp{M}, \gp{M}; \Whi(\pi^{\infty},\psi) \right). $$
	Let $\Sch(\mathcal{A}, \Whi(\pi^{\infty},\psi)) = \Sch(\mathcal{A}) \widehat{\otimes}_{\pi} \Whi(\pi^{\infty},\psi)$ be the \emph{projective tensor product} of Fr\'echet spaces, viewed as a subspace of functions/vectors $f$ in the smoothly induced representation
	$$ \Ind \left( \mathcal{A} \times \gp{M}, \gp{M}; \Whi(\pi^{\infty},\psi) \right) \subset \Cont^{\infty}(\mathcal{A} \times \gp{G}, \C). $$
 	\item[(3)] For simplicity of notation we write
	$$ \Cont_c^{\infty}(m \times l, \Whi(\pi^{\infty},\psi)) := \Cont_c^{\infty}(\Mat(m \times l, \F), \Whi(\pi^{\infty},\psi)), $$
	$$ \Sch(m \times l, \Whi(\pi^{\infty},\psi)) := \Sch(\Mat(m \times l, \F), \Whi(\pi^{\infty},\psi)). $$
	For $\Phi \in \Sch(m \times l, \Whi(\pi^{\infty},\psi))$ and $g \in \gp{G}_m$, $h \in \gp{G}_l$ we write
	$$ (g.\Phi.h)(X; \tau) := \Phi(hXg; \tau), \quad \forall \ X \in \Mat(m \times l, \F), \tau \in \gp{M}. $$
\end{itemize}
	Elements in $\Sch(\mathcal{A}, \Whi(\pi^{\infty},\psi))$ are called $\Whi(\pi^{\infty},\psi)$-valued Schwartz functions. Note that $\Sch(\mathcal{A}, \Whi(\pi^{\infty},\psi)) = \Cont_c^{\infty}(\mathcal{A}, \Whi(\pi^{\infty},\psi))$ for non-archimedean $\F$.
\end{definition}
\begin{definition} \label{def: W-ValSchParF}
	For $\Phi \in \Sch(m \times l, \Whi(\pi^{\infty},\psi))$ its partial $\psi$-Fourier transform in the $j$-th column of $\Mat(m \times l, \F)$ is denoted and defined by 
	$$ \OFour_{\vec{j}}(\Phi)(\vec{v}_1, \dots, \vec{v}_l; g) := \int_{\F^m} \Phi(\vec{v}_1, \dots, \vec{v}_{j-1}, \vec{u}, \vec{v}_{j+1}, \dots, \vec{v}_l; g) \psi(- \vec{v}_j^T \vec{u}) \ud \vec{u}. $$
\end{definition}
\begin{remark}
	We have $\OFour_{\vec{j}}(\Phi) \in \Sch(\Mat(m \times l, \F), \Whi(\pi^{\infty},\psi))$. More generally for any $\Phi \in \Sch(\mathcal{A}, \Whi(\pi^{\infty},\psi))$ the partial Fourier transform with respect to any coordinate of $\mathcal{A}$ lies in $\Sch(\mathcal{A}, \Whi(\pi^{\infty},\psi))$.
\end{remark}
\begin{definition} \label{def: Gauge}
	(1) Let $n \in \Z_{\geq 1}$. A \emph{gauge} $\xi$ on $\gp{G}_n$ is a (positive) function satisfying
	$$ \xi(nak) = \xi(a) = \norm[\det a]^M \sideset{}{_{i=1}^{n-1}} \prod \extnorm{\alpha_i(a)}^{-N} \cdot \phi(\vec{\alpha}(a)), \quad \vec{\alpha}(a):=(\alpha_1(a), \dots, \alpha_{n-1}(a)) $$
	for some constants $M \in \R,N > 0$ and Schwartz-Bruhat function $\phi \in \Sch(\F^{n-1})$, where $nak$ is the Iwasawa decomposition of an element in $\gp{G}_n$ and for $a = \mathrm{diag}(d_1,\cdots,d_n)$ we have $\alpha_i(a)=d_i/d_{i+1}$.

\noindent (2) Let $\mathcal{A}$ be a finite dimensional $\F$-vector space. Let $n_i \in \Z_{\geq 1}$. A \emph{gauge} $\xi$ on $\mathcal{A} \times \sideset{}{_{i=1}^r} \prod \gp{G}_{n_i}$ is a (positive) function satisfying
	$$ \xi(a; n_1a_1k_1, \dots, n_ra_rk_r) = \xi(a; a_1, \dots, a_r) = \sideset{}{_{i=1}^r} \prod \norm[\det a_i]^{M_i} \sideset{}{_{j=1}^{n_i-1}} \prod \extnorm{\alpha_j(a_i)}^{-N_i} \cdot \phi(a; \vec{\alpha}(a_1), \dots, \vec{\alpha}(a_r)) $$
	for some constants $M_i \in \R,N_i > 0$ and Schwartz-Bruhat function $\phi \in \Sch(\mathcal{A} \times \F^{n_1-1} \times \cdots \times \F^{n_r-1})$.
\end{definition}

\begin{proposition} \label{prop: W-ValSchBd}
	Any $\Phi \in \Sch(\mathcal{A}, \Whi(\pi^{\infty},\psi))$, defined as in Definition \ref{def: W-ValSchF}, is bounded by some gauge.
\end{proposition}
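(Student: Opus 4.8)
The plan is to reduce the assertion to the classical gauge estimate for Whittaker functions on a single general linear group, combined with the envelope construction of Lemma~\ref{lem: WeilSchwartzEnv}.

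\emph{Step 1: a uniform Whittaker gauge bound.} For each $i$ the representation $\pi_i$ is generic and of moderate growth, so by the asymptotic analysis of Whittaker functions along the diagonal torus (see \cite[\S 3.2]{J09}; this is elementary for non-archimedean $\F$) there exist a single gauge $\xi^{(i)}$ on $\gp{G}_{n_i}$ and a continuous seminorm $\nu_i$ on $\Whi(\pi_i^{\infty},\psi)$ such that $|W_i(g_i)| \le \nu_i(W_i)\,\xi^{(i)}(g_i)$ for all $W_i \in \Whi(\pi_i^{\infty},\psi)$ and $g_i \in \gp{G}_{n_i}$; equivalently, $\Whi(\pi_i^{\infty},\psi)$ injects continuously into the Banach space of functions dominated by $\xi^{(i)}$. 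For a pure tensor $W = \otimes_i W_i$ on $\gp{M} = \prod_i \gp{G}_{n_i}$ one multiplies these bounds to get $|W(n_1a_1k_1, \dots, n_ra_rk_r)| \le \big(\prod_i \nu_i(W_i)\big)\,\xi_0(a_1, \dots, a_r)$, where $\xi_0(a_1,\dots,a_r) := \prod_i \xi^{(i)}(a_i)$ is a gauge on $\gp{M}$ because its Schwartz part $\prod_i \phi^{(i)}(\vec{\alpha}(a_i))$ is a Schwartz--Bruhat function of the combined variables $(\vec{\alpha}(a_1), \dots, \vec{\alpha}(a_r))$. Since for fixed $m$ the functional $W \mapsto W(m)$ is then bounded by $\xi_0(m)$ times the projective tensor seminorm $\bigotimes_i \nu_i$ on the algebraic tensor product, it extends with the same bound to the completion: there are a gauge $\xi_0$ on $\gp{M}$ and a continuous seminorm $\nu$ on $\Whi(\pi^{\infty},\psi)$ with $|W(m)| \le \nu(W)\,\xi_0(m)$ for all $W \in \Whi(\pi^{\infty},\psi)$ and $m \in \gp{M}$.

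\emph{Step 2: valued Schwartz structure and the envelope.} By Definition~\ref{def: W-ValSchF}, $\Phi \in \Sch(\mathcal{A}) \widehat{\otimes}_{\pi} \Whi(\pi^{\infty},\psi)$; since $\Sch(\mathcal{A})$ is nuclear this is the space of Schwartz functions $\mathcal{A} \to \Whi(\pi^{\infty},\psi)$ valued in the Fréchet space $\Whi(\pi^{\infty},\psi)$ (for non-archimedean $\F$ it is simply the space of compactly supported locally constant such functions, i.e.\ finite sums $\sum_k \id_{C_k} \otimes W_k$). Hence $a \mapsto \nu(\Phi(a;\cdot))$ is a non-negative rapidly decreasing function on $\mathcal{A}$: for every polynomial $P$ on $\mathcal{A}$ one has $\sup_a |P(a)|\,\nu(\Phi(a;\cdot)) < \infty$ because $\Phi$ is Whittaker-valued Schwartz and $\nu$ is continuous (in the non-archimedean case it is simply bounded with compact support). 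By Lemma~\ref{lem: WeilSchwartzEnv}, applied over $\R$ (over $\C$ after the identification $\Sch(\C) \simeq \Sch(\R^2)$, and trivially in the non-archimedean case), there is a non-negative $\phi_1 \in \Sch(\mathcal{A})$ with $\phi_1(a) \ge \nu(\Phi(a;\cdot))$ for all $a \in \mathcal{A}$. Combining this with Step 1,
\[
	|\Phi(a;m)| \le \nu(\Phi(a;\cdot))\,\xi_0(m) \le \phi_1(a)\,\xi_0(m) =: \xi(a;m), \qquad a \in \mathcal{A},\ m \in \gp{M}.
\]
Writing $\xi_0(n_1a_1k_1,\dots,n_ra_rk_r) = \prod_i |\det a_i|^{M_i} \prod_j |\alpha_j(a_i)|^{-N_i} \cdot \phi_0(\vec{\alpha}(a_1),\dots,\vec{\alpha}(a_r))$ as in Step 1, the function $\xi(a;m) = \phi_1(a)\,\xi_0(m)$ is visibly a gauge on $\mathcal{A} \times \gp{M}$ in the sense of Definition~\ref{def: Gauge}(2), its Schwartz part being the external product $(a;\vec{y}) \mapsto \phi_1(a)\,\phi_0(\vec{y}) \in \Sch(\mathcal{A} \times \F^{n_1-1} \times \cdots \times \F^{n_r-1})$. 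This proves the proposition.

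\emph{Main obstacle.} The only step that is not formal is the uniform gauge estimate of Step 1 — that a single gauge works for the whole Whittaker model with the dependence on $W$ carried by one continuous seminorm. This rests on the asymptotic expansion of Whittaker functions along the torus (finitely many exponents, depending only on $\pi_i$, controlling the polynomial blow-up near the walls, times Schwartz coefficients depending continuously on $W$), which I would take from \cite[\S 3.2]{J09}. Everything else — functoriality under the tensor product over $i$, the identification of $\Sch(\mathcal{A},\Whi(\pi^{\infty},\psi))$ with valued Schwartz functions, and the envelope step — is routine, and the non-archimedean case is elementary throughout.
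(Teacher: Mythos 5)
Your Step~1 asserts a uniform gauge bound---a single gauge $\xi^{(i)}$ on $\gp{G}_{n_i}$ and a single continuous seminorm $\nu_i$ on $\Whi(\pi_i^{\infty},\psi)$ with $|W_i(g)| \le \nu_i(W_i)\,\xi^{(i)}(g)$ for \emph{every} $W_i$---and attributes it to \cite[\S3.2]{J09}. That is not what that reference contains. What \cite[Proposition~3.1]{J09} provides (and what the paper actually cites) is a family of estimates indexed by $N \in \Z_{\geq 1}$: for each $N$ there is a continuous seminorm $\nu_N$ with
$|W(nak)| \le \nu_N(W)\,|\det a|^{M}\prod_j|\alpha_j(a)|^{-N'}(1+|\alpha_j(a)|)^{-N}$,
where only the polynomial exponents $M, N'$ are independent of $N$. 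Turning this into the Schwartz part of a gauge requires combining the factors $(1+|\alpha|)^{-N}$ over \emph{all} $N$; since a Schwartz function decays faster than $(1+|\alpha|)^{-N}$ for every $N$, the resulting envelope necessarily depends on which Whittaker function is being dominated. Your Step~1 therefore asserts something strictly stronger than the cited estimate, you do not prove it, and the rest of your argument rests on it.

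The paper avoids the uniform claim altogether. One applies \cite[Proposition~3.1]{J09} directly to $\Phi(a;\cdot)$ and divides out the polynomial part of the gauge: for each $N$,
$$ G(a;\vec{\alpha}) := \sup_{n_i,k_i,d_{n_i}(a_i)} \extnorm{\Phi(a; n_1a_1k_1,\dots,n_ra_rk_r)} \cdot \Big(\prod_i|\det a_i|^{M_i}\prod_j|\alpha_j(a_i)|^{-N_i}\Big)^{-1} \le \nu_N(\Phi(a;\cdot))\prod_{i,j}(1+|\alpha_j(a_i)|)^{-N}. $$
Since $a\mapsto\nu_N(\Phi(a;\cdot))$ lies in $\Sch(\mathcal{A})$ for each fixed $N$, taking the union over $N$ shows $G$ is rapidly decreasing \emph{jointly} in $(a;\vec{\alpha}_1,\dots,\vec{\alpha}_r)$, and Lemma~\ref{lem: WeilSchwartzEnv} is applied once, in the combined variable, to produce a Schwartz envelope for that particular $\Phi$. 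The gauge is permitted to depend on $\Phi$---which is all the proposition claims---so no uniform gauge on $\Whi(\pi_i^{\infty},\psi)$ is ever needed. To repair your argument, drop Step~1 and run the envelope construction jointly in $(a;\vec{\alpha})$ exactly as above; your Step~2 (the projective-tensor/nuclearity bookkeeping and the non-archimedean finite-sum reduction) is then fine.
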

\begin{proof}
	If $\F$ is non-archimedean, any element in $\Whi(\pi^{\infty},\psi)$ is bounded by some gauge by \cite[Proposition (2.3.6)]{JPS79}. This suffices to conclude because any element in $\Sch(\mathcal{A}, \Whi(\pi^{\infty},\psi))$ is a finite sum of elements of the form $\Phi(a;m) = \phi(a) W(m)$ for some $\phi \in \Sch(\mathcal{A})$ and $W \in \Whi(\pi^{\infty},\psi)$. If $\F$ is archimedean we apply \cite[Proposition 3.1]{J09} to get the bound for some $M_i \in \R, N_i > 0$ and any $N \in \Z_{\geq 1}$
\begin{multline*} 
	\extnorm{\Phi(a; n_1a_1k_1, \dots, n_r a_r k_r)} \leq \sideset{}{_{i=1}^r} \prod \norm[\det a_i]^{M_i} \sideset{}{_{j=1}^{n_i-1}} \prod \extnorm{\alpha_j(a_i)}^{-N_i} \cdot \\
	\sideset{}{_{i=1}^r} \prod \sideset{}{_{j=1}^{n_i-1}} \prod \left( 1 + \extnorm{\alpha_j(a_i)} \right)^{-N} \cdot \nu_N(\Phi(a; \cdot)),
\end{multline*}
where $\nu_N$ is a semi-norm on $\pi^{\infty}$ independent of $\Phi$. Since $a \mapsto \nu_N(\Phi(a; \cdot))$ is in $\Sch(\mathcal{A})$, the function
	$$ \sup_{n_i,k_i, d_{n_i}(a_i)} \extnorm{\Phi(a; n_1a_1k_1, \dots, n_r a_r k_r)} \cdot \left( \sideset{}{_{i=1}^r} \prod \norm[\det a_i]^{M_i} \sideset{}{_{j=1}^{n_i-1}} \prod \extnorm{\alpha_j(a_i)}^{-N_i} \right)^{-1} $$
	is a rapidly decreasing function on $\mathcal{A} \times \F^{n_1-1} \times \cdots \times \F^{n_r-1}$, hence is bounded by a Schwartz function by Lemma \ref{lem: WeilSchwartzEnv}.
\end{proof}

	\subsection{Whittaker Functions for Induced Representations}
	
	Take the case $r=2$ in Definition \ref{def: W-ValSchF} with $n=n_1+n_2$. Let $\gp{Q}_{n_1,n_2} < \gp{G}_n$ be the (block \emph{lower} triangular) parabolic subgroup whose Levi component is isomorphic to $\gp{M} = \gp{G}_{n_1} \times \gp{G}_{n_2}$. Then we have a surjective group homomorphism $\gp{Q}_{n_1,n_2} \to \gp{M}$, so that $\pi$ can be inflated to a representation of $\gp{Q}_{n_1,n_2}$ still denoted by $\pi$. Let $\Pi = \pi_1 \boxplus \pi_2$ be the representation of $\gp{G}_n$ induced from $\pi$ of $\gp{Q}_{n_1,n_2}$. We may realize the underlying vector space of $\Pi$ as
	$$ V_{\Pi} = \Ind(\gp{G}_{n}, \gp{Q}_{n_1,n_2}; \Whi(\pi_1,\psi) \otimes \Whi(\pi_2,\psi)). $$
	The subspace of smooth vectors $V_{\Pi}^{\infty}$ consists of $f \in \Cont^{\infty}(\gp{G}_{n} \times \gp{G}_{n_1} \times \gp{G}_{n_2}, \C)$ satisfying (see \cite[\S (4.6)]{JPS83}):
\begin{itemize}
	\item[(1)] For any $g \in \gp{G}_n$, $a_j$ \& $h_j \in \gp{G}_{n_j}$ and $X \in \Mat(n_2 \times n_1, \F)$ we have 
	$$ f \left[ \begin{pmatrix} a_1 & 0 \\ X & a_2 \end{pmatrix}g; h_1, h_2 \right] = \frac{\norm[\det a_2]^{\frac{n_1}{2}}}{\norm[\det a_1]^{\frac{n_2}{2}}} \cdot f\left[ g; h_1a_1, h_2a_2 \right]; $$
	\item[(2)] For any fixed $g \in \gp{G}_n$ and $h_2 \in \gp{G}_{n_2}$, the following function lies in $\Whi(\pi_1^{\infty}, \psi)$
	$$ h_1 \mapsto f\left[ g; h_1, h_2 \right]; $$
	\item[(3)] For any fixed $g \in \gp{G}_n$ and $h_1 \in \gp{G}_{n_1}$, the following function lies in $\Whi(\pi_2^{\infty}, \psi)$
	$$ h_2 \mapsto f\left[ g; h_1, h_2 \right]. $$
\end{itemize}
	We are interested in the special case $n_2=1$. Then $\pi_2 = \mu$ is a (quasi-)character of $\F^{\times}$. In this case the Grassmannian $\gp{Q}_{n_1,1} \backslash \gp{G}_n$ is identified with the projective space $\mathbb{P}^{n_1}(\F)$ via
	$$ \gp{G}_n \to \mathbb{P}^{n_1}(\F), \quad g \mapsto [g^{-1}.\vec{e}_n]. $$ 
Note that $\mathbb{P}^{n_1}(\F)$ admits an open covering of $n$ copies of $\F^{n_1}$. Denote by $\gp{N}_{n_1,1}$ the transpose/opposite of the unipotent radical of $\gp{Q}_{n_1,1}$. We have the corresponding open affine covering of $\gp{G}_n$
\begin{equation} \label{eq: AffCovGn} 
	\gp{G}_n = \sideset{}{_{i=1}^n} \bigcup \gp{Q}_{n_1,1} \gp{N}_{n_1,1} \alpha_n^i, \quad \alpha_n := \begin{pmatrix} & \id_{n_1} \\ 1 & \end{pmatrix}. 
\end{equation}
\begin{lemma} \label{lem: EleSmoothInd}
	The space $V_{\Pi}^{\infty}$ is generated by smooth vectors $f$ satisfying the following conditions:
\begin{itemize}
	\item[(1)] There is $1 \leq i \leq n$ such that $f$ has support contained in $\gp{Q}_{n_1,1} \gp{N}_{n_1,1} \alpha_n^i$.
	\item[(2)] There is $\phi \in \Cont_c^{\infty}(\F^{n_1}, \Whi(\pi_1^{\infty},\psi))$ such that
	$$ f\left[ \begin{pmatrix} \id_{n_1} & \vec{u} \\ 0 & 1 \end{pmatrix} \alpha_n^i; h_1, h_2 \right] = \phi(\vec{u}, h_1) \cdot \mu(h_2), \quad \forall \vec{u} \in \F^{n_1}, h_i \in \gp{G}_{n_i}. $$
\end{itemize}
\end{lemma}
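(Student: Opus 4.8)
The plan is a partition-of-unity argument resting on the open affine covering \eqref{eq: AffCovGn}. The two geometric inputs are that the quotient $\gp{Q}_{n_1,1}\backslash\gp{G}_n$ is the \emph{compact} projective space $\mathbb{P}^{n_1}(\F)$, and that \eqref{eq: AffCovGn} exhibits it as the union of the $n$ standard affine charts, the $i$-th one parametrized by $\vec{u}\in\F^{n_1}\mapsto\gp{Q}_{n_1,1}\begin{pmatrix}\id_{n_1}&\vec{u}\\0&1\end{pmatrix}\alpha_n^i$. A preliminary observation is that, since $n_2=1$, the group $\gp{G}_1=\F^{\times}$ has trivial unipotent radical, so $\Whi(\mu^{\infty},\psi)$ is the line spanned by the function $h_2\mapsto\mu(h_2)$; hence condition (3) in the description of $V_{\Pi}^{\infty}$ already forces $f[g;h_1,h_2]=f[g;h_1,1]\cdot\mu(h_2)$ for every $f\in V_{\Pi}^{\infty}$. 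I would first check that the vectors in the statement do lie in $V_{\Pi}^{\infty}$: given $1\leq i\leq n$ and $\phi\in\Cont_c^{\infty}(\F^{n_1},\Whi(\pi_1^{\infty},\psi))$, define $f$ on the $i$-th cell by the formula in condition (2) of the lemma, propagate it to $\gp{G}_n$ by the equivariance property defining $V_{\Pi}^{\infty}$, and extend it by $0$ off that cell; since $\Supp\phi$ is compact inside the open chart $\F^{n_1}\subset\mathbb{P}^{n_1}(\F)$, this extension is smooth (locally constant, hence the claim trivial, in the non-archimedean case), so $f$ is a smooth vector obeying conditions (1) and (2) of the lemma.

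For the generation statement I would fix a partition of unity $1=\sum_{i=1}^{n}\rho_i$ on $\mathbb{P}^{n_1}(\F)$ subordinate to the cover by the $n$ charts, with each $\Supp\rho_i$ compact in the $i$-th chart; such a partition exists because $\mathbb{P}^{n_1}(\F)$ is a compact smooth manifold in the archimedean case, and because in the non-archimedean case one may take the $\rho_i$ to be indicators of disjoint compact open sets. Pulling the $\rho_i$ back to left-$\gp{Q}_{n_1,1}$-invariant functions on $\gp{G}_n$ along $\gp{G}_n\to\mathbb{P}^{n_1}(\F)$, an arbitrary $f\in V_{\Pi}^{\infty}$ decomposes as the finite sum $f=\sum_{i=1}^{n}\rho_i f$. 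Each $\rho_i f$ again lies in $V_{\Pi}^{\infty}$ (multiplying by a $\gp{Q}_{n_1,1}$-invariant smooth function preserves both the equivariance and the smoothness — no growth condition intervenes because $\gp{Q}_{n_1,1}\backslash\gp{G}_n$ is compact), is supported in the $i$-th cell, and, combining the factorization through $\mu$ above with condition (2) in the description of $V_{\Pi}^{\infty}$ (that $h_1\mapsto(\rho_i f)[g;h_1,1]$ is a $\pi_1$-Whittaker function), has precisely the form demanded in the lemma, with $\phi(\vec{u},h_1):=(\rho_i f)\left[\begin{pmatrix}\id_{n_1}&\vec{u}\\0&1\end{pmatrix}\alpha_n^i;h_1,1\right]$; this $\phi$ is compactly supported in $\vec{u}$ since $\Supp\rho_i$ is compact in the chart, and smooth as a $\Whi(\pi_1^{\infty},\psi)$-valued function of $\vec{u}$ since $f$ is a smooth vector. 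Hence every $f\in V_{\Pi}^{\infty}$ is a finite sum of vectors of the stated type, which is the assertion.

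The argument is largely bookkeeping; the one place that deserves care is the archimedean case, where one must verify that multiplying a moderate-growth smooth vector by a $\gp{Q}_{n_1,1}$-invariant bump function still yields a smooth vector of the (Casselman--Wallach) induced representation, and that the restriction $\phi$ truly lands in $\Cont_c^{\infty}(\F^{n_1},\Whi(\pi_1^{\infty},\psi))$ — i.e. has \emph{compact} support in $\vec{u}$, not merely Schwartz decay. Both points follow from the compactness of $\mathbb{P}^{n_1}(\F)$ together with the joint smoothness of smooth vectors of induced representations (cf. \cite[\S (4.6)]{JPS83} and \cite{J09}), and I expect this to be the only, and mild, obstacle.
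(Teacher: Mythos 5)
Your proof is correct and takes essentially the same route as the paper: a smooth partition of unity on the compact quotient $\gp{Q}_{n_1,1}\backslash\gp{G}_n \simeq \mathbb{P}^{n_1}(\F)$ subordinate to the standard affine cover, lifted to left-$\gp{Q}_{n_1,1}$-invariant functions on $\gp{G}_n$. You are slightly more explicit than the paper in spelling out that the $n_2=1$ hypothesis forces the $\mu(h_2)$ factorization and in checking that the generators are themselves smooth vectors, but the core argument is identical.
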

\begin{proof}
	A smooth partition of unity subordinate to the standard affine covering of $\mathbb{P}^{n_1}(\F)$ can be lifted $\gp{G}_n$, so that for any $f \in V_{\Pi}^{\infty}$ we can write $f = \sideset{}{_{i=1}^r} \sum f_i$, where each $f_i \in V_{\Pi}^{\infty}$ has support contained in $\gp{Q}_{n_1,1} \gp{N}_{n_1,1} \alpha_n^i$ and compact modulo $\gp{Q}_{n_1,1}$. Equivalently there is $\phi_i \in \Cont_c^{\infty}(\F^{n_1}, \Whi(\pi_1^{\infty},\psi))$ such that
	$$ f_i\left[ \begin{pmatrix} \id_{n_1} & \vec{u} \\ 0 & 1 \end{pmatrix} \alpha_n^i; h_1, h_2 \right] = \phi_i(\vec{u}, h_1) \cdot \mu(h_2), \quad \forall \vec{u} \in \F^{n_1}, h_i \in \gp{G}_{n_i}. $$
	The assertion follows readily.
\end{proof}

\noindent Let $\Phi \in \Sch(n_1 \times n, \Whi(\pi^{\infty},\psi))$. Consider the following function on $\C \times \gp{G}_n \times \gp{G}_{n_1} \times \gp{G}_1$
\begin{equation} \label{eq: GenGSInd}
	f_{\Phi}(s; g; h, t) := \mu(\det g) \norm[\det g]^{n_1 \left( \frac{1}{2}+s \right)} \int_{\gp{G}_{n_1}} \left( g.\Phi \right)(h_1, \vec{0}; hh_1^{-1}, t\det(h_1h^{-1})) \norm[\det h_1]^{n \left( \frac{1}{2}+s \right)} \ud h_1.
\end{equation}
\begin{lemma} \label{lem: GenGSIndConv}
	(1) The right hand side of the equation \eqref{eq: GenGSInd} is absolutely convergent for $\Re(s) \gg 1$, where the implicit constant depends only on $\pi_1$ and $\mu$ (independent of $\Phi$).
	
\noindent (2) Write $\pi(s) := \pi \otimes \norm[\det(\cdot)]^s$ and let $\Pi_s := \pi_1(-s) \boxplus \mu(n_1s)$. In the absolute convergent region we have $f_{\Phi}(s; \cdot) \in V_{\Pi_s}^{\infty}$. Moreover any element in $V_{\Pi_s}^{\infty}$ is a such $f_{\Phi}(s; \cdot)$.
\end{lemma}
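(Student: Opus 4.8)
The plan is to prove the convergence in (1) by majorising $\Phi$ with a gauge (Proposition~\ref{prop: W-ValSchBd}) and reducing to a standard integrability estimate, and to prove (2) by checking the defining relations of $V_{\Pi_s}^{\infty}$ for membership and invoking Lemma~\ref{lem: EleSmoothInd} for the surjectivity. For (1): since the $\psi$-Whittaker model of $\mu=\pi_2$ on $\gp{G}_1$ is the line $\C\mu$, every $\Phi\in\Sch(n_1\times n,\Whi(\pi^{\infty},\psi))$ factors as $\Phi(X;m_1,m_2)=\Phi_1(X)(m_1)\,\mu(m_2)$ with $\Phi_1\in\Sch(n_1\times n,\Whi(\pi_1^{\infty},\psi))$, so the modulus of the integrand in \eqref{eq: GenGSInd} equals $\extnorm{\Phi_1((h_1,\vec0)g)(hh_1^{-1})}\cdot\norm[\det h_1]^{n(\frac12+\Re s)}$. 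Applying Proposition~\ref{prop: W-ValSchBd} to $\Phi_1$ and Corollary~\ref{cor: SchCompTrans} to absorb the maximal compact part of the Iwasawa decomposition of $hh_1^{-1}$, one obtains $\extnorm{\Phi_1((h_1,\vec0)g)(hh_1^{-1})}\le\Psi_g(h_1)\,\xi'(hh_1^{-1})\,\norm[\det h_1]^{M_2}$, where $\Psi_g\in\Sch(n_1\times n_1,\F)$ is a fixed Schwartz majorant of $h_1\mapsto(h_1,\vec0)g$, $\xi'$ is a scalar gauge on $\gp{G}_{n_1}$, and $M_2$ depends only on $\mu$; the exponents of $\xi'$ depend only on $\pi_1$. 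Folding $\norm[\det h_1]^{M_2}$ into the weight, (1) reduces to $\int_{\gp{G}_{n_1}}\Psi_g(h_1)\,\xi'(hh_1^{-1})\,\norm[\det h_1]^{n(\frac12+\Re s)+M_2}\,\ud h_1<\infty$. As $h_1\to\infty$ in $\gp{G}_{n_1}$ the rapid decay of $\Psi_g$, combined with $\norm[\det(hh_1^{-1})]=\norm[\det h]\,\norm[\det h_1]^{-1}$ controlling $\xi'$, forces convergence independently of $s$; as $\det h_1\to0$ the gauge $\xi'(hh_1^{-1})$ grows at most polynomially, hence is absorbed by the weight once $\Re s$ exceeds a threshold $\sigma_0$ read off from the gauge exponents, so depending only on $\pi_1$ and $\mu$. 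This is exactly the sort of majorisation the theory of gauges was built for (cf.\ \cite{J09, JPS79}).

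\emph{Membership.} Fix $s$ with $\Re s>\sigma_0$. The gauge bounds above are locally uniform in $(g,h,t)$ and survive right differentiation, so $f_\Phi(s;\cdot)$ is smooth and of moderate growth. To check $f_\Phi(s;\cdot)\in V_{\Pi_s}^{\infty}$ we verify conditions (1)--(3) of \cite[\S(4.6)]{JPS83}. Condition (1), the equivariance under the block--lower--triangular $\gp{Q}_{n_1,1}$, follows by substituting $h_1\mapsto h_1 a_1$ in \eqref{eq: GenGSInd} for the $\gp{G}_{n_1}$--block $a_1$ (note that the unipotent block $X$ drops out, since $(h_1,\vec0)$ has vanishing last block) and using $\det(h_1(ha_1)^{-1})=(\det a_1)^{-1}\det(h_1 h^{-1})$ for the $\gp{G}_1$--block $a_2$; collecting the powers of $\norm[\det a_1]$ and $\norm[a_2]$ produced by the weight, the prefactor $\mu(\det g)\norm[\det g]^{n_1(\frac12+s)}$ and the $\mu$--factor reproduces $\delta_{\gp{Q}_{n_1,1}}^{1/2}$ together with the twists in $\Pi_s=\pi_1(-s)\boxplus\mu(n_1 s)$. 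For condition (2) the substitution $h_1\mapsto hh_1^{-1}$ exhibits $h\mapsto f_\Phi(s;g;h,t)$ as an absolutely convergent superposition of right translates of $\Whi(\pi_1^{\infty},\psi)$--functions, hence as an element of $\Whi(\pi_1(-s)^{\infty},\psi)$; condition (3) is immediate since $\Whi(\mu(n_1 s)^{\infty},\psi)$ on $\gp{G}_1$ is one--dimensional and the $t$--dependence of $f_\Phi$ factors out, by the same computation, as the appropriate power of $\mu$.

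\emph{Surjectivity.} Applying Lemma~\ref{lem: EleSmoothInd} to $\Pi_s$, it suffices to realise as some $f_\Phi(s;\cdot)$ each generating vector $f$ of $V_{\Pi_s}^{\infty}$ supported in a single chart $\gp{Q}_{n_1,1}\gp{N}_{n_1,1}\alpha_n^i$ and given there (in the coordinates of Lemma~\ref{lem: EleSmoothInd}) by $\phi(\vec u,h_1)\,[\mu(n_1 s)](h_2)$ with $\phi\in\Cont_c^{\infty}(\F^{n_1},\Whi(\pi_1^{\infty},\psi))$. The map $h_1\mapsto(h_1,\vec0)g$ identifies $\gp{G}_{n_1}$ with the full--row--rank part of the affine subspace $\{Y\in\Mat(n_1\times n,\F):Yg^{-1}\text{ has vanishing last column}\}$, so the restriction of a scalar Schwartz function on $\Mat(n_1\times n,\F)$ is an arbitrary Schwartz function of $h_1$; choosing $\Phi(X;\tau)=\Psi(X)\,W_0(\tau)$ with $\Psi$ concentrated near the point attached to the $i$--th chart and $W_0\in\Whi(\pi^{\infty},\psi)$ chosen to match $\phi$, the integral in \eqref{eq: GenGSInd} collapses to an evaluation and reproduces $f$; by linearity $\Phi\mapsto f_\Phi(s;\cdot)$ is surjective. (As in \cite{J09}, one may instead note that the image is a closed $\gp{G}_n$--submodule containing a module--generating family, hence equals $V_{\Pi_s}^{\infty}$.)

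The step I expect to be the genuine obstacle is the absolute convergence in (1): the variable $h_1$ couples the Schwartz--Bruhat argument $(h_1,\vec0)g$ with the Whittaker argument $hh_1^{-1}$, so variables cannot be separated naively, and the $\gp{G}_{n_1}$--integral must be split with care --- into a region where $h_1$ is large, controlled by the rapid decay of $\Psi_g$ (for which it matters that $(h,\vec0)g$ has full row rank, so the unipotent part of $h_1$ acts freely on the Schwartz argument), and a region where $\det h_1\to0$, controlled by the weight $\norm[\det h_1]^{n(\frac12+\Re s)}$ when $\Re s$ is large --- before the gauge estimate applies. Surjectivity is conceptually routine given Lemma~\ref{lem: EleSmoothInd}, but matching the normalising $\norm[\det(\cdot)]$--powers and the $\mu$--twist demands careful bookkeeping.
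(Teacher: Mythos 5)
Your argument for part (1) is essentially the paper's argument, reorganised: the paper first makes the change of variables $h_1\mapsto h_1h$, which absorbs both $g$ and $h$ into a translate of $\Phi$ (since $(h_1h,\vec 0)g=(h_1,\vec 0)\bigl(\begin{smallmatrix}h&0\\0&1\end{smallmatrix}\bigr)g$ and the $\gp{G}_n$--action preserves $\Sch(n_1\times n,\Whi(\pi^{\infty},\psi))$), reducing at once to the integral $\int_{\gp{G}_{n_1}}\Phi(h_1,\vec 0;h_1^{-1},\det h_1)\norm[\det h_1]^{n(\frac12+s)}\,\ud h_1$, and then applies a single gauge bound via Proposition~\ref{prop: W-ValSchBd} followed by the Iwasawa decomposition $h_1=kan$. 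You keep $g$ and $h$ around and try to peel off a separate Schwartz majorant $\Psi_g$ of $h_1\mapsto(h_1,\vec 0)g$ and a scalar gauge $\xi'$ of $hh_1^{-1}$; this can be made to work (and Propositions~\ref{prop: SchRestriction}--\ref{prop: SchCompTrans} are the right tools), but you should be aware that the gauge in Proposition~\ref{prop: W-ValSchBd} does \emph{not} factor into a Schwartz part in the matrix variable times a gauge in the Whittaker variable --- the Schwartz factor in Definition~\ref{def: Gauge} couples the two --- so the clean multiplicative bound you write down needs an extra step (restriction, à la Proposition~\ref{prop: SchRestriction}) to justify. The paper's normalisation avoids all of that. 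Your membership check in (2) matches the paper's.

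The genuine gap is in the surjectivity. The statement ``choosing $\Phi(X;\tau)=\Psi(X)W_0(\tau)$ with $\Psi$ concentrated near a point$\ldots$ the integral in \eqref{eq: GenGSInd} collapses to an evaluation and reproduces $f$'' is false as written: the $h_1$--integral in \eqref{eq: GenGSInd} couples the Schwartz argument and the Whittaker argument, and even for $\Psi$ very concentrated near $h_1=\id_{n_1}$ the result is a smoothed translate of $W_0$, never a point evaluation. There is no element of $\Sch(\Mat(n_1\times n,\F))$ that makes the integral collapse, and passing to a limit of concentrating $\Psi$'s does not stay inside the image. More fundamentally, even if you could recover each pure tensor, ``by linearity'' only produces the \emph{algebraic} tensor product $\Cont_c^{\infty}(\F^{n_1})\otimes\Whi(\pi_1^{\infty},\psi)$; for $n_1\ge 2$ and archimedean $\F$ the target $\Cont_c^{\infty}(\F^{n_1},\Whi(\pi_1^{\infty},\psi))$ is a \emph{completed} tensor product, so finite linear combinations of pure tensors do not suffice. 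This is exactly the obstruction the paper's proof is built around: after reducing via Lemma~\ref{lem: EleSmoothInd}, it invokes the Dixmier--Malliavin theorem to write an arbitrary $\phi\in\Cont_c^{\infty}(\F^{n_1},\Whi(\pi^{\infty},\psi))$ as a finite sum $\sum_j\int_{\gp{G}_{n_1}}\ell_j(h_1)\phi_j(h_1\vec u,hh_1^{-1})\,\ud h_1$ with $\ell_j\in\Cont_c^{\infty}(\gp{G}_{n_1})$ supported near $\id_{n_1}$, and then takes $\Phi_j(h_1,\vec u;h,1)=\ell_j(h_1)\phi_j(\vec u,h)\mu(\det h)\norm[\det h_1]^{-n(\frac12+s)}$ (extended by $0$ to the singular locus), which is genuinely Schwartz on $\Mat(n_1\times n,\F)$ because $\ell_j$ keeps $\det h_1$ bounded away from $0$. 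Your parenthetical fallback (``the image is a closed $\gp{G}_n$--submodule containing a generating family'') does not repair the gap without a separate argument that the image is closed, which is not automatic for continuous maps between Fréchet spaces and is precisely what Dixmier--Malliavin sidesteps. Finally, the paper treats $n_1=1$ separately as a classical Tate--integral fact and only runs the Dixmier--Malliavin machine for $n_1\ge 2$; your proposal does not make this distinction.
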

\begin{proof}
	(1) By the change of variables $h_1 \mapsto h_1h$ it suffices to prove the absolute convergence of
	$$ \int_{\gp{G}_{n_1}} \Phi(h_1, \vec{0}; h_1^{-1}, \det h_1) \norm[\det h_1]^{n \left( \frac{1}{2}+s \right)} \ud h_1 $$
for $\Re(s) \gg 1$ and any $\Phi \in \Sch(n_1 \times n, \Whi(\pi_1^{\infty},\psi))$. Bounding $\Phi$ by a gauge by Proposition \ref{prop: W-ValSchBd} and applying the change of variables $h_1 = kan$ we see that the above integral is bounded by
	$$ \int_{(\F^{\times})^{n_1}} \left( \int_{\F^{n_1(n_1-1)/2}} \phi(t_1,\dots,t_{n_1}, \vec{x}) \ud \vec{x} \right) \sideset{}{_{i=1}^{n_1}} \prod \norm[t_i]^{\sigma_i} \ud^{\times} t_i $$
for some $\phi \in \Sch(\F^{n_1(n_1+1)/2})$ and $\sigma_i \gg 1$, which is absolutely convergent.

\noindent (2) Assuming the absolute convergence we easily check the relation
	$$ f_{\Phi} \left( s; \begin{pmatrix} a_1 & 0 \\ X & a_2 \end{pmatrix} g; h, t \right) = \left( \frac{\norm[a_2]^{n_1}}{\norm[\det a_1]} \right)^{\frac{1}{2}-s} f_{\Phi}(s; g; ha_1, ta_2), $$
	showing that $f_{\Phi}(s; \cdot) \in V_{\Pi_s}^{\infty}$. We turn to the ``moreover'' part. Note that the case $n_1=1$ is classical. A proof can be found in \cite[Lemma 3.5 \& 3.8 \& 3.14]{Wu17}. Assume $n_1 \geq 2$ from now on. Taking Lemma \ref{lem: EleSmoothInd} into account, it suffices to show for any $\phi \in \Cont_c^{\infty}(\F^{n_1}, \Whi(\pi^{\infty},\psi))$ there exists $\Phi \in \Sch(n_1 \times n, \Whi(\pi^{\infty},\psi))$ such that
	$$ f_{\Phi} \left( s; \begin{pmatrix} \id_{n_1} & \vec{u} \\ 0 & 1 \end{pmatrix} \alpha_n^i; h, t \right) = \begin{cases} 
		\phi(\vec{u}, h) \cdot \mu(t) & \text{if } i = 0 \\
		0 & \text{if } 0 < i < n
	\end{cases}. $$
	Note that $\Cont_c^{\infty}(\F^{n_1}, \Whi(\pi^{\infty},\psi))$ is a smooth Fr\'echet representation of $\gp{G}_{n_1}$ for the (right-)action
	$$ \phi^{h_1}(\vec{u}, h) := \phi(h_1 \vec{u}, hh_1^{-1}). $$
	In fact it a smooth Fr\'echet representation of $\gp{G}_{n_1} \times \gp{G}_{n_1}$ inherited from the natural actions on $\Cont_c^{\infty}(\F^{n_1})$ and $\Whi(\pi^{\infty},\psi)$. The above action is just the restriction of $\gp{G}_{n_1} \times \gp{G}_{n_1}$ to 
	$$ \gp{G}_{n_1} \hookrightarrow \gp{G}_{n_1} \times \gp{G}_{n_1}, \quad h \mapsto (h, h^{-1}). $$
	By Diximier-Malliavin's theorem (see \cite[Proposition 6.1]{CaB16_D} in the archimedean case and trivial in the non-archimedean case) there exist functions $\ell_j \in \Cont_c^{\infty}(\gp{G}_{n_1})$ and $\phi_j \in \Cont_c^{\infty}(\F^{n_1}, \Whi(\pi^{\infty},\psi))$ such that
	$$ \phi(\vec{u}, h) = \sideset{}{_{j=1}^d} \sum \int_{\gp{G}_{n_1}} \ell_j(h_1) \cdot \phi_j(h_1\vec{u}, hh_1^{-1}) \ud h_1. $$
	Moreover, the support of $\ell_j$ can be taken as close as possible to $\id_{n_1}$. Take a small neighborhood $\Omega$ of $\id_{n_1}$ so that for the natural projections $P_k$ from $\Mat(n_1 \times n_1, \F)$ to the $k$-th column vector space the sets $P_k(\Omega)$ are disjoint for $1 \leq k \leq n_1$. We easily check that
	$$ \Phi_j(h_1,\vec{u}; h, 1) = \begin{cases}
		\ell_j(h_1) \phi_j(\vec{u},h) \mu(\det h) \norm[\det h_1]^{-n \left( \frac{1}{2}+s \right)} & \text{if } h_1 \in \gp{G}_{n_1} \\
		0 & \text{if } \det h_1 = 0
	\end{cases} $$
defines a $\Phi_j \in \Sch(n_1 \times n, \Whi(\pi^{\infty},\psi))$, and $\Phi := \sideset{}{_{j=1}^r} \sum \Phi_j$ satisfies the desired properties.
\end{proof}
	
\noindent By \cite[\S (4.6)]{JPS83}, the $\psi$-Whittaker function of $f_{\Phi}(s; \cdot)$ defined in \eqref{eq: GenGSInd} is given by
\begin{equation} \label{eq: GenGSWhiDef}
	W_{\Phi}(s,g) := \int_{\F^{n_1}} f_{\Phi} \left( s; \begin{pmatrix} \id_{n_1} & \vec{u} \\ 0 & 1 \end{pmatrix} g; \id_{n_1}, 1 \right) \psi(-u_{n_1}) \ud \vec{u}, \quad \vec{u} = (u_1, \dots, u_{n_1})^T.
\end{equation}
	Inserting \eqref{eq: GenGSInd} into \eqref{eq: GenGSWhiDef} and changing the order of integrations we get
\begin{equation} \label{eq: GenGSWhi}
	W_{\Phi}(s,g) = \mu(\det g) \norm[\det g]^{n_1 \left( \frac{1}{2}+s \right)} \int_{\gp{G}_{n_1}} \OFour_{\vec{n}}(g.\Phi.h_1)(\id_{n_1}, \vec{e}_{n_1}; h_1^{-1}, \det h_1) \norm[\det h_1]^{n \left( \frac{1}{2}+s \right)} \ud h_1.
\end{equation}
	
\begin{proposition} \label{prop: GenGSWhi}
	(1) The integral \eqref{eq: GenGSWhi} is absolutely convergent for all $s \in \C$. In particular, $W_{\Phi}(\cdot) := W_{\Phi}(0, \cdot) \in \Whi(\Pi^{\infty},\psi)$. 
	
\noindent (2) Every element of $W \in \Whi(\Pi^{\infty},\psi)$ is equal to $W_{\Phi}(\cdot)$ for some $\Phi \in \Sch(n_1 \times n, \Whi(\pi^{\infty},\psi))$.
\end{proposition}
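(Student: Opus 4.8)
The plan is to derive both parts from the properties of the auxiliary sections $f_\Phi(s;\cdot)$ recorded in Lemma \ref{lem: GenGSIndConv}, the partition into affine charts of Lemma \ref{lem: EleSmoothInd}, and the gauge estimates of Section \ref{sec: SchTechBd} together with Proposition \ref{prop: W-ValSchBd}. For (1), the first step is routine: for $\Re(s)\gg1$ the right side of \eqref{eq: GenGSInd} converges absolutely (Lemma \ref{lem: GenGSIndConv}(1)), a section in $V_{\Pi_s}^\infty$ restricted to the open cell of \eqref{eq: AffCovGn} is Schwartz in $\vec{u}$ so that \eqref{eq: GenGSWhiDef} converges, and bounding $\Phi$ by a gauge (Proposition \ref{prop: W-ValSchBd}) makes the resulting double integral absolutely convergent; Fubini then yields \eqref{eq: GenGSWhi} for $\Re(s)\gg1$. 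The substance of (1) is to push the absolute convergence of the integral \eqref{eq: GenGSWhi} itself to all $s\in\C$.

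For this I would make the change of variables $\vec{w}=h_1\vec{u}$ inside $\OFour_{\vec{n}}(g.\Phi.h_1)(\id_{n_1},\vec{e}_{n_1};h_1^{-1},\det h_1)$, which rewrites the exponential as the additive character $\psi\bigl(-(h_1^{-1}\vec{w})_{n_1}\bigr)$ of $\vec{w}$ whose frequency vector is the last row of $h_1^{-1}$, and then perform an Iwasawa decomposition $h_1=\kappa a\nu$ on $\gp{G}_{n_1}$ with $a=\diag(t_1,\dots,t_{n_1})$, absorbing $\kappa$ and the $g$-dependence into a fixed gauge via Corollary \ref{cor: SchCompTrans} and dominating the $\Whi(\pi^\infty,\psi)$-valued integrand by Proposition \ref{prop: W-ValSchBd}. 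This reduces the claim to absolute convergence of a radial integral whose weight is a monomial $\prod_i\norm[t_i]^{e_i(s)}$. Three sources of decay then cover, for every fixed $s$, all toral degenerations: (i) the Schwartz decay of $\Phi$ in its matrix slot controls $\norm[t_i]\to\infty$; (ii) the rapid decay built into $\Whi(\pi^\infty,\psi)$ along the directions $\alpha_j(a^{-1})\to\infty$, visible in the gauge bound, controls the boundary degenerations in which some $t_i$ is small relative to a neighbor; (iii) integration by parts in $\vec{w}$ against the character of frequency $t_{n_1}^{-1}$ yields, for any prescribed exponent $N$, a saving $\norm[t_{n_1}]^{N}$ that dominates the monomial in the remaining ``proportional shrinking'' directions, precisely the ones that forced $\Re(s)\gg1$ in Lemma \ref{lem: GenGSIndConv}(1). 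Granting this, differentiation under the integral (justified by the same bounds) shows $W_\Phi(0,\cdot)$ is smooth and transforms on the left under $\gp{N}_n(\F)$ by $\psi$; since for $\Re(s)\gg1$ it is the Jacquet integral of $f_\Phi(s;\cdot)\in V_{\Pi_s}^\infty$ by \cite[\S(4.6)]{JPS83} and $\Pi_0=\Pi$, holomorphic continuation --- legitimate now that \eqref{eq: GenGSWhi} converges everywhere --- places $W_\Phi(\cdot)=W_\Phi(0,\cdot)$ in $\Whi(\Pi^\infty,\psi)$. I expect step (iii), i.e.\ trading the ``$\Re(s)$ large'' hypothesis for genuine Fourier decay while keeping the Whittaker-valued gauge estimates uniform in $g$ and $\kappa$, to be the main obstacle; the rest is bookkeeping.

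For (2), I would reduce to elementary sections. By Lemma \ref{lem: EleSmoothInd} the space $V_\Pi^\infty$ is spanned by sections $f$ supported in a single chart of \eqref{eq: AffCovGn} and built from some $\phi\in\Cont_c^\infty(\F^{n_1},\Whi(\pi^\infty,\psi))$; the construction at the end of the proof of Lemma \ref{lem: GenGSIndConv}(2) attaches to each such $f$ a $\Phi\in\Cont_c^\infty(n_1\times n,\Whi(\pi^\infty,\psi))\subset\Sch(n_1\times n,\Whi(\pi^\infty,\psi))$ --- the factor $\norm[\det h_1]^{-n(1/2+s)}$ occurring there being harmless, as it is evaluated on a compact set bounded away from $\det=0$ --- with the additional feature that, the support of $\Phi$ in the first $n_1$ columns being compact and away from the singular locus, the integral \eqref{eq: GenGSInd} runs over a compact subset of $\gp{G}_{n_1}$ and therefore converges for every $s$, in particular at $s=0$, where it gives $f_\Phi(0;\cdot)=f$. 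By \cite[\S(4.6)]{JPS83} together with part (1), $W_\Phi(0,\cdot)$ then equals $W_f$, the image of $f$ under the (essentially unique) Whittaker functional of $\Pi$. Since $f\mapsto W_f$ maps $V_\Pi^\infty$ onto $\Whi(\Pi^\infty,\psi)$ by definition of the Whittaker model, the sections $f$ above span $V_\Pi^\infty$, and $\Phi\mapsto W_\Phi(0,\cdot)$ is linear, every $W\in\Whi(\Pi^\infty,\psi)$ is a single $W_\Phi(0,\cdot)$, as claimed.
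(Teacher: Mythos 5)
Your plan follows the paper's proof closely. Part (2) is handled correctly and identically: by the ``moreover'' part of Lemma \ref{lem: GenGSIndConv}(2), every $f\in V_\Pi^\infty$ arises from a $\Phi$ whose support in the first $n_1$ columns is compact and bounded away from the singular matrices, so the $\gp{G}_{n_1}$-integral in \eqref{eq: GenGSInd} is manifestly convergent at $s=0$ and reproduces $f$; then $W_\Phi(0,\cdot)=W_f$ and surjectivity follows. For part (1) you have located the right sources of decay: Schwartz decay of $\Phi$ in the matrix variables, the Whittaker-model gauge estimate along the $\alpha_j$ directions, and the extra decay produced by $\OFour_{\vec n}$, whose effect (after your substitution $\vec w=h_1\vec u$) is to evaluate a Schwartz function at $h_1^{\iota}\vec e_{n_1}$, of norm $\asymp\norm[a_{n_1}]^{-1}$. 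You do not actually need integration by parts here; the Schwartz decay of $\OFour_{\vec n}(\Phi)$ in that slot --- already packaged in the gauge bound of Proposition \ref{prop: W-ValSchBd} --- yields the saving $\norm[a_{n_1}]^N$ directly and works uniformly in the non-archimedean case, where integration by parts would not be literal.

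The one genuinely missing step is the ``bookkeeping'' you defer. After the Iwasawa decomposition the dominating integrand has the form
\begin{equation*}
\phi\bigl(a_1,\dots,a_{n_1},a_{n_1}^{-1};\vec{\alpha}^{-1}\bigr)\,\frac{\norm[a_1]^N}{\norm[a_{n_1}]^N}\,\prod_{j=1}^{n_1}\norm[a_j]^{B-j},
\end{equation*}
with $\phi$ Schwartz in all its slots, and the observation that makes every $s$ (i.e.\ every $B$) work is a one-line rearrangement: whenever $B-j<0$, write $\norm[a_j]^{B-j}=\extnorm{\alpha_j\cdots\alpha_{n_1-1}}^{B-j}\norm[a_{n_1}]^{B-j}$, shifting the troublesome negative exponent on the intermediate $a_j$ onto positive powers of $\alpha_k^{-1}$ (absorbed by the Whittaker gauge, your (ii)) and of $a_{n_1}^{-1}$ (absorbed by the Fourier decay, your (iii)). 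That is precisely the verification that your ``(i)--(iii) cover all toral degenerations'', and it is the content of the proof, not mere bookkeeping. Finally, a small misstatement: a section in $V_{\Pi_s}^\infty$ restricted to the open cell is not in general ``Schwartz in $\vec u$''; it decays polynomially at a rate governed by $\Re(s)$, and what the first-step Fubini needs is only the convergence of the Jacquet integral for $\Re(s)\gg1$, which Lemma \ref{lem: GenGSIndConv}(1) already supplies.
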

\begin{proof}
	For (1) it suffices to prove the absolute convergence of
	$$ \int_{\gp{G}_{n_1}} \Phi(h_1, h_1^{\iota}.\vec{e}_{n_1}; h_1^{-1}, \det h_1) \norm[\det h_1]^{A} \ud h_1 $$
for any real number $A$ and $\Phi \in \Sch(n_1 \times n, \Whi(\pi^{\infty},\psi))$. Applying the change of variables $h_1=kan$ and bounding $\Phi$ by a gauge we see that the above integral is bounded by
	$$ \int_{\F^{n_1}} \phi \left( a_1, \cdots, a_{n_1}, a_{n_1}^{-1}; \vec{\alpha}^{-1} \right) \frac{\norm[a_1]^N}{\norm[a_n]^N} \cdot \sideset{}{_{j=1}^{n_1}} \prod \norm[a_j]^{B-j} \ud a_j $$
for some $\phi \in \Sch(\F^{2n_1})$, $N \in \Z_{\geq 1}$, $B \in \R$ and $\vec{\alpha}^{-1} = (\alpha_1^{-1}, \dots, \alpha_{n_1-1}^{-1})$ with $\alpha_j =a_ja_{j+1}^{-1}$. If $B-j < 0$ for some $j$, we rewrite $\norm[a_j]^{B-j} = \extnorm{\alpha_j \alpha_{j+1} \cdots \alpha_{n_1-1}}^{B-j} \cdot \norm[a_n]^{B-j}$. So the integrand is
	$$ \phi \left( a_1, \cdots, a_{n_1}, a_{n_1}^{-1}; \vec{\alpha}^{-1} \right) \cdot \sideset{}{_{j=1}^{n_1}} \prod \norm[a_j]^{\sigma_j} \sideset{}{_{j=1}^{n_1-1}} \prod \norm[\alpha_j]^{-\tau_j} $$
	for some real numbers $\sigma_j ,\tau_j \geq 0$. It is therefore convergent. (2) is a direct consequence of the ``moreover'' part of Lemma \ref{lem: GenGSIndConv} (2).
\end{proof}

\begin{remark}
	The integral representation \eqref{eq: GenGSWhiDef} of Whittaker functions is a generalization of the one given in \cite[(7.6) \& (7.7)]{J09}, where $\pi$ is assumed to be induced from the Borel subgroup.
\end{remark}

\section{Double Zeta Integrals}

	\subsection{Complements on Rankin-Selberg Integrals: Global Theory}

	We take the global setting in this subsection (see \S \ref{sec: GlobNotation}). 
	
	Let $\Pi$ (resp. $\pi$) be a cuspidal (resp. automorphic) representation of $\GL_3(\A)$ (resp. $\GL_2(\A)$) with central character $\omega_{\Pi}$ (resp. $\omega$). Without loss of generality, we assume $\omega_{\Pi}$ and $\omega$ are trivial on $\R_+$. We take a smooth vector $F \in V_{\Pi}^{\infty}$ (resp. $\varphi \in V_{\pi}^{\infty}$). If $\pi = \pi_{s_1}(\chi, \omega\chi^{-1}\norm_{\A})$ lies in the continuous spectrum, we assume $\chi$ to be trivial on $\R_+$ and further require $\varphi = \eis(s_1,f)$ to be an Eisenstein series associated with a flat section $f(s_1,\cdot)$ defined by $f \in \pi(\chi, \omega\chi^{-1})$. The global Rankin-Selberg integral for $\Pi \times \pi$, when $\pi$ is cuspidal, was introduced by Jacquet--Shalika in \cite[\S 3.3]{JS81_Eur2} as
\begin{equation} 
	\Psi(s, F, \varphi) = \int_{[\GL_2]} F \begin{pmatrix} g & \\ & 1 \end{pmatrix} \varphi(g) \norm[\det g]_{\A}^{s-\frac{1}{2}} \ud g. 
\label{RSGlobal}
\end{equation}
	By the rapid decay of $F$ (see \cite{MS12} for example), the above integral is absolutely convergent for any $s$ even when $\varphi$ is an Eisenstein series, defining an entire function in $s$. We need to study (\ref{RSGlobal}) for $\varphi = \eis(s_1,f)$ as a function in $s_1$.
	
	The Fourier-Whittaker expansion of $F$
	$$ F(g) = \sideset{}{_{\gp{N}_2(\F) \backslash \GL_2(\F)}} \sum W_F \left( \begin{pmatrix} \gamma & \\ & 1 \end{pmatrix} g \right) $$
	readily implies the decomposition for $\Re s \gg 1$ (just like in the cuspidal case of $\varphi$)
	$$ \Psi(s,F,\eis(s_1,f)) = \sideset{}{_v} \prod \Psi_v(s,W_{F,v},W_{f_v}(s_1)), $$
	$$ \Psi_v(s,W_{F,v},W_{f_v}(s_1)) := \int_{\gp{N}_2(\F_v) \backslash \GL_2(\F_v)} W_{F,v} \begin{pmatrix} g & \\ & 1 \end{pmatrix} W_{f_v}(s_1,g) \norm[\det g]_v^{s-\frac{1}{2}} \ud g, $$
	where $W_{F,v}$ (resp. $W_{f_v}(s_1)$) is the Whittaker function of $F_v$ (resp. the flat section $f_v(s_1,\cdot)$) with respect to the additive character $\psi_v$ (resp. $\psi_v^{-1}$). At an unramified place $\vp < \infty$, the value of $W_{f_v}(s_1,g)$ is given by \cite[Proposition 4.6.5]{Bu98}, which is the product of $L_{\vp}(1+2s_1, \omega_{\vp}^{-1}\chi_{\vp}^2)^{-1}$ and its cuspidal counterpart with normalization $W_v(\mathbbm{1})=1$. Hence
\begin{align} 
	\Psi_{\vp}(s,W_{F,\vp},W_{f_{\vp}}(s_1)) &= L_{\vp}(1+2s_1, \omega_{\vp}^{-1}\chi_{\vp}^2)^{-1} L_{\vp}(s, \Pi_{\vp} \times \pi_{s_1}(\chi_{\vp}, \omega_{\vp} \chi_{\vp}^{-1})) \nonumber \\
	&= L_{\vp}(1+2s_1, \omega_{\vp}^{-1}\chi_{\vp}^2)^{-1} L_{\vp}(s+s_1, \Pi_{\vp} \times \chi_{\vp}) L_{\vp}(s-s_1, \Pi_{\vp} \times \omega_{\vp}\chi_{\vp}^{-1}).
\label{RSLocUnram}
\end{align}
\begin{proposition} \label{prop: RSEisProp}
	Let $S$ be a finite set of places including the archimedean ones such that at any $\vp \notin S$ the section $f_{\vp}$, the Whittaker function $W_{F,\vp}$ are spherical, and $\psi_{\vp}$ has conductor $\vo_{\vp}$. Then
	$$ \Psi(s,F,\eis(s_1,f)) \cdot \frac{L^{(S)}(1+2s_1, \omega^{-1}\chi^2)}{\Lambda(s+s_1, \Pi \times \chi) \Lambda(s-s_1, \Pi \times \omega\chi^{-1})} $$
	is entire in $s,s_1$. In particular, the poles of $s_1 \mapsto \Psi(s,F,\eis(s_1,f))$ are included in the zeroes of $L^{(S)}(1+2s_1, \omega^{-1}\chi^2)$, independent of $s$.
\end{proposition}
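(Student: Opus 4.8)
The plan is to upgrade the Euler product decomposition, which is valid only for $\Re s \gg 1$, into an identity of meromorphic functions of the \emph{two} variables $(s,s_1)$ whose polar divisor is visibly controlled by $L$-factors of known analytic behaviour, and then propagate it everywhere by analytic continuation. Concretely, starting from $\Psi(s,F,\eis(s_1,f)) = \prod_v \Psi_v(s, W_{F,v}, W_{f_v}(s_1))$, valid when $\Re s \gg 1$ and $\Re s_1 > 1/2$ (so that $\eis(s_1,f)$ is given by a convergent series and the Euler product converges), and inserting the unramified evaluation \eqref{RSLocUnram} at the places $\vp \notin S$, I would factor out the partial $L$-functions and rearrange, using $\Lambda(w,\cdot) = L^{(S)}(w,\cdot)\prod_{v\in S}L_v(w,\cdot)$, to get
$$ \Psi(s,F,\eis(s_1,f)) \cdot \frac{L^{(S)}(1+2s_1, \omega^{-1}\chi^2)}{\Lambda(s+s_1, \Pi\times\chi)\Lambda(s-s_1, \Pi\times\omega\chi^{-1})} = \prod_{v \in S} \frac{\Psi_v(s, W_{F,v}, W_{f_v}(s_1))}{L_v(s+s_1, \Pi_v\times\chi_v)\,L_v(s-s_1, \Pi_v\times\omega_v\chi_v^{-1})} =: \Xi(s,s_1) $$
on the open tube $\{\Re s \gg 1,\ \Re s_1 > 1/2\}$. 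It then suffices to show that $\Xi$ extends to an \emph{entire} function on $\C^2$ and that the left-hand side is meromorphic on $\C^2$.

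The main obstacle is the entirety of $\Xi$, which is a purely local matter: for each $v \in S$ one must show that $\Psi_v(s, W_{F,v}, W_{f_v}(s_1))/[L_v(s+s_1, \Pi_v\times\chi_v) L_v(s-s_1, \Pi_v\times\omega_v\chi_v^{-1})]$ is holomorphic in $(s,s_1)$ \emph{simultaneously} in the Rankin--Selberg variable $s$ and in the Eisenstein parameter $s_1$. Here I would first record that the Whittaker function $W_{f_v}(s_1,\cdot)$ of a flat section of the $\GL_2(\F_v)$-principal series $\pi_{s_1}(\chi_v,\omega_v\chi_v^{-1})$ depends holomorphically on $s_1$ (the $\GL_2$ Jacquet integral of a flat section has no poles), so $\Psi_v(s,W_{F,v},W_{f_v}(s_1))$ is a genuine two-variable meromorphic object. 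Second, I would unfold the $\GL_3\times\GL_2$ local Rankin--Selberg integral against such an induced-from-Borel vector: this rewrites $\Psi_v(s, W_{F,v}, W_{f_v}(s_1))$ as a double $\GL_3\times\GL_1$ zeta integral in the shifted variables $s+s_1$ and $s-s_1$ attached to the characters $\chi_v$ and $\omega_v\chi_v^{-1}$, exactly the kind of object governed by the local Rankin--Selberg theory of Jacquet--Piatetski--Shapiro--Shalika \cite{JPS83} (in the form used for the double zeta integrals of this paper); that theory gives that the quotient by $L_v(s+s_1,\Pi_v\times\chi_v)L_v(s-s_1,\Pi_v\times\omega_v\chi_v^{-1})$ is holomorphic, and since these $\GL_3\times\GL_1$ $L$-factors have no zeros the quotient is entire in $(s+s_1,s-s_1)$, hence in $(s,s_1)$. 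An alternative is to invoke directly the entirety of $\Psi_v(s,\cdot,\cdot)/L_v(s,\Pi_v\times\pi_{s_1,v})$ in $s$ together with multiplicativity $L_v(s,\Pi_v\times\pi_{s_1,v}) = L_v(s+s_1,\Pi_v\times\chi_v)L_v(s-s_1,\Pi_v\times\omega_v\chi_v^{-1})$, upgrading to joint holomorphy from the fact that the polar locus of the two-variable integral lies on hyperplanes of the form $\{s+s_1=c\}\cup\{s-s_1=c'\}$; the unfolding route is cleaner because it produces the ``right'' $L$-factors directly, including at reducibility points of $\pi_{s_1,v}$.

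Finally I would conclude by two-variable analytic continuation. The left-hand side of the displayed identity is meromorphic on $\C^2$: for any automorphic form $\varphi$ of moderate growth the integral $\Psi(s,F,\varphi)$ is entire in $s$ by the rapid decay of the cuspidal $F$ (cf.\ \cite{MS12}), $g \mapsto \eis(s_1,f)(g)$ continues meromorphically in $s_1$, and the completed $L$-functions are meromorphic. Since the identity holds on a nonempty open set and both sides are meromorphic on $\C^2$, it holds identically; hence $\Psi(s,F,\eis(s_1,f)) \cdot L^{(S)}(1+2s_1,\omega^{-1}\chi^2)/[\Lambda(s+s_1,\Pi\times\chi)\Lambda(s-s_1,\Pi\times\omega\chi^{-1})] = \Xi(s,s_1)$ is entire. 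For the ``in particular'' clause I would rewrite this as $\Psi(s,F,\eis(s_1,f)) = \Xi(s,s_1)\,\Lambda(s+s_1,\Pi\times\chi)\Lambda(s-s_1,\Pi\times\omega\chi^{-1})/L^{(S)}(1+2s_1,\omega^{-1}\chi^2)$ and observe that, $\Pi$ being cuspidal on $\GL_3$, the twists $\Pi\otimes\chi$ and $\Pi\otimes\omega\chi^{-1}$ are again cuspidal, so $\Lambda(\cdot,\Pi\times\chi)$ and $\Lambda(\cdot,\Pi\times\omega\chi^{-1})$ are entire; thus for each fixed $s$ the only possible poles of $s_1 \mapsto \Psi(s,F,\eis(s_1,f))$ lie among the zeros of $s_1 \mapsto L^{(S)}(1+2s_1,\omega^{-1}\chi^2)$, a set manifestly independent of $s$.
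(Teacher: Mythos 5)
Your proof is correct and follows essentially the same route as the paper: factor the global integral via the unramified computation \eqref{RSLocUnram}, isolate the finite product over $v\in S$ of local ratios $\Psi_v/(L_vL_v)$, and invoke the local $\GL_3\times\GL_1$ Rankin--Selberg theory to get joint entirety in $(s,s_1)$. You are more explicit than the paper on two points the paper glosses over -- namely why the local ratios are holomorphic \emph{jointly} in both variables (including at reducibility points of $\pi_{s_1,v}$, handled by the unfolding to double zeta integrals) and why the ``in particular'' clause follows (entirety of $\Lambda(\cdot,\Pi\times\chi)$ for cuspidal $\Pi$ on $\GL_3$) -- but the underlying argument is the same.
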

\begin{proof}
	We have the unramified computation at $\vp \notin S$ given by (\ref{RSLocUnram}). At other places $v$, $W_{f_v}(s_1)$ is entire in $s_1$. Hence $\Psi_v(\cdots)$ share the same properties as its counterpart in the case of cuspidal $\varphi$. We get
\begin{align*}
	&\quad \Psi(s,F,\eis(s_1,f)) \cdot \frac{L^{(S)}(1+2s_1, \omega^{-1}\chi^2)}{\Lambda(s+s_1, \Pi \times \chi) \Lambda(s-s_1, \Pi \times \omega\chi^{-1})} \\
	&= \prod_{v \in S} \frac{\Psi_v(s,W_{F,v},W_{f_v}(s_1))}{L_v(s+s_1, \Pi_v \times \chi_v) L_v(s-s_1, \Pi_v \times \omega_v\chi_v^{-1})},
\end{align*}
	which is entire since every factor on the right hand side is.
\end{proof}

	\subsection{Complements on Rankin-Selberg Integrals: Local Theory}
	
	We take the local setting in this subsection (see \S \ref{sec: LocNotation}).
	
	Let $\Pi$ be a generic irreducible admissible representation of $\GL_3(\F)$, whose subspace of smooth vectors is denoted by $\Pi^{\infty}$. Let $\widetilde{\Pi}$ (resp. $\widetilde{\Pi}^{\infty}$) be the contragredient of $\Pi$ (resp. $\Pi^{\infty}$). Denote by $\Whi(\Pi^{\infty};\psi)$ the Whittaker model of $\Pi^{\infty}$ with respect to $\psi$. Then for every $W \in \Whi(\Pi^{\infty};\psi)$, we have $\widetilde{W} \in \Whi(\widetilde{\Pi}^{\infty};\psi^{-1})$. For every (unitary) character $\chi$ of $\F^{\times}$ and $s \in \C$, the following two integrals are integral representations of the Rankin-Selberg $L$-functions $L(s,\Pi \times \chi)$ introduced in \cite{JPS83}:
\begin{equation}
	\Psi(s,W,\chi;0) = \int_{\F^{\times}} W \begin{pmatrix} t & & \\ & 1 & \\ & & 1 \end{pmatrix} \chi(t) \norm[t]^{s-1} \ud^{\times}t,
\label{RSIntGL3GL1Type0}
\end{equation}
\begin{equation}
	\Psi(s,W,\chi;1) = \int_{\F^{\times}} \left( \int_{\F} W \begin{pmatrix} t & & \\ x & 1 & \\ & & 1 \end{pmatrix} \ud x \right) \chi(t) \norm[t]^{s-1} \ud^{\times}t,
\label{RSIntGL3GL1Type1}
\end{equation}
	where $W \in \Whi(\Pi^{\infty};\psi)$. Both integrals are absolutely convergent for $\Re s \gg 1$, admit meromorphic continuation to $s \in \C$ and satisfy the following functional equation
\begin{equation}
	\Psi(1-s, \widetilde{\Pi}(w_{3,1}).\widetilde{W},\chi^{-1};1) = \gamma(s, \Pi \times \chi; \psi) \Psi(s,W,\chi;0).
\label{RSIntFEGL3GL1}
\end{equation}
\begin{remark}
	For non-archimedean $\F$, the above results are contained in \cite[Theorem (2.7)]{JPS83}, while for archimedean $\F$, they are contained in \cite[Theorem 2.1]{JS90}.
\end{remark}

	We shall need the above results for a special type of $W \in \Whi(\Pi^{\infty};\psi)$, namely
\begin{equation}
	W(g) = \int_{\F} W_0 \left( g \begin{pmatrix} 1 & & \\ x & 1 & \\ & & 1 \end{pmatrix} \right) \Phi(x) \ud x, \quad W_0 \in \Whi(\Pi^{\infty};\psi), \Phi \in \Sch(\F).
\label{TestWhiF}
\end{equation}
	The following result is an easy extension of \cite[Theorem (7.4)]{JPS79}.

\begin{lemma} \label{lem: RSIntFEGL3GL1Var}
	(1) The function defined by (\ref{TestWhiF}) satisfies $W \in \Whi(\Pi^{\infty};\psi)$.
	
\noindent (2) We write for any $W_0 \in \Whi(\Pi^{\infty};\psi)$
	$$ \Psi(s,W_0,\chi;\Phi) = \int_{\F^{\times}} \left( \int_{\F} W_0 \begin{pmatrix} t & & \\ x & 1 & \\ & & 1 \end{pmatrix} \Phi(x) \ud x \right) \chi(t) \norm[t]^{s-1} \ud^{\times}t. $$
	Then the above integral is absolutely convergent for $\Re s \gg 1$, admits meromorphic continuation to $s \in \C$ and satisfies the functional equation
	$$ \Psi(1-s,\widetilde{\Pi}(w_{3,1}).\widetilde{W_0},\chi^{-1};\invOFour(\Phi)) = \gamma(s, \Pi \times \chi; \psi) \Psi(s,W_0,\chi;\Phi). $$
\end{lemma}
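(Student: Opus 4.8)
The plan is to prove both assertions by reducing them to the already-established results for the standard Rankin--Selberg integrals \eqref{RSIntGL3GL1Type0}, \eqref{RSIntGL3GL1Type1} and their functional equation \eqref{RSIntFEGL3GL1}, using the fact that the operation \eqref{TestWhiF} is an integral of right translates. For part (1), I would first observe that for each fixed $x \in \F$ the right translate $g \mapsto W_0\left( g \begin{pmatrix} 1 & & \\ x & 1 & \\ & & 1 \end{pmatrix} \right)$ lies in $\Whi(\Pi^{\infty};\psi)$, since $\Whi(\Pi^{\infty};\psi)$ is stable under right translation by $\GL_3(\F)$. In the non-archimedean case $\Phi \in \Cont_c^{\infty}(\F)$, so $W$ is a \emph{finite} linear combination of such translates and hence lies in $\Whi(\Pi^{\infty};\psi)$ trivially. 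In the archimedean case one argues that $x \mapsto W_0(g \cdot n_{21}(x))$ (writing $n_{21}(x)$ for the indicated unipotent element) is a smooth $\Whi(\Pi^{\infty};\psi)$-valued function, that the integral against $\Phi \in \Sch(\F)$ converges in the Fr\'echet topology of $\Whi(\Pi^{\infty};\psi)$ — using moderate growth of Whittaker functions together with rapid decay of $\Phi$ — and that the resulting vector is again in the space because $\Whi(\Pi^{\infty};\psi)$ is complete and closed under such Bochner-type integrals. This is exactly the content of \cite[Theorem (7.4)]{JPS79} in the Borel-induced case, and the same argument works verbatim here since it uses only the translation action; this is the ``easy extension'' referred to.

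For part (2), I would unwind the definition: by Fubini (justified in the region $\Re s \gg 1$ by the same growth/decay estimates together with Proposition \ref{prop: SchRestriction}-type bounds on the Whittaker function near $0$ and $\infty$), one has
\[
	\Psi(s,W_0,\chi;\Phi) = \Psi(s,W,\chi;1),
\]
where $W$ is the function \eqref{TestWhiF} and the right-hand side is the integral \eqref{RSIntGL3GL1Type1} attached to $W$. Since $W \in \Whi(\Pi^{\infty};\psi)$ by part (1), the standard theory gives absolute convergence for $\Re s \gg 1$ and meromorphic continuation to all of $\C$.

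For the functional equation, the key computation is to identify what happens to $W$ under the operation $W \mapsto \widetilde{\Pi}(w_{3,1}).\widetilde{W}$. Writing $\widetilde{W}(g) = W(w_3 g^{\iota})$ and tracking the intertwining element $w_{3,1}$, the lower-unipotent translate $\begin{pmatrix} 1 & & \\ x & 1 & \\ & & 1 \end{pmatrix}$ on the right of $W_0$ becomes, after the inverse-transpose and the relevant Weyl conjugation, an \emph{upper}-unipotent translate; the integral $\int_\F (\cdots) \Phi(x)\,\ud x$ then turns into $\int_\F (\cdots) \widehat{\Phi}$-type integral after applying \eqref{RSIntFEGL3GL1} and using that the $\psi$-Fourier transform on $\Sch(\F)$ is intertwined with this change of unipotent direction — precisely, one checks
\[
	\widetilde{\Pi}(w_{3,1}).\widetilde{W} \ \text{equals the function \eqref{TestWhiF} built from } \widetilde{\Pi}(w_{3,1}).\widetilde{W_0} \text{ and } \invOFour(\Phi).
\]
Granting this identity, the desired functional equation is immediate from \eqref{RSIntFEGL3GL1} applied to $W$. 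The main obstacle is this last bookkeeping step: one must carefully compute how $w_3$, the inverse-transpose $\iota$, and $w_{3,1}$ conjugate the one-parameter unipotent subgroup $\{n_{21}(x)\}$ appearing in \eqref{TestWhiF}, and verify that the Jacobian/character factors introduced along the way are exactly those that convert $\Phi$ into $\invOFour(\Phi)$ rather than into some twisted or rescaled Fourier transform. I expect this to be a short but error-prone matrix manipulation, after which everything else follows formally from the cited results and the convergence estimates of \S\ref{sec: SchTechBd}.
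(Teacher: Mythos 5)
Your overall strategy — reduce everything to the known Rankin--Selberg integrals and their functional equation \eqref{RSIntFEGL3GL1} — is the same as the paper's, and part (1) is essentially fine. But there are two substantive errors in part (2) that would derail the argument.

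First, the identification is wrong: from \eqref{TestWhiF} one has $W\left(\begin{smallmatrix} t & & \\ & 1 & \\ & & 1 \end{smallmatrix}\right) = \int_\F W_0\left(\begin{smallmatrix} t & & \\ x & 1 & \\ & & 1 \end{smallmatrix}\right) \Phi(x)\, \ud x$ directly, so $\Psi(s, W_0, \chi; \Phi) = \Psi(s, W, \chi; 0)$, the \emph{type-$0$} integral — not $\Psi(s, W, \chi; 1)$, which would require an additional unweighted $x$-integration. This matters because \eqref{RSIntFEGL3GL1} takes type $0$ to type $1$, so it is the type-$0$ integral of $W$ that you want in order to feed the functional equation.

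Second, and more seriously, your claimed structural identity — that $\widetilde{\Pi}(w_{3,1}).\widetilde{W}$ equals the function \eqref{TestWhiF} built from $\widetilde{\Pi}(w_{3,1}).\widetilde{W_0}$ and $\invOFour(\Phi)$ — is false. Following through the conjugations $w_3$, $\iota$, $w_{3,1}$ as you describe does turn the lower unipotent $n_1^-(x)$ into an \emph{upper} unipotent, but it is $n_2^+(-y) = \left(\begin{smallmatrix} 1 & & -y \\ & 1 & \\ & & 1 \end{smallmatrix}\right)$ in the $(1,3)$ position, \emph{not} the $(2,1)$ lower unipotent of \eqref{TestWhiF}; at this stage the weight is still $\Phi$, not $\invOFour(\Phi)$. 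If your claimed identity held, then $\widetilde{\Pi}(w_{3,1}).\widetilde{W}$ evaluated at $\left(\begin{smallmatrix} t & & \\ x & 1 & \\ & & 1 \end{smallmatrix}\right)$ would be a \emph{convolution} in $x$ against $\invOFour(\Phi)$, and integrating over $x$ as in the type-$1$ integral would simply produce a spurious constant factor $\Phi(0)$, not $\Psi(1-s, \widetilde{\Pi}(w_{3,1}).\widetilde{W_0}, \chi^{-1}; \invOFour(\Phi))$. The actual mechanism is different and simpler: one right-translates $\left(\begin{smallmatrix} t & & \\ x & 1 & \\ & & 1 \end{smallmatrix}\right)$ by $n_2^+(-y)$, commutes the unipotent to the \emph{left}, and the resulting left upper-unipotent factor with entry $-xy$ in the $(2,3)$ slot is absorbed by the $\psi^{-1}$-Whittaker character of $\widetilde{\Pi}(w_{3,1}).\widetilde{W_0}$, producing a factor $\psi(xy)$. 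Integrating this against $\Phi(y)\, \ud y$ yields the \emph{pointwise product}
\[
	\widetilde{\Pi}(w_{3,1}).\widetilde{W}\begin{pmatrix} t & & \\ x & 1 & \\ & & 1 \end{pmatrix} = \invOFour(\Phi)(x) \cdot \widetilde{\Pi}(w_{3,1}).\widetilde{W_0}\begin{pmatrix} t & & \\ x & 1 & \\ & & 1 \end{pmatrix},
\]
and it is this multiplicative identity — not a transfer of \eqref{TestWhiF}-type structure — that converts the type-$1$ zeta integral of $\widetilde{\Pi}(w_{3,1}).\widetilde{W}$ into $\Psi(1-s,\widetilde{\Pi}(w_{3,1}).\widetilde{W_0},\chi^{-1};\invOFour(\Phi))$ and closes the argument. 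You should redo the matrix bookkeeping with this commutator structure in mind.
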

\begin{proof}
	(1) This is obvious for non-archimedean $\F$. For $\F \in \{ \R,\C \}$, let $X$ be any element in the enveloping algebra of the Lie algebra of $\GL_3(\F)$. It suffices to prove the convergence of the following integral
	$$ \int_{\F} \Phi(x) \cdot \Pi(X) \Pi(n_1^-(x)) W_0 \ud x, \quad n_1^-(x) := \begin{pmatrix} 1 & & \\ x & 1 & \\ & & 1 \end{pmatrix} $$
	in the underlying Hilbert space $V_{\Pi}$ of $\Pi$. If we write $n_1^-(-x) X n_1^-(x)$ as a linear combination of elements of a basis in the enveloping algebra of the Lie algebra of $\GL_3(\F)$, then the length of the sum depends only on the degree of $X$, and the coefficients are at most polynomial in $x$. It follows that the dominant integral
	$$ \int_{\F} \extnorm{\Phi(x)} \cdot \extNorm{ \Pi(X) \Pi(n_1^-(x)) W_0 } \ud x = \int_{\F} \extnorm{\Phi(x)} \cdot \extNorm{ \Pi(n_1^-(-x) X n_1^-(x)).W_0 } \ud x < +\infty $$
is convergent. Hence $W \in \Whi(\Pi^{\infty};\psi)$.

\noindent (2) It suffices to prove the functional equation, as other assertions follow easily from (1). Note that, with $W$ defined by (\ref{TestWhiF}), we have
	$$ \Psi(s,W_0,\chi;\Phi) = \Psi(s,W,\chi;0). $$
	The desired functional equation follows from (\ref{RSIntFEGL3GL1}) if we can justify
\begin{equation}
	\Psi(1-s, \widetilde{\Pi}(w_{3,1}).W,\chi^{-1};1) = \Psi(1-s,\widetilde{\Pi}(w_{3,1}).\widetilde{W_0},\chi^{-1};\widehat{\Phi}).
\label{DualZetaRel}
\end{equation}
	To this end, we compute
\begin{align*}
	\widetilde{W}(g) &= W(w_3{}^tg^{-1}) = \int_{\F} \Phi(x) W_0(w_3{}^tg^{-1}n_1^-(x)) \ud x \\
	&= \int_{\F} \Phi(y) \widetilde{W_0}(gn_1^+(-y)) \ud y, \quad n_1^+(x) := {}^t \left( n_1^-(x) \right) = \begin{pmatrix} 1 & x & \\ & 1 & \\ & & 1 \end{pmatrix}.
\end{align*}
	It follows that
	$$ \widetilde{W}(gw_{3,1}) = \int_{\F} \Phi(y) \widetilde{\Pi}(w_{3,1}).\widetilde{W_0}(gn_2^+(-y)) \ud y, \quad n_2^+(x) := w_{3,1}n_1^+(x)w_{3,1} = \begin{pmatrix} 1 & & x \\ & 1 & \\ & & 1 \end{pmatrix}. $$
	In particular, we have
\begin{align*}
	\widetilde{\Pi}(w_{3,1}).\widetilde{W} \begin{pmatrix} t & & \\ x & 1 & \\ & & 1 \end{pmatrix} &= \int_{\F} \Phi(y) \widetilde{\Pi}(w_{3,1}).\widetilde{W_0} \left( \begin{pmatrix} t & & \\ x & 1 & \\ & & 1 \end{pmatrix} n_2^+(-y) \right) \ud y \\
	&= \int_{\F} \Phi(y) \widetilde{\Pi}(w_{3,1}).\widetilde{W_0} \left( \begin{pmatrix} 1 & & -ty \\ & 1 & -xy \\ & & 1 \end{pmatrix} \begin{pmatrix} t & & \\ x & 1 & \\ & & 1 \end{pmatrix} \right) \ud y \\
	&= \invOFour(\Phi)(x) \cdot \widetilde{\Pi}(w_{3,1}).\widetilde{W_0} \begin{pmatrix} t & & \\ x & 1 & \\ & & 1 \end{pmatrix},
\end{align*}
	which justifies well (\ref{DualZetaRel}).
\end{proof}

	\subsection{A Double Zeta Integral}
	
	We regard $\Phi \mapsto \Psi(s,W_0,\chi; \Phi)$ as a tempered distribution, and would like to study its Mellin transform. Precisely, for unitary characters $\chi_j$ of $\F^{\times}$, $s_j \in \C$ and $W \in \Whi(\Pi^{\infty};\psi)$ we introduce the following double zeta integral
\begin{equation} \label{DZInt}
	\DBZ{s_1}{s_2}{\chi_1}{\chi_2}{W} = \int_{(\F^{\times})^2} W \begin{pmatrix} t_1 & & \\ t_2 & 1 & \\ & & 1 \end{pmatrix} \chi_1(t_1) \chi_2(t_2) \norm[t_1]^{s_1-1} \norm[t_2]^{s_2} \ud^{\times}t_1 \ud^{\times}t_2.
\end{equation}

\begin{proposition} \label{prop: DZIntLocProp}
	(1) The integral in (\ref{DZInt}) is absolutely convergent for $\Re s_2 > 0$ and $\Re s_1 \gg 1$. Moreover, if $\Pi$ is unitary and $\RamCst$-tempered for some $0 < \RamCst < 1/2$, then the absolute convergence holds for $\Re s_2 > 0$ and $\Re(s_1) > \RamCst$.
	
\noindent (2) The integral in (\ref{DZInt}) has meromorphic continuation to $s_1,s_2 \in \C$ so that the ratio
	$$ \DBZ{s_1}{s_2}{\chi_1}{\chi_2}{W} / \left( L(s_1, \Pi \times \chi_1) L(s_2,\chi_2) \right) $$
	is holomorphic in $(s_1,s_2) \in \C^2$. Moreover, if $\F$ is archimedean, then $\DBZ{s_1}{s_2}{\chi_1}{\chi_2}{W}$ has rapid decay in any vertical region of the shape $a_j \leq \Re s_j \leq b_j$ with $a_j,b_j \in \R$ for $j=1,2$.
	
\noindent (3) Let $\Pi = \pi \boxplus \mu$, where $\pi$ is a unitary irreducible (not necessarily square-integrable) representation of $\GL_2(\F)$ and is $\vartheta$-tempered for some $0< \vartheta < 1/2$. Then the integral in (\ref{DZInt}) is absolutely convergent for $\Re s_2 > 0$ and $\Re s_1 > \vartheta$.
	
\noindent (4) The double zeta integral satisfies the following functional equation
	$$ \DBZ{1-s_1}{1-s_2}{\chi_1^{-1}}{\chi_2^{-1}}{\widetilde{\Pi}(w_{3,1}).\widetilde{W}} = \gamma(s_1,\Pi \times \chi_1,\psi) \gamma(s_2,\chi_2,\psi) \DBZ{s_1}{s_2}{\chi_1}{\chi_2}{W}. $$
\end{proposition}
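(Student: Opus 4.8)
The plan rests on the elementary matrix identity $\left(\begin{smallmatrix}t_1&&\\t_2&1&\\&&1\end{smallmatrix}\right)=\diag(t_1,1,1)\,n_1^-(t_2)$, where $n_1^-(x):=\left(\begin{smallmatrix}1&&\\x&1&\\&&1\end{smallmatrix}\right)$, so that
$$\DBZ{s_1}{s_2}{\chi_1}{\chi_2}{W}=\int_{(\F^\times)^2}W\!\big(\diag(t_1,1,1)\,n_1^-(t_2)\big)\,\chi_1(t_1)\chi_2(t_2)\norm[t_1]^{s_1-1}\norm[t_2]^{s_2}\,\ud^\times t_1\,\ud^\times t_2.$$
Reading this two ways: integrating over $t_1$ first, the inner integral is $\Psi(s_1,\Pi(n_1^-(t_2))W,\chi_1;0)$, the $\GL_3\times\GL_1$ Rankin--Selberg integral \eqref{RSIntGL3GL1Type0} for the vector $\Pi(n_1^-(t_2))W\in\Whi(\Pi^\infty,\psi)$; alternatively, using $\ud x=\zeta(1)^{-1}\norm[x]\,\ud^\times x$, the whole integral equals $\Psi(s_1,W,\chi_1;\Phi_{s_2,\chi_2})$ in the sense of Lemma~\ref{lem: RSIntFEGL3GL1Var}, with the (non-Schwartz) homogeneous test function $\Phi_{s_2,\chi_2}:=\zeta(1)\,\chi_2\norm[\cdot]^{s_2-1}$.

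For \textup{(1)} and \textup{(3)} I would split the $t_2$-range at $\norm[t_2]=1$ (for archimedean $\F$, Iwasawa-decompose the $\GL_2$-block directly): on $\norm[t_2]\le1$ one has $n_1^-(t_2)\in\gp{K}_3$, while on $\norm[t_2]>1$ one has $\diag(t_1,1,1)\,n_1^-(t_2)=\left(\begin{smallmatrix}1&t_1/t_2&\\&1&\\&&1\end{smallmatrix}\right)\diag(t_1/t_2,t_2,1)\,\kappa$ with $\kappa\in\gp{K}_3$. Using the left $\gp{N}_3$-equivariance of $W$ (which absorbs the unipotent factor into a unitary character) and compactness of $\gp{K}_3$, both pieces are dominated by torus integrals of $\sup_{\kappa\in\gp{K}_3}\norm[W(\diag(a_1,a_2,1)\kappa)]$; bounding this by a gauge (Proposition~\ref{prop: W-ValSchBd}, or the classical estimates of \cite{JPS79,JPS83}) reduces everything to the convergence of $\int\xi(a_1,a_2)\norm[a_1]^{\Re s_1-1}\norm[a_2]^{\sigma}\,\ud^\times a_1\,\ud^\times a_2$. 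Rapid decay of $\xi$ along any simple-root coordinate handles large $t_1,t_2$; the $\norm[t_2]\le1$ piece forces $\Re s_2>0$; and the growth of $\xi$ as $\alpha_1=a_1/a_2\to0$ forces $\Re s_1$ past the largest exponent of $\Pi$ in the $\alpha_1$-direction — $<1/2$ for generic $\Pi$ (Jacquet--Shalika), hence $\Re s_1\gg1$, and $\le\RamCst$ for $\RamCst$-tempered $\Pi$, hence $\Re s_1>\RamCst$. In Part~(3), $\Pi=\pi\boxplus\mu$: here $\alpha_1$ lies inside the $\GL_2$-factor of the Levi, so this exponent is governed by $\pi$ alone (visible from the induced structure, e.g.\ via \eqref{eq: GenGSWhi} or \cite[\S(4.6)]{JPS83}) and is $\le\RamCst$ regardless of $\mu$; hence convergence for $\Re s_2>0$, $\Re s_1>\RamCst$.

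For \textup{(4)} I would observe that, for $0<\Re s_2<1$ and $\Re s_1\gg1$, the computation in the proof of Lemma~\ref{lem: RSIntFEGL3GL1Var} goes through verbatim with $\Phi_{s_2,\chi_2}$ in place of a Schwartz--Bruhat function (the intermediate integrals converging by the rapid decay of $W_0$ at infinity and $\Re s_2>0$ at the origin); the only use of Schwartz-ness is the step $\int\Phi(y)\psi(xy)\,\ud y=\invOFour(\Phi)(x)$, which for $\Phi=\Phi_{s_2,\chi_2}$ is evaluated by Tate's local functional equation and produces $\Phi_{1-s_2,\chi_2^{-1}}$ up to the factor $\gamma(s_2,\chi_2,\psi)$ (in the normalizations of \S\ref{sec: GlobNotation}). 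Since $\Psi(1-s_1,\widetilde\Pi(w_{3,1})\widetilde W,\chi_1^{-1};\Phi_{1-s_2,\chi_2^{-1}})=\DBZ{1-s_1}{1-s_2}{\chi_1^{-1}}{\chi_2^{-1}}{\widetilde\Pi(w_{3,1})\widetilde W}$, the functional equation of Lemma~\ref{lem: RSIntFEGL3GL1Var} then reads exactly as asserted, and extends to all $(s_1,s_2)$ because both sides are meromorphic by Part~(2). (Alternatively the identity extends from Schwartz $\Phi$ to $\Phi_{s_2,\chi_2}$ by a routine limiting argument.)

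For \textup{(2)}, I would use that $\Psi(s_1,\Pi(n_1^-(t_2))W,\chi_1;0)$ continues meromorphically in $s_1$ with $\Psi(s_1,\Pi(n_1^-(t_2))W,\chi_1;0)/L(s_1,\Pi\times\chi_1)$ holomorphic \cite{JPS83,JS90}, and that this quotient extends, as a function of $t_2$, to a Schwartz--Bruhat function on $\F$ holomorphic in $s_1$, of value $\Psi(s_1,W,\chi_1;0)/L(s_1,\Pi\times\chi_1)$ at $t_2=0$ — one checks smoothness and rapid decay in $t_2$ by differentiating the absolutely convergent integral for $\Re s_1\gg1$, using $\Pi(n_1^-(t_2))W\to W$ as $t_2\to0$ and the rapid decay of $\Pi(n_1^-(t_2))W$ as $\norm[t_2]\to\infty$ from the gauge analysis, then continuing in $s_1$. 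Then $\DBZ{s_1}{s_2}{\chi_1}{\chi_2}{W}/L(s_1,\Pi\times\chi_1)$ is the Tate zeta integral of that Schwartz--Bruhat function against $\chi_2\norm[\cdot]^{s_2}\,\ud^\times t_2$, so a further division by $L(s_2,\chi_2)$ yields a function holomorphic on $\C^2$; in the archimedean case the rapid decay in vertical strips follows by combining the vertical-strip bounds of \cite{JS90} (uniform over $\{\Pi(n_1^-(t_2))W\}_{t_2}$) with the corresponding bound for the Tate integral. I expect Part~(2) to be the main obstacle: one must upgrade the single-variable Rankin--Selberg $L$-factor control — and the vertical-strip estimates — to be \emph{uniform in the test vector} $\Pi(n_1^-(t_2))W$ and sufficiently regular in $t_2$, so that dividing out $L(s_1,\Pi\times\chi_1)$ genuinely leaves a Schwartz--Bruhat function of $t_2$; the Part~(3) exponent bound, resting on $\alpha_1$ being insensitive to $\mu$, is the other delicate point.
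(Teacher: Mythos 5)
Your proposal takes a genuinely different route from the paper, and it contains a real gap at exactly the point you flag.

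\textbf{What differs.} The paper's proof of parts (1)--(3) hinges on the \emph{generalized Godement sections} developed in \S\ref{sec: GGS}: by Proposition \ref{prop: GenGSWhi}, every $W\in\Whi(\Pi^\infty,\psi)$ with $\Pi=\pi\boxplus\mu$ admits the integral representation \eqref{GodementWhi} in terms of a $\Whi(\pi^\infty,\psi)$-valued Schwartz function $\Phi$; substituting this into \eqref{DZInt} and making the Iwasawa substitution $h=\kappa\,n(u)\,a(t)\,z$ yields \eqref{DZetaAC}/\eqref{DZetaACbis}, in which the $(t_1,t,t_2)$-integrals literally \emph{decouple} into a $\GL_2$ Rankin--Selberg integral representing $L(s_1,\pi\times\chi_1)$ and two Tate integrals representing $L(s_1,\mu\chi_1)L(s_2,\chi_2)$. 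Absolute convergence for $\Re s_1>\RamCst$, meromorphic continuation, holomorphy of the quotient, and the archimedean rapid-decay estimates in vertical strips then all follow from standard one-variable statements plus Propositions \ref{prop: SchRestriction}, \ref{prop: SchCompTrans}, \ref{prop: W-ValSchBd}. (For non-archimedean $\F$, (1) and (2) are done even more cheaply by the support property \cite[Lemma (4.1.5)]{JPS79}, which reduces the double zeta integral to a finite sum of products of single zeta integrals.) You instead work directly with the Iwasawa decomposition and a gauge bound on $W$.

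\textbf{The gap.} Your plan for (2) needs the assertion that
\[
t_2\ \longmapsto\ \Psi\bigl(s_1,\Pi(n_1^-(t_2))W,\chi_1;0\bigr)\big/L(s_1,\Pi\times\chi_1)
\]
extends (after meromorphic continuation in $s_1$) to a Schwartz--Bruhat function of $t_2$, holomorphic in $s_1$, with vertical-strip decay uniform over the $t_2$-family. This is not something the gauge bound on $W$ gives you: the gauge controls the growth of $W$, i.e.\ the \emph{numerator} $\Psi(\cdot)$ in the region of absolute convergence, but says nothing a priori about the \emph{quotient} $\Psi/L$ after continuation, nor about Schwartz-norm control in the transversal $t_2$-variable uniformly in $s_1$ past the abscissa of convergence. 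The step ``then continuing in $s_1$'' cannot be taken pointwise in $t_2$ without losing the Schwartz-valuedness; you need holomorphic continuation of a Fr\'echet-valued function, and proving that the continued function has no $L$-free poles and remains Schwartz-valued is exactly as hard as the original problem. The paper's Godement section identity \eqref{GodementWhi} is precisely the device that manufactures this uniformity: it exhibits the $t_2$-family $\Pi(n_1^-(t_2))W$ as the image of a single $\Phi\in\Sch(2\times3,\Whi(\pi^\infty,\psi))$, after which the quotient by $L(s_1,\pi\times\chi_1)L(s_1,\mu\chi_1)$ is manifestly entire with Schwartz-controlled $t_2$-behavior. You acknowledge this is ``the main obstacle,'' but you have not closed it; absent a substitute for \eqref{GodementWhi}, your (2) is not a proof.

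\textbf{The same gap resurfaces in (3).} Your claim that the exponent of $W$ in the $\alpha_1$-direction is ``governed by $\pi$ alone'' is the kind of statement the Godement section is built to prove; citing ``e.g.\ via \eqref{eq: GenGSWhi}'' concedes the point. The exponents that enter for general $W\in\Whi((\pi\boxplus\mu)^\infty,\psi)$ depend a priori on both $\pi$ and $\mu$ (indeed $L(s_1,\Pi\times\chi_1)=L(s_1,\pi\times\chi_1)L(s_1,\mu\chi_1)$, and the $\mu$-factor has a pole at $\Re s_1=0$ when $\mu\chi_1$ is unramified), so the conclusion ``$\Re s_1>\vartheta$ suffices'' is not free. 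In the paper this works because the $\mu$-part is isolated in the Tate integral over $t$ in \eqref{DZetaAC}, which converges for $\Re s_1>0$ since $\mu$ is unitary; that decoupling is supplied by \eqref{GodementWhi}.

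\textbf{Minor remarks.} Your (4) is in spirit the same as the paper's; the paper sidesteps the non-Schwartz test function $\Phi_{s_2,\chi_2}$ by Mellin-inverting $\Phi\mapsto\Psi(s_1,W_0,\chi_1;\Phi)$ (equations \eqref{RSDZMellinRel}--\eqref{RSDZMellinRelBis}) and appealing to denseness of Mellin transforms of $\Sch(\F)$, which is cleaner than your proposed ``routine limiting argument,'' but your formal computation is on the right track. For non-archimedean $\F$ your Iwasawa analysis of (1) is correct but unnecessarily heavy; the paper's finite-sum reduction via compact support is simpler and directly yields the $\RamCst$-tempered radius-of-convergence argument you would otherwise need a separate gauge estimate for.
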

\begin{proof}
	(1) \& (2) We first note that for non-archimedean $\F$, the two assertions are easy. In fact, by \cite[Lemma (4.1.5)]{JPS79} we know that the function on $\F$
	$$ x \mapsto W \begin{pmatrix} t_1 & & \\ x & 1 & \\ & & 1 \end{pmatrix} $$
	has support contained in a compact subset independent of $t_1 \in \F^{\times}$. By smoothness, there exist $n,N \in \Z_{\geq 0}$ so that the above function has support in $\vp^{-N}$, and is invariant by additive translation by $\vp^n$. Taking a system of representatives $\alpha_j$ of $\vp^{-N}/\vp^n$, we readily see that
	$$ W \begin{pmatrix} t_1 & & \\ t_2 & 1 & \\ & & 1 \end{pmatrix} = \sum_j W\left( \begin{pmatrix} t_1 & & \\ & 1 & \\ & & 1 \end{pmatrix} \begin{pmatrix} 1 & & \\ \alpha_j & 1 & \\ & & 1 \end{pmatrix} \right) \mathbbm{1}_{\alpha_j + \vp^n}(t_2). $$
	Thus the double zeta integral
\begin{align*}
	\DBZ{s_1}{s_2}{\chi_1}{\chi_2}{W} &= \int_{(\F^{\times})^2} W \begin{pmatrix} t_1 & & \\ t_2 & 1 & \\ & & 1 \end{pmatrix} \chi_1(t_1) \norm[t_1]^{s_1-1} \chi(t_2) \norm[t_2]^{s_2} \ud^{\times}t_1 \ud^{\times}t_2 \\
	&= \sum_j \int_{\F^{\times}} W\left( \begin{pmatrix} t_1 & & \\ & 1 & \\ & & 1 \end{pmatrix} \begin{pmatrix} 1 & & \\ \alpha_j & 1 & \\ & & 1 \end{pmatrix} \right) \chi_1(t_1) \norm[t_1]^{s_1-1} \ud^{\times}t_1 \cdot \\
	&\qquad \int_{\F^{\times}} \mathbbm{1}_{\alpha_j + \vp^n}(t_2) \chi_2(t_2) \norm[t_2]^{s_2} \ud^{\times} t_2
\end{align*}
	is a finite sum of products of standard integrals representing $L(s_1, \Pi \times \chi_1)$ and $L(s_2,\chi_2)$. The required properties follow from \cite[Theorem (2.7)]{JPS83} and Tate's thesis. Moreover, if $\Pi$ is unitary and $\RamCst$-tempered, then the function represented by $\DBZ{s_1}{s_2}{\chi_1}{\chi_2}{W}$ is a Laurent series in $q^{-s_1}$ and holomorphic in $\Re(s_1) > \RamCst$ (where $L(s_1, \Pi \times \chi_1)$ is holomorphic). Its radius of absolute convergence must be $> q^{-\RamCst}$, i.e., it is absolutely convergent for $\Re(s_1) > \RamCst$. We then note that the case of archimedean $\F$ satisfies the condition in (3) by Langlands's classification. Hence we leave this case to the next part.

\noindent (3) We first give another treatment of meromorphic continuation. Let $\Phi \in \Sch(2 \times 3, \Whi(\pi^{\infty}; \psi))$ (see Definition \ref{def: W-ValSchF}). The equation \eqref{eq: GenGSWhi} associates a $W \in \Whi(\Pi^{\infty},\psi)$ by the formula
\begin{equation} \label{GodementWhi}
	W(g) := \mu(\det g) \norm[\det g] \int_{\GL_2(\F)} \OFour_{\vec{3}}(g.\Phi)(h,h^{\iota}\vec{e}_2; h^{-1}) \mu(\det h) \norm[\det h]^{\frac{1}{2}} \ud h,
\end{equation}
	where $\vec{e}_2 = (0,1)^T$. Moreover, every element in $\Whi(\Pi^{\infty},\psi)$ is a such $W$ by Proposition \ref{prop: GenGSWhi}. From (\ref{GodementWhi}) we deduce, by the change of variables $h \mapsto ha(t_1)^{-1}$,
\begin{align*}
	W \begin{pmatrix} t_1 & & \\ t_2 & 1 & \\ & & 1 \end{pmatrix} &= \mu(t_1) \norm[t_1] \int_{\GL_2(\F)} \OFour_{\vec{3}}(\Phi) \left( h \begin{pmatrix} t_1 & 0 \\ t_2 & 1 \end{pmatrix}, h^{\iota} \begin{pmatrix} 0 \\ 1 \end{pmatrix}; h^{-1} \right) \mu(\det h) \norm[\det h]^{\frac{1}{2}} \ud h \\
	&= \norm[t_1]^{\frac{1}{2}} \int_{\GL_2(\F)} \OFour_{\vec{3}}(\Phi) \left( h \begin{pmatrix} 1 & 0 \\ t_2 & 1 \end{pmatrix}, h^{\iota} \begin{pmatrix} 0 \\ 1 \end{pmatrix}; a(t_1)h^{-1} \right) \mu(\det h) \norm[\det h]^{\frac{1}{2}} \ud h.
\end{align*}
	We introduce $\gp{A}^1 = \gp{A}^1(\F) = \left\{ a(t) \ \middle| \ t \in \F^{\times} \right\}$ and rewrite
\begin{align*} 
	W \begin{pmatrix} t_1 & & \\ t_2 & 1 & \\ & & 1 \end{pmatrix} &=  \norm[t_1]^{\frac{1}{2}} \int_{\GL_2(\F) / \gp{A}^1(\F)} \int_{\F^{\times}} \OFour_{\vec{3}}(\Phi) \left( h \begin{pmatrix} t & 0 \\ t_2 & 1 \end{pmatrix}, h^{\iota} \begin{pmatrix} 0 \\ 1 \end{pmatrix}; a(t_1 t^{-1})h^{-1} \right) \cdot \\
	&\qquad \mu(t) \norm[t]^{\frac{1}{2}} \mu(\det h) \norm[\det h]^{\frac{1}{2}} \ud^{\times}t \ud h.
\end{align*}
	Consequently, the double zeta integral (\ref{DZInt}) can be rewriten, at first formally and with the change of variables $t_1 \mapsto t_1t$, as
\begin{align}
	\DBZ{s_1}{s_2}{\chi_1}{\chi_2}{W} &= \int_{\GL_2(\F) / \gp{A}^1(\F)} \mu(\det h) \norm[\det h]^{\tfrac{1}{2}} \cdot \left\{ \int_{(\F^{\times})^3} \OFour_{\vec{3}}(\Phi) \left( h \begin{pmatrix} t & 0 \\ t_2 & 1 \end{pmatrix}, h^{\iota} \begin{pmatrix} 0 \\ 1 \end{pmatrix}; a(t_1)h^{-1} \right) \right. \nonumber \\
	&\quad \left. \chi_1(t_1) \norm[t_1]^{s_1-\frac{1}{2}} \mu\chi_1(t) \norm[t]^{s_1} \chi_2(t_2) \norm[t_2]^{s_2} \ud^{\times}t_1 \ud^{\times}t \ud^{\times}t_2 \right\} \ud h. \label{DZetaAC}
\end{align}
	The inner integrals over $\ud^{\times} t_1$ and $\ud^{\times} t \ud^{\times} t_2$ are standard integral representations of $L(s_1, \pi \times \chi_1)$ and $L(s_1, \mu\chi_1) L(s_2,\chi_2)$ respectively for any fixed $h$, hence admit meromorphic continuation to $(s,s_0) \in \C^2$, which become holomorphic after dividing by $L(s_1, \pi \times \chi_1) L(s_1, \mu\chi_1) L(s_2,\chi_2)$.
	
	It remains to justify the absolute convergence, and the rapid decay in the case of archimedean $\F$. To this end, we use the following Iwasawa decomposition
	$$ \GL_2(\F) = \gp{K} \gp{N} \gp{A}^1 \gp{Z}; \qquad h = \kappa n(u) a(t) z, \quad u \in \F \ \& \ t,z \in \F^{\times}. $$
	Therefore the measure on $\GL_2(\F)/\gp{A}^1$ is identified with $\ud \kappa \ud u \ud^{\times}z$. Now that for $h = \kappa n(u) z$ we have
	$$ \mu(\det h) \norm[\det h]^{\tfrac{1}{2}} = \mu(\det \kappa) \mu^2(z) \norm[z], $$
	$$ \OFour_{\vec{3}}(\Phi) \left( h \begin{pmatrix} t & 0 \\ t_2 & 1 \end{pmatrix}, h^{\iota} \begin{pmatrix} 0 \\ 1 \end{pmatrix}; a(t_1)h^{-1} \right) = \psi(-t_1u) \omega_{\pi}^{-1}(z) \OFour_{\vec{3}}(\Phi.\kappa) \left( \begin{pmatrix} z(t+ut_2) & zu & 0 \\ zt_2 & z & z^{-1} \end{pmatrix}; a(t_1)\kappa^{-1} \right), $$
	we can rewrite (\ref{DZetaAC}) as
\begin{align}
	\DBZ{s_1}{s_2}{\chi_1}{\chi_2}{W} &= \int_{\gp{K}} \int_{\F} \int_{\F^{\times}} \mu(\det \kappa) \mu^2(z) \norm[z] \cdot \nonumber \\
	&\quad \omega_{\pi}^{-1}(z) \left\{ \int_{(\F^{\times})^3} \OFour_{\vec{3}}(\Phi.\kappa n(u)) \left( \begin{pmatrix} zt & 0 & 0 \\ zt_2 & z & z^{-1} \end{pmatrix}; a(t_1)\kappa^{-1} \right) \right. \cdot \nonumber \\
	&\quad \left. \chi_1(t_1) \norm[t_1]^{s_1-\frac{1}{2}} \mu\chi_1(t) \norm[t]^{s_1} \chi_2(t_2) \norm[t_2]^{s_2} \ud^{\times}t_1 \ud^{\times}t \ud^{\times}t_2 \right\} \ud \kappa \ud u \ud^{\times}z. \label{DZetaACbis}
\end{align}
	If $\Re s_1 > \RamCst$ and $\Re s_2 > 0$, then the inner integrals are absolutely convergent. More generally, if $\Re s_1$ and $\Re s_2$ vary in compact intervals and if $s_1$ and $s_2$ are away from the possible poles, then there are Sobolev, resp. Schwartz norms so that we have uniformly
	$$ \extnorm{\int_{\F^{\times}} W_1(a(t_1)) \chi_1(t_1) \norm[t_1]^{s_1-\frac{1}{2}} \ud^{\times}t_1} \ll \Sob_1(W_1), \quad \forall \ W_1 \in \Whi(\pi^{\infty}, \psi); $$
	$$ \extnorm{\int_{(\F^{\times})^2} \phi(t,t_2) \mu\chi_1(t) \norm[t]^{s_1} \chi_2(t_2) \norm[t_2]^{s_2} \ud^{\times}t \ud^{\times}t_2 } \ll \Sob_2(\phi), \quad \forall \ \phi \in \Sch(\F^2). $$
	If $\F$ is archimedean, we even have the rapid decays for any $A \gg 1$ (with different norms)
	$$ \extnorm{\int_{\F^{\times}} W_1(a(t_1)) \chi_1(t_1) \norm[t_1]^{s_1-\frac{1}{2}} \ud^{\times}t_1} \ll (1+\norm[\Im s_1])^{-A} \Sob_1(W_1), \quad \forall \ W_1 \in \Whi(\pi^{\infty}, \psi); $$
	$$ \extnorm{\int_{(\F^{\times})^2} \phi(t,t_2) \mu\chi_1(t) \norm[t]^{s_1} \chi_2(t_2) \norm[t_2]^{s_2} \ud^{\times}t \ud^{\times}t_2 } \ll (1+\norm[\Im s_1])^{-A}(1+\norm[\Im s_2])^{-A} \Sob_2(\phi), \quad \forall \ \phi \in \Sch(\F^2). $$
	Changing $W_1$ (resp. $\phi$) with the translates $(\kappa n(u))^{-1}.W_1$ (resp. $(\kappa n(u))^T.\phi$) only increases the right hand side by a polynomial in $\norm[u]$. By Proposition \ref{prop: SchRestriction} \& \ref{prop: SchCompTrans} and Definition \ref{def: W-ValSchF} we can dominate the outer triple integrals in (\ref{DZetaACbis}) by the convergent integral
	$$ (1+\norm[\Im s_1])^{-A}(1+\norm[\Im s_2])^{-A} \int_{\gp{K}} \int_{\F} \int_{\F^{\times}} \norm[z]^{1-\Re s_1 - \Re s_2} (1+\norm[u])^B \phi \begin{pmatrix} zu & 0 \\ z & z^{-1} \end{pmatrix} \ud \kappa \ud u \ud^{\times}z, $$
	for some positive Schwartz function $\phi \in \Sch(2 \times 2, \F)$. Consequently, the new integral representations (\ref{DZetaAC}) and (\ref{DZetaACbis}) are absolutely convergent for all such $s_0,s$, which have rapid decay in the case of archimedean $\F$. This also justifies the formal computation leading to these integral representations by Fubini.
	
\noindent (4) Let $\widehat{\F^{\times}}$ be the unitary dual group of $\F^{\times}$. For $c \in \R$, write $\widehat{\F^{\times}}(c)$ for the set of quasi-characters $\chi$ of $\F^{\times}$ such that $\norm[\chi(t)] = \norm[t]^c$. This is a principal homogeneous space of $\widehat{\F^{\times}}$, which inherits the Plancherel measure of $\widehat{\F^{\times}}$ dual to $\ud^{\times}t$ on $\F^{\times}$, denoted by $\ud \mu$, so that we have the Mellin inversion formula
	$$ f(t) = \int_{\widehat{\F^{\times}}(c)} \left( \int_{\F^{\times}} f(t_1) \chi(t_1) \norm[t_1]^s \ud^{\times}t_1 \right) \ud \mu(\chi \norm^s), \quad \forall f \in \Sch(\F^{\times}). $$
For $\Re s_1 \gg 1$ large, we can change the order of integrations, apply the Mellin inversion over $\F^{\times}$ and get
\begin{align}
	\Psi(s_1,W,\chi_1;\Phi) &= \int_{\F} \left( \int_{\F^{\times}} W \begin{pmatrix} t_1 & & \\ x & 1 & \\ & & 1 \end{pmatrix} \chi_1(t_1) \norm[t_1]^{s_1-1} \ud^{\times}t_1 \right) \Phi(x) \ud x \nonumber \\
	&= \int_{\widehat{\F^{\times}}(c)} \left( \int_{(\F^{\times})^2} W \begin{pmatrix} t_1 & & \\ t_2 & 1 & \\ & & 1 \end{pmatrix} \chi_1(t_1) \norm[t_1]^{s_1-1} \chi_2(t_2) \norm[t_2]^{s_2} \ud^{\times}t_1 \ud^{\times}t_2 \right) \cdot \nonumber \\
	&\qquad \left( \int_{\F^{\times}} \Phi(t_2) \chi_2(t_2)^{-1} \norm[t_2]^{1-s_2} \ud^{\times}t_2 \right) \ud \mu(\chi_2 \norm^{s_2}) \nonumber \\
	&= \int_{\widehat{\F^{\times}}(c)} \DBZ{s_1}{s_2}{\chi_1}{\chi_2}{W} \cdot \Zeta(1-s_2,\chi_2^{-1},\Phi) \ud \mu(\chi_2 \norm^{s_2}), \label{RSDZMellinRel}
\end{align}
	where $0<c=\Re s_2<1$. Note that both sides of (\ref{RSDZMellinRel}) have meromorphic continuation to $s_1 \in \C$ with absolutely convergent integral on the right hand side. Similarly, we have
\begin{equation}
	\Psi(1-s_1,\widetilde{\Pi}(w_{3,1})\widetilde{W},\chi_1^{-1};\invOFour(\Phi)) = \int_{\widehat{\F^{\times}}(c)} \DBZ{1-s_1}{1-s_2}{\chi_1^{-1}}{\chi_2^{-1}}{\widetilde{\Pi}(w_{3,1})\widetilde{W}} \cdot \Zeta(s_2,\chi_2,\invOFour(\Phi)) \ud \mu(\chi_2 \norm^{s_2}).
\label{RSDZMellinRelBis}
\end{equation}
	Applying the functional equation in Lemma \ref{lem: RSIntFEGL3GL1Var} (2) and Tate's local functional equation, we get
\begin{align*} 
	&\quad \int_{\widehat{\F^{\times}}(c)} \DBZ{1-s_1}{1-s_2}{\chi_1^{-1}}{\chi_2^{-1}}{\widetilde{\Pi}(w_{3,1})\widetilde{W}} \cdot \Zeta(s_2,\chi_2,\invOFour(\Phi)) \ud \mu(\chi_2 \norm^{s_2}) \\
	&= \gamma(s_1,\Pi \times \chi_1, \psi) \int_{\widehat{\F^{\times}}(c)} \DBZ{s_1}{s_2}{\chi_1}{\chi_2}{W} \cdot \gamma(s_2,\chi_2,\psi) \Zeta(s_2,\chi_2,\invOFour(\Phi)) \ud \mu(\chi_2 \norm^{s_2}).
\end{align*}
	The desired functional equation for $0<\Re s_2<1$ follows by the denseness of the Mellin transform for $\Phi \in \Sch(\F)$, and by meromorphic continuation of both sides to $s_2 \in \C$.
\end{proof}

\section{Global Distributions}

	We take the global setting in this section (see \S \ref{sec: GlobNotation}).

	Fix a cuspidal automorphic representation $\Pi$ of $\GL_3(\A)$ and a unitary Hecke character $\omega$ of $\F^{\times} \backslash \A^{\times}$. The main distribution is defined on $V_{\Pi}^{\infty}$, the space of smooth vectors in (the automorphic realization of) $\Pi$, by the formula
\begin{equation} \label{eq: MainDis}
	\Theta(F) := \int_{\F \backslash \A} \left( \int_{\F^{\times} \backslash \A^{\times}} F \begin{pmatrix} t & & \\ & 1 & x \\ & & 1 \end{pmatrix} (\omega\omega_{\Pi})^{-1}(t) \ud^{\times}t \right) \psi(-x) \ud x.
\end{equation}

	Although the above integral defining $\Theta(F)$ is absolutely convergent, it will turn out to be convenient to introduce a holomorphic variant
\begin{equation} \label{eq: MainDisVar}
	\Theta(s_0, F) := \int_{\F \backslash \A} \left( \int_{\F^{\times} \backslash \A^{\times}} F \begin{pmatrix} t & & \\ & 1 & x \\ & & 1 \end{pmatrix} (\omega\omega_{\Pi})^{-1}(t) \norm[t]_{\A}^{s_0} \ud^{\times}t \right) \psi(-x) \ud x.
\end{equation}
	The above integral is still absolutely convergent for any $s_0 \in \C$ thanks to the rapid decay of $F$ at the cusp varieties. We obviously have
	$$ \Theta(F) = \Theta(0, F). $$
	We are going to decompose $\Theta(s_0, F)$ for $\Re s_0 \gg 1$ in two different ways with meromorphic continuation to $s_0 \in \C$, and get our main identity as the equality of the two decompositions at $s_0=0$.
	
\begin{theorem} \label{thm: MainId}
	(1) The distribution $\Theta(s_0,F)$ has a meromorphic continuation to $\norm[\Re s_0] < 1/2$ given by
\begin{align*}
	\Theta(s_0,F) &= \frac{1}{\zeta_{\F}^*} \sum_{\chi \in \widehat{\F^{\times} \R_+ \backslash \A^{\times}}} \int_{-\infty}^{\infty} \DBZ{1/2+i\tau}{s_0+1/2-i\tau}{\chi}{(\chi \omega \omega_{\Pi})^{-1}}{W_F} \frac{\ud \tau}{2\pi} \\
	&\quad + \frac{1}{\zeta_{\F}^*} \Res_{s_1=s_0} \DBZ{s_1+1}{s_0-s_1}{(\omega \omega_{\Pi})^{-1}}{\mathbbm{1}}{W_F} - \frac{1}{\zeta_{\F}^*} \Res_{s_1=s_0-1} \DBZ{s_1+1}{s_0-s_1}{(\omega \omega_{\Pi})^{-1}}{\mathbbm{1}}{W_F},
\end{align*}
	where $\Zeta(\cdots)$ is an integral representation of $L(1/2+i\tau, \Pi \times \chi) L(s_0+1/2-i\tau, (\chi \omega \omega_{\Pi})^{-1})$ (see (\ref{DZIntGlobal})).
	
\noindent (2) The distribution $\Theta(s_0,F)$ for $\norm[\Re s_0] < 1/2$ has another expression as
	$$ \Theta(s_0,F) = \sideset{}{_{\substack{\pi \text{ cuspidal} \\ \omega_{\pi}=\omega^{-1}}}} \sum \Theta(s_0,F \mid \pi) + \sum_{\chi \in \widehat{\R_+ \F^{\times} \backslash \A^{\times}}} \int_{-\infty}^{\infty} \Theta(s_0,F \mid \pi(\chi,\omega^{-1}\chi^{-1})) \frac{\ud \tau}{4\pi}, $$
	where $\Theta(s_0,F \mid \pi)$ (resp. $\Theta(s_0,F \mid \pi(\chi,\omega^{-1}\chi^{-1}))$) is a distribution representing $L((1-s_0)/2, \Pi \times \widetilde{\pi})$ (resp. $L((1-s_0)/2-i\tau, \Pi \times \chi^{-1}) L((1-s_0)/2+i\tau, \Pi \times \omega\chi)$) (see (\ref{2ndDecompCusp}) and (\ref{2ndDecompEis})).
\end{theorem}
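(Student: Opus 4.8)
The plan is to establish each of the two decompositions separately, in a region of the auxiliary variable $s_0$ where every manipulation is legitimate, continue both expressions meromorphically to $\norm[\Re s_0] < 1/2$, and then read off the statement: the equality of the two decompositions on that strip is automatic from uniqueness of meromorphic continuation, and the specialization $s_0 = 0$ (i.e.\ Theorem~\ref{thm: Main1}) is licit because $\Theta(s_0,F)$ is itself entire — the integral \eqref{eq: MainDisVar} converges absolutely for every $s_0$ by the rapid decay of $F$ along Siegel sets (\cite{MS12}) — so both right-hand sides are in particular finite and regular at $s_0=0$. Conceptually the two directions are the two ``resolutions'' of the period $\Theta$ prescribed by the strong Gelfand configuration \eqref{eq: KwanGraph}: the first passes through the maximal unipotent $\gp N_3 \subset \GL_3$ and the $\GL_3\times\GL_1$ Rankin--Selberg theory, the second through the subgroup $\GL_2$ and the $\GL_3\times\GL_2$ Rankin--Selberg theory. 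Throughout one uses that the local components $\Pi_v$ are generic and, by known approximations towards Ramanujan, $\RamCst$-tempered for some $\RamCst < 1/2$, so that Proposition~\ref{prop: DZIntLocProp} applies.

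For part~(1), I would take $\Re s_0 \gg 1$ and substitute into \eqref{eq: MainDisVar} the Fourier--Whittaker expansion of the $\GL_3$ cusp form $F$. The sum over $\gp N_2(\F)\backslash\GL_2(\F)$ combines with the torus integral over $\F^\times\backslash\A^\times$ (note $\diag(t,1,1) \in \GL_2(\F)$ for $t\in\F^\times$) to unfold the $\F^\times$-quotient, while the integral over $\F\backslash\A$ against $\psi(-x)$ picks out, by additive Fourier orthogonality, the $(2,3)$-slot of the $\gp N_3$-character; what survives from the large Bruhat cell is, after introducing a Mellin expansion over the idele class group $\widehat{\F^\times\backslash\A^\times} = \bigsqcup_{\chi\in\widehat{\F^\times\R_+\backslash\A^\times}}\{\chi\norm^{i\tau}\}$, the quantity $\tfrac{1}{\zeta_\F^*}\sum_\chi \int_{(\sigma)} \DBZ{s_1+1}{s_0-s_1}{\chi}{(\chi\omega\omega_\Pi)^{-1}}{W_F}\,\tfrac{\ud s_1}{2\pi i}$ on a line $\Re s_1 = \sigma$ inside the absolute-convergence region of Proposition~\ref{prop: DZIntLocProp}(1), the global double zeta integral representing $\Lambda(s_1+1,\Pi\times\chi)\Lambda(s_0-s_1,(\chi\omega\omega_\Pi)^{-1})$ via the unramified computation behind \eqref{RSLocUnram} and \eqref{RSIntGL3GL1Type1} (the degenerate cell contributes only a lower-order term absorbed by the polar contributions below). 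One then moves the $s_1$-contour to $\Re s_1 = -\tfrac12$, using the rapid decay in vertical strips of Proposition~\ref{prop: DZIntLocProp}(2) to keep every intermediate integral convergent, and applies the functional equation of Proposition~\ref{prop: DZIntLocProp}(4) in its global completed form (a ``shift--reflect--shift back'' manoeuvre); only the term $\chi = (\omega\omega_\Pi)^{-1}$, for which $(\chi\omega\omega_\Pi)^{-1} = \mathbbm{1}$ and the factor $\zeta_\F(s_0-s_1)$ appears, has poles — at $s_1 = s_0$ and $s_1 = s_0-1$ — and these are crossed in opposite senses, depositing precisely the two residue terms with a relative minus sign. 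The surviving contour integral (reparametrized by $s_1 = -\tfrac12 + i\tau$) is the stated spectral sum, and this simultaneously provides the continuation to $\norm[\Re s_0] < 1/2$.

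For part~(2), I would perform the torus integral first: set $P_{s_0}(g) := \int_{\F^\times\backslash\A^\times} F(\diag(t,1,1)g)\,(\omega\omega_\Pi)^{-1}(t)\norm[t]_\A^{s_0}\,\ud^\times t$, which converges for every $s_0$ by the rapid decay of $F$ in the $\diag(t,1,1)$-direction and is left-invariant under the Levi $(\GL_1\times\GL_2)(\F)$. Restricting to the copy $j(\GL_2) := \{\diag(1,h)\}$ in the last two coordinates, the function $h \mapsto P_{s_0}(j(h))$ is automorphic and rapidly decreasing on $[\GL_2]$, and a direct computation shows it has central character $\omega^{-1}\norm^{s_0}$; hence it lies in $L^2$ of the finite-volume space $\GL_2(\F)\gp Z_2(\A)\backslash\GL_2(\A)$ twisted by that character. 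Since the unipotent occurring in \eqref{eq: MainDisVar} is $\diag(1,n(x)) = j(n(x))$, we have $\Theta(s_0,F) = \int_{\F\backslash\A} P_{s_0}(j(n(x)))\psi(-x)\,\ud x$, i.e.\ $\Theta(s_0,F)$ is the $\psi$-Whittaker--Fourier coefficient of $P_{s_0}\circ j$. Now insert the $\GL_2$-spectral decomposition $P_{s_0}\circ j = \sum_\pi \mathrm{proj}_\pi + (\text{Eisenstein part}) + (\xi\circ\det\text{-part})$, legitimate term by term because the central character is fixed and $P_{s_0}\circ j$ is rapidly decreasing (for $\Re s_0 = 0$ this is the ordinary unitary decomposition; in general one twists the spectrum by $\norm^{s_0/2}\circ\det$ and continues). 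The one-dimensional $\xi\circ\det$-part is constant on $\gp N_2$ and is annihilated by the $\psi$-integral; for each cuspidal $\pi$ (with $\omega_\pi = \omega^{-1}$ after untwisting) and each Eisenstein datum, the $\psi$-Whittaker coefficient of the corresponding projection is $\sum_j \langle P_{s_0}\circ j, \varphi_{\pi,j}\rangle\,W_{\varphi_{\pi,j}}(\id_2)$, and the pairing $\langle P_{s_0}\circ j, \varphi_{\pi,j}\rangle$ unwinds to the global $\GL_3\times\GL_2$ Rankin--Selberg integral \eqref{RSGlobal} for $\Pi\times\widetilde\pi$ (in the variant with $\GL_2$ in the last two coordinates, conjugate to \eqref{RSGlobal} by a fixed Weyl element of $\GL_3$) evaluated at $s = (1-s_0)/2$; by the Euler product and \eqref{RSLocUnram} this represents $\Lambda((1-s_0)/2,\Pi\times\widetilde\pi)$, and for the Eisenstein term built from $\pi_{i\tau}(\chi,\omega^{-1}\chi^{-1})$ the product $\Lambda((1-s_0)/2 - i\tau, \Pi\times\chi^{-1})\Lambda((1-s_0)/2 + i\tau, \Pi\times\omega\chi)$. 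These are the distributions $\Theta(s_0,F\mid\pi)$ and $\Theta(s_0,F\mid\pi_{i\tau}(\chi,\omega^{-1}\chi^{-1}))$; Proposition~\ref{prop: RSEisProp} shows the Eisenstein contribution has no poles blocking the placement of the $\tau$-contour on $\R$, and that the pole of the $\GL_2$-Eisenstein series at spectral parameter $\tfrac12$ feeds only into the $\xi\circ\det$-part already killed.

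The \textbf{main obstacle} is the rigorous justification of interchanging the (infinite) spectral expansion or Mellin inversion with the period integrals, together with a clean handling of the meromorphic continuation in $s_0$ under the moving auxiliary contours. In part~(2) one needs quantitative rapid decay of $P_{s_0}\circ j$ \emph{and} of its spectral projections, uniform in $s_0$ on compacta, so as to integrate term by term against the non-compact $\gp N_2$-direction and to sum/integrate over the $\GL_2$-spectrum (the individual Rankin--Selberg integrals then converge absolutely by the rapid decay of $F$, as noted after \eqref{RSGlobal}, even when $\varphi$ is an Eisenstein series). In part~(1) one must check that the contour shift and the functional-equation reflection are legitimate — no pinching of contours, all intermediate integrals convergent, which Proposition~\ref{prop: DZIntLocProp} guarantees — and that the $s_0$-poles that would appear for $\Re s_0 \ge 1/2$ genuinely remain outside the strip $\norm[\Re s_0] < 1/2$. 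Once both continuations to that strip are in place, setting $s_0 = 0$ yields the theorem.
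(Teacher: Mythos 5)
Your proposal is correct and follows essentially the same strategy the paper employs: for part (1), Fourier--Whittaker unfolding of $F$ combined with Mellin inversion over the idele class group to produce the contour integral of the double zeta-integral, then contour shifting (using the analytic properties established in Proposition \ref{DZIntGlobalProp}) to reach $\Re s_1 = -\tfrac12$ while collecting the residues at $s_1 = s_0$ and $s_1 = s_0-1$; for part (2), integrating out the torus to produce a rapidly decreasing automorphic function on $[\GL_2]$ of central character $\omega^{-1}$ (after the $\norm^{-s_0/2}\circ\det$ normalization), applying the automorphic Fourier/spectral expansion, and reading off the Whittaker coefficients as $\GL_3\times\GL_2$ Rankin--Selberg integrals at $s=(1-s_0)/2$. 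The only cosmetic deviations are that the paper first conjugates by the Weyl element $w$ so that the $\GL_2$-block sits in the upper-left corner whereas you keep it in the lower-right block (these are equivalent), and that the paper realizes the residue terms not by a literal functional-equation ``reflection'' but by a rightward contour shift past both poles followed by a leftward shift back to $\Re s_1 = -\tfrac12$ once $\norm[\Re s_0] < 1/2$ is imposed — this two-step manoeuvre is precisely what handles the $s_0$-singularities you correctly flag as the main delicate point.
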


	\subsection{First Decomposition}
	
	Recall the Fourier coefficients of $F$ for $\beta_1, \beta_2 \in \F$ associated with a unipotent subgroup
	$$ F_{(\beta_1,\beta_2)}(g) := \int_{(\F \backslash \A)^2} F \left( \begin{pmatrix} 1 & & x_1 \\ & 1 & x_2 \\ & & 1 \end{pmatrix} g \right) \psi(-\beta_1 x_1 - \beta_2 x_2) \ud x_1 \ud x_2. $$
	We have
	$$ \int_{\F \backslash \A} F \left( \begin{pmatrix} 1 & & \\ & 1 & x \\ & & 1 \end{pmatrix} g \right) \psi(-x) \ud x = \sum_{\beta_1 \in \F} F_{(\beta_1,1)}(g) = \sum_{\beta_1 \in \F} F_{(0,1)} \left( \begin{pmatrix} 1 & & \\ \beta_1 & 1 & \\ & & 1 \end{pmatrix} g \right). $$
	Also recall the Whittaker function of $F$ defined by
	$$ W_F(g) := \int_{(\F \backslash \A)^3} F \left( \begin{pmatrix} 1 & x_3 & x_1 \\ & 1 & x_2 \\ & & 1 \end{pmatrix} g \right) \psi(-x_2-x_3) \ud x_1 \ud x_2 \ud x_3, $$
	we have the relation (by cuspidality of $F$, see (\ref{eq: CuspCondGL3}))
	$$ F_{(0,1)}(g) = \sum_{\alpha \in \F^{\times}} W_F \left( \begin{pmatrix} \alpha & & \\ & 1 & \\ & & 1 \end{pmatrix} g \right). $$
	Hence we can rewrite, at first formally
\begin{align}
	\Theta(s_0,F) &= \int_{\F^{\times} \backslash \A^{\times}} \left( \sum_{\alpha \in \F^{\times}} \sum_{\beta_1 \in \F} W_F \left( \begin{pmatrix} \alpha & & \\ \beta_1 & 1 & \\ & & 1 \end{pmatrix} \begin{pmatrix} t & & \\ & 1 & \\ & & 1 \end{pmatrix} \right) \right) (\omega\omega_{\Pi})^{-1}(t) \norm[t]_{\A}^{s_0} \ud^{\times}t \nonumber \\
	&= \int_{\F^{\times} \backslash \A^{\times}} \left( \sum_{\alpha \in \F^{\times}} W_F \begin{pmatrix} \alpha t & & \\ & 1 & \\ & & 1 \end{pmatrix} \right) (\omega\omega_{\Pi})^{-1}(t) \norm[t]_{\A}^{s_0} \ud^{\times}t \nonumber \\
	&\quad + \int_{\F^{\times} \backslash \A^{\times}} \left( \sum_{\alpha \in \F^{\times}} \sum_{\beta \in \F^{\times}} W_F \begin{pmatrix} \alpha t & & \\ \beta t & 1 & \\ & & 1 \end{pmatrix} \right) (\omega\omega_{\Pi})^{-1}(t) \norm[t]_{\A}^{s_0} \ud^{\times}t. \label{1stDecompInitial}
\end{align}

	The first integral on the right hand side of (\ref{1stDecompInitial}) is simply a global Rankin-Selberg integral for $\GL_3 \times \GL_1$, namely
\begin{align*} 
	\Psi \left( s_0+1, F, (\omega\omega_{\Pi})^{-1} \right) &= \int_{\F^{\times} \backslash \A^{\times}} \left( \int_{(\F \backslash \A)^2} F \begin{pmatrix} t & & x_1 \\ & 1 & x_2 \\ & & 1 \end{pmatrix} \psi(-x_2) \ud x_1 \ud x_2 \right) (\omega\omega_{\Pi})^{-1}(t) \norm[t]_{\A}^{s_0} \ud^{\times}t \\
	&= \int_{\F^{\times} \backslash \A^{\times}} \left( \sum_{\alpha \in \F^{\times}} W_F \begin{pmatrix} \alpha t & & \\ & 1 & \\ & & 1 \end{pmatrix} \right) (\omega\omega_{\Pi})^{-1}(t) \norm[t]_{\A}^{s_0} \ud^{\times}t
\end{align*}
	in the notation of \cite{JPS83}, which is an integral representation of $L(s_0+1,\Pi \times (\omega\omega_{\Pi})^{-1})$. Hence the absolute convergence for $\Re s_0 \gg 1$ and meromorphic continuation to $s_0 \in \C$ of this term follow easily. 
\begin{remark}
	We recall the other global Rankin-Selberg integral for $\GL_3 \times \GL_1$
\begin{align*} 
	\widetilde{\Psi} \left( s, F, \chi \right) &= \int_{\F^{\times} \backslash \A^{\times}} \left( \int_{(\F \backslash \A)^2} F \begin{pmatrix} t & & \\ x_1 t & 1 & x_2 \\ & & 1 \end{pmatrix} \psi(-x_2) \ud x_1 \ud x_2 \right) \chi(t) \norm[t]_{\A}^s \ud^{\times}t \\
	&= \int_{\A^{\times}} \left( \int_{\A} W_F \begin{pmatrix} t & & \\ x & 1 & \\ & & 1 \end{pmatrix} \ud x \right) \chi(t) \norm[t]_{\A}^{s-1} \ud^{\times}t,
\end{align*}
	and the functional equation $\widetilde{\Psi}(1-s, \widetilde{\Pi}(w_{3,1})\widetilde{F}, \chi^{-1}) = \Psi(s,F,\chi)$. Obviously, $\widetilde{\Psi}(s,F,\chi)$ has an infinite product decomposition, which we write as
	$$ \widetilde{\Psi}(s,F,\chi) = \sideset{}{_v} \prod \widetilde{\Psi}_v(s,W_{F,v},\chi_v). $$
\label{RSFEGlobal}
\end{remark}
	
	To prove the absolute convergence of the second integral on the right hand side of (\ref{1stDecompInitial}), we first recall a fundamental estimation, which was implicitly used in \cite[\S 2.6.2]{Wu14} to control the dominant of the Fourier-Whittaker expansion of an automorphic form for $\GL_2$. Note that a refined version also appeared as \cite[Lemma 5.37]{Wu19_TF} which implies the following result needed here. 

\begin{lemma}
	Let $f: \A^{\times} \to \C$ be a function on the ideles, which is decomposable in the sense that
	$$ f((t_v)_v) = \sideset{}{_v} \prod f_v(t_v), \quad \forall (t_v)_v \in \A^{\times}, $$
	where $f_{\vp} \mid_{\vo_{\vp}^{\times}} = 1$ at any $\vp \nmid \idlJ$ for some integral ideal $\idlJ$ but is \emph{not necessarily} equal to the characteristic function of $\vo_{\vp}^{\times}$. Suppose there is a constant $c \in \R$ such that for any $N > 0$ we have
	$$ \left\{ \begin{matrix} f_v(t) \ll_{c,N} \min(\norm[t]_v^c, \norm[t]_v^{-N}) & \forall v \mid \infty \\ f_{\vp}(t) \ll \norm[t]_{\vp}^c \mathbbm{1}_{\mathrm{ord}_{\vp}(t) \geq - \mathrm{ord}_{\vp}(\idlJ)} & \forall \vp < \infty \ \& \ \vp \mid \idlJ \\ f_{\vp}(t) \leq \norm[t]_{\vp}^c \mathbbm{1}_{\mathrm{ord}_{\vp}(t) \geq 0} & \forall \vp < \infty \ \& \ \vp \nmid \idlJ\end{matrix} \right. . $$
	Then we have for any $N \gg 1$ the estimation of
	$$ \sideset{}{_{\alpha \in \F^{\times}}} \sum \norm[f(\alpha t)] \ll_{c,N,\idlJ} \min (\norm[t]_{\A}^{c-1}, \norm[t]_{\A}^{-N}). $$
\label{FundSumEst}
\end{lemma}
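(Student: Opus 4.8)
Here is the plan I would follow. The idea is to remove the abstract hypotheses on the local factors $f_v$ by dominating $\norm[f]$ pointwise by a genuine adelic Schwartz--Bruhat function, and then reduce everything to the rapid decay of an adelic theta sum.

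\emph{Step 1: domination by an adelic Schwartz function.} Fix $N \gg 1$. Enlarging $\idlJ$ if necessary, I may assume $f_\vp\mid_{\vo_\vp^\times}=1$ and the clean bound $f_\vp(t)\le \norm[t]_\vp^c\mathbbm{1}_{\vo_\vp}(t)$ hold simultaneously for all $\vp\nmid\idlJ$. For each place $v$ I claim $t\mapsto \norm[f_v(t)]\,\norm[t]_v^{-c}$ is dominated on $\F_v^\times$ by a non-negative Schwartz--Bruhat function $\phi_v$ on all of $\F_v$: at an archimedean $v$ the function $\norm[f_v(t)]\norm[t]_v^{-c}$ is bounded near $0$ and rapidly decreasing at $\infty$ (apply the archimedean hypothesis with the auxiliary parameter taken large), so it lies under a positive Schwartz function by Lemma \ref{lem: WeilSchwartzEnv}; at $\vp\mid\idlJ$ it is bounded by a constant multiple of $\mathbbm{1}_{\vp^{-\mathrm{ord}_\vp(\idlJ)}\vo_\vp}$; and at $\vp\nmid\idlJ$ we may simply take $\phi_\vp=\mathbbm{1}_{\vo_\vp}$. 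Since only finitely many $\phi_v$ differ from $\mathbbm{1}_{\vo_\vp}$, the product $\Phi:=\prod_v\phi_v$ is a non-negative element of $\Sch(\A)$, and multiplying the local inequalities together (the contributed constants form a finite product) gives $\norm[f(t)]\le C(c,N,\idlJ)\,\norm[t]_\A^c\,\Phi(t)$ for every $t\in\A^\times$. Using the product formula $\norm[\alpha]_\A=1$ for $\alpha\in\F^\times$, this yields
$$ \sum_{\alpha\in\F^\times}\norm[f(\alpha t)]\ \le\ C(c,N,\idlJ)\,\norm[t]_\A^c\sum_{\alpha\in\F^\times}\Phi(\alpha t). $$

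\emph{Step 2: the theta estimate and conclusion.} It then suffices to prove that for any non-negative $\Phi\in\Sch(\A)$ and any $M>0$,
$$ \Theta^\times_\Phi(t):=\sum_{\alpha\in\F^\times}\Phi(\alpha t)\ \ll_{\Phi,M}\ \min\bigl(\norm[t]_\A^{-1},\,\norm[t]_\A^{-M}\bigr). $$
Feeding this into Step 1 with $M:=N+c$ gives the assertion, because for $N\gg1$ (precisely $N\ge 1-c$) the two branches of $\norm[t]_\A^c\min(\norm[t]_\A^{-1},\norm[t]_\A^{-M})$ are exactly $\norm[t]_\A^{c-1}$ for $\norm[t]_\A\le1$ and $\norm[t]_\A^{-N}$ for $\norm[t]_\A\ge1$, matching $\min(\norm[t]_\A^{c-1},\norm[t]_\A^{-N})$. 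To bound $\Theta^\times_\Phi$ for $\norm[t]_\A\ge1$, I would use that $\F^\times$ is cocompact in $\A^{(1)}$: write $t=\gamma t_0 u$ with $\gamma\in\F^\times$, $t_0$ in a fixed compact fundamental domain for $\F^\times\backslash\A^{(1)}$ and $u$ in the chosen $\R_{>0}$-section (so $\norm[u]_\A=\norm[t]_\A$), and reindex the sum by $\gamma$. For each nonzero $\beta$ the non-archimedean components of $\beta t_0 u$ are confined (by the compact support of $\Phi$ at finite places and compactness of the fundamental domain) to a bounded family of fractional ideals, so $\prod_{\vp}\norm[(\beta t_0 u)_\vp]_\vp$ is bounded; the product formula then forces $\prod_{v\mid\infty}\norm[(\beta t_0 u)_v]_v\gg\norm[t]_\A$, so some archimedean component is $\gg\norm[t]_\A^{1/r}$, and the rapid decay of $\Phi$ at the archimedean places gives $\Phi(\beta t_0 u)\ll_{\Phi,M}\norm[t]_\A^{-M}$ with a tail summable over $\beta$; hence $\Theta^\times_\Phi(t)\ll_{\Phi,M}\norm[t]_\A^{-M}$ there.

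\emph{Step 3: the range $\norm[t]_\A\le1$.} Here I would invoke Poisson summation on $\A/\F$ with self-dual measures ($\Vol(\A/\F)=1$): for $x\mapsto\Phi(xt)$ it reads $\sum_{\alpha\in\F}\Phi(\alpha t)=\norm[t]_\A^{-1}\sum_{\beta\in\F}\widehat\Phi(\beta t^{-1})$, so
$$ \Theta^\times_\Phi(t)=\norm[t]_\A^{-1}\Bigl(\widehat\Phi(0)+\sum_{\beta\in\F^\times}\widehat\Phi(\beta t^{-1})\Bigr)-\Phi(0). $$
Since $\norm[t^{-1}]_\A\ge1$, the already-established large-norm estimate applied to the Schwartz function $\widehat\Phi$ bounds the inner sum by $\norm[t]_\A^{M}$, and we get $\Theta^\times_\Phi(t)\ll_\Phi\norm[t]_\A^{-1}$ for $\norm[t]_\A\le1$. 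Combining with Step 2 finishes the theta estimate.

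\emph{Main obstacle.} Steps 1 and 3 are formal once the right statements are set up; the technical heart is the rapid decay of $\Theta^\times_\Phi(t)$ as $\norm[t]_\A\to\infty$ in Step 2, where one must carefully separate archimedean from non-archimedean components and exploit the product formula together with the cocompactness of $\F^\times$ in $\A^{(1)}$. This is precisely the estimate extracted from \cite[\S 2.6.2]{Wu14} and \cite[Lemma 5.37]{Wu19_TF}, and one may alternatively just cite those references in place of Step 2.
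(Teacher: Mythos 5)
The paper does not actually reprove this lemma: it records the statement and defers to the author's earlier works \cite[\S 2.6.2]{Wu14} and \cite[Lemma~5.37]{Wu19_TF}, so there is no in-paper proof to compare against. Your proposal is a sound self-contained reconstruction of the standard argument those references encapsulate: dominate $\norm[f]$ by $\norm[t]_\A^c$ times a non-negative adelic Schwartz--Bruhat function (the archimedean envelope via Lemma~\ref{lem: WeilSchwartzEnv} is legitimate, since $\norm[f_v(t)]\,\norm[t]_v^{-c}$ is bounded near $0$ and, by the hypothesis with $N$ arbitrary, rapidly decreasing at $\infty$); use the product formula to pull out $\norm[t]_\A^c$; and bound the adelic theta sum $\sum_{\alpha\in\F^\times}\Phi(\alpha t)$ by $\min(\norm[t]_\A^{-1},\norm[t]_\A^{-M})$ via rapid decay for $\norm[t]_\A\ge 1$ (cocompactness of $\F^\times$ in $\A^{(1)}$, compact support at finite places, product formula) together with Poisson summation over $\A/\F$ for $\norm[t]_\A\le 1$. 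The one place to tighten is the tail summation in Step~2: ``some archimedean component is $\gg\norm[t]_\A^{1/r}$, hence $\Phi(\beta t_0 u)\ll\norm[t]_\A^{-M}$'' gives a bound that is uniform in $\beta$ but does not by itself make the sum over the lattice of admissible $\beta$ finite. You should split the decay exponent of $\Phi_\infty$, using one part to produce the factor $\norm[t]_\A^{-M}$ and reserving the remainder to make the lattice sum $\sum_{\beta}(1+\norm{\beta t_{0,\infty}})^{-M''}$ converge (which requires $M''$ larger than $[\F:\Q]$ and uses that the lattice depends only on the supports of the $\Phi_\vp$ and the compact fundamental domain). With that adjustment the proof is complete and achieves the stated dependence of the implied constant on $c$, $N$, $\idlJ$ and the hypotheses' implied constants.
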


\noindent Next, we recall some uniform bound of the Whittaker function $W_F$, which was established in the Rankin-Selberg theory by \cite{JPS83, J09}. For our purpose, we only need a special case for $\GL_3$ stated as follows.

\begin{lemma}
	There is a constant $M \in \R$ depending only on $\Pi$ and an integral ideal $\idlJ$ depending only on $F$, such that for any $N_1,N_2 > 1$ we have
\begin{align*} 
	\extnorm{ W_F \begin{pmatrix} t_1 & & \\ t_2 & 1 & \\ & & 1 \end{pmatrix} } &\ll_{N_1,N_2,F} \sideset{}{_{v \mid \infty}} \prod \min (\norm[t_{1,v}]^M, \norm[t_{1,v}]_v^{-N_1}) \min (1, \norm[t_{2,v}]_v^{-N_2}) \\
	&\quad \cdot \sideset{}{_{\vp < \infty}} \prod \norm[t_{1,\vp}]_{\vp}^M \mathbbm{1}_{\mathrm{ord}_{\vp}(t_{1,\vp}), \mathrm{ord}_{\vp}(t_{2,\vp}) \geq - \mathrm{ord}_{\vp}(\idlJ)}.
\end{align*}
\label{WhiUBd}
\end{lemma}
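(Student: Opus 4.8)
The plan is to exploit the factorisation of the Whittaker function place by place. First I would reduce to the case $W_F=\prod_v W_{F,v}$ a pure tensor: a general smooth $F$ gives $W_F$ in the completed tensor product $\widehat{\bigotimes}_{v}\Whi(\Pi_v^{\infty},\psi_v)$, which is a finite sum of vectors that are pure at the finite places, the finitely many archimedean factors being treated directly because the estimates below are continuous in the relevant Fr\'echet seminorms; replacing $\idlJ$ by a common multiple then reduces the statement to bounding a single local factor $W_{F,v}\begin{pmatrix} t_1 & & \\ t_2 & 1 & \\ & & 1\end{pmatrix}$ at each place, with the exponent $M$ kept independent of $F$. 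The elementary local input is the Bruhat/Iwasawa decomposition of $\begin{pmatrix} t_1 & 0 \\ t_2 & 1\end{pmatrix}\in\GL_2(\F_{v})$, which, in the regimes $\norm[t_2]\le\min(1,\norm[t_1])$, $\norm[t_1]<\norm[t_2]\le 1$ and $\norm[t_2]>\max(1,\norm[t_1])$ (and their mild variants), rewrites $W_{F,v}\begin{pmatrix} t_1 & & \\ t_2 & 1 & \\ & & 1\end{pmatrix}$ as $\psi_v$ of a monomial phase in $t_1,t_2$ times $W_{F,v}$ of a \emph{diagonal} matrix with monomial entries, such as $\diag(t_1,1,1)$, $\diag(t_1/t_2,t_2,1)$ or $\diag(t_1^2/t_2,t_2/t_1,1)$.

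At a finite place $\vp\nmid\idlJ$, where $\psi_\vp$ is unramified and the datum spherical, I would invoke the Casselman--Shalika formula for the diagonal value: it vanishes off the dominant cone, which forces $W_{F,\vp}\begin{pmatrix} t_1 & & \\ t_2 & 1 & \\ & & 1\end{pmatrix}=0$ unless $\mathrm{ord}_{\vp}(t_1),\mathrm{ord}_{\vp}(t_2)\ge 0$ (the two non-trivial regimes producing diagonal monomials outside the cone), and on the cone it equals $\norm[t_{1,\vp}]_{\vp}$ times a Schur polynomial in the Satake parameters of $\Pi_{\vp}$, hence is $\ll_M\norm[t_{1,\vp}]_{\vp}^{M}$ for a fixed $M$ depending only on $\Pi$ (e.g.\ $M=\tfrac12$, using the Jacquet--Shalika bound on the parameters). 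At the finitely many ramified finite places I would instead use the standard structure of Whittaker functions of generic representations of $\GL_3(\F_{\vp})$: the support of $x\mapsto W_{F,\vp}\begin{pmatrix} t_1 & & \\ x & 1 & \\ & & 1\end{pmatrix}$ is compact uniformly in $t_1$ by \cite[Lemma (4.1.5)]{JPS79}, and $W_{F,\vp}$ is dominated by a gauge by \cite[Proposition (2.3.6)]{JPS79}; combined with the reduction above and the fixed conductor of $\psi_\vp$, this confines the matrix coefficient to the set where $\norm[t_1]_{\vp},\norm[t_2]_{\vp}$ are bounded above by a power of $q_{\vp}$ depending only on $F$, and bounds it by $\norm[t_{1,\vp}]_{\vp}^{M}$ there (enlarging $M$ by a place-dependent amount, harmless since there are finitely many such $\vp$). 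Collecting these powers of $q_{\vp}$ into the integral ideal $\idlJ$ gives the non-archimedean factor of the asserted bound.

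The archimedean places carry the genuine content. For $v\mid\infty$ I would feed the Bruhat regimes into the classical asymptotics of $\GL_3(\F_{v})$-Whittaker functions on the torus (\cite[\S 3]{J09}; see also \cite[Theorem 2.1]{JS90}): polynomial growth, with exponent governed by the archimedean $L$-parameter of $\Pi_v$ and hence by $\Pi$ alone, as a simple root tends to $0$, and faster-than-polynomial decay as a simple root tends to $\infty$. The regime $\norm[t_2]\le\min(1,\norm[t_1])$, with diagonal element $\diag(t_1,1,1)$, directly produces $\min(\norm[t_{1,v}]^M,\norm[t_{1,v}]_v^{-N_1})$ with $t_2$-dependence absent ($\le 1$, matching $\min(1,\norm[t_{2,v}]_v^{-N_2})$ there); in the other regimes the diagonal entries involve $t_2$ (or $t_2/t_1$, $t_1^2/t_2$), whose reciprocal-power decay supplies $\norm[t_{2,v}]_v^{-N_2}$ while the residual $t_1$-dependence is reorganised into $\min(\norm[t_{1,v}]^M,\norm[t_{1,v}]_v^{-N_1})$, using that $N_1,N_2$ may be taken arbitrarily large so there is ample slack. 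I expect the main obstacle to be precisely this reorganisation --- decoupling cleanly and uniformly the entanglement between $t_1$ and $t_2$ introduced by the Bruhat cells, so as to reach the product form demanded by the statement while keeping the exponent $M$ free of any dependence on $F$. Multiplying the resulting local bounds over all places of $\bF$ then gives the lemma.
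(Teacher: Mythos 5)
Your plan is essentially the one the paper uses: decompose into a pure tensor, handle each place separately, use gauge bounds at the archimedean places and [JPS79] plus the explicit spherical formula at the unramified finite places. The main divergence is at the archimedean places, and it is precisely the step you flag as the ``main obstacle''. You propose to rederive a two-variable bound by hand, feeding Bruhat/Iwasawa regimes into the one-variable gauge asymptotics of [J09, Prop.~3.3]. This is doable (the gauge bound is multiplicative in the simple roots $\alpha_1,\alpha_2$, so after the Iwasawa substitution the $t_1$- and $t_2$-dependence do decouple up to harmless rescaling of $N_1,N_2$ and shrinking of $M$), but it is more labor than necessary: [J09, \S 5.2] already carries out exactly this reduction and produces the two-variable bound
$$ \extnorm{W_{F,v}\begin{pmatrix}t_1&&\\t_2&1&\\&&1\end{pmatrix}} \ll \norm[t_1]_v^{M_v}(1+\norm[t_1]_v^2)^{-N_1}(1+\norm[t_2]_v^2)^{-N_2}, $$
which is what the paper cites. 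So the ``decoupling'' worry is legitimate if you insist on redoing the calculation, but it evaporates once you use the right form of the citation.

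Two minor points worth fixing. At the unramified places you invoke Bruhat regimes plus Casselman--Shalika vanishing off the dominant cone, which works, but the cleaner route (and the one the paper takes) is [JPS79, Lemma (4.1.5)]: it directly shows that for the spherical vector the value $W_{F,\vp}\begin{pmatrix}t_1&&\\x&1&\\&&1\end{pmatrix}$ equals $W_{F,\vp}(\diag(t_1,1,1))\cdot\mathbbm{1}_{\vo_\vp}(x)$, after which only Shintani's formula for the diagonal is needed. Also, the exponent $M=1/2$ you suggest does not follow from Jacquet--Shalika: that bound gives $\theta<1/2$, hence $M=1-\theta>1/2$; the paper's choice $M=1-\theta$ with the current record $\theta=5/14$ is the correct statement. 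Finally, the diagonal you list for the third regime should read $\diag(t_1/t_2,t_2,1)$ (up to a unit and compact factor) rather than $\diag(t_1^2/t_2,t_2/t_1,1)$.
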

\begin{proof}
	Without loss of generality, we may assume $W_F = \otimes_v W_{F,v}$ is decomposable. At an archimedean place $v \mid \infty$, the argument in \cite[\S 5.2]{J09}, which is based on \cite[Proposition 3.3]{J09}, shows the existence of some constant $M_v \in \R$ depending only on $\Pi_v$ so that for any $N_1,N_2 > 1$
	$$ \extnorm{ W_{F,v} \begin{pmatrix} t_{1,v} & & \\ t_{2,v} & 1 & \\ & & 1 \end{pmatrix} } \ll_{N_1,N_2,F_v} \norm[t_{1,v}]_v^{M_v} (1+\norm[t_{1,v}]_v^2)^{-N_1} (1+\norm[t_{2,v}]_v^2)^{-N_2}. $$
	At a non-archimedean $\vp < \infty$ such that $W_{F,\vp}$ is not spherical, the proof of \cite[Lemma (2.6)]{JPS83} implies the existence of some constant $M_{\vp} \in \R$ depending only on $\Pi_{\vp}$, some integral ideal $\idlJ_{\vp}' \subset \vo_{\vp}$ depending only on $F_{\vp}$ and some Schwartz function $\Phi_{\vp} \in \Sch(\F_{\vp})$ so that
	$$ \extnorm{ W_{F,\vp} \begin{pmatrix} t_{1,\vp} & & \\ t_{2,\vp} & 1 & \\ & & 1 \end{pmatrix} } \leq \norm[t_{1,\vp}]_{\vp}^{M_{\vp}} \Phi_{\vp}(t_{1.\vp}) \mathbbm{1}_{(\idlJ_{\vp}')^{-1}}(t_{2,\vp}) \ll \norm[t_{1,\vp}]_{\vp}^{M_{\vp}} \mathbbm{1}_{\mathrm{ord}_{\vp}(t_{1,\vp}), \mathrm{ord}_{\vp}(t_{2,\vp}) \geq - \mathrm{ord}_{\vp}(\idlJ_{\vp})}, $$
	where $\idlJ_{\vp}^{-1}$ contains $(\idlJ_{\vp}')^{-1}$ and the support of $\Phi_{\vp}$. At a non-archimedean $\vp < \infty$ such that $W_{F,\vp}$ is spherical, we first have by the proof of \cite[Lemma (4.1.5)]{JPS79} that
	$$ W_{F,\vp} \begin{pmatrix} t_{1,\vp} & & \\ t_{2,\vp} & 1 & \\ & & 1 \end{pmatrix} = W_{F,\vp} \begin{pmatrix} t_{1,\vp} & & \\ & 1 & \\ & & 1 \end{pmatrix} \mathbbm{1}_{\vo_{\vp}}(t_{2,\vp}). $$
	The value of the spherical Whittaker function is due to Shintani \cite{Sh76}. If $(\alpha_1,\alpha_2,\alpha_3)$ is the Satake parameter of $\Pi_{\vp}$, and if we normalize $W_{F,\vp}(\mathbbm{1})=1$, then
	$$ W_{F,\vp} \begin{pmatrix} \varpi^n & & \\ & 1 & \\ & & 1 \end{pmatrix} = \mathbbm{1}_{n \geq 0} \cdot q^{-n} \cdot \frac{\begin{vmatrix} \alpha_1^{n+2} & \alpha_2^{n+2} & \alpha_3^{n+2} \\ \alpha_1 & \alpha_2 & \alpha_3 \\ 1 & 1 & 1 \end{vmatrix}}{(\alpha_1 - \alpha_2) (\alpha_1 - \alpha_3) (\alpha_2 - \alpha_3)}. $$
	Since $\Pi$ is unitary, it is $\theta$-tempered for any constant towards the Ramanujan-Petersson conjecture for $\GL_3$ (current record $\theta = 5/14$). Hence $\norm[\alpha_j] < q^{\theta}$. We readily deduce
	$$ \extnorm{ W_{F,\vp} \begin{pmatrix} t_{1,\vp} & & \\ t_{2,\vp} & 1 & \\ & & 1 \end{pmatrix} } \leq C_{\vp} \norm[t_{1,\vp}]_{\vp}^{1-\theta} \mathbbm{1}_{\vo_{\vp}}(t_{1,\vp}) \mathbbm{1}_{\vo_{\vp}}(t_{2,\vp}), $$
	and up to a finite number of exceptional places, we can take $C_{\vp} = 1$. The desired estimation follows by taking $M$ to be the smallest one among the $M_v$ and $M_{\vp}$, $\idlJ$ to be the product of $\idlJ_{\vp}$.
\end{proof}
\begin{remark}
	It should be possible to take $M=1-\theta$ in the above lemma. But this seems to be a difficult problem, and is not yet available in the literature so far.
\end{remark}

\begin{corollary}
	There is $M \in \R$ depending only on $\Pi$ such that for any $N_1,N_2 > 1$
	$$ \sum_{\alpha,\beta \in \F^{\times}} \extnorm{ W_F \begin{pmatrix} \alpha t_1 & & \\ \beta t_2 & 1 & \\ & & 1 \end{pmatrix} } \ll_{N_1,N_2} \min(\norm[t_1]_{\A}^{M-1}, \norm[t_1]_{\A}^{-N_1}) \cdot \min(\norm[t_2]_{\A}^{-1}, \norm[t_2]_{\A}^{-N_2}). $$
\label{SumWhiUBd}
\end{corollary}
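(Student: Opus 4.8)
The plan is to read off the estimate directly from two results already established: the pointwise bound on $W_F$ in Lemma~\ref{WhiUBd} and the idelic summation estimate in Lemma~\ref{FundSumEst}. The key structural observation is that the upper bound in Lemma~\ref{WhiUBd} \emph{factors} into a function of $t_1$ times a function of $t_2$. Indeed, at each archimedean place the $t_1$- and $t_2$-dependence already sit in separate factors, and at each finite place the indicator $\mathbbm{1}_{\mathrm{ord}_\vp(t_{1,\vp}),\,\mathrm{ord}_\vp(t_{2,\vp})\geq -\mathrm{ord}_\vp(\idlJ)}$ splits as a product of an indicator in $t_{1,\vp}$ and one in $t_{2,\vp}$. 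After enlarging $\idlJ$ so as to absorb the finitely many spherical places at which the constant $C_\vp$ of Lemma~\ref{WhiUBd} exceeds $1$, this exhibits a bound
$$ \extnorm{ W_F \begin{pmatrix} t_1 & & \\ t_2 & 1 & \\ & & 1 \end{pmatrix} } \ll_{N_1,N_2,F} f_1(t_1)\, f_2(t_2), $$
with $f_1=\prod_v f_{1,v}$, $f_2=\prod_v f_{2,v}$ decomposable over the places of $\F$, where $f_{1,v}(t)=\min(\norm[t]_v^M,\norm[t]_v^{-N_1})$, $f_{2,v}(t)=\min(1,\norm[t]_v^{-N_2})$ for $v\mid\infty$, and $f_{1,\vp}(t)=\norm[t]_\vp^M\,\mathbbm{1}_{\mathrm{ord}_\vp(t)\geq -\mathrm{ord}_\vp(\idlJ)}$, $f_{2,\vp}(t)=\mathbbm{1}_{\mathrm{ord}_\vp(t)\geq -\mathrm{ord}_\vp(\idlJ)}$ for $\vp<\infty$.

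Granting this, I would substitute $\alpha t_1$ for $t_1$ and $\beta t_2$ for $t_2$, sum over $\alpha,\beta\in\F^\times$, and use non-negativity of all terms to separate the double sum:
$$ \sum_{\alpha,\beta\in\F^\times}\extnorm{ W_F \begin{pmatrix} \alpha t_1 & & \\ \beta t_2 & 1 & \\ & & 1 \end{pmatrix} } \ll \Bigl(\sum_{\alpha\in\F^\times} f_1(\alpha t_1)\Bigr)\Bigl(\sum_{\beta\in\F^\times} f_2(\beta t_2)\Bigr). $$
Now $f_1$ satisfies the hypotheses of Lemma~\ref{FundSumEst} with $c=M$ (and $f_{1,\vp}\mid_{\vo_\vp^\times}=1$ for $\vp\nmid\idlJ$), while $f_2$ satisfies them with $c=0$; so Lemma~\ref{FundSumEst} gives $\sum_\alpha f_1(\alpha t_1)\ll\min(\norm[t_1]_\A^{M-1},\norm[t_1]_\A^{-N_1})$ and $\sum_\beta f_2(\beta t_2)\ll\min(\norm[t_2]_\A^{-1},\norm[t_2]_\A^{-N_2})$, and multiplying these two bounds is exactly the assertion, with the very same $M$ that occurs in Lemma~\ref{WhiUBd}.

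Since this is a corollary there is no genuine obstacle; the only care needed is routine bookkeeping with parameters. To obtain the decay exponents $N_1,N_2$ demanded in the statement one should invoke Lemma~\ref{WhiUBd} with sufficiently large auxiliary parameters (depending on $[\F:\Q]$, on $M$, and on $\idlJ$) so that the conclusion of Lemma~\ref{FundSumEst} already decays at least as fast as $\norm[t_i]_\A^{-N_i}$; and when $M<0$ one should check that the minimum $\min(\norm[t_1]_\A^{M-1},\norm[t_1]_\A^{-N})$ coming out of Lemma~\ref{FundSumEst} is dominated by $\min(\norm[t_1]_\A^{M-1},\norm[t_1]_\A^{-N_1})$, which is an elementary comparison of the two exponents on the ranges $\norm[t_1]_\A\leq 1$ and $\norm[t_1]_\A\geq 1$.
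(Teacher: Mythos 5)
Your proof is correct and follows exactly the route the paper indicates (the paper merely asserts the corollary as ``an immediate consequence of the previous two lemmas,'' which is precisely the factoring of the pointwise bound from Lemma~\ref{WhiUBd} into a $t_1$-part and a $t_2$-part, followed by two applications of Lemma~\ref{FundSumEst} with $c=M$ and $c=0$). The only thing worth flagging is that, as usual with such ``$\ll_N$''-type bounds, one should pick the auxiliary decay parameters in Lemma~\ref{WhiUBd} large enough (depending on the desired $N_1,N_2$) before feeding the resulting envelopes into Lemma~\ref{FundSumEst}; this is standard bookkeeping and you acknowledge it.
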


\noindent This corollary is an immediate consequence of the previous two lemmas. It implies readily
	$$ \sum_{\alpha,\beta \in \F^{\times}} \extnorm{ W_F \begin{pmatrix} \alpha t & & \\ \beta t & 1 & \\ & & 1 \end{pmatrix} } \ll_N \min(\norm[t]_{\A}^{M-2}, \norm[t]_{\A}^{-N}), $$
	and justifies the absolute convergence of (\ref{1stDecompInitial}) for $\Re s_0 > 2-M$.
	
	In order to get the meromorphic continuation, we introduce the following (global) double zeta integral
\begin{equation}
	\DBZ{s_1}{s_2}{\chi_1}{\chi_2}{W_{F}} = \int_{(\A^{\times})^2} W_F \begin{pmatrix} t_1 & & \\ t_2 & 1 & \\ & & 1 \end{pmatrix} \chi_1(t_1) \chi_2(t_2) \norm[t_1]^{s_1-1} \norm[t_2]^{s_2} \ud^{\times}t_1 \ud^{\times}t_2,
\label{DZIntGlobal}
\end{equation}
	where $\chi_j$'s are unitary Hecke characters of $\F^{\times} \backslash \A^{\times}$.
	
\begin{proposition}
	(1) The integral in (\ref{DZIntGlobal}) is absolutely convergent for $\Re s_2 > 1$ and $\Re s_1 \gg 1$.
	
\noindent (2) The integral in (\ref{DZIntGlobal}) has meromorphic continuation to $s_1,s_2 \in \C$ so that the ratio
	$$ \DBZ{s_1}{s_2}{\chi_1}{\chi_2}{W_F} / \left( L(s_1, \Pi \times \chi_1) L(s_2,\chi_2) \right) $$
	is holomorphic in $(s_1,s_2) \in \C^2$. Moreover, it has rapid decay in any vertical region of the shape $a_j \leq \Re s_j \leq b_j$ with $a_j,b_j \in \R$ for $j=1,2$.
	
\noindent (3) The double zeta integral satisfies the following functional equation
	$$ \DBZ{1-s_1}{1-s_2}{\chi_1^{-1}}{\chi_2^{-1}}{\widetilde{\Pi}(w_{3,1}).\widetilde{W_F}} = \DBZ{s_1}{s_2}{\chi_1}{\chi_2}{W_F}. $$
	
\noindent (4) The double zeta integral has possible simple poles at $s_2 \in \{ 0,1 \}$ for $\chi_2 = \mathbbm{1}$ with residues
	$$ \Res_{s_2=1} \DBZ{s_1}{s_2}{\chi_1}{\mathbbm{1}}{W_F} = \zeta_{\F}^* \int_{\A^{\times}} \left( \int_{\A} W_F \begin{pmatrix} t_1 & & \\ x & 1 & \\ & & 1 \end{pmatrix} \ud x \right) \chi_1(t_1) \norm[t_1]^{s_1-1} \ud^{\times}t_1 = \zeta_{\F}^* \widetilde{\Psi}(s_1, F, \chi_1), $$
	$$ \Res_{s_2=0} \DBZ{s_1}{s_2}{\chi_1}{\mathbbm{1}}{W_F} = -\zeta_{\F}^* \int_{\A^{\times}} W_F \begin{pmatrix} t_1 & & \\ & 1 & \\ & & 1 \end{pmatrix} \chi_1(t_1) \norm[t_1]^{s_1-1} \ud^{\times}t_1 = -\zeta_{\F}^* \Psi(s_1, F, \chi_1). $$
\label{DZIntGlobalProp}
\end{proposition}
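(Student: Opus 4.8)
The plan is to reduce the whole proposition to the local theory of Proposition \ref{prop: DZIntLocProp} together with the standard analytic properties of completed automorphic $L$-functions. By linearity of \eqref{DZIntGlobal} in $W_F$ we may assume $W_F=\bigotimes_v W_{F,v}$ is a pure tensor, and fix a finite set $S$ of places of $\F$ containing all archimedean places, outside of which $W_{F,v}$, $\chi_{1,v}$, $\chi_{2,v}$ are unramified and $\psi_v$ has conductor $\vo_v$. The first step is to establish, for $\Re s_1\gg 1$ and $\Re s_2>1$, the Euler factorization $\DBZ{s_1}{s_2}{\chi_1}{\chi_2}{W_F}=\prod_v\DBZLoc{s_1}{s_2}{\chi_{1,v}}{\chi_{2,v}}{W_{F,v}}$. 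Since the $\chi_j$ are unitary, the integral of the absolute value factors over the places; at $v\notin S$ the local integral collapses, via the identity $W_{F,v}\begin{pmatrix}t_1&&\\t_2&1&\\&&1\end{pmatrix}=W_{F,v}\begin{pmatrix}t_1&&\\&1&\\&&1\end{pmatrix}\mathbbm{1}_{\vo_v}(t_2)$ recalled in the proof of Lemma \ref{WhiUBd}, to the product of a standard $\GL_3\times\GL_1$ Rankin--Selberg integral and a Tate integral, hence equals $L_v(s_1,\Pi_v\times\chi_{1,v})L_v(s_2,\chi_{2,v})$; this Euler product converges absolutely in the stated range because $\Pi$ is unitary (bounded Satake parameters, i.e.\ $\vartheta$-tempered for $\vartheta=\theta<1/2$), while the finitely many local factors at $v\in S$ are finite by Proposition \ref{prop: DZIntLocProp}(1). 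This proves (1), and Fubini justifies the factorization.

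Set $R_v(s_1,s_2):=\DBZLoc{s_1}{s_2}{\chi_{1,v}}{\chi_{2,v}}{W_{F,v}}/\bigl(L_v(s_1,\Pi_v\times\chi_{1,v})L_v(s_2,\chi_{2,v})\bigr)$, which equals $1$ for $v\notin S$ and is entire for $v\in S$ by Proposition \ref{prop: DZIntLocProp}(2); then the factorization becomes $\DBZ{s_1}{s_2}{\chi_1}{\chi_2}{W_F}=\Lambda(s_1,\Pi\times\chi_1)\,\Lambda(s_2,\chi_2)\prod_{v\in S}R_v(s_1,s_2)$, where $\Lambda$ denotes the completed $L$-function. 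As $\Pi$ is cuspidal on $\GL_3$, $\Lambda(s_1,\Pi\times\chi_1)$ is entire, and $\Lambda(s_2,\chi_2)$ is entire unless $\chi_2=\mathbbm{1}$, in which case it has only simple poles at $s_2\in\{0,1\}$; this yields the meromorphic continuation, the holomorphy of the ratio in (2), and the location of the poles asserted at the beginning of (4). For the rapid decay in (2): completed $L$-functions are of finite order, hence polynomially bounded in vertical strips by Phragm\'en--Lindel\"of, while the archimedean factors $R_v$ ($v\in S$) decay faster than any polynomial in $\Im s_1$ and $\Im s_2$ by the last assertion of Proposition \ref{prop: DZIntLocProp}(2), so the product decays rapidly.

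For the functional equation (3) I would apply the same factorization to $\DBZ{1-s_1}{1-s_2}{\chi_1^{-1}}{\chi_2^{-1}}{\widetilde{\Pi}(w_{3,1})\widetilde{W_F}}$ — note $\widetilde{\Pi}(w_{3,1})\widetilde{W_F}=\bigotimes_v\widetilde{\Pi}_v(w_{3,1})\widetilde{W_{F,v}}$ lies in $\Whi(\widetilde{\Pi}^\infty,\psi^{-1})$ and is attached to $\widetilde{\Pi}(w_{3,1})\widetilde F$ — and invoke the local functional equations of Proposition \ref{prop: DZIntLocProp}(4) place by place. Writing $\gamma_v(s,\tau_v,\psi_v)=\epsilon_v(s,\tau_v,\psi_v)\,L_v(1-s,\widetilde{\tau_v})/L_v(s,\tau_v)$, each local functional equation becomes, after dividing out the local $L$-factors, the relation $\widetilde R_v(1-s_1,1-s_2)=\epsilon_v(s_1,\Pi_v\times\chi_{1,v},\psi_v)\,\epsilon_v(s_2,\chi_{2,v},\psi_v)\,R_v(s_1,s_2)$ between the two families of local ratios. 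Multiplying over $v\in S$ (the $\epsilon_v$ being trivial at $v\notin S$, so $\prod_v\epsilon_v=\epsilon$ is the global $\epsilon$-factor) and combining with the global functional equations $\Lambda(s,\tau)=\epsilon(s,\tau)\Lambda(1-s,\widetilde\tau)$ for $\tau=\Pi\times\chi_1$ and $\tau=\chi_2$ collapses everything to the asserted identity with trivial constant.

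Finally, for the residues in (4) take $\chi_2=\mathbbm{1}$, so $\Lambda(s_2,\mathbbm{1})$ (the completed Dedekind zeta, with residue $\zeta_\F^*$ at $s=1$) contributes the only poles, and the factorization gives $\Res_{s_2=1}\DBZ{s_1}{s_2}{\chi_1}{\mathbbm{1}}{W_F}=\zeta_\F^*\,\Lambda(s_1,\Pi\times\chi_1)\prod_{v\in S}R_v(s_1,1)$. The measure normalization $\ud^\times t_v=\zeta_v(1)\,\ud t_v/\norm[t_v]_v$ of \S\ref{sec: GlobNotation} turns the inner $t_2$-integral at $s_2=1$ into an integral over $\F_v$, so $\DBZLoc{s_1}{1}{\chi_{1,v}}{\mathbbm{1}}{W_{F,v}}=\zeta_v(1)\,\widetilde{\Psi}_v(s_1,W_{F,v},\chi_{1,v})$ (the local integral of Remark \ref{RSFEGlobal}), whence $R_v(s_1,1)=\widetilde{\Psi}_v(s_1,W_{F,v},\chi_{1,v})/L_v(s_1,\Pi_v\times\chi_{1,v})$; since $\widetilde{\Psi}_v=L_v$ at $v\notin S$, the product over all $v$ reconstitutes $\widetilde{\Psi}(s_1,F,\chi_1)=\prod_v\widetilde{\Psi}_v(s_1,W_{F,v},\chi_{1,v})$, giving the first residue formula. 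The second follows from (3) with $\chi_2=\mathbbm{1}$: then $\DBZ{s_1}{s_2}{\chi_1}{\mathbbm{1}}{W_F}=\DBZ{1-s_1}{1-s_2}{\chi_1^{-1}}{\mathbbm{1}}{\widetilde{\Pi}(w_{3,1})\widetilde{W_F}}$, so the residue at $s_2=0$ is $-1$ times the residue at $u=1$ of $u\mapsto\DBZ{1-s_1}{u}{\chi_1^{-1}}{\mathbbm{1}}{\widetilde{\Pi}(w_{3,1})\widetilde{W_F}}$, which by the first residue formula and the $\GL_3\times\GL_1$ functional equation $\widetilde{\Psi}(1-s_1,\widetilde{\Pi}(w_{3,1})\widetilde F,\chi_1^{-1})=\Psi(s_1,F,\chi_1)$ of Remark \ref{RSFEGlobal} equals $\zeta_\F^*\,\Psi(s_1,F,\chi_1)$. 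The substantive analytic input is already supplied by Proposition \ref{prop: DZIntLocProp}, so the remaining work is chiefly the bookkeeping of completed versus partial $L$-functions and of $\epsilon$- and $\gamma$-factors; the only point requiring genuine care is the rapid-decay claim in (2), where the polynomial growth of completed $L$-functions in vertical strips must be played off against the super-polynomial decay of the archimedean local factors.
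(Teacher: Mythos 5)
Your overall plan is the same as the paper's: reduce everything to the Euler product factorization and Proposition~\ref{prop: DZIntLocProp}, then for~(4) compute the residue at $s_2=1$ from the unramified collapse $\DBZLoc[\vp]{s_1}{s_2}{\chi_{1,\vp}}{\mathbbm{1}}{W_{F,\vp}}=\widetilde{\Psi}_{\vp}(s_1,W_{F,\vp},\chi_{1,\vp})\zeta_{\vp}(s_2)$ and derive the residue at $s_2=0$ from~(3) together with the $\GL_3\times\GL_1$ functional equation of Remark~\ref{RSFEGlobal}. Your bookkeeping of $\epsilon$-factors in~(3) and the measure computation $\norm[t_2]\ud^\times t_2=\zeta_v(1)\ud t_2$ in~(4) are both correct and match what the paper does.

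There is, however, a genuine gap in the rapid-decay claim of~(2). You factor $\DBZ{s_1}{s_2}{\chi_1}{\chi_2}{W_F}=\Lambda(s_1,\Pi\times\chi_1)\Lambda(s_2,\chi_2)\prod_{v\in S}R_v(s_1,s_2)$ and then assert that the archimedean $R_v=\Zeta_v/(L_vL_v)$ ``decay faster than any polynomial by the last assertion of Proposition~\ref{prop: DZIntLocProp}(2).'' That proposition establishes rapid decay of the local \emph{zeta integral} $\Zeta_v$ itself, not of the ratio $R_v$. At an archimedean place the local $L$-factors are products of $\Gamma$-factors which decay \emph{exponentially} on vertical lines (Stirling); consequently $R_v$ is merely entire and of finite order, in fact polynomially bounded on vertical strips, and is in general not rapidly decaying. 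Phragm\'en--Lindel\"of gives polynomial bounds for $\Lambda(s_j,\cdot)$, so the product $\Lambda\Lambda\prod_S R_v$ in your decomposition is only polynomially controlled — not enough. The correct bookkeeping pairs the \emph{partial} $L$-function with the local zeta integral: write $\DBZ{\cdots}=L^{S}(s_1,\Pi\times\chi_1)\,L^{S}(s_2,\chi_2)\prod_{v\in S}\Zeta_v(s_1,s_2)$, use Phragm\'en--Lindel\"of for $L^S$ (polynomial growth in vertical strips), note that $\Zeta_\vp$ at finite $\vp\in S$ is a rational function in $q_\vp^{-s_1},q_\vp^{-s_2}$ hence bounded away from poles, and invoke the rapid decay of the archimedean $\Zeta_v$ (which Proposition~\ref{prop: DZIntLocProp}(2) actually supplies) to obtain the rapid decay of the product. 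With that repair your proof coincides with the paper's.
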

\begin{proof}
	(1)-(3) follow from their local counterpart given in Proposition \ref{prop: DZIntLocProp}. To prove (4), we first notice that the second formula follows from the first one via the functional equation in (3) and the functional equation of the global Rankin-Selberg integrals recalled in Remark \ref{RSFEGlobal}. Thus it suffices to prove the first formula. We first suppose $\Re s_1 \gg 1$ is large and take $s_2 > 1$, so that the double zeta integral is absolutely convergent. Then we have an obvious infinite product decomposition
	$$ \DBZ{s_1}{s_2}{\chi_1}{\mathbbm{1}}{W_F} = \sideset{}{_v} \prod \DBZLoc{s_1}{s_2}{\chi_{1,v}}{\mathbbm{1}}{W_{F,v}}. $$
	For a finite number of places $S$ containing the archimedean ones we have for any $\vp \notin S$
	$$ W_{F,\vp} \begin{pmatrix} t_1 & & \\ t_2 & 1 & \\ & & 1 \end{pmatrix} = W_{F,\vp} \begin{pmatrix} t_1 & & \\ & 1 & \\ & & 1 \end{pmatrix} \cdot \mathbbm{1}_{\vo_{\vp}}(t_2) \quad \Rightarrow $$
	$$ \DBZLoc[\vp]{s_1}{s_2}{\chi_{1,{\vp}}}{\mathbbm{1}}{W_{F,\vp}} = \widetilde{\Psi}_{\vp}(s_1, W_{F,\vp}, \chi_{1,\vp}) \cdot \zeta_{\vp}(s_2). $$
	In other words, we have
	$$ \DBZ{s_1}{s_2}{\chi_1}{\mathbbm{1}}{W_F} = \zeta^{S}(s_2) \cdot \sideset{}{_{v \in S}} \prod \DBZLoc{s_1}{s_2}{\chi_{1,v}}{\mathbbm{1}}{W_{F,v}} \cdot \sideset{}{_{\vp \notin S}} \prod \widetilde{\Psi}_{\vp}(s_1, W_{F,\vp}, \chi_{1,\vp}). $$
	The pole at $s_2=1$ is given by $\zeta^{S}(s_2)$ while the other terms are regular at $s_2 = 1$ with
	$$ \DBZLoc{s_1}{1}{\chi_{1,v}}{\mathbbm{1}}{W_{F,v}} = \zeta_v(1) \cdot \widetilde{\Psi}_{\vp}(s_1, W_{F,v}, \chi_{1,v}). $$
	The desired formula then follows readily in this case. The general case follows by the uniqueness of meromorphic continuation on both sides.
\end{proof}

\noindent Let $M$ be the constant determined by $\Pi$ in Corollary \ref{SumWhiUBd}. We fix a $s_0$ with $\Re s_0 > 1-M$, and consider the following function on $\F^{\times} \backslash \A^{\times}$
\begin{equation} 
	f(t_1) := \int_{\F^{\times} \backslash \A^{\times}} \left( \sum_{\alpha \in \F^{\times}} \sum_{\beta \in \F^{\times}} W_F \begin{pmatrix} \alpha t t_1 & & \\ \beta t & 1 & \\ & & 1 \end{pmatrix} \right) (\omega\omega_{\Pi})^{-1}(t) \norm[t]_{\A}^{s_0} \ud^{\times}t. 
\label{AuxFtoMInv}
\end{equation}

\begin{lemma}
	(1) The defining integral of $f(t_1)$ is absolutely convergent integral for every $t_1 \in \A^{\times}$.
	
\noindent (2) The function $f(t_1)$ is smooth for the action of $\A^{\times}$.

\noindent (3) The function $f(t_1)$ admits the Mellin inversion
	$$ f(1) = \frac{1}{\Vol(\F^{\times} \R_+ \backslash \A^{\times})} \sum_{\chi \in \widehat{\F^{\times} \R_+ \backslash \A^{\times}}} \int_{(c)} \left( \int_{\F^{\times} \backslash \A^{\times}} f(t_1) \chi(t_1) \norm[t_1]_{\A}^{s_1} \ud^{\times}t_1 \right) \frac{\ud s_1}{2\pi i} $$
	for any $c$ satisfying $1-M < c < \Re s_0-1$.
\end{lemma}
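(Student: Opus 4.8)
The plan is to prove the three claims in turn; the essential input throughout is the Whittaker bound of Corollary \ref{SumWhiUBd} and, more sharply, Lemma \ref{WhiUBd}, whose exponent $M$ depends only on $\Pi$. By linearity I may assume $W_F=\otimes_v W_{F,v}$ is a pure tensor. For (1) I would apply Corollary \ref{SumWhiUBd} with first argument $tt_1$ and second argument $t$, so that for fixed $t_1$ the inner double sum is $\ll_{N_1,N_2}\min(\norm[tt_1]_{\A}^{M-1},\norm[tt_1]_{\A}^{-N_1})\min(\norm[t]_{\A}^{-1},\norm[t]_{\A}^{-N_2})$, a function of $\norm[t]_{\A}$ only; under $\F^{\times}\backslash\A^{\times}\simeq(\F^{\times}\backslash\A^{(1)})\times\R_+$ the defining integral is then dominated by a finite volume factor times an elementary one-variable integral over $\R_+$, which converges absolutely for the relevant $s_0$ once $N_1,N_2$ are large. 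Tracking the same computation in $r:=\norm[t_1]_{\A}$ yields in addition the two-sided bound $\norm[f(t_1)]\ll\min(\norm[t_1]_{\A}^{M-1},\norm[t_1]_{\A}^{1-\Re s_0})$, to be reused in (3).

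For (2) I would argue place by place. At a finite place $v$ I use the identity
\[ \begin{pmatrix} \alpha t t_1 u & & \\ \beta t & 1 & \\ & & 1 \end{pmatrix} = \begin{pmatrix} \alpha t t_1 & & \\ \beta t & 1 & \\ & & 1 \end{pmatrix}\begin{pmatrix} u & & \\ \beta t(1-u) & 1 & \\ & & 1 \end{pmatrix}, \qquad u\in\F_v^{\times}\hookrightarrow\A^{\times}. \]
Since $1-u$ is supported at $v$, the right factor is $\id_3$ away from $v$; by \cite[Lemma (4.1.5)]{JPS79} (recalled in the proof of Proposition \ref{prop: DZIntLocProp}) the $v$-component of $\beta t$ must lie in a fixed compact set of $\F_v$ for $W_{F,v}$ not to vanish, so only $\beta t$ of bounded $v$-adic size contribute to the sum; choosing $U_v\ni 1$ a small enough compact open then forces the right factor into a level subgroup of $W_F$ for all contributing $(\alpha,\beta,t)$, whence $f(t_1 u)=f(t_1)$ for $u\in U_v$. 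At an archimedean $v$ I would differentiate under the integral and sums: a Lie-algebra computation gives
\[ \partial_{t_{1,v}}W_F\!\begin{pmatrix} \alpha t t_1 & & \\ \beta t & 1 & \\ & & 1 \end{pmatrix} = \frac{1}{t_{1,v}}\Big[(E_{11}W_F)-(\beta t)_v(E_{21}W_F)\Big]\!\begin{pmatrix} \alpha t t_1 & & \\ \beta t & 1 & \\ & & 1 \end{pmatrix}, \]
where $E_{11},E_{21}$ act at $v$ and $E_{ij}W_F\in\Whi(\Pi^{\infty},\psi)$ is the Whittaker function of $\Pi(E_{ij})F\in V_{\Pi}^{\infty}$, to which Lemma \ref{WhiUBd} applies with the same $M$. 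The polynomial factor $(\beta t)_v$ is swallowed by the rapid decay in the $(2,1)$-entry (it merely lowers $N_2$), so the differentiated series is again dominated by a bound of Corollary \ref{SumWhiUBd} type, uniformly for $t_1$ in compacta, hence integrable against $(\omega\omega_{\Pi})^{-1}(t)\norm[t]_{\A}^{s_0}\ud^{\times}t$. Iterating shows $f\in C^{\infty}$ at $v$, and the same estimates give the bound $\ll\min(\norm[t_1]_{\A}^{M-1},\norm[t_1]_{\A}^{1-\Re s_0})$ (up to constants) for every derivative of $f$.

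For (3) I would appeal to Mellin inversion on $\F^{\times}\backslash\A^{\times}\simeq(\F^{\times}\R_+\backslash\A^{\times})\times\R_+$. By (2) the expansion of $f$ along $\widehat{\F^{\times}\R_+\backslash\A^{\times}}$ converges; for each $\chi$ its $\chi$-component, regarded as a function of $r=\norm[t_1]_{\A}$, is smooth on $\R_+$, decays — with all $r$-derivatives — like $\min(r^{M-1},r^{1-\Re s_0})$ by (1)–(2), and decays rapidly as $\chi$ varies by the uniform smoothness in the compact direction. Hence its Mellin transform over $\R_+$, which up to the factor $\Vol(\F^{\times}\R_+\backslash\A^{\times})^{-1}$ equals $\int_{\F^{\times}\backslash\A^{\times}}f(t_1)\chi(t_1)\norm[t_1]_{\A}^{s_1}\ud^{\times}t_1$, is holomorphic in $1-M<\Re s_1<\Re s_0-1$ and rapidly decreasing on vertical lines (integrate by parts with $r\partial_r$, the boundary terms vanishing because of the decay estimates and the strip condition); Mellin inversion recovers that component for any $c$ in the strip, and evaluating at $r=1$ (i.e.\ $t_1=1$) and summing over $\chi$ — legitimate by the two rapid decays — yields the asserted identity. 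The main obstacle is (2): the delicate point is the uniformity licensing one to move the local parameter under the only conditionally convergent sum over $\alpha,\beta$, which is supplied at finite places by the compact support of Whittaker functions in the relevant coordinate and at archimedean places by the stability of the exponent $M$ under applying enveloping-algebra elements to $W_F$, together with the rapid decay that absorbs the polynomial factors $(\beta t)_v$ produced by differentiation.
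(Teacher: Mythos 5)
Your proposal follows essentially the same route as the paper: bound $f(t_1)$ via Corollary \ref{SumWhiUBd}, prove archimedean smoothness by rewriting $t_{1,v}\partial_{t_{1,v}}$ as a universal enveloping algebra action on $F$ and absorbing the resulting polynomial factor $(\beta t)_v$ into the rapid decay, then invoke Mellin inversion on $\F^{\times}\backslash\A^{\times}$ using the uniform two-sided decay of $f$ and its derivatives. The one place you go further than the paper is at finite places, where the paper's ``obviously follows from the smoothness of $W_F$'' is replaced by your explicit matrix factorization together with the uniform compactness of $(\beta t)_v$ from Lemma \ref{WhiUBd}; this is a correct and slightly more careful version of the same observation.
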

\begin{proof}
	(1) Applying Corollary \ref{SumWhiUBd}, we get
\begin{multline*} 
	\sum_{\alpha,\beta \in \F^{\times}} \extnorm{ W_F \begin{pmatrix} \alpha t t_1 & & \\ \beta t & 1 & \\ & & 1 \end{pmatrix} } \ll_{N_1,N_2} \min(\norm[tt_1]_{\A}^{M-1}, \norm[tt_1]_{\A}^{-N_1}) \cdot \min(\norm[t]_{\A}^{-1}, \norm[t]_{\A}^{-N_2}) \\
	= \min \left( \norm[t_1]_{\A}^{M-1} \cdot \min (\norm[t]_{\A}^{M-2}, \norm[t]_{\A}^{M-N_2}), \norm[t_1]_{\A}^{-N_1} \cdot \min( \norm[t]_{\A}^{-N_1-1}, \norm[t]_{\A}^{-N_1-N_2} ) \right).
\end{multline*}
	We choose $1-M < N_1 < \Re s_0-1$ arbitrarily, and $N_2$ large to get the absolute convergence of the integral of right hand side against $\norm[t]_{\A}^{s_0} \ud^{\times}t$, together with the estimation
\begin{equation} 
	\norm[f(t_1)] \leq \int_{\F^{\times} \backslash \A^{\times}} \sum_{\alpha,\beta \in \F^{\times}} \extnorm{ W_F \begin{pmatrix} \alpha t t_1 & & \\ \beta t & 1 & \\ & & 1 \end{pmatrix} } \norm[t]_{\A}^{\Re s_0} \ud^{\times}t \ll_{N_1} \min (\norm[t_1]_{\A}^{M-1}, \norm[t_1]_{\A}^{-N_1}). 
\label{ParIntBd}
\end{equation}

\noindent (2) Having proved the absolute convergence in (1), the smoothness for $\A_{\fin}^{\times}$ obviously follows from the smoothness of $W_F$. The smoothness for $\A_{\infty}^{\times}$ is defined by the differentials $D_v = t_{1,v} \partial_{t_{1,v}}$ at $\F_v = \R$, resp. $D_v = \rho_v \partial_{\rho_v}$ and $D_v' = \partial_{\theta_v}$ at $\F_v = \C$ for the polar coordinates $t_{1,v} = \rho_v e^{i\theta_v}$. They correspond to the left differential operators defined by $E_v = \mathrm{diag}(1,0,0)$, resp. $E_v$ and $E_v' = \mathrm{diag}(i,0,0)$  of the Lie algebra $\mathfrak{gl}_3(\F_v)$. Let $H_v$ resp. $H_v$ and $H_v'$ be elements in $\mathfrak{gl}_3(\F_v)$ given by
	$$ H_v = \begin{bmatrix} 0 & 0 & 0 \\ 1 & 0 & 0 \\ 0 & 0 & 0 \end{bmatrix}, \quad H_v' = \begin{bmatrix} 0 & 0 & 0 \\ i & 0 & 0 \\ 0 & 0 & 0 \end{bmatrix}. $$
	It is easy to write the left differentials by the right differentials via
	$$ \begin{pmatrix} t_v t_{1,v} & & \\ t_v & 1 & \\ & & 1 \end{pmatrix}^{-1} E_v \begin{pmatrix} t_v t_{1,v} & & \\ t_v & 1 & \\ & & 1 \end{pmatrix} = E_v - t_v H_v, $$
	$$ \begin{pmatrix} t_v t_{1,v} & & \\ t_v & 1 & \\ & & 1 \end{pmatrix}^{-1} E_v' \begin{pmatrix} t_v t_{1,v} & & \\ t_v & 1 & \\ & & 1 \end{pmatrix} = E_v' - t_v H_v'. $$
	In other words, if we introduce
	$$ g(t_1,t) := W_{\F} \begin{pmatrix} t_v t_{1,v} & & \\ t_v & 1 & \\ & & 1 \end{pmatrix}, $$
	then we have the relations
	$$ D_vg = W_{E_v.F} - t_v W_{H_v.F}, \quad D_v'g = W_{E_v'.F} - t_v W_{H_v'.F}. $$
	Hence for any polynomial $P$ in $D_v,D_v'$, there is a polynomial $Q$ in $E_v,E_v',H_v,H_v',t_v$ such that $P.g = W_{Q.F}$. It follows that $P.g(t_1,t)$ satisfies the same bound offered by Lemma \ref{WhiUBd} and Corollary \ref{SumWhiUBd} as $g(t_1,t)$, because for any integer $k$ we have
	$$ \norm[t_v]_v^k \min(1,\norm[t_v]_v^{-N_2}) \leq \min(1,\norm[t_v]_v^{k-N_2}). $$
	Hence we can pass the differentials in the sum and integral defining $f(t_1)$, proving its smoothness.
	
\noindent (3) By (1) and (2), $f(t_1)$ is a smooth function on $\F^{\times} \backslash \A^{\times}$, whose derivatives with respect to any polynomial in $D_v,D_v'$ at $v \mid \infty$ satisfy the bound (\ref{ParIntBd}). Hence $f(t_1)$ is Mellin invertible over $\F^{\times} \backslash \A^{\times}$, and the stated formula holds for $1-M < c < N_1$. Since $N_1$ can be arbitrarily close to $\Re s_0 - 1$, we conclude.
\end{proof}

\begin{remark}
	The above argument is quite similar to that in \cite[Lemma 3.5]{Wu22}. The only difference is that we replace \cite[Proposition 2.8]{Wu22} with its generalization Lemma \ref{FundSumEst}.
\end{remark}

	We can finally rewrite the equation (\ref{1stDecompInitial}) as follows.
\begin{align*}
	&\quad \Theta(s_0,F) - \Psi \left( s_0+1, F, (\omega\omega_{\Pi})^{-1} \right) \\
	&= \int_{\F^{\times} \backslash \A^{\times}} \left( \sum_{\alpha \in \F^{\times}} \sum_{\beta \in \F^{\times}} W_F \begin{pmatrix} \alpha t & & \\ \beta t & 1 & \\ & & 1 \end{pmatrix} \right) (\omega\omega_{\Pi})^{-1}(t) \norm[t]_{\A}^{s_0} \ud^{\times}t \\
	&= \frac{1}{\Vol(\F^{\times} \R_+ \backslash \A^{\times})} \sum_{\chi \in \widehat{\F^{\times} \R_+ \backslash \A^{\times}}} \int_{(c)} \left( \int_{(\A^{\times})^2} W_F \begin{pmatrix} tt_1 & & \\ t & 1 & \\ & & 1 \end{pmatrix} \chi(t_1) \norm[t_1]_{\A}^{s_1} (\omega\omega_{\Pi})^{-1}(t) \norm[t]_{\A}^{s_0} \ud^{\times}t_1 \ud^{\times}t \right) \frac{\ud s_1}{2\pi i} \\
	&= \frac{1}{\Vol(\F^{\times} \R_+ \backslash \A^{\times})} \sum_{\chi \in \widehat{\F^{\times} \R_+ \backslash \A^{\times}}} \int_{(c)} \DBZ{s_1+1}{s_0-s_1}{\chi}{(\chi \omega \omega_{\Pi})^{-1}}{W_F} \frac{\ud s_1}{2\pi i}.
\end{align*}

\noindent To obtain the meromorphic continuation for $\Re s_0$ small, we first shift the contour of integration in $s_1$ to $\Re s_1 = c > \Re s_0$, pick up two poles at $s_1 = s_0-1, s_0$ for $\chi = (\omega\omega_{\Pi})^{-1}$ and obtain by Proposition \ref{DZIntGlobalProp} (4)
\begin{align*}
	\Theta(s_0,F) - \Psi \left( s_0+1, F, (\omega\omega_{\Pi})^{-1} \right) &= \frac{1}{\zeta_{\F}^*} \sum_{\chi \in \widehat{\F^{\times} \R_+ \backslash \A^{\times}}} \int_{(c)} \DBZ{s_1+1}{s_0-s_1}{\chi}{(\chi \omega \omega_{\Pi})^{-1}}{W_F} \frac{\ud s_1}{2\pi i} \\
	&\quad + \widetilde{\Psi}(s_0, F, (\omega\omega_{\Pi})^{-1}) - \Psi(s_0+1,F,(\omega\omega_{\Pi})^{-1}).
\end{align*}
	We assume $\norm[\Re s_0] < 1/2$ and shift the contour back to $\Re s_1 = -1/2$, pick up the pole at $s_1 = s_0$ for $\chi = (\omega\omega_{\Pi})^{-1}$ again and obtain Theorem \ref{thm: MainId} (1) by
\begin{align*}
	\Theta(s_0,F) &= \frac{1}{\zeta_{\F}^*} \sum_{\chi \in \widehat{\F^{\times} \R_+ \backslash \A^{\times}}} \int_{(-1/2)} \DBZ{s_1+1}{s_0-s_1}{\chi}{(\chi \omega \omega_{\Pi})^{-1}}{W_F} \frac{\ud s_1}{2\pi i} \\
	&\quad + \widetilde{\Psi}(s_0, F, (\omega\omega_{\Pi})^{-1}) + \Psi(s_0+1,F,(\omega\omega_{\Pi})^{-1}).
\end{align*}

	\subsection{Second Decomposition}
	
	Recall the Weyl element $w$ and its action
	$$ w = \begin{pmatrix} & 1 & \\ & & 1 \\ 1 & & \end{pmatrix} = w_3 w_{3,1}; \quad w^{-1} \begin{pmatrix} g & \\ & t \end{pmatrix} w = \begin{pmatrix} t & \\ & g \end{pmatrix}, \ \forall g \in \GL_2, t \in \GL_1. $$
	Taking into account the left invariance by $w$ of $F$, we can rewrite $\Theta(s_0,F)$ as
\begin{align} \label{eq: 2ndDecompInitial}
	\Theta(s_0,F) &= \int_{\F \backslash \A} \left( \int_{\F^{\times} \backslash \A^{\times}} \Pi(w).F \begin{pmatrix} 1 & x & \\ & 1 & \\ & & t \end{pmatrix} (\omega\omega_{\Pi})^{-1}(t) \norm[t]_{\A}^{s_0} \ud^{\times}t \right) \psi(-x) \ud x \nonumber \\
	&= \int_{\F \backslash \A} \left( \int_{\F^{\times} \backslash \A^{\times}} \Pi(w).F \begin{pmatrix} t & tx & \\ & t & \\ & & 1 \end{pmatrix} \omega(t) \norm[t]_{\A}^{-s_0} \ud^{\times}t \right) \psi(-x) \ud x. 
\end{align}
	Note that the function $F_1$ on $\GL_2(\A)$ defined by
	$$ F_1(g) := \int_{\F^{\times} \backslash \A^{\times}} \Pi(w).F \begin{pmatrix} tg & \\ & 1 \end{pmatrix} \omega(t) \norm[t]_{\A}^{-s_0} \ud^{\times}t \cdot \norm[\det g]_{\A}^{-\frac{s_0}{2}} $$
is smooth, invariant by $\GL_2(\F)$ on the left, transforms as the character $\omega^{-1}$ on the center of $\GL_2$, and has rapid decay as $g \to \infty$ in any Siegel domain of $\GL_2$ since $\Pi(w).F$ is a cusp form on $\GL_3$. We can apply the automorphic Fourier inversion (see \cite[Theorem 2.3]{Wu22}) and get a normally convergent expansion
\begin{align*}
	F_1(g) &= \sideset{}{_{\substack{\pi \text{ cuspidal} \\ \omega_{\pi}=\omega^{-1}}}} \sum \sideset{}{_{e \in \Bas(\pi)}} \sum \Pairing{F_1}{e} e(g) \\
	&\quad + \sum_{\chi \in \widehat{\R_+ \F^{\times} \backslash \A^{\times}}} \sum_{f \in \Bas(\chi,\omega^{-1}\chi^{-1})} \int_{-\infty}^{\infty} \Pairing{F_1}{\eis(i\tau,f)} \eis(i\tau,f)(g) \frac{\ud \tau}{4\pi} \\
	&\quad + \frac{1}{\Vol([\PGL_2])} \sideset{}{_{\substack{ \chi \in \widehat{\F^{\times} \backslash \A^{\times}} \\ \chi^2 = \omega^{-1} }}} \sum \int_{[\PGL_2]} F_1(x) \overline{\chi(\det x)} \ud x \cdot \chi(\det g).
\end{align*}
	Since $\F\backslash \A$ is compact, we can insert the above expansion of $F_1$ to (\ref{eq: 2ndDecompInitial}) and get
\begin{equation} \label{eq: 2ndDecomp}
	\Theta(s_0,F) = \sideset{}{_{\substack{\pi \text{ cuspidal} \\ \omega_{\pi}=\omega^{-1}}}} \sum \Theta(s_0,F \mid \pi) + \sum_{\chi \in \widehat{\R_+ \F^{\times} \backslash \A^{\times}}} \int_{-\infty}^{\infty} \Theta(s_0,F \mid \chi,\omega^{-1}\chi^{-1}; i\tau) \frac{\ud \tau}{4\pi}, 
\end{equation}
	where we have written
\begin{align} \label{2ndDecompCusp}
	\Theta(s_0,F \mid \pi) &= \sideset{}{_{e \in \Bas(\pi)}} \sum \Pairing{F_1}{e} W_e(\mathbbm{1}) \\
	&= \sideset{}{_{e \in \Bas(\pi)}} \sum \int_{\GL_2(\F) \backslash \GL_2(\A)} \Pi(w).F \begin{pmatrix} g & \\ & 1 \end{pmatrix} \overline{e(g)} \norm[\det g]_{\A}^{-\frac{s_0}{2}} \ud g \cdot W_e(\mathbbm{1}) \nonumber \\
	&= \sideset{}{_{e \in \Bas(\pi)}} \sum \Psi \left( \frac{1-s_0}{2}, \Pi(w).F, e^{\vee} \right) W_e(\mathbbm{1}), \nonumber
\end{align}
\begin{multline} \label{2ndDecompEis}
	\Theta(s_0,F \mid \chi,\omega^{-1}\chi^{-1}; i\tau) = \sum_{f \in \Bas(\chi,\omega^{-1}\chi^{-1})} \Pairing{F_1}{\eis(i\tau,f)} W_{f,i\tau}(\mathbbm{1}) \\
	= \sum_{f \in \Bas(\chi,\omega^{-1}\chi^{-1})} \int_{\GL_2(\F) \backslash \GL_2(\A)} \Pi(w).F \begin{pmatrix} g & \\ & 1 \end{pmatrix} \overline{\eis(i\tau,f)(g)} \norm[\det g]_{\A}^{-\frac{s_0}{2}} \ud g \cdot W_{f,i\tau}(\mathbbm{1}) \\
	= \sum_{f \in \Bas(\chi,\omega^{-1}\chi^{-1})} \Psi \left( \frac{1-s_0}{2}, \Pi(w).F, \eis(-i\tau, f^{\vee}) \right) W_{f,i\tau}(\mathbbm{1}). 
\end{multline}

\noindent The decomposition (\ref{eq: 2ndDecomp}) of $\Theta(s_0,F)$ is done for $\Re s_0 \gg 1$. Moving $s_0$ continually, we come across no poles of any summand/integrand, by the Rankin-Selberg theory and Proposition \ref{prop: RSEisProp}. Hence (\ref{eq: 2ndDecomp}) is valid for all $s_0 \in \C$, proving Theorem \ref{thm: MainId} (2).

	\subsection{Euler Product Factorisation}
	\label{sec: EulerProdD}
	
	The local versions of (\ref{2ndDecompCusp}) and (\ref{2ndDecompEis}) are given by:
	
\begin{equation} \label{eq: 2ndDecompCuspLoc}
	\Theta_v(s_0,W_{F,v} \mid \pi_v) = \sideset{}{_{e \in \Bas(\pi_v)}} \sum \Psi_v \left( \frac{1-s_0}{2}, \Pi(w).W_{F,v}, W_e^{\vee} \right) W_e(\mathbbm{1}),
\end{equation}

\noindent where the dual basis $W_e^{\vee} \in \Whi(\pi_v, \psi_v^{-1})$ is taken in terms of the norm in the Kirillov model;

\begin{equation} \label{eq: 2ndDecompEisLoc}
	\Theta(s_0,W_{F,v} \mid \chi_v,\omega_v^{-1}\chi_v^{-1}; i\tau) = \sum_{f \in \Bas(\chi_v,\omega_v^{-1}\chi_v^{-1})} \Psi_v \left( \frac{1-s_0}{2}, \Pi(w).W_{F,v}, W_{f^{\vee},-i\tau} \right) W_{f,i\tau}(\mathbbm{1}),
\end{equation}

\noindent where the dual basis $f^{\vee} \in \pi(\chi_v^{-1}, \omega_v \chi_v)$ is taken in terms of the norm in the induced model. The norm identification \cite[Proposition 2.13]{Wu14} or \cite[(2.3)]{BFW21+} implies

\begin{multline} \label{eq: WtGL2CuspD}
	\Theta(s_0, F \mid \pi) = \frac{1}{2 \Lambda_{\F}(2) L(1,\pi,\mathrm{Ad})} \cdot \prod_{v \mid \infty} \Theta_v(s_0,W_{F,v} \mid \pi_v) \cdot \prod_{\vp < \infty} \Theta_{\vp}(s_0,W_{F,\vp} \mid \pi_{\vp}) L(1, \pi_{\vp} \times \widetilde{\pi}_{\vp}) \\
	= \frac{L((1-s_0)/2, \Pi \times \widetilde{\pi})}{2 \Lambda_{\F}(2) L(1,\pi,\mathrm{Ad})} \cdot \prod_{v \mid \infty} \Theta_v(s_0,W_{F,v} \mid \pi_v) \cdot \prod_{\vp < \infty} \Theta_{\vp}(s_0,W_{F,\vp} \mid \pi_{\vp}) \frac{L(1, \pi_{\vp} \times \widetilde{\pi}_{\vp})}{L((1-s_0)/2, \Pi_{\vp} \times \widetilde{\pi}_{\vp})},
\end{multline}
	whose specialization to $s_0=0$ is written as
\begin{equation} \label{eq: WtGL2CuspDBis}
	\Theta(F \mid \pi) = \frac{L(1/2, \Pi \times \widetilde{\pi})}{2 \Lambda_{\F}(2) L(1,\pi,\mathrm{Ad})} \cdot \prod_{v \mid \infty} \Theta_v(W_{F,v} \mid \pi_v) \cdot \prod_{\vp < \infty} \Theta_{\vp}(W_{F,\vp} \mid \pi_{\vp}) \frac{L(1, \pi_{\vp} \times \widetilde{\pi}_{\vp})}{L(1/2, \Pi_{\vp} \times \widetilde{\pi}_{\vp})}.
\end{equation}

\noindent Similarly, we have

\begin{multline} \label{eq: WtGL2EisD}
	\Theta(s_0, F \mid \chi, \omega^{-1}\chi^{-1}; i\tau) = \prod_v \Theta_v(s_0,W_{F,v} \mid \chi_v, \omega_v^{-1}\chi_v^{-1}; i\tau) \\
	= \frac{L((1-s_0)/2-i\tau, \Pi \times \chi^{-1}) L((1-s_0)/2+i\tau, \Pi \times \omega\chi)}{\zeta_{\F}(2) \extnorm{L(1+2i\tau,\omega\chi^2)}^2} \cdot \prod_{v \mid \infty} \Theta_v(s_0,W_{F,v} \mid \chi_v, \omega_v^{-1}\chi_v^{-1}; i\tau) \cdot \\
	\prod_{\vp < \infty} \Theta_{\vp}(s_0,W_{F,\vp} \mid \chi_{\vp}, \omega_{\vp}^{-1}\chi_{\vp}^{-1}; i\tau) \frac{\zeta_v(2) \extnorm{L(1+2i\tau, \omega_{\vp}\chi_{\vp}^2)}^2}{L((1-s_0)/2-i\tau, \Pi_{\vp} \times \chi_{\vp}^{-1}) L((1-s_0)/2+i\tau, \Pi_{\vp} \times \omega_{\vp}\chi_{\vp})},
\end{multline}
whose specialization to $s_0=0$ is written as
\begin{multline} \label{eq: WtGL2EisDBis}
	\Theta(F \mid \chi, \omega^{-1}\chi^{-1}; i\tau) = \frac{L(1/2-i\tau, \Pi \times \chi^{-1}) L(1/2+i\tau, \Pi \times \omega\chi)}{\zeta_{\F}(2) \extnorm{L(1+2i\tau,\omega\chi^2)}^2} \cdot \\
	\sideset{}{_{v \mid \infty}} \prod \Theta_v(W_{F,v} \mid \chi_v, \omega_v^{-1}\chi_v^{-1}; i\tau) \cdot \\
	\sideset{}{_{\vp < \infty}} \prod \Theta_{\vp}(W_{F,\vp} \mid \chi_{\vp}, \omega_{\vp}^{-1}\chi_{\vp}^{-1}; i\tau) \frac{\zeta_{\vp}(2) \extnorm{L(1+2i\tau, \omega_{\vp}\chi_{\vp}^2)}^2}{L(1/2-i\tau, \Pi_{\vp} \times \chi_{\vp}^{-1}) L(1/2+i\tau, \Pi_{\vp} \times \omega_{\vp}\chi_{\vp})}.
\end{multline}

\begin{remark}
	Just as \cite[(2.5)]{BFW21+}, for $\pi_v = \pi(\chi_v,\omega_v^{-1}\chi_v^{-1})$ we have the relation
	$$ \Theta_v(W_{F,v} \mid \chi_v, \omega_v^{-1}\chi_v^{-1}; i\tau) = \frac{\zeta_v(1)^2}{\zeta_v(2)} \Theta_v(W_{F,v} \mid \pi_v). $$
\end{remark}

	For the dual side, by the global functional equation in Proposition \ref{DZIntGlobalProp} (3), we have a global equality
	$$ \DBZ{1/2+i\tau}{s_0+1/2-i\tau}{\chi}{(\chi \omega \omega_{\Pi})^{-1}}{W_F} = \DBZ{1/2-i\tau}{1/2+i\tau-s_0}{\chi^{-1}}{\chi \omega \omega_{\Pi}}{\widetilde{\Pi}(w_{3,1}).\widetilde{W_F}}. $$
	The following decomposition follows readily from the computation at the unramified places:
\begin{multline} \label{eq: DualWtGL1D}
	\DBZ{1/2-i\tau}{1/2+i\tau-s_0}{\chi^{-1}}{\chi \omega \omega_{\Pi}}{\widetilde{\Pi}(w_{3,1}).\widetilde{W_F}} = L(1/2-i\tau, \widetilde{\Pi} \times \chi^{-1}) L(1/2+i\tau-s_0, \chi\omega\omega_{\Pi}) \cdot \\
	\prod_{v \mid \infty} \DBZ{1/2-i\tau}{1/2+i\tau-s_0}{\chi_v^{-1}}{\chi_v \omega_v \omega_{\Pi,v}}{\widetilde{\Pi}_v(w_{3,1}).\widetilde{W_{F,v}}} \cdot \\
	\prod_{\vp < \infty} \DBZ{1/2-i\tau}{1/2+i\tau-s_0}{\chi_{\vp}^{-1}}{\chi_{\vp} \omega_{\vp} \omega_{\Pi,\vp}}{\widetilde{\Pi}_{\vp}(w_{3,1}).\widetilde{W_{F,\vp}}} \frac{1}{L(1/2-i\tau, \widetilde{\Pi}_{\vp} \times \chi_{\vp}^{-1}) L(1/2+i\tau-s_0, \chi_{\vp}\omega_{\vp}\omega_{\Pi,\vp})},
\end{multline}
whose specialization to $s_0=0$ is written as
\begin{multline} \label{eq: DualWtGL1DBis}
	\DBZ{1/2-i\tau}{1/2+i\tau}{\chi^{-1}}{\chi \omega \omega_{\Pi}}{\widetilde{\Pi}(w_{3,1}).\widetilde{W_F}} = L(1/2-i\tau, \widetilde{\Pi} \times \chi^{-1}) L(1/2+i\tau, \chi\omega\omega_{\Pi}) \cdot \\
	\prod_{v \mid \infty} \DBZ{1/2-i\tau}{1/2+i\tau}{\chi_v^{-1}}{\chi_v \omega_v \omega_{\Pi,v}}{\widetilde{\Pi}_v(w_{3,1}).\widetilde{W_{F,v}}} \cdot \\
	\prod_{\vp < \infty} \DBZ{1/2-i\tau}{1/2+i\tau}{\chi_{\vp}^{-1}}{\chi_{\vp} \omega_{\vp} \omega_{\Pi,\vp}}{\widetilde{\Pi}_{\vp}(w_{3,1}).\widetilde{W_{F,\vp}}} \frac{1}{L(1/2-i\tau, \widetilde{\Pi}_{\vp} \times \chi_{\vp}^{-1}) L(1/2+i\tau, \chi_{\vp}\omega_{\vp}\omega_{\Pi,\vp})}.
\end{multline}

	For the convenience of the follow-up papers, we introduce the \emph{weight functions} as follows (below $\vp$ denotes a finite place while $v$ denotes a general place):
\begin{equation} \label{eq: WtFDef}
	h_v(\pi_v) := \Theta_v(W_{F,v} \mid \pi_v), \quad H_{\vp}(\pi_{\vp}) := h_{\vp}(\pi_{\vp}) \frac{L(1, \pi_{\vp} \times \widetilde{\pi}_{\vp})}{L(1/2, \Pi_{\vp} \times \widetilde{\pi}_{\vp})};
\end{equation}
\begin{align} \label{eq: DWtFDef}
	\widetilde{h}_v(\chi_v) &:= \DBZ{1/2}{1/2}{\chi_v^{-1}}{\chi_v \omega_v \omega_{\Pi,v}}{\widetilde{\Pi}_v(w_{3,1}).\widetilde{W_{F,v}}}, \\
	\widetilde{H}_{\vp}(\chi_{\vp}) &:= \widetilde{h}_{\vp}(\chi_{\vp}) L(1/2, \widetilde{\Pi}_{\vp} \times \chi_{\vp}^{-1})^{-1} L(1/2, \chi_{\vp}\omega_{\vp}\omega_{\Pi,\vp})^{-1}. \nonumber
\end{align}
	We also introduce the abbreviation $\pi(\chi_v,s) := \pi(\chi_v \norm_v^s, \omega_v^{-1} \chi_v^{-1} \norm_v^{-s})$ for simplicity of notation. Then the main equality in Theorem \ref{thm: Main1} can be rewritten as (note that $\zeta_v(1)=1$ at $v \mid \infty$)
\begin{multline} \label{eq: Main1Bis}
	\sum_{\pi: \omega_{\pi}=\omega^{-1}} \frac{L(1/2, \Pi \times \widetilde{\pi})}{2 \Lambda_{\F}(2) L(1,\pi,\mathrm{Ad})} \cdot \prod_{v \mid \infty} h_v(\pi_v) \cdot \prod_{\vp < \infty} H_{\vp}(\pi_{\vp}) + \\ 
	\sum_{\chi \in \widehat{\R_+ \F^{\times} \backslash \A^{\times}}} \int_{-\infty}^{\infty} \frac{L(1/2-i\tau, \Pi \times \chi^{-1}) L(1/2+i\tau, \Pi \times \omega\chi)}{2 \Lambda_{\F}(2) \extnorm{L(1+2i\tau,\omega\chi^2)}^2}
	\cdot \prod_{v \mid \infty} h_v(\pi(\chi_v,i\tau)) \cdot \prod_{\vp < \infty} H_{\vp}(\pi(\chi_{\vp},i\tau)) \frac{\ud \tau}{2\pi} \\
	= \frac{1}{\zeta_{\F}^*} \sum_{\chi \in \widehat{\R_+ \F^{\times} \backslash \A^{\times}}} \int_{-\infty}^{\infty} L(1/2-i\tau, \widetilde{\Pi} \times \chi^{-1}) L(1/2+i\tau, \chi\omega\omega_{\Pi}) \cdot \prod_{v \mid \infty} \widetilde{h}_v(\chi_v \norm_v^{i\tau}) \cdot \prod_{\vp < \infty} \widetilde{H}_{\vp}(\chi_{\vp} \norm_{\vp}^{i\tau}) \frac{\ud \tau}{2\pi} + \\
	\frac{1}{\zeta_{\F}^*} \sum_{\pm} \Res_{s_1 = \pm \frac{1}{2}} L(1/2-s_1, \widetilde{\Pi} \times \omega\omega_{\Pi}) L(1/2+s_1, \id) \cdot \prod_{v \mid \infty} \widetilde{h}_v(\omega_v^{-1}\omega_{\Pi,v}^{-1} \norm_v^{s_1}) \cdot \prod_{\vp < \infty} \widetilde{H}_{\vp}(\omega_{\vp}^{-1}\omega_{\Pi,\vp}^{-1} \norm_{\vp}^{s_1}).
\end{multline}
	Note that in the above the infinite products over $\vp < \infty$ are finite for any chosen $F$.

\section{Local Weight Transforms}

	\subsection{Miller--Schmid Type Theory: Non-Archimedean Case}
	\label{sec: MSTheory}

	We begin with a Paley--Wiener theory.
	
\begin{definition} \label{def: SSchNA}
	(1) A \emph{finite function} on a locally compact group is a continuous function whose translates span a finite dimensional vector space.
	
\noindent (2) We write $\SSch(\F)$ for the space spanned by $\eta \cdot \Sch(\F)$ as $\eta$ runs through finite functions on $\F^{\times}$.
\end{definition}
\begin{remark}
	The space of finite functions is spanned by functions of the shape $\chi \cdot v_{\F}^k$, where $\chi$ is a quasi-character of $\F^{\times}$ and $k \in \Z_{\geq 0}$. It is clear that $\SSch(\F) \subset \Cont^{\infty}(\F^{\times})$, since $\chi \cdot v_{\F}^k \cdot \id_{\vO_{\F}} \in \Cont^{\infty}(\F^{\times})$.
\end{remark}

\begin{definition} \label{eq: SMellinNA}
	Write $\SMel(\F) = \Cont_c(\widehat{\vO_{\F}^{\times}}, \C(X))$, where $\C(X)$ is the fractional field of $\C[X]$.
\end{definition}

\begin{proposition} \label{prop: PaleyWienerNA}
	The Mellin transform for $f \in \Cont^{\infty}(\F^{\times})$ defined for $\xi \in \widehat{\vO_{\F}^{\times}}$ and $s \in \C$ by
	$$ \Mellin{f}(\xi,s) := \int_{\F^{\times}} f(t) \xi(t) \norm[t]^s \ud^{\times} t $$
whenever the integral is absolutely convergent, induces a bijection between $\SSch(\F)$ and $\SMel(\F)$ up to the change of variables $X := q^{-s}$.
\end{proposition}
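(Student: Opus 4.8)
The plan is to work fibre by fibre over the valuation map, using $\F^{\times}=\bigsqcup_{n\in\Z}\varpi^{n}\vO_{\F}^{\times}$ and Fourier analysis on the compact group $\vO_{\F}^{\times}$. Since $\vO_{\F}^{\times}$ is compact, $\widehat{\vO_{\F}^{\times}}$ is discrete, so $\SMel(\F)=\Cont_c(\widehat{\vO_{\F}^{\times}},\C(X))$ is just the space of finitely supported maps $\widehat{\vO_{\F}^{\times}}\to\C(X)$. For $f\in\Cont^{\infty}(\F^{\times})$, decompose the restriction of $f$ to each coset into $\vO_{\F}^{\times}$-isotypic pieces, $f(u\varpi^{n})=\sum_{\eta}b_{n}(\eta)\eta(u)$ (a finite sum for each $n$, by smoothness); then, up to the fixed constant $\Vol(\vO_{\F}^{\times},\ud^{\times})$,
$$ \Mellin{f}(\xi,s)=\sum_{n\in\Z}b_{n}(\xi^{-1})\,X^{n},\qquad X=q^{-s}, $$
in whatever region of $s$ the right-hand side converges absolutely. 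The first step is to record one elementary fact: a two-sided sequence $(c_{n})_{n\in\Z}$ in $\C$ is the coefficient sequence of the Laurent expansion at $X=0$ of a (necessarily unique) $R\in\C(X)$ if and only if $c_{n}=0$ for $n\ll 0$ and $c_{n}$ is an exponential polynomial $\sum_{\ell}p_{\ell}(n)\beta_{\ell}^{n}$ (with $p_{\ell}\in\C[n]$, $\beta_{\ell}\in\C^{\times}$) for $n\gg 0$; this follows from $\sum_{n\ge 0}\binom{n+m-1}{m-1}\beta^{n}X^{n}=(1-\beta X)^{-m}$ and partial fractions.

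Well-definedness $\SSch(\F)\to\SMel(\F)$ is then a direct verification. By linearity it suffices to take $f=\eta\cdot\phi$ with $\phi\in\Sch(\F)=\Cont_c^{\infty}(\F)$ and, by the Remark following Definition \ref{def: SSchNA}, with $\eta=\chi\cdot v_{\F}^{k}$ for a quasi-character $\chi$ and $k\in\Z_{\ge 0}$. Since $\phi$ is compactly supported and constant, equal to $\phi(0)$, on some $\varpi^{N}\vO_{\F}$, the restriction of $f$ to $\varpi^{n}\vO_{\F}^{\times}$ vanishes for $n$ small, equals $\phi(0)\chi(\varpi)^{n}n^{k}$ times the character $\chi|_{\vO_{\F}^{\times}}$ for $n\ge N$, and for the finitely many intermediate $n$ is $\chi(\varpi)^{n}n^{k}$ times a finite combination of characters of $\vO_{\F}^{\times}$. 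Hence only finitely many $\xi$ occur, and for each of them $n\mapsto b_{n}(\xi^{-1})$ satisfies the hypotheses of the elementary fact; so $\Mellin{f}(\xi,\cdot)\in\C(X)$ and $\Mellin{f}\in\SMel(\F)$. Injectivity is immediate: if $\Mellin{f}=0$ then every $\Mellin{f}(\xi,\cdot)$ is the zero rational function, so all its Laurent coefficients vanish, i.e.\ $b_{n}(\xi^{-1})=0$ for all $n,\xi$, whence $f\equiv 0$ on $\F^{\times}$.

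For surjectivity I build a preimage of a given $G\in\SMel(\F)$ one isotypic component at a time, suppressing the harmless overall constant $\Vol(\vO_{\F}^{\times},\ud^{\times})$ relating $f$ to $\Mellin{f}$; since the pieces I construct are isotypic for distinct characters they do not interfere, so I may assume $G=\delta_{\xi_{0}}\otimes R$ with $R\in\C(X)$ having Laurent expansion $\sum_{n\ge n_{0}}r_{n}X^{n}$ at $X=0$. Fix $N$ large enough that $r_{n}=\sum_{\ell,i}c_{\ell,i}\,n^{i}\beta_{\ell}^{n}$ for $n\ge N$, and write $\beta_{\ell}=q^{-s_{\ell}}$. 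The finitely many coefficients $r_{n_{0}},\dots,r_{N-1}$ are realized by the function $t\mapsto r_{v_{\F}(t)}\,\xi_{0}^{-1}(t)$ supported on $\{\,n_{0}\le v_{\F}(t)<N\,\}$, which lies in $\Cont_c^{\infty}(\F)=\Sch(\F)\subset\SSch(\F)$; the exponential-polynomial tail is realized by
$$ \sum_{\ell,i}c_{\ell,i}\,\bigl(\xi_{0}^{-1}\cdot\norm_{\F}^{s_{\ell}}\cdot v_{\F}^{\,i}\bigr)\cdot\id_{\varpi^{N}\vO_{\F}}, $$
where $\xi_{0}^{-1}\cdot\norm_{\F}^{s_{\ell}}\cdot v_{\F}^{\,i}$ is a finite function on $\F^{\times}$ (a quasi-character times a power of $v_{\F}$) and $\id_{\varpi^{N}\vO_{\F}}\in\Sch(\F)$, so the product lies in $\SSch(\F)$; on $\varpi^{n}\vO_{\F}^{\times}$ it equals $r_{n}\,\xi_{0}^{-1}(t)$ for $n\ge N$ and $0$ for $n<N$. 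Adding the two contributions yields $f\in\SSch(\F)$ with $\Mellin{f}(\xi_{0},\cdot)=\sum_{n\ge n_{0}}r_{n}X^{n}=R$ and $\Mellin{f}(\eta,\cdot)=0$ for $\eta\ne\xi_{0}$; summing over the finite support of $G$ finishes the proof.

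The substantive point, and the step I expect to require the most care, is this surjectivity construction: the Mellin image of $\Sch(\F)$ alone already forces the data at each $\xi$ to be a rational function of $X$ with a very specific shape (vanishing for $n\ll 0$, polynomial-times-geometric for $n\gg 0$), so obtaining every element of $\C(X)$ needs exactly the extra flexibility of multiplying Schwartz--Bruhat functions by finite functions, and the observation that $\xi_{0}^{-1}\norm_{\F}^{s}v_{\F}^{\,i}$ is itself such a finite function is what makes the exponential-polynomial tails accessible. The remaining points — finiteness of the $\xi$-support, absolute convergence of the Mellin integrals in a nonempty half-plane, and the generating-function identity above — are routine.
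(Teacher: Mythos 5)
Your proof is correct and arrives at the same conclusion as the paper's by a genuinely different route, so let me compare.

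The paper works algebraically: it decomposes $\SSch(\F) = \bigoplus_\xi \SSch(\F;\xi)$ by Peter--Weyl, observes that $\Cont_c^\infty(\F^\times;\xi)$ is a convolution algebra which the Mellin transform identifies multiplicatively with $\C[X,X^{-1}]$, exhibits $\SSch(\F;\xi)$ as the $\Cont_c^\infty(\F^\times;\xi)$-module generated by the functions $[v_\F]_k\,\norm^{s_0}\xi\,\id_{\vO_\F}$, computes their Mellin transforms to be $k!\,(1-\beta X)^{-(k+1)}$ (the paper writes $(1-\beta X)^{-k}$, a harmless off-by-one), and then invokes the partial-fraction expansion over the PID $\C[X,X^{-1}]$ to see that these generate all of $\C(X)$. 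Your proof instead works coefficient by coefficient: you view $\Mellin{f}(\xi,\cdot)$ as the ordinary generating function $\sum_n b_n(\xi^{-1})X^n$ of the $\xi^{-1}$-isotypic parts of $f$ along the filtration $\F^\times=\bigsqcup_n\varpi^n\vO_\F^\times$, and you replace the partial-fraction argument with the elementary characterization of Laurent coefficient sequences of rational functions (vanishing for $n\ll 0$, exponential polynomial for $n\gg 0$). Your surjectivity construction — peeling off a compactly supported head and realizing the exponential-polynomial tail by $\sum_{\ell,i}c_{\ell,i}\,\xi_0^{-1}\norm^{s_\ell}v_\F^{\,i}\cdot\id_{\varpi^N\vO_\F}$ — is explicit and avoids the convolution/module machinery entirely. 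The two arguments encode the same combinatorics: your ``elementary fact'' is essentially a restatement of partial fractions in sequence-space language, and your tail functions correspond to the paper's $[v_\F]_k\norm^{s_0}\xi\id_{\vO_\F}$ after a change of basis in $\C[n]_{\leq k}$. The trade-off is that the paper's module-theoretic framing generalizes cleanly to the archimedean analogues cited in Remark \ref{rmk: MSArch}, whereas your version is more self-contained and computationally transparent, and makes the isotypic bookkeeping (finiteness of the $\xi$-support, which characters actually appear) completely concrete. Both proofs are sound.
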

\begin{proof}
	Clearly the Mellin transform is injective on $\SSch(\F)$. It suffices to identify its image as $\SMel(\F)$. Note that $\SSch(\F)$ is a smooth $\vO_{\F}^{\times}$-module. For every $\xi \in \widehat{\vO_{\F}^{\times}}$ we introduce
	$$ \Cont_c^{\infty}(\F^{\times}; \xi) := \left\{ f \in \Cont_c^{\infty}(\F^{\times}) \ \middle| \ f(tx) = \xi(t)f(x), \ \forall t \in \vO_{\F}^{\times} \right\}, $$
	$$ \SSch(\F; \xi) := \left\{ f \in \SSch(\F) \ \middle| \ f(tx) = \xi(t)f(x), \ \forall t \in \vO_{\F}^{\times} \right\}, $$
	then a (smooth/profinite) version of Peter--Weyl's theorem for $\vO_{\F}^{\times}$ implies
	$$ \SSch(\F) = \sideset{}{_{\xi}} \bigoplus \SSch(\F; \xi). $$
	It suffices to show that the Mellin transform maps $\SSch(\F; \xi)$ onto $\C(X)$ viewed as the subspace of functions in $\SMel(\F)$ supported in the singleton $\{ \xi^{-1} \}$. Note that the Mellin transform identifies $\Cont_c^{\infty}(\F^{\times}; \xi)$ with $\C[X,X^{-1}]$, and takes convolution on $\F^{\times}$ to multiplication. Introduce
	$$ \Sch^*(\F; \xi) := \left\{ f \in \xi \cdot\Sch(\F) \ \middle| \ f(tx) = \xi(t)f(x), \ \forall t \in \vO_{\F}^{\times} \right\} = \Cont_c^{\infty}(\F^{\times};\xi) \bigoplus \C \xi \id_{\vO_{\F}}, $$
	$$ [X]_0 := 1, \quad [X]_k := (X+1)\cdots(X+k), \ \forall k \in \Z_{\geq 1}. $$
	Note that $\Cont_c^{\infty}(\F^{\times};\xi)$ is a commutative algebra with identity $\xi \id_{\vO_{\F}^{\times}}$, and we have the equalities
	$$ \left( \xi \id_{\varpi_{\F} \vO_{\F}^{\times}} - \xi \id_{\vO_{\F}^{\times}} \right) * \left( \xi \id_{\vO_{\F}^{\times}} \right) = \xi \id_{\vO_{\F}^{\times}} \quad \Rightarrow \quad \Sch^*(\F; \xi) = \Cont_c^{\infty}(\F^{\times};\xi) *  \left( \xi \id_{\vO_{\F}} \right); $$
	$$ \norm^{s_0} (\phi * f) = \left( \norm^{s_0} \phi \right) * \left( \norm^{s_0} f \right), \quad v_{\F} (\phi * f) = (v_{\F} \phi)*f + \phi * (v_{\F} f), \quad \forall \phi \in \Cont_c^{\infty}(\F^{\times}), f \in \SSch(\F). $$
	We can therefore write
\begin{multline*} 
	\SSch(\F;\xi) = \sum_{s_0 \in \C, k \in \Z_{\geq 0}} v_{\F}^k \norm^{s_0} \Sch^*(\F;\xi) = \sum_{s_0 \in \C, k \in \Z_{\geq 0}} [v_{\F}]_k \norm^{s_0} \Cont_c^{\infty}(\F^{\times};\xi)*(\xi \id_{\vO_{\F}}) \\
	= \sum_{s_0 \in \C, k \in \Z_{\geq 0}} \Cont_c^{\infty}(\F^{\times};\xi)* \left( [v_{\F}]_k \norm^{s_0} \xi \id_{\vO_{\F}} \right).
\end{multline*}
	Writing $\beta = q^{-s_0}$, a simple computation shows (for $\Re(s) \gg 1$)
	$$ \Mellin{[v_{\F}]_k \norm^{s_0} \xi \id_{\vO_{\F}}}(\xi^{-1},s) = \left[ X \frac{\ud}{\ud X} \right]_k \left( \frac{1}{1-\beta X} \right) = \frac{k!}{(1-\beta X)^k}. $$
	Therefore the image under the Mellin transform of $\SSch(\F;\xi)$ is identified with
	$$ \sum_{\beta \in \C^*, k \in \Z_{\geq 0}} \C[X,X^{-1}] \frac{k!}{(1-\beta X)^k} = \C(X) $$
	by the partial fractional expansion for the \textrm{PID} $\C[X,X^{-1}]$.
\end{proof}
	
	Consider a generic admissible irreducible $\gp{G}_r(\F)$-representation $\pi$. Note that $\VorH(\pi)$ given in \eqref{eq: BasicVHS} is a $\Cont_c^{\infty}(\F^{\times})$-submodule of $\SSch(\F)$ (see \cite[Theorem (2.7)]{JPS83} and \cite[Proposition (2.2)]{JPS79}). The \emph{Voronoi--Hankel transform} in Definition \ref{def: VoronoiGLn}
\begin{equation} \label{eq: VorHInitial}
	\VorH_{\pi, \psi}: \VorH(\pi) \to \norm \VorH(\widetilde{\pi})
\end{equation}
	is an isomorphism determined by the local functional equation
\begin{equation} \label{eq: VorHLocFE}
	\Mellin{\VorH_{\pi, \psi}(h)}(\xi^{-1},-s) = \gamma(s, \pi \times \xi, \psi) \cdot \Mellin{h}(\xi,s), \quad \forall \ h \in \VorH(\pi), \xi \in \widehat{\vO_{\F}^{\times}}.
\end{equation}
	
\noindent We propose a Miller--Schmid type extension of $\VorH_{\pi,\psi}$ to functions with simple singularities at the infinity.
	
\begin{proposition} \label{prop: MillerSchmidtNA}
	(1) For any $h \in \SSch(\F)$ there is a unique $\MS_{\pi,\psi} \circ \Inv(h) \in \SSch(\F)$ such that
	$$ \Mellin{\MS_{\pi, \psi}(\Inv(h))}(\xi^{-1},-s) = \gamma(s, \pi \times \xi, \psi) \cdot \Mellin{\Inv(h)}(\xi,s), \quad \forall \ \xi \in \widehat{\vO_{\F}^{\times}}. $$
The Mellin transforms on both sides are absolutely convergent for $\Re(s) \ll -1$. 

\noindent (2) The two transforms $\VorH_{\pi,\psi}$ and $\MS_{\pi,\psi}$ coincide on $\VorH(\pi) \cap \Inv(\SSch(\F)) = \Cont_c^{\infty}(\F^{\times})$.
\end{proposition}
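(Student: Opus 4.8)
The strategy is to translate both assertions into the Paley--Wiener correspondence of Proposition~\ref{prop: PaleyWienerNA}, which identifies $\SSch(\F)$ with $\SMel(\F)=\Cont_c(\widehat{\vO_{\F}^{\times}},\C(X))$ (with $X=q^{-s}$) via the Mellin transform, and to use that for non-archimedean $\F$ the factor $\gamma(s,\pi\times\xi,\psi)$ is a rational function of $q^{-s}$ whose $\varepsilon$-factor is a monomial; in particular, for each $\xi\in\widehat{\vO_{\F}^{\times}}$ it is a nonzero element of $\C(X)$, so multiplication by it is a bijection of $\C(X)$. For part~(1), fix $h\in\SSch(\F)$. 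The substitution $t\mapsto t^{-1}$ gives $\Mellin{\Inv(h)}(\xi,s)=\Mellin{h}(\xi^{-1},-s)$ on the common domain of convergence; since $h$ vanishes near $\infty$ in $\F^{\times}$ and grows at worst like a finite function near $0$, the integral $\Mellin{h}(\xi^{-1},\sigma)$ is absolutely convergent for $\Re\sigma\gg1$, hence $\Mellin{\Inv(h)}(\xi,s)$ converges for $\Re(s)\ll-1$; this is the asserted convergence of the right-hand side, and of the left-hand side once $\MS_{\pi,\psi}(\Inv(h))\in\SSch(\F)$ is produced, its Mellin transform at $(\xi^{-1},-s)$ converging for $\Re(-s)\gg1$.

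By Proposition~\ref{prop: PaleyWienerNA} the function $\sigma\mapsto\Mellin{h}(\eta,\sigma)$ lies in $\C(X)$ for every $\eta$ and is zero outside a finite subset of $\widehat{\vO_{\F}^{\times}}$. After the reparametrisation $\eta=\xi^{-1}$, $\sigma=-s$, the identity we want for $g:=\MS_{\pi,\psi}(\Inv(h))$ reads $\Mellin{g}(\eta,\sigma)=\gamma(-\sigma,\pi\times\eta^{-1},\psi)\,\Mellin{h}(\eta,\sigma)$, using $\Mellin{\Inv(h)}(\eta^{-1},-\sigma)=\Mellin{h}(\eta,\sigma)$. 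Now $\gamma(-\sigma,\pi\times\eta^{-1},\psi)$ is a nonzero element of $\C(X)$ (it is rational in $q^{\sigma}=X^{-1}$), so the assignment $\eta\mapsto[\,\sigma\mapsto\gamma(-\sigma,\pi\times\eta^{-1},\psi)\,\Mellin{h}(\eta,\sigma)\,]$ has the same finite support in $\widehat{\vO_{\F}^{\times}}$ and takes values in $\C(X)$, hence defines an element of $\SMel(\F)$. Proposition~\ref{prop: PaleyWienerNA} then furnishes a unique $g\in\SSch(\F)$ whose Mellin transform is this element, which is exactly the stated equation; uniqueness is the injectivity half of Proposition~\ref{prop: PaleyWienerNA}.

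For part~(2), first note $\VorH(\pi)\cap\Inv(\SSch(\F))=\Cont_c^{\infty}(\F^{\times})$: one has $\Cont_c^{\infty}(\F^{\times})\subseteq\VorH(\pi)$ by Definition~\ref{def: VoronoiGLn}(0) and $\Cont_c^{\infty}(\F^{\times})=\Inv(\Cont_c^{\infty}(\F^{\times}))\subseteq\Inv(\SSch(\F))$; conversely, because $\Sch(\F)=\Cont_c^{\infty}(\F)$ in the non-archimedean case, every element of $\SSch(\F)$ vanishes near $\infty$ in $\F^{\times}$ and every element of $\Inv(\SSch(\F))$ vanishes near $0$, so a smooth function lying in both is supported on a compact subset of $\F^{\times}$, and since $\VorH(\pi)\subseteq\SSch(\F)$ (as recalled before the proposition, via \cite[Theorem~(2.7)]{JPS83} and \cite[Proposition~(2.2)]{JPS79}) this gives the claim. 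Now take $h\in\Cont_c^{\infty}(\F^{\times})$; then $h=\Inv(h')$ with $h'=\Inv(h)\in\Cont_c^{\infty}(\F^{\times})\subseteq\SSch(\F)$, so $\MS_{\pi,\psi}(h)$ is defined by part~(1) and satisfies $\Mellin{\MS_{\pi,\psi}(h)}(\xi^{-1},-s)=\gamma(s,\pi\times\xi,\psi)\,\Mellin{h}(\xi,s)$, which is precisely the equation \eqref{eq: VorHLocFE} defining $\VorH_{\pi,\psi}(h)$. Both $\MS_{\pi,\psi}(h)$ and $\VorH_{\pi,\psi}(h)$ lie in $\SSch(\F)$ (for the latter, $\norm\VorH(\widetilde{\pi})\subseteq\SSch(\F)$ since $\norm$ is a quasi-character and $\VorH(\widetilde{\pi})\subseteq\SSch(\F)$), so their Mellin transforms are rational functions of $q^{s}$ agreeing on a half-plane, hence on all of $\C$, and the injectivity in Proposition~\ref{prop: PaleyWienerNA} forces $\MS_{\pi,\psi}(h)=\VorH_{\pi,\psi}(h)$.

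The argument is bookkeeping rather than conceptual, and the only points deserving care are: (i) keeping track of the three half-planes of convergence in play ($\Re s\gg1$ for $\VorH(\pi)$-type integrals, $\Re(-s)\gg1$ for $\VorH(\widetilde{\pi})$-type integrals, $\Re s\ll-1$ for integrals over $\Inv(\SSch(\F))$) and using the rationality supplied by Proposition~\ref{prop: PaleyWienerNA} together with that of $\gamma$ to identify the meromorphic continuations; and (ii) checking that multiplication by $\gamma(s,\pi\times\xi,\psi)$ does not leave $\SMel(\F)$, which reduces to the rationality of $\gamma$ and the obvious preservation of the finite support in $\widehat{\vO_{\F}^{\times}}$.
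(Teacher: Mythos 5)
Your proposal is correct and takes essentially the same approach as the paper: deduce existence and uniqueness from the Mellin-side Paley--Wiener identification of Proposition~\ref{prop: PaleyWienerNA} together with the rationality of $\gamma(s,\pi\times\xi,\psi)$ in $q^{-s}$, and for part~(2) observe that on $\Cont_c^{\infty}(\F^{\times})$ the Mellin transform converges everywhere so the defining relations for $\MS_{\pi,\psi}$ and $\VorH_{\pi,\psi}$ become identical and injectivity of the Mellin transform finishes. The paper's own proof is two sentences and leaves the verification of $\VorH(\pi)\cap\Inv(\SSch(\F))=\Cont_c^{\infty}(\F^{\times})$ and the bookkeeping of convergence domains to the reader; you supply exactly those details, which is the only difference.
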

\begin{proof}
	(1) The existence and uniqueness of $\MS_{\pi,\psi} \circ \Inv(h) \in \SSch(\F)$ is a direct consequence of Proposition \ref{prop: PaleyWienerNA}: the right hand side is absolutely convergent for $\Re(s) \ll -1$ and defines a rational function in $X$. 

\noindent (2) It suffices to note that for $h \in \Cont_c^{\infty}(\F^{\times})$ the Mellin transform $\Mellin{h}(\xi,s)$ is absolutely convergent for all $s \in \C$.
\end{proof}

\begin{remark} \label{rmk: MSArch}
	For archimedean $\F \in \{ \R, \C \}$, we have analogues of $\SMel(\F)$ (see Definition \ref{eq: SMellinNA}). The precise definitions and the corresponding Paley--Wiener theories can be found in \cite[Definition 6.21 \& Corollary 6.39]{MS04} for the real case and \cite[Lemma 2.8]{Qi20} for the complex case. The analogues of Proposition \ref{prop: MillerSchmidtNA} can be stated in the same way, or stated in some different but equivalent way in \cite[Lemma 6.19]{MS04} for the real case and in \cite[Theorem 3.12]{Qi20} for the complex case.
\end{remark}

	\subsection{Extended Voronoi--Hankel Transforms}
	
	We first recall the Godement--Jacquet theory in the local setting. We follow \cite{GoJ11} with a slight modification. Namely, we present the theory with $\beta^{\iota}(g) = \beta(g^{\iota})$ instead of $\check{\beta}(g) = \beta(g^{-1})$.
	
	Let $\F$ be a local field. Let $\pi$ be an irreducible \emph{smooth} and \emph{generic} representation of $\GL_n(\F)$, with smooth dual representation $\widetilde{\pi}$ (also called \emph{contra-gredient representation}). Let $\Pairing{\cdot}{\cdot}$ be the natural pairing on $V_{\pi} \times V_{\widetilde{\pi}}$. A \emph{matrix coefficient} $\beta$ of $\pi$ is a function on $\GL_n(\F)$ so that for some $v \in V_{\pi}$ and $\widetilde{v} \in V_{\widetilde{\pi}}$ we have $\beta(g) = \Pairing{\pi(g).v}{\widetilde{v}}$. The set of matrix coefficients is denoted by $C(\pi)$.
\begin{theorem} \label{thm: GJLoc}
	For any Schwartz--Bruhat function $\Phi \in \Sch(\Mat_n(\F))$ and any $\beta \in C(\pi)$, the Godement--Jacquet zeta function is defined by
	$$ \Zeta(s, \Phi, \beta) := \int_{\GL_n(\F)} \Phi(g) \beta(g) \norm[\det g]^{s+\frac{n-1}{2}} \ud g. $$
\begin{itemize}
	\item[(1)] The above integral defining $\Zeta(s, \Phi, \beta)$ is absolutely convergent in $\Re s > s_0$ for some $s_0 \in \R$.
	\item[(2)] The function $s \mapsto \Zeta(s, \Phi, \beta)$ has a meromorphic continuation to $s \in \C$, and the quotient function $s \mapsto \Zeta(s, \Phi, \beta)/L(s,\pi)$ is entire. If $\F$ is archimedean, then the function $s \mapsto \Zeta(s, \Phi, \beta)$ is rapidly decreasing in any region $a \leq \Re(s) \leq b$ away from the possible poles.
	\item[(3)] We have the local functional equation
	$$ \frac{\Zeta(1-s, \widehat{\Phi}, \beta^{\iota})}{L(1-s, \widetilde{\pi})} = \varepsilon(s, \pi, \psi) \frac{\Zeta(s, \Phi, \beta)}{L(s, \pi)}, \quad \text{or} \quad \Zeta(1-s, \widehat{\Phi}, \beta^{\iota}) = \gamma(s, \pi, \psi) \Zeta(s, \Phi, \beta). $$
\end{itemize}
\end{theorem}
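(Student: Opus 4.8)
The plan is to deduce the statement from the Godement--Jacquet theory in the form of \cite{GoJ11}, after checking that replacing the contragredient $\check\beta(g)=\beta(g^{-1})$ by the transpose twist $\beta^{\iota}(g)=\beta(g^{\iota})$ is harmless. The two relevant compatibilities are: first, $g\mapsto g^{\iota}={}^tg^{-1}$ is an anti-involution of $\GL_n(\F)$ for which $\pi^{\iota}:=\pi\circ(g\mapsto g^{\iota})\cong\widetilde\pi$, so $\beta^{\iota}\in C(\widetilde\pi)$ and $L(s,\pi^{\iota})=L(s,\widetilde\pi)$; second, the $\psi$-Fourier transform on $\Mat_n(\F)$ satisfies $\widehat{g.\Phi.h}=\norm[\det g]^{-n}\norm[\det h]^{-n}\,g^{\iota}.\widehat\Phi.h^{\iota}$, as recorded in \S\ref{sec: LocNotation}. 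These are exactly what is needed to transport every identity in \cite{GoJ11} to the present normalization; in particular the $L$- and $\varepsilon$-factors are the standard ones.

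For part (1) one writes $g=\kappa_1 a\kappa_2$ in the Cartan decomposition and bounds $|\beta(g)|$ by a gauge in $a$ (the exponents being controlled by the Langlands data of $\pi$); feeding this together with the Schwartz envelope estimates of Corollary \ref{cor: SchCompTrans} and the weight $\norm[\det g]^{s+\frac{n-1}{2}}$ into the integral, one dominates it by a product of one-dimensional Tate-type integrals, convergent for $\Re s$ larger than a constant $s_0$ depending only on $\pi$.

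For part (2), in the non-archimedean case one observes that the functions $\Zeta(s,\Phi,\beta)$, as $\Phi\in\Sch(\Mat_n(\F))$ and $\beta\in C(\pi)$ vary, generate a fractional $\C[q^{s},q^{-s}]$-ideal; using the finite generation of $\Sch(\Mat_n(\F))$ as a $\GL_n(\F)\times\GL_n(\F)$-module together with the asymptotic expansion of matrix coefficients, this ideal is principal and, after normalization, generated by $L(s,\pi)$, which gives the meromorphic continuation and the holomorphy of $\Zeta(s,\Phi,\beta)/L(s,\pi)$. In the archimedean case the meromorphic continuation follows from the standard holonomicity/integration-by-parts argument (Bernstein), and the rapid decay in vertical strips is obtained by inserting the envelope bounds of Lemma \ref{lem: WeilSchwartzEnv} and Corollary \ref{cor: SchCompTrans} into the same estimate as in part (1).

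For part (3), which I expect to be the \emph{main obstacle}, one considers the two bilinear functionals $(\Phi,\beta)\mapsto\Zeta(s,\Phi,\beta)/L(s,\pi)$ and $(\Phi,\beta)\mapsto\Zeta(1-s,\widehat\Phi,\beta^{\iota})/L(1-s,\widetilde\pi)$ on $\Sch(\Mat_n(\F))\times C(\pi)$. One checks that both are holomorphic in $s$ and transform identically under $\GL_n(\F)\times\GL_n(\F)$ (translations on $\Phi$ matched against the $\pi$-translations carried by $\beta$), then invokes the Godement--Jacquet uniqueness statement that for generic $s$ the space of such equivariant functionals is at most one-dimensional; hence the two are proportional, with a factor $\varepsilon(s,\pi,\psi)$ depending only on $(s,\pi,\psi)$ that extends meromorphically. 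Non-vanishing and a first explicit determination of this factor come from evaluating on a test pair $(\Phi,\beta)$ concentrated near a suitable parabolic, reducing to Tate's thesis through the Kirillov model, as in \cite{GoJ11}. The only delicate point is to verify that the passage from $\check\beta$ to $\beta^{\iota}$ — and correspondingly from $g^{-1}$ to $g^{\iota}$ in the Fourier-transform identity — leaves the uniqueness argument and the resulting $\gamma(s,\pi,\psi)$ untouched; this is immediate from the two compatibilities recorded in the first paragraph.
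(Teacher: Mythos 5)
You have reproved the Godement--Jacquet local theory from scratch (Cartan decomposition and gauges for convergence, fractional-ideal argument and Bernstein holonomicity for continuation, uniqueness of equivariant functionals for the functional equation), whereas the paper's proof is much shorter: it simply cites \cite[Theorem 15.4.4]{GoJ11} for the non-archimedean case and \cite[Theorem 15.9.1]{GoJ11} for the archimedean case, and observes that the latter is stated in a weaker form — with $\Sch(\Mat_n(\F))$ replaced by the \emph{standard} Schwartz functions $\Sch_0(\Mat_n(\F))$ and $C(\pi)$ replaced by its $\gp{K}_n$-finite subset — which is then upgraded via the argument of \cite[Proposition 4.4]{J09} (every relevant $\pi$ embeds in a principal series) together with the density of $\Sch_0$ in $\Sch$.

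The real obstacle is therefore not where you locate it. You flag the convention change $\check\beta\rightsquigarrow\beta^{\iota}$ as the delicate point and expect part (3) to be the main difficulty, but the paper treats (3) as already done by \cite{GoJ11}; the actual work in the archimedean case is the upgrade from $\gp{K}_n$-finite matrix coefficients and standard Schwartz functions to smooth matrix coefficients and arbitrary Schwartz--Bruhat functions. Your sketch does not address this: the Bernstein/holonomicity argument you invoke for part (2) and the multiplicity-one statement you invoke for part (3) are quoted as if they were available in the full smooth generality, whereas in the form proved in \cite{GoJ11} they are restricted to the $\gp{K}_n$-finite setting. To close the argument you would either have to carry out the Casselman--Wallach / density extension explicitly (which is what \cite[Proposition 4.4]{J09} does), or reformulate your part (2) and (3) arguments so that they manifestly operate on the full smooth model. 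As written, the passage from $\Sch_0 \times C(\pi)^{\gp{K}_n\text{-fin}}$ to $\Sch \times C(\pi)$ is the missing step.
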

\begin{proof}
	See \cite[Theorem 15.4.4]{GoJ11} for the non-archimedean case. As for the archimedean case, \cite[Theorem 15.9.1]{GoJ11} is a weaker version where $\Sch(\Mat_n(\F))$ is replaced by the subspace of \emph{standard} Schwartz functions $\Sch_0(\Mat_n(\F))$, and $C(\pi)$ is replaced with its $\gp{K}_n$-finite subset. The proof of \cite[Proposition 4.4]{J09} shows the equivalence and implies our version: any relevant $\pi$ is a subrepresentation of a principal one. Note that the denseness of $\Sch_0(\Mat_n(\F))$ can be found in \cite[Exercise \Rmnum{3}.6 \& \Rmnum{3}.7]{HT92}.
\end{proof}
\begin{remark} \label{rmk: FourTransConv}
	We take the convention of Fourier transforms in \cite[(0.2)]{JPS79} as
	$$ \invOFour(\Phi)(X) = \widehat{\Phi}(X) := \int_{\Mat_n(\F)} \Phi(Y) \psi \left( \Tr (X Y^T) \right) \ud Y, $$
	since it is with this convention that the consistency of the Godement--Jacquet theory and the $\GL_n \times \GL_1$ Rankin--Selberg theory has been checked, i.e., they give the same gamma factors.
\end{remark}

\begin{proposition} \label{prop: GJZAbsCTemp}
	If $\pi = \Pi^{\infty}$ for a unitary and \emph{tempered} representation $\Pi$, then the integral $\Zeta(s, \Phi, \beta)$ is absolutely convergent in $\Re(s) > 0$ for any $\beta \in C(\pi)$.
\end{proposition}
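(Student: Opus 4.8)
The plan is to reduce the absolute convergence of $\Zeta(s,\Phi,\beta)$ to an elementary one-variable estimate along the diagonal torus, by combining the Cartan decomposition with the Schwartz-envelope bound of Corollary \ref{cor: SchCompTrans} and the Harish--Chandra majorization of matrix coefficients of tempered representations. Set $\sigma := \Re(s)$ and use $\GL_n(\F) = \gp{K}_n A^+ \gp{K}_n$, where $A^+ \subset \gp{A}_n(\F)$ is a chamber and the resulting Jacobian $J$ satisfies $J(a) \leq C_n\,\delta(a)$ on $A^+$, with $\delta(\diag(a_1,\dots,a_n)) := \prod_{i<j}\norm[a_i/a_j]$ the modulus of $\gp{B}_n$. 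Then
\[
	\int_{\GL_n(\F)}\norm[\Phi(g)]\,\norm[\beta(g)]\,\norm[\det g]^{\sigma+\frac{n-1}{2}}\,\ud g \;\leq\; C_n\int_{A^+}\Bigl(\sup_{\kappa_1,\kappa_2 \in \gp{K}_n}\norm[\Phi(\kappa_1 a\kappa_2)]\Bigr)\Bigl(\sup_{\kappa_1,\kappa_2 \in \gp{K}_n}\norm[\beta(\kappa_1 a\kappa_2)]\Bigr)\norm[\det a]^{\sigma+\frac{n-1}{2}}\delta(a)\,\ud a.
\]
Corollary \ref{cor: SchCompTrans} (with $m = n$) provides a positive $\widetilde\Phi \in \Sch(\Mat_n(\F))$ with $\sup_{\kappa_1,\kappa_2}\norm[\Phi(\kappa_1 a\kappa_2)] \leq \widetilde\Phi(a)$, and restricting $\widetilde\Phi$ to diagonal matrices gives, for every $A > 0$, the bound $\widetilde\Phi(\diag(a_1,\dots,a_n)) \ll_A \prod_{k=1}^n(1+\norm[a_k])^{-A}$.

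For the matrix-coefficient factor I would write $\beta(\kappa_1 a\kappa_2) = \langle \Pi(a)\Pi(\kappa_2)v,\, \widetilde\Pi(\kappa_1^{-1})\widetilde v\rangle$ for the vectors $v,\widetilde v$ defining $\beta$, and use that $\Pi(\kappa_2)v$ and $\widetilde\Pi(\kappa_1^{-1})\widetilde v$ stay in compact subsets of their spaces as $\kappa_1,\kappa_2$ range over $\gp{K}_n$. Temperedness of $\Pi$ then yields, uniformly in $\kappa_1,\kappa_2$, the Harish--Chandra bound $\norm[\beta(\kappa_1 a\kappa_2)] \ll_{v,\widetilde v} \Xi(a)\,(1+\log\Norm[a])^{d}$ for some $d \geq 0$, where $\Xi$ is Harish--Chandra's $\Xi$-function and $\Norm[\cdot]$ a fixed norm on $\GL_n(\F)$; together with the standard majorization $\Xi(a) \ll \delta(a)^{-1/2}(1+\log\Norm[a])^{d'}$ and with $\log\Norm[\diag(a_1,\dots,a_n)] \asymp 1 + \max_k\lvert\log\norm[a_k]\rvert$ on $A^+$, this dominates the $\beta$-factor by $\delta(a)^{-1/2}$ times a fixed power of $1+\sum_k\lvert\log\norm[a_k]\rvert$. (In the non-archimedean case one may take $d = 0$: smooth vectors are $\gp{K}_n$-finite by admissibility, so $\norm[\beta(g)] \ll \Xi(g)$; there the argument simplifies further, since $\Phi$ is then compactly supported on $\Mat_n(\F)$ and the integral becomes a sum over dominant cocharacters that is a product of geometric series.)

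It then remains to assemble the estimates. The identity $\norm[\det a]^{\frac{n-1}{2}}\delta(a)^{1/2} = \prod_{k=1}^n\norm[a_k]^{\,n-k}$ (a direct exponent count) shows that the integrand over $A^+$ is dominated, for every $A > 0$, by
\[
	\prod_{k=1}^n(1+\norm[a_k])^{-A}\,\norm[a_k]^{\,\sigma+n-k}\cdot\Bigl(1+\sum_{k}\lvert\log\norm[a_k]\rvert\Bigr)^{d+d'}.
\]
Enlarging the region of integration from $A^+$ to all of $\gp{A}_n(\F) \cong (\F^\times)^n$ and using $(1+\sum_k x_k)^N \leq C_N\prod_k(1+x_k)^N$ for $x_k \geq 0$, the integral factors into $n$ one-dimensional integrals of the form $\int_{\F^\times}(1+\norm[t])^{-A}(1+\lvert\log\norm[t]\rvert)^{d+d'}\norm[t]^{\,\sigma+n-k}\,\ud^{\times} t$. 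Each of these converges: near $t = 0$ the exponent $\sigma + n - k \geq \sigma > 0$ overcomes the logarithmic factor, and for $\norm[t]$ large it suffices to take $A > \sigma + n - 1$. Hence the original integral converges for every $\sigma > 0$, which is the assertion.

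The genuinely non-elementary ingredient is the Harish--Chandra majorization of the matrix coefficient in its form uniform over translates by $\gp{K}_n$, together with the precise comparison of the Cartan-decomposition Jacobian with $\delta$; once these standard harmonic-analysis inputs are granted, the remainder of the proof is the power-counting sketched above.
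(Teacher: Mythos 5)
Your proof is correct, and it shares the paper's starting point — namely the Harish--Chandra/CHH/Sun majorization $\norm[\beta(g)] \ll \Xi_n(g)$ for smooth vectors of a tempered representation — but from there the two arguments diverge. The paper immediately recognizes $\Xi_n(g) = \Pairing{\Pi_0(g).e_0}{e_0}$ as the spherical matrix coefficient of the principal series $\Pi_0 = \Ind_{\gp{B}_n(\F)}^{\GL_n(\F)} \id$ and invokes an iterated application of the Godement--Jacquet reduction \cite[(15.7.13)]{GoJ11} together with Lemma \ref{lem: WeilSchwartzEnv}, which collapses the integral directly to $\int_{(\F^{\times})^n}\phi(\vec t)\prod_j\norm[t_j]^{\sigma}\ud^{\times}t_j$ with a single uniform exponent $\sigma$. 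You instead pass to the Cartan decomposition $\GL_n(\F) = \gp{K}_n A^+ \gp{K}_n$, bound the Jacobian by $\delta(a)$, use Corollary \ref{cor: SchCompTrans} to envelope $\Phi$, and then apply the classical asymptotics $\Xi_n(a) \ll \delta(a)^{-1/2}(1+\log\Norm[a])^{d'}$ on the positive chamber, leading to the staircase exponents $\sigma + n - k$ after the exponent count $\norm[\det a]^{(n-1)/2}\delta(a)^{1/2} = \prod_k \norm[a_k]^{n-k}$. Both are valid; the paper's route is shorter because it leans on the Godement--Jacquet machinery already developed in \cite{GoJ11}, while yours is more self-contained and transparent at the price of bringing in the Cartan Jacobian and the $\Xi$-asymptotics explicitly.

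Two small remarks on your write-up. First, the uniformity over $\gp{K}_n$-translates of $\beta$ that you labour over is automatic and costs nothing: the bound $\norm[\beta(g)] \ll_{v,\widetilde v} \Xi_n(g)$ already holds for all $g$ at once with a constant depending only on the defining vectors, and $\Xi_n$ is bi-$\gp{K}_n$-invariant, so $\norm[\beta(\kappa_1 a \kappa_2)] \ll \Xi_n(\kappa_1 a \kappa_2) = \Xi_n(a)$ without any discussion of compact $\gp{K}_n$-orbits in $V_\pi^{\infty}$. Second, the extra factor $(1+\log\Norm[a])^d$ you attach to the matrix-coefficient bound is not present in the version the paper cites (Sun, He, Cowling--Haagerup--Howe), though it is of course harmless here since any polynomial in $\log\norm[a_k]$ is absorbed by the Schwartz envelope at $\infty$ and by $\sigma > 0$ at the origin. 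Neither point affects correctness.
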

\begin{proof}
	This is essentially an easy consequence of the decay of matrix coefficient
\begin{equation} \label{eq: DecayMCSmooth}
	\norm[\beta(g)] \ll \Xi_n(g)
\end{equation}
 	where $\Xi_n$ is the Harish-Chandra's Xi-function for $\GL_n(\F)$. The estimation \eqref{eq: DecayMCSmooth} can be found in \cite[Theorem 1.2]{Sun09} (see also \cite[Theorem 1.1]{He09}), extending the $\gp{K}$-finite case \cite[Theorem 2]{CHH88}. Note that 
	$$ \Xi_n(g) = \Pairing{\Pi_0(g).e_0}{e_0} $$
	is the matrix coefficient of a unitary spherical vector $e_0$ in the representation $\Pi_0 = \Ind_{\gp{B}_n(\F)}^{\GL_n(\F)} \id$ parabolically induced from the trivial character of the Borel subgroup $\gp{B}_n(\F)$.  To conclude, an iterated application of \cite[(15.7.13)]{GoJ11} together with Lemma \ref{lem: WeilSchwartzEnv} shows for any $\sigma > 0$
	$$ \int_{\GL_n(\F)} \extnorm{\Phi(g)} \Xi_n(g) \norm[\det g]^{\sigma+\frac{n-1}{2}} \ud g \ll \int_{(\F^{\times})^{\oplus n}} \phi(t_1,\dots,t_n) \sideset{}{_{j=1}^n} \prod \norm[t_j]^{\sigma} \ud^{\times} t_j < \infty $$
	for some positive Schwartz-Bruhat function $\phi \in \Sch(\F^n)$.
\end{proof}

\begin{proposition} \label{prop: BdWhiTemp}
	Let $\pi = \Pi^{\infty}$ for a unitary and \emph{tempered} representation $\Pi$. For any $W \in \Whi(\pi, \psi)$, there is a Schwartz--Bruhat function $0< \phi \in \Sch(\F^n)$ so that
	$$ \extnorm{W \left( n \begin{pmatrix} a_1 & & \\ & \ddots & \\ & & a_n \end{pmatrix} \kappa \right)} \leq \prod_{j=1}^{n-1} \norm[t_j]^{\frac{j(n-j)}{2}} \left( 1+ (\log \norm[t_j])^2 \right)^d \cdot \phi(t_1,\dots,t_{n-1}) $$
for any $a_j = t_1 \cdots t_j \in \F^{\times}$, $n \in \gp{N}_n(\F)$, $\kappa \in \gp{K}_n$, and some $d \in \Z_{\geq 0}$ depending only on $\pi$.
\end{proposition}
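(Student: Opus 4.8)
\emph{Strategy.} The estimate is the classical sharp decay bound for Whittaker functions of tempered representations: in the coordinates $t_j$ of the statement, $\prod_{j=1}^{n-1}\norm[t_j]^{j(n-j)/2}$ is the optimal polynomial decay rate (a rearrangement of the half-modular character of $\gp{B}_n$), the factors $(1+(\log\norm[t_j])^2)^d$ absorb the finitely many coincident leading exponents coming from the non-semisimplicity of the torus action on the relevant Jacquet module, and the Schwartz function $\phi$ encodes the rapid decay in the anti-dominant directions together with boundedness on compacta. The plan is to reduce, through the Langlands classification of tempered representations of $\GL_n(\F)$, to essentially square-integrable data on smaller general linear groups, and then to propagate the estimate through Jacquet's integral using the gauge bounds of Jacquet (\cite[Propositions 3.1, 3.3]{J09}; cf.\ Proposition \ref{prop: W-ValSchBd}) and the matrix-coefficient decay $\norm[\beta(g)] \ll \Xi_n(g)$ already used in Proposition \ref{prop: GJZAbsCTemp}.

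\emph{Reduction and base case.} Since $\Pi$ is tempered unitary we may write $\Pi \simeq \Ind_{\gp{P}}^{\GL_n(\F)}(\sigma_1 \otimes \cdots \otimes \sigma_r)$ (normalized smooth induction), where $\gp{P}$ is a standard parabolic with Levi $\gp{M} \simeq \prod_{i=1}^r \GL_{n_i}(\F)$ and each $\sigma_i$ is essentially square-integrable with unitary central character; by the standard realization of Whittaker functionals on parabolically induced representations (\cite{J09}; cf.\ \eqref{eq: GenGSWhi} for the case $r=n-1$, $n_i=1$), every $W \in \Whi(\Pi^{\infty},\psi)$ is a finite sum of composed Jacquet integrals $W_f(g) = \int f(w^{-1}ug)\,\overline{\psi(u)}\,du$ over $(\gp{N}_n(\F)\cap\gp{M}(\F))\backslash\gp{N}_n(\F)$, with $f$ in the induced model valued in $\Whi(\sigma_1,\psi)\widehat{\otimes}\cdots\widehat{\otimes}\Whi(\sigma_r,\psi)$ and $w$ a fixed Weyl element. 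The base case---the same estimate for each $\sigma_i$---is explicit or elementary: when $\F$ is archimedean only $n_i \in \{1,2\}$ occur, and one uses unitary characters, resp.\ the $K$-Bessel Whittaker functions of the $\GL_2$ discrete series with their exponential decay; when $\F$ is non-archimedean, a supercuspidal has Whittaker function supported in a compact subset of the diagonal torus modulo the centre \cite{JPS79}, while a generalized Steinberg $\mathrm{St}_k(\rho)$ is handled by realizing it inside $\rho\norm^{(k-1)/2}\times\cdots\times\rho\norm^{-(k-1)/2}$ and running the asymptotic analysis, the leading exponents being bounded by the sharp rate precisely because the supercuspidal support is unitary.

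\emph{Inductive step.} Plugging the Iwasawa decomposition of $w^{-1}ua$ into the Jacquet integral, bounding $f$ on $\GL_n(\F)$ by a gauge times the norms of its $\Whi(\sigma_i,\psi)$-valued coordinates (Proposition \ref{prop: W-ValSchBd}), and inserting the base-case bound for the latter, the modular characters combine across the Levi into the half-modular character of $\gp{B}_n$, and the residual integration over $(\gp{N}_n(\F)\cap\gp{M}(\F))\backslash\gp{N}_n(\F)$ converts, simple root by simple root, a pure power of $\norm[\alpha_j(a)]$ into a $\min(\norm[\alpha_j(a)],\norm[\alpha_j(a)]^{-1})$-type factor---genuine Schwartz decay away from the wall, a polynomial-in-$\log$ correction on it---its convergence being exactly where temperedness enters (the ``equality'' regime in counting exponents). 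Non-archimedeanly there is a shortcut avoiding the induction: by \cite[Lemma~(4.1.5)]{JPS79} the function $W(a\kappa)$ vanishes unless every $\norm[\alpha_j(a)]$ is bounded above uniformly in $\kappa$, and on the complementary cone Casselman's description of the Jacquet module expresses it as a finite sum $\sum_c c(\kappa)\prod_j \norm[\alpha_j(a)]^{s_{j,c}}P_{j,c}(v_{\F}(\alpha_j(a)))$ with $\Re(s_{j,c})$ bounded below by temperedness and $\deg P_{j,c}\le d$; taking $\phi$ a multiple of the indicator of a large compact set then finishes this case.

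\emph{Main obstacle.} The hard part is the archimedean case: pinning down the \emph{sharp} exponent $j(n-j)/2$ (not $j(n-j)/2+\varepsilon$), uniformly in $\kappa\in\gp{K}_n$, with only the stated polynomial-in-$\log$ loss and with genuine Schwartz decay in the anti-dominant directions. This needs either a careful treatment of the boundary case in the convergence of the Jacquet integral, or a uniform-in-$\kappa$ analysis of the leading exponents in the Casselman--Wallach asymptotic expansion of the Whittaker function---including the non-semisimple part of the torus action that produces the $\log$ powers---and is where essentially all the technical effort resides.
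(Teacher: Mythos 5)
Your non-archimedean argument lines up with the paper's: the paper also reduces to the torus via $\gp{K}_n$-finiteness and unitarity of the central character, invokes the finite-function expansion of $W$ on the torus from \cite[Proposition (2.2)]{JPS79}, and then bounds the exponents of the finite functions by $j(n-j)/2$ using \cite[Proposition (2.5)]{JS83} (temperedness). Your invocation of \cite[Lemma~(4.1.5)]{JPS79} for the support and Casselman's Jacquet-module picture is a mild repackaging of the same chain.

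For the archimedean case you take a genuinely different route, and that route is where the gap lies. The paper does not re-derive the estimate from the induced model: it simply cites \cite[Proposition~3.5]{J09} (alternatively \cite[Theorem~2]{J04}) together with the Weil--Schwartz envelope Lemma \ref{lem: WeilSchwartzEnv} to package the rapid decay into a single positive Schwartz function $\phi$. You propose instead to run the Jacquet integral over $(\gp{N}_n\cap\gp{M})\backslash\gp{N}_n$ starting from the discrete-series factors ($n_i\in\{1,2\}$) and a gauge bound on the section, then keep track of how the simple-root integrations convert power growth into Schwartz decay plus $\log$ powers. That is the right shape of argument, and it is essentially how Jacquet himself establishes the cited proposition, but as written it is an outline rather than a proof: you yourself flag the \emph{sharp} exponent $j(n-j)/2$ (rather than $j(n-j)/2+\varepsilon$), uniformity in $\kappa\in\gp{K}_n$, and control of the non-semisimple ($\log$-producing) part of the torus action as the unresolved obstacle. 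None of these are handled in the inductive step you wrote: the Iwasawa decomposition of $w^{-1}ua$ and the precise bookkeeping of how the $u$-integration trades a power $\norm[\alpha_j(a)]^{c}$ for a Schwartz factor at the boundary exponent are exactly where the difficulty sits, and your sketch does not close it. If you want a self-contained archimedean proof you must either carry out that boundary analysis (the asymptotic expansion of the Whittaker function à la Casselman--Wallach, with uniformity in $\kappa$), or do what the paper does and cite Jacquet's estimate directly; at present the archimedean half of your argument remains a plan, not a proof.

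Two smaller remarks. First, your invocation of the matrix-coefficient decay $\norm[\beta(g)]\ll\Xi_n(g)$ is not actually used in the paper's proof of this proposition (it belongs to Proposition \ref{prop: GJZAbsCTemp}); it is not needed here and does not obviously shorten the Whittaker estimate. Second, once you have the rapid-decay bound in the form of a pointwise dominant, you still need Lemma \ref{lem: WeilSchwartzEnv} (or an equivalent) to replace ``rapidly decreasing'' by a single majorizing Schwartz function $\phi>0$ as in the statement; your proposal does not mention this final packaging step.
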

\begin{proof}
	In the archimedean case, the desired bound is a consequence of \cite[Proposition 3.5]{J09} and Lemma \ref{lem: WeilSchwartzEnv}. Alternatively we may derive it from \cite[Theorem 2]{J04} and \cite[Proposition 3.5]{J09}. In the non-archimedean case, we may take $a_n=1$ by the unitarity of the central character and $n=\kappa=\id$ by the $\gp{K}_n$-finiteness of $W$. By \cite[Proposition (2.2)]{JPS79} we have
	$$ W \left( \begin{pmatrix} a_1 & & \\ & \ddots & \\ & & a_n \end{pmatrix} \right) = \omega_{\pi}(a_n) \sum_{\lambda} \lambda(t_1,\dots,t_{n-1}) \phi_{\lambda}(t_1,\dots,t_{n-1}) $$
for a finite number of \emph{finite functions} $\lambda$ and some $\phi_{\lambda} \in \Sch(\F^{n-1})$. We may assume that each $\lambda$ is \emph{decomposable} in the sense that $\lambda(t_1,\dots,t_{n-1}) = \lambda_1(t_1) \cdots \lambda_{n-1}(t_{n-1})$ for finite functions $\lambda_j$ on $\F^{\times}$. The bound \cite[Proposition (2.5)]{JS83} implies that the exponent of $\lambda_j$ is $\leq j(n-j)/2$, namely 
	$$ \norm[\lambda_j(t_j)] \ll \norm[t_j]^{\frac{j(n-j)}{2}-\epsilon}, \quad \forall \epsilon > 0. $$
	We conclude since $\lambda_j$ is a sum of products of a quasi-character and a power of the additive valuation.
\end{proof}

\begin{corollary} \label{cor: RSAbsCTemp}
	Let $\pi = \Pi^{\infty}$ for a unitary and \emph{tempered} representation $\Pi$. For any $h \in \VorH(\pi)$ and $\xi \in \widehat{\F^{\times}}$ the Mellin transform $\Mellin{h}(\xi,s)$ is absolutely convergent in $\Re(s) > 0$.
\end{corollary}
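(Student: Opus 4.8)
The plan is to reduce $\Mellin{h}(\xi,s)$ to an elementary one-dimensional integral by realising $h$ through the $j=0$ avatar of $\VorH(\pi)$ and then invoking the tempered Whittaker envelope of Proposition \ref{prop: BdWhiTemp}. Since the space $\VorH(\pi)=\VorH(\pi;j)$ of \eqref{eq: BasicVHS} is independent of $j$, every $h\in\VorH(\pi)$ can be written as
\[
	h(y)=\norm[y]^{-\frac{n-1}{2}}\,W\!\begin{pmatrix} y & \\ & \id_{n-1}\end{pmatrix},\qquad y\in\F^{\times},
\]
for a suitable $W\in\Whi(\pi^{\infty},\psi)$. (For $n=1$ the claim is just Tate's thesis, since $\VorH(\pi)=\pi\cdot\Sch(\F)$; so I assume $n\ge 2$.)

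The next step is to bound this Whittaker value. As $\pi=\Pi^{\infty}$ with $\Pi$ unitary and tempered, Proposition \ref{prop: BdWhiTemp} applies; evaluating its estimate at the diagonal element $\diag(y,1,\dots,1)$, whose successive diagonal ratios all equal $1$ apart from the single one equal to $y$, the factors attached to the trivial ratios collapse and one obtains an integer $d\ge 0$ depending only on $\pi$ and a Schwartz--Bruhat function $\phi_0\in\Sch(\F)$ with
\[
	\extnorm{W\!\begin{pmatrix} y & \\ & \id_{n-1}\end{pmatrix}}\ \le\ \norm[y]^{\frac{n-1}{2}}\bigl(1+(\log\norm[y])^{2}\bigr)^{d}\,\phi_0(y),\qquad y\in\F^{\times}.
\]
The power $\norm[y]^{(n-1)/2}$ cancels the normalising factor in the definition of $h$, so that $\norm[h(y)]\ll\bigl(1+(\log\norm[y])^{2}\bigr)^{d}\phi_0(y)$.

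Finally, since $\xi$ is unitary the modulus of the integrand defining $\Mellin{h}(\xi,s)$ is $\norm[h(t)]\,\norm[t]^{\Re s}$, whence
\[
	\int_{\F^{\times}}\norm[h(t)]\,\norm[t]^{\Re s}\,\ud^{\times}t\ \ll\ \int_{\F^{\times}}\bigl(1+(\log\norm[t])^{2}\bigr)^{d}\phi_0(t)\,\norm[t]^{\Re s}\,\ud^{\times}t .
\]
I would split the right-hand side at $\norm[t]=1$. Over $\{\norm[t]\ge 1\}$ the rapid decay of $\phi_0$ makes it converge for every $s$; over $\{\norm[t]<1\}$, where $\phi_0$ is bounded, it reduces to $\int_{\norm[t]<1}\bigl(1+(\log\norm[t])^{2}\bigr)^{d}\norm[t]^{\Re s}\,\ud^{\times}t$, a convergent geometric-type series in the non-archimedean case and an integral of the shape $\int_{0}^{\infty}(1+u^{2})^{d}e^{-u\Re s}\,\ud u$ in the archimedean case, finite exactly when $\Re s>0$. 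This gives the absolute convergence of $\Mellin{h}(\xi,s)$ on $\Re s>0$.

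I do not foresee a genuine obstacle: the only thing requiring a moment's care is that the single-variable envelope in the second step really does come out of Proposition \ref{prop: BdWhiTemp} after freezing all diagonal ratios but one at $1$, so that the negative power $\norm[y]^{-(n-1)/2}$ built into \eqref{eq: BasicVHS} is absorbed and one is left with a plain Mellin integral; the precise exponents produced by the remaining ratios are irrelevant, since they are fed into a Schwartz factor. (Alternatively one could route the argument through the consistency of the Godement--Jacquet and $\GL_n\times\GL_1$ Rankin--Selberg theories together with Proposition \ref{prop: GJZAbsCTemp}, but the direct path via Proposition \ref{prop: BdWhiTemp} is shorter.)
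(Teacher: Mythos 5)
Your proof is correct and is exactly the deduction the paper intends: Corollary \ref{cor: RSAbsCTemp} is stated without a separate proof immediately after Proposition \ref{prop: BdWhiTemp}, and your application of that bound at $\diag(y,1,\dots,1)$ (only the first simple ratio is $\neq 1$, so $\norm[y]^{(n-1)/2}$ exactly cancels the normalisation, leaving a logarithmic factor times a one-variable Schwartz envelope), followed by the elementary split of the Mellin integral at $\norm[t]=1$, is the natural argument. One caution for the record: as printed, Proposition \ref{prop: BdWhiTemp} writes ``$a_j = t_1\cdots t_j$'', which at $\diag(y,1,\dots,1)$ would force $t_1=y$ and $t_2=y^{-1}$; you have (correctly) read the $t_j$ as the simple-root ratios $a_j/a_{j+1}$, which is the interpretation forced both by the exponent $j(n-j)/2$ and by the paper's own invocation of the proposition in the proof of Lemma \ref{lem: IntWtsEst}.
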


\begin{remark} \label{rmk: NonTempBd}
	It should be possible to extend the estimation \eqref{eq: DecayMCSmooth} to the non-tempered case, based on the work of Oh \cite{Oh02}, and a strategy in the rank one case of Venkatesh \cite[Lemma 9.1]{Ve10} which he attributes to Shalom \cite{Shal00}. Similarly, the bound of the Whittaker functions in Proposition \ref{prop: BdWhiTemp} should also be extensible. We believe that the absolute convergence region should be $\Re(s) > \RamCst$ in both Proposition \ref{prop: GJZAbsCTemp} and Corollary \ref{cor: RSAbsCTemp} for a unitary and $\RamCst$-tempered representation.
\end{remark}

\begin{proof}[Proof of Theorem \ref{thm: ExtVorH}]
	First consider the case of a general irreducible smooth and generic $\pi$. Take any $\Phi \in \Cont_c^{\infty}(\GL_n(\F))$, $\beta \in C(\pi)$ and $h \in \SSch(\F)$. Then we have by the Plancherel for Mellin transform, and for $\sigma \ll -1$
\begin{align*}
	&\quad \int_{\GL_n(\F)} \widehat{\Phi}(g) \beta^{\iota}(g) \norm[\det g]^{\frac{n+1}{2}} \cdot \Inv(h)(\det g) \ud g \\
	&= \sideset{}{_{\xi \in \widehat{\F^1}}} \sum \int_{\sigma+i\R(\F)} \left( \int_{\GL_n(\F)} \widehat{\Phi}(g) \beta^{\iota}(g) \xi(\det g)^{-1} \norm[\det g]^{1-s+\frac{n-1}{2}} \ud g \right) \left( \int_{\F^{\times}} h(t^{-1}) \xi(t) \norm[t]^s \ud^{\times} t \right) \ud_{\F} s \\
	&= \sideset{}{_{\xi \in \widehat{\F^1}}} \sum \int_{\sigma+i\R(\F)} \Zeta(1-s, \widehat{\Phi}, (\beta \otimes \xi)^{\iota}) \cdot \Mellin{\Inv(h)}(\xi,s) \ud_{\F} s \\
	&= \sideset{}{_{\xi \in \widehat{\F^1}}} \sum \int_{\sigma+i\R(\F)} \Zeta(s, \Phi, \beta \otimes \xi) \cdot \Mellin{\MS_{\pi,\psi}(\Inv(h))}(\xi^{-1},-s) \ud_{\F} s,
\end{align*}
	where we have applied the local functional equations in Proposition \ref{prop: MillerSchmidtNA} and Theorem \ref{thm: GJLoc}. Since $\Phi \in \Cont_c^{\infty}(\GL_n(\F))$ the zeta integral $\Zeta(s, \Phi, \beta \otimes \xi)$ is absolutely convergent for any $s \in \C$. Applying the Plancherel for Mellin transform again we get
\begin{equation} \label{eq: VH-MS-Comp}
	\int_{\GL_n(\F)} \widehat{\Phi}(g) \beta^{\iota}(g) \norm[\det g]^{\frac{n+1}{2}} \cdot \Inv(h)(\det g) \ud g = \int_{\GL_n(\F)} \Phi(g) \beta(g) \norm[\det g]^{\frac{n-1}{2}} \cdot \MS_{\pi,\psi}(\Inv(h))(\det g) \ud g. 
\end{equation}
	To see that \eqref{eq: VH-MS-Comp} characterizes $\MS_{\pi,\psi}(\Inv(h))$, take any $t_0 \in \F^{\times}$ and $g_0 \in \GL_n(\F)$ with $t_0 = \det g_0$. There is always a $\beta \in C(\pi)$ such that $\beta(g_0) \neq 0$, since $C(\pi)$ is stable by translation by $\GL_n(\F)$. Then $g \mapsto \Phi(g) \beta(g) \norm[\det g]^{\frac{n-1}{2}}$ can be any smooth function of compact support in any neighborhood of $g_0$ on which $\beta \neq 0$. Therefore \eqref{eq: VH-MS-Comp} characterizes $\MS_{\pi,\psi}(\Inv(h))$ in small neighborhoods of any $t_0 \in \F^{\times}$, since $\GL_n(\F) \simeq \F^{\times} \times \SL_n(\F)$ as topological spaces, hence it uniquely determines $\MS_{\pi,\psi}(\Inv(h))$.
	
	Now consider the special case of unitary and tempered $\pi = \Pi^{\infty}$. Replacing $\Inv(h)$ with any $h \in \VorH(\pi)$, $\Phi \in \Cont_c^{\infty}(\GL_n(\F))$ with any $\Phi \in \Sch(\Mat_n(\F))$, and re-taking the above argument, we see that
\begin{align*}
	&\quad \int_{\GL_n(\F)} \widehat{\Phi}(g) \beta^{\iota}(g) \norm[\det g]^{\frac{n+1}{2}} \cdot h(\det g) \ud g \\
	&= \sideset{}{_{\xi \in \widehat{\F^1}}} \sum \int_{\sigma+i\R(\F)} \Zeta(1-s, \widehat{\Phi}, (\beta \otimes \xi)^{\iota}) \cdot \Mellin{h}(\xi,s) \ud_{\F} s \\
	&= \sideset{}{_{\xi \in \widehat{\F^1}}} \sum \int_{\sigma+i\R(\F)} \Zeta(s, \Phi, \beta \otimes \xi) \cdot \Mellin{\VorH_{\pi,\psi}(h)}(\xi^{-1},-s) \ud_{\F} s
\end{align*}
	holds for $0 < \sigma < 1$ by Proposition \ref{prop: GJZAbsCTemp} and Corollary \ref{cor: RSAbsCTemp}. Therefore we obtain
\begin{equation} \label{eq: ExtVHComp}
	\int_{\GL_n(\F)} \widehat{\Phi}(g) \beta^{\iota}(g) \norm[\det g]^{\frac{n+1}{2}} \cdot h(\det g) \ud g = \int_{\GL_n(\F)} \Phi(g) \beta(g) \norm[\det g]^{\frac{n-1}{2}} \cdot \VorH_{\pi,\psi}(h)(\det g) \ud g
\end{equation}
	and conclude the proof.
\end{proof}

	\subsection{Local Weight Transforms: Tempered Case}

	Let $\Pi$ be $\RamCst$-tempered for some $\RamCst < 1/2$. The goal of this section is to give a formula of
\begin{equation} 
	\widetilde{h}(\chi) := \DBZ{1/2}{1/2}{\chi^{-1}}{\chi \omega \omega_{\Pi}}{\widetilde{\Pi}(w_{3,1}).\widetilde{W_F}} 
\label{SimpleDualWt}
\end{equation}
in terms of the local component of $\Theta(F \mid \pi)$, namely
\begin{equation} 
	h(\pi) := \Theta(W_F \mid \pi) = \sideset{}{_{e \in \Bas(\pi)}} \sum \Psi \left( \tfrac{1}{2}, \Pi(w).W_F, W_{e}^{\vee} \right) W_e(\mathbbm{1}). 
\label{SimpleWt}
\end{equation}

	If we denote $W := \Pi(w).W_F$, then the weight functions $h(\pi)$ given by (\ref{SimpleWt}) depends on the restriction of $W$ to the usual embedding of $\GL_2(\F)$ in $\GL_3(\F)$, i.e., the associated function in the Kirillov model. So the question of weight transformation formula is translated into the question: 
\begin{center}
	How does the Kirillov model determine the Whittaker model?
\end{center}
	If the ``determination'' process is required to go via the local functional equations, then this question is intimately related to Jacquet's conjecture on the local converse theorems, which is now a theorem. In the present simple case of $\GL_3$, we only need the \emph{height} theory associated with the Bruhat decomposition of $\GL_3$ due to Chen \cite{Ch06}. We shall not recall this theory, but only present the ``shortest parth'' we have found with this theory as the following matrix equation. It is responsible for the relevant weight transformation formula/process:
\begin{equation} \label{EssMatId}
	\begin{pmatrix} t_2t_1^{-1} & & \\ t_1^{-1} & 1 & \\ & & 1 \end{pmatrix} w_3 = \begin{pmatrix} 1 & t_2 & \\ & 1 & \\ & & 1 \end{pmatrix} \begin{pmatrix} t_2 & & \\ & 1 & \\ & & 1 \end{pmatrix} w_{3,1} \begin{pmatrix} & -1 & t_1 \\ 1 & & \\ & & t_1 \end{pmatrix}^{\iota}.
\end{equation}
	By the absolute convergence established in Proposition \ref{prop: DZIntLocProp}, we have
\begin{multline} \label{InvRed1}
	\widetilde{h}(\chi) = \int_{(\F^{\times})^2} \widetilde{\Pi}(w_{3,1}) \widetilde{W_F} \begin{pmatrix} t_1 & & \\ t_2 & 1 & \\ & & 1 \end{pmatrix} \omega\omega_{\Pi}(t_2) \chi \left( \frac{t_2}{t_1} \right) \extnorm{\frac{t_2}{t_1}}^{\frac{1}{2}} \ud^{\times}t_1 \ud^{\times}t_2 \\
	= \int_{(\F^{\times})^2} \widetilde{\Pi}(w_3) \widetilde{W} \begin{pmatrix} t_2t_1^{-1} & & \\ t_1^{-1} & 1 & \\ & & 1 \end{pmatrix} \omega\omega_{\Pi}(t_1)^{-1} \chi^{-1}(t_2) \norm[t_2]^{-\frac{1}{2}} \ud^{\times}t_1 \ud^{\times}t_2.
\end{multline}
	The equation of matrices (\ref{EssMatId}) implies the equation
\begin{equation} \label{EssWhitId}
	\widetilde{\Pi}(w_3) \widetilde{W} \begin{pmatrix} t_2t_1^{-1} & & \\ t_1^{-1} & 1 & \\ & & 1 \end{pmatrix} = \psi(-t_2) \cdot \widetilde{\Pi\begin{pmatrix} & -1 & t_1 \\ 1 & & \\ & & t_1 \end{pmatrix}W} \left( \begin{pmatrix} t_2 & & \\ & 1 & \\ & & 1 \end{pmatrix} w_{3,1} \right). 
\end{equation}
	Hence it is reasonable to introduce
\begin{equation} \label{InvDis1}
	h^*(t_1,t_2) = \widetilde{\Pi\begin{pmatrix} & -1 & t_1 \\ 1 & & \\ & & t_1 \end{pmatrix}W} \left( \begin{pmatrix} t_2 & & \\ & 1 & \\ & & 1 \end{pmatrix} w_{3,1} \right),
\end{equation}
	whose relation to (\ref{InvRed1}) is given by
\begin{equation} \label{InvDisRel1}
	\widetilde{h}(\chi) = \int_{(\F^{\times})^2} h^*(t_1,t_2) \omega\omega_{\Pi}(t_1)^{-1} \psi(-t_2) \chi^{-1}(t_2) \norm[t_2]^{-\frac{1}{2}} \ud^{\times}t_1 \ud^{\times}t_2.
\end{equation}

\noindent If we introduce
\begin{equation} \label{eq: InvDis2}
	h(t_1,y) := \int_{\F} \Pi\begin{pmatrix} & -1 & t_1 \\ 1 & & \\ & & t_1 \end{pmatrix}W \begin{pmatrix} y & & \\ x & 1 & \\ & & 1 \end{pmatrix} \ud x,
\end{equation}
	then by Definition \ref{def: VoronoiGLn} we get the relation 
\begin{equation} \label{eq: InvDisRel2}
	h^*(t_1,t_2) = \Vor_{\Pi}(h(t_1,\cdot))(t_2),
\end{equation} 

\noindent We may summarize the equations (\ref{InvDisRel1}) and (\ref{eq: InvDisRel2}) in a single formula as
\begin{equation} \label{eq: DWtORel}
	\widetilde{h}(\chi) = \int_{(\F^{\times})^2} \omega\omega_{\Pi}(t_1)^{-1} \psi(-t_2) \chi^{-1}(t_2) \norm[t_2]^{-\frac{1}{2}} \Vor_{\Pi}(h(t_1,\cdot))(t_2) \ud^{\times}t_1 \ud^{\times}t_2.
\end{equation}

	The functions $\widetilde{h}(\chi; s_1,s_2)$, $h^*(t_1,t_2)$, $h(t_1,y)$ and $h(\pi)$ are distributions/functionals which satisfy certain invariance properties. In fact, by the general theory of Kirillov models, the $\psi$-Kirillov model $\Kir(\Pi^{\infty},\psi)$ contains $\Cont_c^{\infty}(\gp{N}_2(\F) \backslash \GL_2(\F), \psi)$. We can assume that for some $f \in \Cont_c^{\infty}(\GL_2(\F))$ we have
\begin{equation} \label{eq: InvDisConst}
	W \begin{pmatrix} g & \\ & 1 \end{pmatrix} = \int_{\F} \psi(-x) f \left( \begin{pmatrix} 1 & x \\ & 1 \end{pmatrix} g \right) \ud x.
\end{equation}
	The weight function
\begin{align}
	h(\pi) &= \sideset{}{_{e \in \Bas(\pi)}} \sum \int_{\gp{N}_2(\F) \backslash \GL_2(\F)} W\begin{pmatrix} g & \\ & 1 \end{pmatrix} W_{e^{\vee}}(g) \ud g \cdot W_e(\mathbbm{1}) \label{eq: WtIsBesselT} \\
	&= \sideset{}{_{e \in \Bas(\pi)}} \sum \int_{\GL_2(\F)} f(g) W_{e^{\vee}}(g) \ud g \cdot W_e(\mathbbm{1}) = \BesselD_{\widetilde{\pi},\psi^{-1}}(f) \nonumber \\
	&= \int_{(\F^{\times})^2} h(t_1,y) \omega^{-1}(t_1) \BesselF_{\widetilde{\pi},\psi^{-1}} \begin{pmatrix} & -y \\ 1 & \end{pmatrix} \frac{\ud^{\times}y}{\norm[y]} \ud^{\times}t_1 \nonumber
\end{align}
	becomes the Bessel distribution for the contragredient of $\pi$ applied to $f$: $\BesselF_{\pi,\psi}$ is the locally integrable function representing the Bessel distribution $\BesselD_{\pi,\psi}$; and the function
\begin{align} \label{eq: RelOI}
	h(t_1,y) &= h(t_1,y;f) = \omega_{\Pi}(t_1) \int_{\F} W \left( \begin{pmatrix} y & & \\ x & 1 & \\ & & 1 \end{pmatrix} \begin{pmatrix} & -1 & 1 \\ 1 & & \\ & & 1 \end{pmatrix} \begin{pmatrix} t_1^{-1} & & \\ & t_1^{-1} & \\ & & 1 \end{pmatrix} \right) \ud x \\
	&:= \omega_{\Pi}(t_1) \int_{\F^2} f \left( \begin{pmatrix} 1 & x_1 \\ & 1 \end{pmatrix} \begin{pmatrix} & -y \\ 1 & \end{pmatrix} \begin{pmatrix} 1 & x_2 \\ & 1 \end{pmatrix} \begin{pmatrix} t_1^{-1} & \\ & t_1^{-1} \end{pmatrix} \right) \psi(-x_1-x_2) \ud x_1 \ud x_2 \nonumber
\end{align}
	is simply the relative orbital integral for the Bessel distributions. Since $\widetilde{h}(\chi)$ and $h^*(t_1,t_2)$ are integral transforms of $h(t_1,y)$, all these functions are (extensions of) distributions $\Theta$ on $f \in \Cont_c^{\infty}(\GL_2(\F))$ satisfying
\begin{equation} \label{eq: InvDisModel}
	\Theta(\rpL_{n(u_1)} \rpR_{n(u_2)} f) = \psi(-u_1+u_2) \Theta(f), \quad \forall u_1, u_2 \in \F.
\end{equation}

	Moreover note that only the function
\begin{equation} \label{eq: InvDis2IntCenter}
	H(y) = \int_{\F^{\times}} h(t_1,y) \omega\omega_{\Pi}(t_1)^{-1} \ud^{\times}t_1,
\end{equation}
	not the function $h(t_1,y)$, can be recovered from the weight functions $h(\pi)$ via a suitable \emph{Bessel inversion transform}. However, the function $H$ lies beyond the applicability of the usual $\Vor_{\Pi}$, not even $\MS_{\Pi,\psi}$. We shall establish the following version in the case of tempered $\Pi$
\begin{equation} \label{eq: DWtORelBis}
	\widetilde{h}(\chi) = \int_{\F^{\times}} \psi(-t_2) \chi^{-1}(t_2) \norm[t_2]^{-\frac{1}{2}} \widetilde{\Vor}_{\Pi}(H)(t_2) \ud^{\times}t_2.
\end{equation}

\begin{lemma} \label{lem: IntWtsEst}
	Suppose $\Pi$ is unitary and tempered. We have the bounds for any $\epsilon > 0$
	$$ \int_{\F^{\times}} \extnorm{h(t,y)} \ud^{\times} t \ll_{\epsilon} \min \left( \norm[y]_{\F}^{\frac{1}{2}+\epsilon}, \norm[y]_{\F}^{1-\epsilon} \right), \quad \int_{\F^{\times}} \extnorm{h^*(t,y)} \ud^{\times} t \ll_{\epsilon} \min \left( \norm[y]_{\F}^{\epsilon}, \norm[y]_{\F}^{1-\epsilon} \right). $$
\end{lemma}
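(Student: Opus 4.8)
The plan is to turn both $h(t,y)$ and $h^*(t,y)$ into (essentially pointwise) values of the Whittaker function $W:=\Pi(w).W_F$ and then to estimate everything with the tempered bound of Proposition~\ref{prop: BdWhiTemp}. For $h$ I would start from the explicit formula \eqref{eq: RelOI}: factoring out the unipotent $\left(\begin{smallmatrix}1&0&y\\0&1&x\\0&0&1\end{smallmatrix}\right)$, using left $\gp{N}_3$-equivariance of $W$, and peeling off the central part of $\diag(1,1,t)$, one reaches the clean identity
$$ h(t,y) = \omega_{\Pi}(t)\,\norm[y]\int_{\F} \psi(yu)\,W\!\begin{pmatrix} \tfrac{1}{t}\left(\begin{smallmatrix}0&-y\\1&-yu\end{smallmatrix}\right) & \\ & 1\end{pmatrix}\ud u, $$
in which the matrix lies in the standard $\GL_2\hookrightarrow\GL_3$, so its $\GL_3$-Iwasawa torus is $\diag(b_1,b_2,1)$ with $\norm[b_2]=\max(1,\norm[yu])/\norm[t]$ (the norm of the bottom row) and $\norm[b_1]=\norm[y]/(\norm[t]\max(1,\norm[yu]))$; since $\Pi$ is unitary, $\norm[\omega_\Pi(t)]=1$, so that factor is harmless. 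For $h^*$, starting from \eqref{InvDis1} and $\widetilde W(g)=W(w_3g^\iota)$, the same manipulation gives $h^*(t,t_2)=W\!\left(\begin{smallmatrix}1&0&0\\0&0&t\\0&-t_2^{-1}&t t_2^{-1}\end{smallmatrix}\right)$, which I view as $W$ restricted to the bottom-right $\GL_2$ (Iwasawa torus $\diag(1,b_1',b_2')$ with $\norm[b_1']=\min(1,\norm[t])$ and $\norm[b_2']=\max(1,\norm[t])/\norm[t_2]$).

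Next I would feed these into Proposition~\ref{prop: BdWhiTemp}, which for $\left(\begin{smallmatrix}g_2&\\&1\end{smallmatrix}\right)$ reads $\bigl|W\!\left(\begin{smallmatrix}g_2&\\&1\end{smallmatrix}\right)\bigr|\ll\norm[b_2]\,(1+(\log\norm[b_1])^2+(\log\norm[b_2])^2)^d\,\phi(b_1,b_2/b_1)$ for a fixed $0<\phi\in\Sch(\F^2)$. Plugging in the data for $h$ and substituting $r=1/t$ and then $w=c_u r$, the triple integral $\int_{\F^\times}\norm[h(t,y)]\ud^\times t$ is dominated by $\norm[y]$ times an integral whose $r$-part is a one–dimensional Mellin integral of a Schwartz function against $\norm[w]\,(\text{log powers})\,\ud^\times w$, finite and rapidly decreasing in the remaining variable by the envelope argument (Lemma~\ref{lem: WeilSchwartzEnv}, Proposition~\ref{prop: SchRestriction}); splitting the leftover $u$-integral at $\norm[yu]=1$ then reduces everything to elementary integrals $\int_0^\infty\rho^{a}G(\rho/\norm[y])\,\ud^\times\rho$ with $G$ rapidly decaying, which evaluate to $\min(\norm[y]^{1/2+\epsilon},\norm[y]^{1-\epsilon})$, the $\epsilon$ absorbing $(\log\norm[y])^d$ via $(\log\norm[y])^d\ll_\epsilon\norm[y]^{\pm\epsilon}$. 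The same recipe applies to $h^*$ on the bottom-right $\GL_2$, with one twist: Proposition~\ref{prop: BdWhiTemp} is only \emph{uniform}, not decaying, in the "third" Iwasawa coordinate, so to exhibit the decay of $\int_{\F^\times}\norm[h^*(t,t_2)]\ud^\times t$ as $t_2\to0$ (where $\norm[b_2']\to\infty$) I would first extract the central factor $b_2'$ and apply the bound to $W(\diag(1/b_2',\,b_1'/b_2',\,1)\kappa)$, producing the extra factor $\norm[b_1'/b_2']\asymp\norm[t_2]\min(1,\norm[t])/\max(1,\norm[t])$; combining this with the "uniform" bound (best for $t_2\to\infty$) and integrating over $t$ yields $\min(\norm[t_2]^\epsilon,\norm[t_2]^{1-\epsilon})$. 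In particular the finiteness of both $\int_{\F^\times}$-integrals (needed even to make sense of \eqref{eq: InvDis2IntCenter} and \eqref{InvDisRel1}) falls out.

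The main obstacle is organizational rather than conceptual: one must keep the joint dependence on the auxiliary variable ($x$, resp. $u$), the central variable $t$, and the base point $y$ (resp. $t_2$) straight through the matrix factorizations, and — this is the subtle point — recognize that it is precisely the extraction of the central character that moves the dangerous parameter into the two "active" coordinates $b_1,b_2$ of Proposition~\ref{prop: BdWhiTemp}, where the bound is genuinely decaying, rather than into the determinant-like direction where it is merely uniform; this is what makes the very finiteness of $\int_{\F^\times}\norm[h(t,y)]\ud^\times t$ non-obvious. A secondary point is uniformity across $\R$, $\C$ and $\vp<\infty$: all the substitutions and the one-dimensional estimates $\int f(\rho)\rho^a\ud^\times\rho$ are insensitive to the place in the normalized absolute value, and Proposition~\ref{prop: SchRestriction} together with Lemma~\ref{lem: WeilSchwartzEnv} lets one replace "Schwartz in $b_1$ for each fixed $b_2/b_1$" by an honest Schwartz majorant, so the same computation runs verbatim everywhere.
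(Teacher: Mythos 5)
Your overall strategy — rewrite $h$ and $h^*$ as (integrals of) point values of $W$, Iwasawa-decompose, apply the tempered Whittaker bound, and reduce to one-variable integrals — is the paper's, and your matrix identities for $h$ and $h^*$ are correct, as is the observation that one must normalize by the central factor $b_2'$ before the bound can bite (this is just the $a_n=1$ normalization built into Proposition~\ref{prop: BdWhiTemp} via the unitary central character). The step that fails is the form of the bound. For a $\GL_2$-element with Iwasawa torus $\diag(b_1,b_2)$ sitting inside $\GL_3$ you write $\bigl|W\bigl(\begin{smallmatrix}g_2&\\&1\end{smallmatrix}\bigr)\bigr|\ll\norm[b_2]\,(\cdots)^d\,\phi(b_1,b_2/b_1)$; this is not an upper bound. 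The Schwartz factor must depend on the \emph{simple roots} $\alpha_1=b_1/b_2$ and $\alpha_2=b_2$ — those are the variables the exponents $j(n-j)/2$ from $\delta_{\gp{B}}^{1/2}$ attach to — so the correct form is $\norm[b_1]\,(\cdots)^d\,\phi(b_1/b_2,b_2)$, which is exactly the display the paper invokes at the start of the proof (written there for the torus $\diag(t_1t_2,t_2,1)$, i.e.\ $t_1=\alpha_1$, $t_2=\alpha_2$). Your $\phi(b_1,b_2/b_1)=\phi(\alpha_1\alpha_2,1/\alpha_1)$ decays super-polynomially as $\alpha_1\to 0$ at fixed $\alpha_2$, whereas for tempered $\Pi$ the Whittaker function only vanishes to first order there: $W\asymp\norm[\alpha_1]$ up to logs, because the Jacquet-module exponents are unitary (equivalently, $L(s,\Pi)$ has poles on $\Re s=0$). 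You were presumably taking the wording ``$a_j=t_1\cdots t_j$'' in Proposition~\ref{prop: BdWhiTemp} at face value; that looks like a misprint for the simple-root parametrization $t_j=a_j/a_{j+1}$, as both the $\delta_{\gp{B}}^{1/2}$-exponents and the paper's own application of the bound in this very lemma show.

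With the Schwartz arguments corrected, your skeleton — split the $u$-integral at $\norm[yu]=1$, substitute, and reduce to Mellin-type integrals against Schwartz majorants via Proposition~\ref{prop: SchRestriction} and Lemma~\ref{lem: WeilSchwartzEnv} — is the right one and is what the paper carries out case by case for $\R$, $\C$ and $\vp<\infty$ with explicit Iwasawa matrices; but the asymmetric output exponents, $\min(\norm[y]^{1/2+\epsilon},\norm[y]^{1-\epsilon})$ for $h$ versus $\min(\norm[y]^{\epsilon},\norm[y]^{1-\epsilon})$ for $h^*$, are produced precisely in those last elementary estimates, which your sketch does not carry out.
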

\begin{proof}
	Proposition \ref{prop: BdWhiTemp} implies the bounds for some $0 \leq \phi_j \in \Sch(\F^2)$ and $d \in \Z_{\geq 0}$
	$$ \extnorm{W \left( n z \begin{pmatrix} t_1t_2 & & \\ & t_2 & \\ & & 1 \end{pmatrix} \kappa \right)} \leq \norm[t_1] \norm[t_2] \left( 1+ (\log \norm[t_1])^2 \right)^d \left( 1+ (\log \norm[t_2])^2 \right)^d \phi_1(t_1,t_2), $$
	$$ \extnorm{\widetilde{W} \left( n z \begin{pmatrix} t_1t_2 & & \\ & t_2 & \\ & & 1 \end{pmatrix} \kappa \right)} \leq \norm[t_1] \norm[t_2] \left( 1+ (\log \norm[t_1])^2 \right)^d \left( 1+ (\log \norm[t_2])^2 \right)^d \phi_2(t_1,t_2), $$
	valid for any $n \in \gp{N}_3(\F), \kappa \in \gp{K}_3, z, t_1, t_2 \in \F^{\times}$. We abbreviate $\norm = \norm_{\F}$ for simplicity of notation.
	
\noindent (1) We first consider the non-archimedean case. We have the Iwasawa decomposition of
	$$ \begin{pmatrix} y & & \\ x & 1 & \\ & & 1 \end{pmatrix} \begin{pmatrix} & -1 & t \\ 1 & & \\ & & t \end{pmatrix} = \begin{cases}
		\begin{pmatrix} -y & & yt \\ & -1 & xt \\ & & t \end{pmatrix} \begin{pmatrix} & 1 & \\ -1 & x & \\ & & 1 \end{pmatrix} & \text{if } x \in \vO_{\F} \\
		\begin{pmatrix} -yx^{-1} & -y & yt \\ & -x & xt \\ & & t \end{pmatrix} \begin{pmatrix} 1 & & \\ -x^{-1} & 1 & \\ & & 1 \end{pmatrix} & \text{if } x \notin \vO_{\F}
	\end{cases}. $$
	From the integral representation \eqref{eq: InvDis2} of $h(t,y)$ we get
\begin{multline*} 
	\int_{\F^{\times}} \extnorm{h(t,y)} \ud^{\times}t \leq \Vol(\vO_{\F}) \cdot \norm[y] \left( 1 + (\log \norm[y])^2 \right)^d \int_{\F^{\times}} \norm[t] \left( 1 + (\log \norm[t])^2 \right)^d \phi_1(y, -t) \ud^{\times}t \\
	+ \int_{\F^{\times}} \int_{\vP_{\F}} \norm[yxt^{-1}] \left( 1 + (\log \norm[yx^2])^2 \right)^d \left( 1 + (\log \norm[xt])^2 \right)^d \phi_1(yx^2, -x^{-1}t^{-1}) \norm[x]^{-2} \ud x \ud^{\times} t \\
	\ll_{\epsilon} \norm[y] \left( 1 + (\log \norm[y])^2 \right)^d \phi_1(y) + \norm[y] \int_{\vP_{\F}} \min \left( \left( \tfrac{1}{\norm[yx^2]} \right)^{\epsilon}, \left( \tfrac{1}{\norm[yx^2]} \right)^{\frac{1}{2}-\epsilon} \right) \ud x \ll_{\epsilon} \min \left( \norm[y]^{\frac{1}{2}+\epsilon}, \norm[y]^{1-\epsilon} \right)
\end{multline*}
	for some positive $\phi_1 \in \Sch(\F)$ by Lemma \ref{lem: WeilSchwartzEnv}. Similarly we have the Iwasawa decomposition of
	$$ \begin{pmatrix} y & \\ & w_2 \end{pmatrix} \begin{pmatrix} & -1 & t \\ 1 & & \\ & & t \end{pmatrix}^{\iota} = \begin{cases}
		\begin{pmatrix} -y & & \\ & t^{-1} & \\ & & 1 \end{pmatrix} \begin{pmatrix} & 1 & \\ & t & 1 \\ 1 & & \end{pmatrix} & \text{if } t \in \vO_{\F} \\
		\begin{pmatrix} -yt^{-1} & y & \\ & 1 & \\ & & 1 \end{pmatrix} \begin{pmatrix} & & 1 \\ & 1 & t^{-1} \\ 1 & & \end{pmatrix} & \text{if } t \notin \vO_{\F}
	\end{cases}. $$
	From the definition \eqref{InvDis1} of $h^*(t,y)$ we get
\begin{multline*}
	\int_{\F^{\times}} \extnorm{h^*(t,y)} \ud^{\times}t \leq \int_{\vO_{\F}} \norm[y] \left( 1 + (\log \norm[yt])^2 \right)^d \left( 1 + (\log \norm[t])^2 \right)^d \phi_2(-yt, t^{-1}) \ud^{\times} t \\
	+ \int_{\vP_{\F}} \norm[yt] \left( 1 + (\log \norm[yt])^2 \right)^d \phi_2(-yt, 1) \ud^{\times} t \\
	\ll_{\epsilon} \norm[y] \int_{\vO_{\F}} \min \left( \left( \tfrac{1}{\norm[yt]} \right)^{1-\epsilon} \norm[t], \left( \tfrac{1}{\norm[yt]} \right)^{\epsilon} \norm[t]^{2\epsilon} \right) \ud^{\times} t + \\
	\int_{\vP_{\F}} \norm[yt] \min \left( \left( \tfrac{1}{\norm[yt]} \right)^{1-\epsilon}, \left( \tfrac{1}{\norm[yt]} \right)^{\epsilon} \right) \ud^{\times} t  \ll_{\epsilon} \min \left( \norm[y]^{\epsilon}, \norm[y]^{1-\epsilon} \right).
\end{multline*}

\noindent (2) We then consider the real case. We have the Iwasawa decomposition of
	$$ \begin{pmatrix} y & & \\ x & 1 & \\ & & 1 \end{pmatrix} \begin{pmatrix} & -1 & t \\ 1 & & \\ & & t \end{pmatrix} = \begin{pmatrix} -\tfrac{y}{\sqrt{1+x^2}} & -\tfrac{yx}{\sqrt{1+x^2}} & yt \\ & -\sqrt{1+x^2} & xt \\ & & t \end{pmatrix} \begin{pmatrix} \tfrac{x}{\sqrt{1+x^2}} & \tfrac{1}{\sqrt{1+x^2}} & \\ -\tfrac{1}{\sqrt{1+x^2}} & \tfrac{x}{\sqrt{1+x^2}} & \\ & & 1 \end{pmatrix}. $$
	From the integral representation \eqref{eq: InvDis2} of $h(t,y)$ we get
\begin{align*} 
	\int_{\R^{\times}} \extnorm{h(t,y)} \ud^{\times}t &\leq \norm[y] \int_{\R^{\times}} \int_{\R} \tfrac{1}{\norm[t]\sqrt{1+x^2}} \left( 1 + \left( \log \tfrac{\norm[y]}{1+x^2} \right)^2 \right)^d \left( 1 + \left( \log \tfrac{\sqrt{1+x^2}}{\norm[t]} \right)^2 \right)^d \cdot \\
	&\qquad \qquad \qquad \phi_1 \left( \tfrac{y}{1+x^2}, - \tfrac{\sqrt{1+x^2}}{t} \right) \ud x \ud^{\times} t \\
	&\leq \norm[y] \int_{\R} \tfrac{1}{1+x^2} \left( 1 + \left( \log \tfrac{\norm[y]}{1+x^2} \right)^2 \right)^d \phi_1 \left( \tfrac{y}{1+x^2} \right) \ud x \\
	&\ll_{\epsilon} \norm[y] \int_{\R} \tfrac{1}{1+x^2} \min \left( \left( \tfrac{1+x^2}{\norm[y]} \right)^{\frac{1}{2}-\epsilon}, \left( \tfrac{1+x^2}{\norm[y]} \right)^{\epsilon} \right) \ud x \ll_{\epsilon} \min(\norm[y]^{\frac{1}{2}+\epsilon}, \norm[y]^{1-\epsilon}).
\end{align*}
	for some positive $\phi_1 \in \Sch(\R)$ by Lemma \ref{lem: WeilSchwartzEnv}. Similarly we have the Iwasawa decomposition of
	$$ \begin{pmatrix} y & \\ & w_2 \end{pmatrix} \begin{pmatrix} & -1 & t \\ 1 & & \\ & & t \end{pmatrix}^{\iota} = \begin{pmatrix} -\tfrac{y}{\sqrt{1+t^2}} & -\tfrac{yt}{\sqrt{1+t^2}} & \\ & \tfrac{\sqrt{1+t^2}}{t} & \\ & & 1 \end{pmatrix} \begin{pmatrix} & \tfrac{1}{\sqrt{1+t^2}} & -\tfrac{t}{\sqrt{1+t^2}} \\ & \tfrac{t}{\sqrt{1+t^2}} & \tfrac{1}{\sqrt{1+t^2}} \\ 1 & & \end{pmatrix}. $$
	From the definition \eqref{InvDis1} of $h^*(t,y)$ we get for any $\epsilon > 0$
\begin{multline*}
	\int_{\R^{\times}} \extnorm{h^*(t,y)} \ud^{\times}t \leq \norm[y] \int_{\R^{\times}} \tfrac{1}{\sqrt{1+t^2}} \left( 1 + \left( \log \tfrac{\norm[yt]}{1+t^2} \right)^2 \right)^d \left( 1 + \left( \log \tfrac{\sqrt{1+t^2}}{\norm[t]} \right)^2 \right)^d \phi_2 \left( -\tfrac{yt}{1+t^2}, \tfrac{\sqrt{1+t^2}}{t} \right) \ud^{\times} t \\
	\ll_{\epsilon} \norm[y] \int_{\R^{\times}} \min \left( \left( \tfrac{1+t^2}{\norm[yt]} \right)^{1-\epsilon} \left( \tfrac{\norm[t]}{\sqrt{1+t^2}} \right), \left( \tfrac{1+t^2}{\norm[yt]} \right)^{\epsilon} \left( \tfrac{\norm[t]}{\sqrt{1+t^2}} \right)^{2\epsilon} \right) \tfrac{\ud^{\times}t}{\sqrt{1+t^2}} \ll_{\epsilon} \min(\norm[y]^{\epsilon}, \norm[y]^{1-\epsilon}).
\end{multline*}

\noindent (3) Since the complex case is quite similar to the real one, we omit the details.
\end{proof}

\begin{proof}[Proof of Theorem \ref{thm: DWtFTemp}]
	It is equivalent to proving \eqref{eq: DWtORelBis}. By \eqref{eq: InvDisRel2} and the characterizing property of the Voronoi--Hankel transform \eqref{eq: ExtVHComp} we have for any $\Phi \in \Sch(\Mat_3(\F))$ and $\beta \in C(\pi)$ the equality
\begin{equation} \label{eq: IntWtsRel}
	\int_{\GL_3(\F)} \widehat{\Phi}(g) \beta^{\iota}(g) \norm[\det g] \cdot h(t, \det g) \ud g = \int_{\GL_3(\F)} \Phi(g) \beta(g) \norm[\det g]\cdot h^*(t,\det g) \ud g.
\end{equation}
	By Lemma \ref{lem: IntWtsEst} and Proposition \ref{prop: GJZAbsCTemp} we have
	$$ \int_{\F^{\times}} \int_{\GL_3(\F)} \extnorm{\widehat{\Phi}(g) \beta^{\iota}(g)} \norm[\det g] \cdot \extnorm{h(t, \det g)} \ud g \ud^{\times} t \ll_{\epsilon} \int_{\GL_3(\F)} \extnorm{\widehat{\Phi}(g) \beta^{\iota}(g)} \norm[\det g]^{2-\epsilon} \ud g < \infty, $$
	$$ \int_{\F^{\times}} \int_{\GL_3(\F)} \extnorm{\Phi(g) \beta(g)} \norm[\det g] \cdot \extnorm{h^*(t,\det g)} \ud g \ud^{\times} t \ll_{\epsilon} \int_{\GL_3(\F)} \extnorm{\Phi(g) \beta(g)} \norm[\det g]^{2-\epsilon} \ud g < \infty. $$
	Hence we can integrate \eqref{eq: IntWtsRel} to get
\begin{multline*}
	\int_{\GL_3(\F)} \widehat{\Phi}(g) \beta^{\iota}(g) \norm[\det g] \cdot \left( \int_{\F^{\times}} h(t, \det g) \omega\omega_{\Pi}(t)^{-1} \ud^{\times} t \right) \ud g \\
	= \int_{\F^{\times}} \omega\omega_{\Pi}(t)^{-1} \left( \int_{\GL_3(\F)} \widehat{\Phi}(g) \beta^{\iota}(g) \norm[\det g] \cdot h(t, \det g) \ud g \right) \ud^{\times} t \\
	= \int_{\F^{\times}} \omega\omega_{\Pi}(t)^{-1} \left( \int_{\GL_3(\F)} \Phi(g) \beta(g) \norm[\det g] \cdot h^*(t,\det g) \ud g \right) \ud^{\times} t \\
	= \int_{\GL_3(\F)} \Phi(g) \beta(g) \norm[\det g] \cdot \left( \int_{\F^{\times}} h^*(t,\det g) \omega\omega_{\Pi}(t)^{-1} \ud^{\times} t \right) \ud g,
\end{multline*} 
	and conclude the proof of Theorem \ref{thm: DWtFTemp} by Theorem \ref{thm: ExtVorH}.
\end{proof}

\section{Voronoi--Hankel Kernel Functions for $\GL_2$}

	\subsection{Dihedral Case}
	
\begin{definition} \label{def: AdmPExt}
	Let $\F$ be a local field. Let $P_2(\F)$ be the set of $(\E/\F, \eta)$ where
\begin{itemize}
	\item $\E$ a quadratic field extension of $\F$ with the non-trivial Galois action denoted by $v \mapsto \bar{v}$,
	\item $\eta: \E^{\times} \to \C^{\times}$ is a quasi-character which does not factor through $\Nr_{\E/\F}$.
\end{itemize}
	Write $P_2^u(\F)$ to be the subset of $(\E/\F, \eta) \in P_2(\F)$ with unitary $\eta$. Write $P_1(\F)$, resp. $P_1^u(\F)$ to be the set of quasi-, resp. unitary characters of $\F^{\times}$.
\end{definition}

\noindent Let $\psi$ be a non-trivial character of $\F$. The Weil's representation $\pi(\eta,\psi)$ of $\GL_2(\F)^+$, the subgroup of $\GL_2(\F)$ consisting of matrices with determinant in $\Nr(\E^{\times})$, is realized in
	$$ \Sch(\E, \eta^{-1}) := \left\{ \Phi \in \Sch(\E) \ \middle| \ \Phi(yv) = \eta(y)^{-1} \Phi(y), \ \forall y \in \E^1 \right\}, $$
	with the formulas
\begin{equation} \label{eq: DiWeilRepF}
\begin{cases}
	\left( \pi(\eta,\psi) \begin{pmatrix} \Nr(b) & \\ & 1 \end{pmatrix} \Phi \right)(v) &= \extnorm{\Nr(b)}^{\tfrac{1}{2}} \eta(b) \Phi(bv), \\
	\left( \pi(\eta,\psi) \begin{pmatrix} 1 & x \\ & 1 \end{pmatrix} \Phi \right)(v) &= \psi(x \Nr(v)) \Phi(v), \\
	\left( \pi(\eta,\psi) \begin{pmatrix} a & \\ & a^{-1} \end{pmatrix} \Phi \right)(v) &= \norm[a] \eta_{\E}(a) \Phi(av), \\
	\pi(\eta,\psi)(w') \Phi &= \lambda(\E/\F,\psi) \cdot \Phi^{\iota}, \quad w' := \begin{pmatrix} & 1 \\ -1 & \end{pmatrix}
\end{cases}
\end{equation}
	where $\eta_{\E}$ is the non-trivial quadratic character trivial on $\Nr(\E^{\times})$, $\lambda(\E/\F,\psi)$ is the \emph{Weil index} and
\begin{equation} \label{eq: TwistFour}
	\Phi^{\iota}(v) := \int_{\E} \Phi(u) \psi(\Tr(u\bar{v})) \ud u = \invOFour_{\E}(\Psi)(\bar{v}). 
\end{equation}
	Then $(\pi_{\eta}, \Sch)$ is induced from $(\pi(\eta,\psi), \Sch(\E, \eta^{-1}))$ and is independent of a choice of $\psi$.
	$$ \Sch = \Ind_{\GL_2(\F)^+}^{\GL_2(\F)} \Sch(\E, \eta^{-1}) = \left\{ f: \GL_2(\F) \to \Sch(\E,\eta^{-1}) \ \middle| \ f(hg) = \pi(\eta,\psi)(h).f(g), \quad \forall h \in \GL_2(\F)^+ \right\}. $$
	In fact $\Sch$ is the subspace of smooth vectors, on which a non-trivial $\psi$-Whittaker functional is given by
	$$ \ell: \Sch \to \C, \quad f \mapsto \ell(f) := f(\id)(1). $$
\begin{definition} \label{def: NrRepsIndMap}
	Let $\boldsymbol{\epsilon} = \left\{ \epsilon_1,  \epsilon_2 \right\}$ be a set of representatives for $\F^{\times} / \Nr(\E^{\times})$. Let the norm one subgroup be $\E^1 = \left\{ b \in \E^{\times} \ \middle| \ \Nr(b) = 1 \right\}$. Define
	$$ I_{\E} = I_{\E,\boldsymbol{\epsilon}}: \intL^1(\F^{\times}) \to \intL^1(\E^{\times}/\E^1)^{\oplus 2}, \quad I_{\E}(f)(b) := (f(\Nr(b)\epsilon_1), f(\Nr(b)\epsilon_2))^T; $$
	$$ J_{\E} = J_{\E,\boldsymbol{\epsilon}}: \intL^1(\F^{\times}) \to \intL^1(\E^{\times}/\E^1)^{\oplus 2}, \quad J_{\E}(f)(b) := (f(\Nr(b)\epsilon_1^{-1}), f(\Nr(b)\epsilon_2^{-1}))^T. $$
	We also write $I_{\E}(f)_j$, resp. $J_{\E}(f)_j$, for the $j$-th component of $I_{\E}(f)$, resp. $J_{\E}(f)$.
\end{definition}

\begin{lemma} \label{lem: VHFourDDi}
	Let $(\E/\F, \eta) \in P_2(\F)$ be an admissible pair. Then we have
	$$ J_{\E} \circ \VorH_{\pi_{\eta}} = \lambda(\E/\F, \psi) \cdot \VorH_{\eta} \circ I_{\E}. $$
\end{lemma}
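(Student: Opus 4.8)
\textbf{Proof plan for Lemma \ref{lem: VHFourDDi}.}

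The strategy is to unwind both sides into Mellin transforms over $\F^{\times}$ and compare gamma factors, using the explicit Weil-representation model of $\pi_{\eta}$ to compute the Voronoi--Hankel kernel directly. First I would recall that by Definition \ref{def: VoronoiGLn} (and \eqref{eq: VorHLocFE}), $\VorH_{\pi_{\eta}}$ is characterized on $\VorH(\pi_{\eta})$ by the local functional equation $\Mellin{\VorH_{\pi_{\eta}}(h)}(\xi^{-1},-s) = \gamma(s,\pi_{\eta}\times\xi,\psi)\Mellin{h}(\xi,s)$, while $\VorH_{\eta}$ on $\eta\cdot\Sch(\F)$ is characterized (via \eqref{eq: LocFEGL1}, \eqref{eq: LocFEGL1Bis}) by the abelian Tate functional equation with $\gamma(s,\eta\xi,\psi)$. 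Since $\pi_{\eta}$ is the dihedral (automorphically induced) representation attached to $(\E/\F,\eta)$, we have the factorization $\gamma(s,\pi_{\eta}\times\xi,\psi) = \gamma(s, \eta\cdot(\xi\circ\Nr_{\E/\F}), \psi_{\E})$ where $\psi_{\E}=\psi\circ\Tr_{\E/\F}$, together with the Weil-index bookkeeping contained in $\lambda(\E/\F,\psi)$ — this is exactly the local Langlands/automorphic-induction identity for $\gamma$-factors. So the claimed identity $J_{\E}\circ\VorH_{\pi_{\eta}} = \lambda(\E/\F,\psi)\cdot\VorH_{\eta}\circ I_{\E}$ should follow once I match the two sides coordinate-by-coordinate over the cosets $\F^{\times}/\Nr(\E^{\times})$ and check that the norm map transports the $\F$-Mellin transform to an $\E$-Mellin transform in the correct way.

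Concretely, the steps I would carry out are: (1) Fix $h\in\VorH(\pi_{\eta})$; by the definition \eqref{eq: BasicVHS} with $n=2$, $h(y) = \norm[y]^{-1/2}W(a(y))$ for $W\in\Whi(\pi_{\eta}^{\infty},\psi)$, and using the Whittaker functional $\ell(f)=f(\id)(1)$ together with the first formula in \eqref{eq: DiWeilRepF}, express $h$ restricted to the coset $\Nr(\E^{\times})\epsilon_j$ in terms of the Schwartz function $f(\kappa_j)\in\Sch(\E,\eta^{-1})$ evaluated along $\E^{\times}/\E^1$ — this is precisely why $I_{\E}$ (which pulls $\F^{\times}$-functions back to $\E^{\times}/\E^1$ along $\Nr$ on each coset) is the right intertwiner. (2) Compute $\VorH_{\pi_{\eta}}(h)$ by applying the long Weyl element: by the recipe $\widetilde{W}(h) = W(w_2 h^{\iota})$ and $\Vor_{\pi}(H)(y)=\widetilde W\!\left(\begin{smallmatrix} y & \\ & w_1\end{smallmatrix}\right)$, the action of $w'=\left(\begin{smallmatrix} & 1 \\ -1 & \end{smallmatrix}\right)$ in \eqref{eq: DiWeilRepF} turns $\Phi$ into $\lambda(\E/\F,\psi)\cdot\Phi^{\iota}$, where $\Phi^{\iota}=\invOFour_{\E}(\Phi)(\bar v)$ by \eqref{eq: TwistFour}. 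Tracking the normalization operators $\Mult_{\pm}$ and $\Trans(-1)$ in the definition of $\VorH_{\pi}$, the upshot is that on the $\E$-side the transform is the inverse Fourier transform on $\E$ twisted by $\eta$ — i.e. exactly $\VorH_{\eta}$ in the sense of \eqref{eq: LocFEGL1Bis}, but for $\E$ in place of $\F$. (3) Now observe that on the coset $\Nr(\E^{\times})\epsilon_j$ the output lands (after Galois conjugation $v\mapsto\bar v$, which inverts $\Nr$) over $\Nr(\E^{\times})\epsilon_j^{-1}$ — this is the role of $J_{\E}$ versus $I_{\E}$. Matching the two descriptions of the transformed function on each coset gives the stated identity with the single overall constant $\lambda(\E/\F,\psi)$.

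The main obstacle I anticipate is the careful bookkeeping in step (2)–(3): keeping track of the precise interplay among the normalization twists ($\Mult_{\frac12}$, $\Mult_{-\frac{n-3}{2}}=\Mult_{0}$ at $n=2$), the sign $\Trans(-1)$, the quadratic character $\eta_{\E}$ appearing in the split-torus formula of \eqref{eq: DiWeilRepF}, and the fact that $\Phi^{\iota}$ involves the $\E$-Fourier transform precomposed with Galois conjugation rather than the naive one. In particular I must verify that the quadratic-character factors $\eta_{\E}(\epsilon_j)$ and the Weil indices combine so that only $\lambda(\E/\F,\psi)$ survives as a global constant and no coset-dependent factor $\eta_{\E}(\epsilon_j)$ remains — this is where the choice of representatives $\boldsymbol{\epsilon}$ and the definitions of $I_{\E,\boldsymbol{\epsilon}}$, $J_{\E,\boldsymbol{\epsilon}}$ are pinned down. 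One clean way to avoid error is to verify the identity first at the level of Mellin transforms: apply $\Mellin{\cdot}(\xi,s)$ to both sides, use $\Mellin{I_{\E}(f)_j}$ expressed as an $\E^{\times}$-Mellin integral of the $j$-th Schwartz component, invoke the $\gamma$-factor factorization $\gamma(s,\pi_{\eta}\times\xi,\psi)=\gamma(s,\eta\cdot(\xi\circ\Nr),\psi_{\E})$, and then appeal to the uniqueness statement in \eqref{eq: VorHLocFE} (or its archimedean analogue, Remark \ref{rmk: MSArch}) to conclude that the two sides agree as functions. I would present the proof in that Mellin-transform form, relegating the explicit Weil-representation computation of the kernel to the statements of Theorem \ref{thm: VorHKerDih} / Corollary \ref{cor: VorHKDi} that follow.
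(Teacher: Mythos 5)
Your concrete plan in steps (1)--(3) tracks the paper's proof quite closely: realize $h\in\VorH(\pi_{\eta})$ in the Weil-representation model with Schwartz components $\Phi_j = f\left(\begin{smallmatrix}\epsilon_j & \\ & 1\end{smallmatrix}\right)$, compute $h$ on the cosets $\Nr(\E^{\times})\epsilon_j$ and $\widetilde h$ (which is exactly $\VorH_{\pi_{\eta}}(h)$ after unwinding $\Trans(-1)\circ\Mult_{1/2}\circ\Vor_{\pi}\circ\Mult_{1/2}$) on the cosets $\Nr(\E^{\times})\epsilon_j^{-1}$, using the action of $w'$ in \eqref{eq: DiWeilRepF} to produce $\lambda(\E/\F,\psi)\,\Phi_j^{\iota}$, and compare componentwise. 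That is precisely what the paper does, so the main line of your argument is sound.

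There is one factual error in your step (3). You claim the coset shift $\epsilon_j\mapsto\epsilon_j^{-1}$ happens ``after Galois conjugation $v\mapsto\bar v$, which inverts $\Nr$''. Galois conjugation does \emph{not} invert the norm; $\Nr_{\E/\F}(\bar v)=\Nr_{\E/\F}(v)$. The passage from $I_{\E}$ to $J_{\E}$ is forced by the inverse transpose inside $\widetilde W(g)=W(w_2 g^{\iota})$, $g^{\iota}={}^tg^{-1}$: writing $\widetilde h(t)=\norm[t]^{1/2}f\bigl(w'\,a(t^{-1})\bigr)(1)$ puts $t^{-1}$, hence $\Nr(b)^{-1}\epsilon_j$, into the Iwasawa slot, which is why the natural parametrization of $\widetilde h$ is along $\Nr(\E^{\times})\epsilon_j^{-1}$. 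The Galois bar enters through $\Phi^{\iota}(v)=\invOFour_{\E}(\Phi)(\bar v)$ in \eqref{eq: TwistFour}, and its role is only to make $\Phi^{\iota}$ again lie in $\Sch(\E,\eta^{-1})$; it has nothing to do with the coset shift.

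On your preferred alternative (compare $\E$-Mellin transforms, invoke the $\gamma$-factor inductivity, and conclude by Paley--Wiener uniqueness), beware that it conflates two different inputs. If you apply the $\GL_2\times\GL_1$ local functional equation \eqref{eq: VorHLocFE} directly, you must first relate the $\E$-Mellin transform of $I_{\E}(h)_j$ at $\xi\circ\Nr$ to the \emph{full} $\F$-Mellin transform of $h$, but the former sees only the coset $\Nr(\E^{\times})\epsilon_j$. Recombining requires writing the coset indicator as $\tfrac12(1+\eta_{\E}(\epsilon_j)\eta_{\E})$, applying \eqref{eq: VorHLocFE} at both $\xi$ and $\xi\eta_{\E}$, and then using the self-twist identity $\gamma(s,\pi_{\eta}\times\xi\eta_{\E},\psi)=\gamma(s,\pi_{\eta}\times\xi,\psi)$ (from $\pi_{\eta}\otimes\eta_{\E}\simeq\pi_{\eta}$) to recombine --- an ingredient your sketch omits. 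If instead you ``express $\Mellin{I_{\E}(h)_j}$ as an $\E^{\times}$-Mellin integral of the $j$-th Schwartz component'', you are really computing $I_{\E}(h)_j$ and $J_{\E}(\VorH_{\pi_{\eta}}h)_j$ explicitly in terms of $\Phi_j$ --- but once you have those explicit formulas, the conclusion is a direct comparison via \eqref{eq: TwistFour} and needs neither \eqref{eq: VorHLocFE} nor $\gamma$-factor inductivity, which is exactly the paper's shorter route. So the Mellin detour adds length without buying additional robustness here.
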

\begin{proof}
	Let $f \in \Sch$. Write $\Phi_j = f \begin{pmatrix} \epsilon_j & \\ & 1 \end{pmatrix} \in \Sch(\E, \eta^{-1})$. Consider
	$$ h(t) := \norm[t]^{-\frac{1}{2}} W_f \begin{pmatrix} t & \\ & 1 \end{pmatrix} = \norm[t]^{-\frac{1}{2}} f \begin{pmatrix} t & \\ & 1 \end{pmatrix}(1). $$
	By definition we have for $j \in \{ 1,2 \}$
	$$ h(\Nr(b) \epsilon_j) = \extnorm{\Nr(b)\epsilon_j}^{-\frac{1}{2}} \pi(\eta,\psi) \begin{pmatrix} \Nr(b) & \\ & 1 \end{pmatrix}.\Phi_j(1) = \norm[\epsilon_j]^{-\frac{1}{2}} \eta(b) \Phi_j(b). $$
	Therefore we get
\begin{equation} \label{eq: DiHIE}
	I_{\E}(h)(b) = \begin{pmatrix} \norm[\epsilon_1]^{-\frac{1}{2}} \eta(b) \Phi_1(b) \\ \norm[\epsilon_2]^{-\frac{1}{2}} \eta(b) \Phi_2(b) \end{pmatrix}.
\end{equation}
	Similarly consider
	$$ \widetilde{h}(t) := \norm[t]^{\frac{1}{2}} \widetilde{W}_f \begin{pmatrix} -t & \\ & 1 \end{pmatrix} = \norm[t]^{\frac{1}{2}} W_f \left( w' \begin{pmatrix} t^{-1} & \\ & 1 \end{pmatrix} \right) = \norm[t]^{\frac{1}{2}} f \left( w' \begin{pmatrix} t^{-1} & \\ & 1 \end{pmatrix} \right)(1). $$
	By definition we have for $j \in \{ 1,2 \}$
\begin{multline*} 
	\widetilde{h}(\Nr(b) \epsilon_j^{-1}) = \extnorm{\Nr(b)\epsilon_j^{-1}}^{\frac{1}{2}} f \left( \begin{pmatrix} 1 & \\ & \Nr(b)^{-1} \end{pmatrix} w' \begin{pmatrix} \epsilon_j & \\ & 1 \end{pmatrix} \right)(1) \\
	= \extnorm{\Nr(b)\epsilon_j^{-1}}^{\frac{1}{2}} \cdot \extnorm{\Nr(b)}^{\frac{1}{2}} \eta^{-1}(b) f \left( w' \begin{pmatrix} \epsilon_j & \\ & 1 \end{pmatrix} \right)(\bar{b}) = \norm[\epsilon_j]^{-\frac{1}{2}} \eta^{-1}(b) \norm[b]_{\E} \cdot \invOFour_{\E}(\Phi_j)(b).
\end{multline*}
Therefore we get
\begin{equation} \label{eq: DiHJE}
	J_{\E}(\widetilde{h})(b) = \begin{pmatrix} \norm[\epsilon_1]^{-\frac{1}{2}} \eta^{-1}(b) \norm[b]_{\E} \cdot \invOFour_{\E}(\Phi_1)(b) \\ \norm[\epsilon_2]^{-\frac{1}{2}} \eta^{-1}(b) \norm[b]_{\E} \cdot \invOFour_{\E}(\Phi_2)(b) \end{pmatrix}.
\end{equation}
	Comparing \eqref{eq: DiHIE} and \eqref{eq: DiHJE} we conclude.
\end{proof}

\begin{corollary} \label{cor: VorHKDi}
	The Voronoi--Hankel transform $\VorH_{\pi_{\eta}}$ is of convolution type with kernel defined by
	$$ \VHF_{\pi_{\eta}}(t) := \zeta_{\E}(1)^{-1} \lambda(\E/\F,\psi) \id_{\Nr(\E^{\times})}(t) \cdot \norm[t]_{\F} \int_{\E^1} \psi(x \delta) \eta^{-1}(x \delta) \ud \delta, $$
where $x \in \E$ is any element with $\Nr_{\E/\F}(x)=t$, and the Haar measure $\ud \delta$ on $\E^1$ is chosen so that the quotient measure on $\E^{\times}/\E^1 \simeq \Nr(\E^{\times})$ coincides with the restriction of the Haar measure $\ud^{\times} t$ on $\F^{\times}$.
\end{corollary}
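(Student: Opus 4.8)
The plan is to deduce Corollary \ref{cor: VorHKDi} from Lemma \ref{lem: VHFourDDi} by unwinding the identity $J_{\E} \circ \VorH_{\pi_{\eta}} = \lambda(\E/\F,\psi) \cdot \VorH_{\eta} \circ I_{\E}$ and then computing the $\GL_1$-side Voronoi--Hankel transform $\VorH_{\eta}$ explicitly on the image of $I_{\E}$. First I would observe that it suffices to prove the convolution formula on the dense subspace $\VorH(\pi_{\eta}) \supset \Cont_c^{\infty}(\F^{\times})$, so I take $h = \VorH(\pi_\eta)$-type functions arising from $f \in \Sch$ as in the proof of Lemma \ref{lem: VHFourDDi}, and test the claimed kernel against such $h$. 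The key point is that by Definition \ref{def: VoronoiGLn}(2), $\VorH_{\eta}$ on $\eta \cdot \Sch(\E)$-valued functions is $\Mult_1(\eta^{-1}) \circ \invOFour_{\E} \circ \Mult_0(\eta^{-1})$ (applied componentwise over $\E^{\times}/\E^1$, after descending via $I_\E$), so Lemma \ref{lem: VHFourDDi} already tells us that $\widetilde{h}(t) := \VorH_{\pi_\eta}(h)(t)$, when restricted to $\Nr(\E^\times) \ni t = \Nr(x)$, equals (up to the explicit constant $\norm[\epsilon_j]^{-1/2}$ bookkeeping and $\lambda(\E/\F,\psi)$) the expression $\eta^{-1}(x)\norm[x]_{\E} \cdot \invOFour_{\E}(\Phi_j)(x)$.

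The second step is to convert the partial Fourier transform $\invOFour_{\E}(\Phi_j)(x)$ on $\E$ into the convolution-type formula. The idea is: $\Phi_j$ satisfies $\Phi_j(yv) = \eta(y)^{-1}\Phi_j(v)$ for $y \in \E^1$, and using \eqref{eq: DiHIE} we can write $\Phi_j(b) = \norm[\epsilon_j]^{1/2}\eta(b)^{-1} h(\Nr(b)\epsilon_j)$, i.e. $\Phi_j$ is determined by $h$ restricted to $\Nr(\E^{\times})\epsilon_j$. Plugging this into $\invOFour_{\E}(\Phi_j)(x) = \int_{\E} \Phi_j(u)\psi(\Tr(u\bar{x}))\,\ud u$ and disintegrating the Haar measure $\ud u$ on $\E$ along the norm fibration --- writing $u = x' \delta$ with $\delta \in \E^1$ and $\Nr(x') = $ a variable in $\F^{\times}$ --- produces, after the substitution, an integral of $h$ against a kernel which is precisely the fiber integral $\int_{\E^1}\psi(x\delta)\eta^{-1}(x\delta)\,\ud\delta$ times a power of $\norm[t]_{\F}$, times the convergence factor $\zeta_{\E}(1)^{-1}$ coming from the normalization of $\ud\delta$ versus $\ud^{\times}t$ stated in the Corollary. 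The characteristic function $\id_{\Nr(\E^\times)}(t)$ appears because the fiber integral makes sense only for $t \in \Nr(\E^{\times})$ (and $\VorH_{\pi_\eta}(h)$ supported where $\Phi_j$ contributes), while the dependence on $j$ disappears because $\epsilon_1,\epsilon_2$ exhaust $\F^\times/\Nr(\E^\times)$ and the two components assemble to a single function on $\F^\times$.

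The main obstacle I anticipate is the careful measure-theoretic bookkeeping in the disintegration of $\ud u$ on $\E$ along $\Nr: \E^{\times} \to \F^{\times}$: one must verify that with the stated normalization of $\ud\delta$ on $\E^1$ (so that the quotient measure on $\E^{\times}/\E^1 \simeq \Nr(\E^{\times})$ matches $\ud^{\times}t\mid_{\Nr(\E^\times)}$), the constant that emerges is exactly $\zeta_{\E}(1)^{-1}$ and the power of $\norm[t]_{\F}$ is exactly $1$ (not $1/2$ or $3/2$ --- these half-integer shifts are governed by the $\Mult_{\pm(n-1)/2}$, $\Mult_{-(n-3)/2}$ operators in Definition \ref{def: VoronoiGLn}(1) with $n=2$, combined with the $\norm[b]_\E$ versus $\norm[\Nr b]_\F$ discrepancy). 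A secondary subtlety is the convergence/interchange: the fiber integral $\int_{\E^1}\psi(x\delta)\eta^{-1}(x\delta)\,\ud\delta$ need not be absolutely convergent as a function of $x$ pointwise when $\E^1$ is non-compact (archimedean case), so strictly speaking one should interpret $\VHF_{\pi_\eta}$ as a locally integrable function / tempered distribution as in Definition \ref{def: ConvTypeTrans}, and check the convolution identity against $h \in \Cont_c^{\infty}(\F^\times)$ using Fubini after regularization --- I would handle this by first working with $f$ such that $\Phi_j \in \Cont_c^\infty(\E^\times)$, where all integrals are finite, and then extending by the density of the Mellin transform and the established mapping properties of $\VorH_{\pi_\eta}$.
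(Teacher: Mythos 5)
Your proposal is correct and takes essentially the same approach as the paper: plug $h \in \Cont_c^{\infty}(\F^{\times})$ into Lemma \ref{lem: VHFourDDi}, rewrite the $\invOFour_{\E}$-integral by converting the additive Haar measure $\ud_\E y$ to the multiplicative one $\ud_\E^{\times}y$ (producing $\zeta_{\E}(1)^{-1}\norm[y]_\E$), disintegrate $\E^{\times}$ along $\E^{\times}/\E^{1}$, and assemble the two components indexed by $\epsilon_j$ into a single convolution over $\F^{\times}$ using the support condition $\id_{\Nr(\E^{\times})}$. One small inaccuracy: you worry that $\E^{1}$ may be non-compact in the archimedean case and that the fiber integral might therefore fail to converge absolutely, but for $(\E/\F,\eta) \in P_2(\F)$ the extension $\E/\F$ is a quadratic field extension, so $\E^{1} = \ker(\Nr_{\E/\F})$ is always compact (it is $S^1$ for $\C/\R$, and a subgroup of $\vO_{\E}^{\times}$ non-archimedeanly); consequently the fiber integral is finite, the kernel is manifestly locally integrable, and no regularization or distributional interpretation is needed. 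The split (non-field) case is handled separately in Lemma \ref{lem: VorHKInd}.
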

\begin{proof}
	Let $h \in \Cont_c^{\infty}(\F^{\times})$ and write $h^* = \VorH_{\pi_{\eta}}(h)$. By Lemma \ref{lem: VHFourDDi} we have for any $1 \leq j \leq 2$
\begin{align*}
	h^*(\Nr(x) \epsilon_j^{-1}) &= \lambda(\E/\F,\psi) \eta^{-1}(x) \norm[x]_{\E} \int_{\E} \psi(xy) \eta^{-1}(y) h(\Nr(y)\epsilon_j) \ud_{\E} y \\
	&= \zeta_{\E}(1)^{-1} \lambda(\E/\F,\psi) \int_{\E^{\times}} \psi(xy) \eta^{-1}(xy) \norm[xy]_{\E} h(\Nr(y)\epsilon_j) \ud_{\E}^{\times} y \\
	&= \int_{\E^{\times}/\E^1} \VHF_{\pi_{\eta}}(\Nr(xy)) h(\Nr(y)\epsilon_j) \ud_{\E}^{\times} y.
\end{align*}
	Now that $\epsilon_j$ form a system of representatives of $\F^{\times} / \Nr(\E^{\times})$, we deduce
	$$ h^*(t_2) = \int_{\F^{\times}} \VHF_{\pi_{\eta}}(t_2t_1) h(t_1) \ud^{\times} t_1, $$
	proving that $\VorH_{\pi_{\eta}}$ is of convolution type with kernel $\VHF_{\pi_{\eta}}$.
\end{proof}

	\subsection{Induced Case}
	
	Suppose $\pi = \mu_1 \boxplus \mu_2$ is induced from the Borel subgroup, and the two quasi-characters $\mu_i$ of $\F^{\times}$ satisfy $\norm[\mu_i(t)]=\norm[t]_{\F}^{\sigma_i}$ with $\sigma_1+\sigma_2=0$ and $\norm[\sigma_i] \leq \RamCst < 1/2$. The Godement section is based on $\Phi \in \Sch(1 \times 2, \F)$ so that any $W \in \Whi(\pi^{\infty}, \psi)$ has the integral representation
\begin{equation} \label{eq: WhiGoSecGL2}
	W(h) = \mu_2(\det(h)) \norm[\det(h)]^{\tfrac{1}{2}} \int_{\F^{\times}} \OFour_2(h.\Phi.t)(1,1) \mu_1^{-1}\mu_2(t) \norm[t] \ud^{\times}t. 
\end{equation}
	It implies the integral representation of
\begin{multline} \label{eq: DualWhiGoSecGL2}
	\widetilde{W}(h) = \mu_2(-1) \mu_2^{-1}(\det(h)) \norm[\det(h)]^{\tfrac{1}{2}} \int_{\F^{\times}} \invOFour_2(h.\widehat{\Phi}.t)(1,1) \mu_1\mu_2^{-1}(t) \norm[t] \ud^{\times}t \\
	= \mu_2(-1) \mu_2^{-1}(\det(h)) \norm[\det(h)]^{-\tfrac{1}{2}} \int_{\F^{\times}} \OFour_1(h^{\iota}.\Phi.t)(1,1) \mu_1^{-1}\mu_2(t) \norm[t] \ud^{\times}t.
\end{multline}
	Hence we get the following integral representations
\begin{multline} \label{eq: VorWtFIntRGL2}
	h(y) := \norm[y]^{-\frac{1}{2}} W \begin{pmatrix} y & \\ & 1 \end{pmatrix} = \mu_2(y) \int_{\F^{\times}} \OFour_2(\Phi.t)(y,1) \mu_1^{-1}\mu_2(t) \norm[t] \ud^{\times}t \\
	= \mu_1(y) \int_{\F^{\times}} \OFour_2(\Phi.t)(1,y) \mu_1^{-1}\mu_2(t) \norm[t] \ud^{\times}t;
\end{multline}
\begin{equation} \label{eq: VorDualWtFIntRGL2}
	h^*(y) := \norm[y]^{\frac{1}{2}} \widetilde{W} \begin{pmatrix} -y & \\ & 1 \end{pmatrix} = \mu_2^{-1}(y) \norm[y] \int_{\F^{\times}} \invOFour_1(\Phi.t)(y,1) \mu_1^{-1}\mu_2(t) \norm[t] \ud^{\times}t.
\end{equation}
	We introduce an intermediate function
\begin{equation} \label{eq: VorMidWtFIntRGL2}
	h^{\OFour}(y) := \mu_1^{-1}(y) \norm[y] \int_{\F^{\times}} (\Phi.t)(1,y) \mu_1^{-1}\mu_2(t) \norm[t] \ud^{\times}t = \mu_2^{-1}(y) \int_{\F^{\times}} (\Phi.t)(y^{-1},1) \mu_1^{-1}\mu_2(t) \norm[t] \ud^{\times}t.
\end{equation}

\begin{lemma} \label{lem: VorTransDecompGL2}
	Let $\OFour$ (resp. $\invOFour$) denote the Fourier transform (resp. its inverse) in the sense of tempered distributions. Then we have the relations
	$$ h^{\OFour} = \Mult_1(\mu_1^{-1}) \circ \invOFour \circ \Mult_0(\mu_1^{-1})(h), \quad h^* = \Mult_{1}(\mu_2^{-1}) \circ \invOFour \circ \Mult_0(\mu_2^{-1}) \circ \Inv(h^{\OFour}). $$
	Equivalently, we get the following decomposition
	$$ \VorH_{\mu_1 \boxplus \mu_2} = \widetilde{\VorH}_{\mu_2} \circ \Inv \circ \widetilde{\VorH}_{\mu_1}. $$
\end{lemma}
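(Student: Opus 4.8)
The plan is to read off the two displayed equations directly from the Godement-section integral representations (\ref{eq: WhiGoSecGL2})--(\ref{eq: VorMidWtFIntRGL2}), and then obtain the operator identity $\VorH_{\mu_1 \boxplus \mu_2} = \widetilde{\VorH}_{\mu_2} \circ \Inv \circ \widetilde{\VorH}_{\mu_1}$ by unwinding Definition \ref{def: VoronoiGLn}. First I would record the two elementary facts that make everything run: the partial Fourier transform $\OFour_j$ in the $j$-th coordinate of $\Mat(1 \times 2, \F)$ and its inverse $\invOFour_j$ satisfy $\invOFour_j \circ \OFour_j = \mathrm{id}$ (Fourier inversion for the self-dual measure), and, for $\Phi \in \Sch(1 \times 2, \F)$ and $t \in \F^{\times}$, one has $\OFour_2(\Phi.t)(1,y) = \norm[t]^{-1} (\OFour_2\Phi)(t, y/t)$ together with the analogous scaling in the first coordinate; these are one-line changes of variables.

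For the first relation I would start from the second integral representation of $h$ in (\ref{eq: VorWtFIntRGL2}), so that $\Mult_0(\mu_1^{-1})(h)(y) = \mu_1^{-1}(y)h(y) = \int_{\F^{\times}} \OFour_2(\Phi.t)(1,y)\, \mu_1^{-1}\mu_2(t)\norm[t]\,\ud^{\times}t$. Viewing the right-hand side as a tempered distribution in $y$ and applying $\invOFour$, I would interchange $\invOFour$ with $\int_{\F^{\times}}\ud^{\times}t$ and use $\invOFour_2 \circ \OFour_2 = \mathrm{id}$ to obtain $\invOFour\bigl(\Mult_0(\mu_1^{-1})(h)\bigr)(y) = \int_{\F^{\times}} (\Phi.t)(1,y)\, \mu_1^{-1}\mu_2(t)\norm[t]\,\ud^{\times}t$; multiplying by $\mu_1^{-1}(y)\norm[y]$ and comparing with the first form of (\ref{eq: VorMidWtFIntRGL2}) gives $\Mult_1(\mu_1^{-1})\circ\invOFour\circ\Mult_0(\mu_1^{-1})(h) = h^{\OFour}$. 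The second relation comes out the same way starting from the \emph{second} form of (\ref{eq: VorMidWtFIntRGL2}): computing $\Inv(h^{\OFour})(y) = h^{\OFour}(y^{-1}) = \mu_2(y)\int_{\F^{\times}} (\Phi.t)(y,1)\,\mu_1^{-1}\mu_2(t)\norm[t]\,\ud^{\times}t$, then applying $\Mult_0(\mu_2^{-1})$, then $\invOFour$ (again interchanging with the $t$-integral, now using $\invOFour_1\circ\OFour_1 = \mathrm{id}$), then $\Mult_1(\mu_2^{-1})$, and matching with (\ref{eq: VorDualWtFIntRGL2}) yields $h^* = \Mult_1(\mu_2^{-1})\circ\invOFour\circ\Mult_0(\mu_2^{-1})\circ\Inv(h^{\OFour})$. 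Since Definition \ref{def: VoronoiGLn} identifies $\widetilde{\VorH}_{\mu_i}$ with $\Mult_1(\mu_i^{-1})\circ\invOFour\circ\Mult_0(\mu_i^{-1})$ (extended to its natural domain) and, for $n=2$, identifies $\VorH_{\mu_1\boxplus\mu_2}(h)$ with $h^*$, the two relations combine to $\VorH_{\mu_1\boxplus\mu_2}(h) = h^* = \widetilde{\VorH}_{\mu_2}\bigl(\Inv(h^{\OFour})\bigr) = \widetilde{\VorH}_{\mu_2}\bigl(\Inv(\widetilde{\VorH}_{\mu_1}(h))\bigr)$, the asserted factorisation; along the way one checks from (\ref{eq: VorMidWtFIntRGL2}) that $\Inv(h^{\OFour})$ lies in the domain of the extension $\widetilde{\VorH}_{\mu_2}$.

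The main obstacle is the justification of pushing $\invOFour$ inside $\int_{\F^{\times}}\ud^{\times}t$, since the combined double integral is only conditionally convergent as written. I would handle this by pairing against an arbitrary $g \in \Sch(\F)$, using Parseval in a single variable and Fubini: the required absolute convergence of $\int_{\F^{\times}}\int_{\F} \extnorm{(\Phi.t)(1,y)}\,\extnorm{(\invOFour g)(y)}\,\ud y\,\extnorm{\mu_1^{-1}\mu_2(t)}\,\norm[t]\,\ud^{\times}t$ follows from $\int_{\F}\extnorm{\Phi(a,t)}\,\ud a$ (resp. $\sup_a\extnorm{\Phi(a,t)}$) being a Schwartz function of $t$ together with $\norm[\sigma_i] \le \RamCst < 1/2$, which makes $\norm[t]^{1-2\sigma_1}$ integrable near $0$; alternatively one proves the identity first on the dense set of $\Phi$ for which the $t$-integral is compactly supported and extends by continuity. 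The remaining work is bookkeeping --- matching the Fourier conventions ($\OFour$ versus $\invOFour = \OFour_{\psi^{-1}}$), the self-dual measure, and the sign characters $\mu_i(-1)$ that appear in (\ref{eq: DualWhiGoSecGL2}) and in the $\Trans((-1)^{n-1})$ of Definition \ref{def: VoronoiGLn} --- which is routine but must be carried out carefully so that no stray constant survives.
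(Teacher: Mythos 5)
Your proposal is essentially the paper's proof: both pair against a Schwartz test function, use Parseval in the one Fourier variable to transfer $\invOFour$ to the test side, and justify the Fubini interchange of $\invOFour$ with the $t$-integral by the absolute-convergence estimate furnished by the Schwartz restriction bound $\sup_y\norm[\Phi(t,ty)]\ll\Phi_1(t)$ (Proposition \ref{prop: SchRestriction}) together with integrability of $\norm[t]^{1-2\sigma_1}$ near $0$, which is exactly where $\norm[\sigma_i]\leq\RamCst<1/2$ enters. One small imprecision: for the second relation no cancellation $\invOFour_1\circ\OFour_1=\mathrm{id}$ occurs --- one simply computes $\invOFour_1(\Phi.t)(y,1)$ and matches \eqref{eq: VorDualWtFIntRGL2} directly --- but this does not affect the argument.
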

\begin{proof}
	Recall $\norm[\mu_j(t)]=\norm[t]^{\sigma_j}$ with $-\RamCst \leq \sigma_j \leq \RamCst$ and $\sigma_1 + \sigma_2 = 0$.
	
\noindent (1) For any test function $\phi \in \Sch(\F)$, we estimate the dominant integrals
\begin{multline*}
	\int_{\F} \extnorm{\phi(y)} \int_{\F^{\times}} \extnorm{\OFour_2(\Phi.t)(1,y) \mu_1^{-1}\mu_2(t)} \norm[t] \ud^{\times}t \ud y = \int_{\F} \extnorm{\phi(y)} \int_{\F^{\times}} \extnorm{\OFour_2(\Phi)(t,yt^{-1})} \norm[t]^{\sigma_2-\sigma_1} \ud^{\times}t \ud y \\
	= \int_{\F^{\times}} \int_{\F} \extnorm{\OFour_2(\Phi)(t,y)} \extnorm{\phi(yt)} \norm[t]^{1+\sigma_2-\sigma_1} \ud y \ud^{\times}t \ll \int_{\F^{\times}} \int_{\F} \extnorm{\OFour_2(\Phi)(t,y)} \norm[t]^{1+\sigma_2-\sigma_1} \ud y \ud^{\times}t < \infty,
\end{multline*}
\begin{multline*}
	\int_{\F} \extnorm{\invOFour(\phi)(y)} \int_{\F^{\times}} \extnorm{(\Phi.t)(1,y) \mu_1^{-1}\mu_2(t)} \norm[t] \ud^{\times}t \ud y = \int_{\F} \extnorm{\invOFour(\phi)(y)} \int_{\F^{\times}} \extnorm{\Phi(t,yt)} \norm[t]^{1+\sigma_2-\sigma_1} \ud^{\times}t \ud y \\
	\ll \int_{\F} \extnorm{\invOFour(\phi)(y)} \int_{\F^{\times}} \Phi_1(t) \norm[t]^{1+\sigma_2-\sigma_1} \ud^{\times}t \ud y < \infty,
\end{multline*}
	where we have applied Proposition \ref{prop: SchRestriction} to bound $\extnorm{\Phi(t,yt)} \ll \Phi_1(t)$ for some positive $\Phi_1 \in \Sch(\F)$ and all $y \in \F$. Hence we can freely change the order of integrals and get
\begin{multline*}
	\int_{\F} \phi(y) \int_{\F^{\times}} \OFour_2(\Phi.t)(1,y) \mu_1^{-1}\mu_2(t) \norm[t] \ud^{\times}t \ud y = \int_{\F^{\times}} \int_{\F} \phi(y) \OFour_2(\Phi.t)(1,y) \mu_1^{-1}\mu_2(t) \norm[t] \ud y \ud^{\times}t \\
	= \int_{\F^{\times}} \int_{\F} \invOFour(\phi)(y) (\Phi.t)(1,y) \mu_1^{-1}\mu_2(t) \norm[t] \ud y \ud^{\times}t = \int_{\F} \invOFour(\phi)(y) \int_{\F^{\times}} (\Phi.t)(1,y) \mu_1^{-1}\mu_2(t) \norm[t] \ud^{\times}t \ud y,
\end{multline*}
	proving the first relation $h^{\OFour} = \Mult_1(\mu_1^{-1}) \circ \invOFour \circ \Mult_0(\mu_1^{-1})(h)$.
	
\noindent (2) For any test function $\phi \in \Sch(\F)$, we estimate the dominant integrals
\begin{multline*}
	\int_{\F} \extnorm{\phi(y)} \int_{\F^{\times}} \extnorm{(\Phi.t)(y,1) \mu_1^{-1}\mu_2(t)} \norm[t] \ud^{\times}t \ud y = \int_{\F} \extnorm{\phi(y)} \int_{\F^{\times}} \extnorm{\Phi(ty,t)} \norm[t]^{1+\sigma_2-\sigma_1} \ud^{\times}t \ud y \\
	\ll \int_{\F} \extnorm{\phi(y)} \int_{\F^{\times}} \Phi_2(t) \norm[t]^{1+\sigma_2-\sigma_1} \ud^{\times}t \ud y < \infty
\end{multline*}
	for some positive $\Phi_2 \in \Sch(\F)$ satisfying $\extnorm{\Phi(yt,t)} \leq \Phi_2(t)$,
\begin{multline*}
	\int_{\F} \extnorm{\OFour(\phi)(y)} \int_{\F^{\times}} \extnorm{\invOFour_1(\Phi.t)(y,1) \mu_1^{-1}\mu_2(t)} \norm[t] \ud^{\times}t \ud y = \int_{\F} \extnorm{\OFour(\phi)(y)} \int_{\F^{\times}} \extnorm{\invOFour_1(\Phi)(yt^{-1},t)} \norm[t]^{\sigma_2-\sigma_1} \ud^{\times}t \ud y \\
	= \int_{\F^{\times}} \int_{\F} \extnorm{\OFour_1(\Phi)(y,t)} \extnorm{\OFour(\phi)(yt)} \norm[t]^{1+\sigma_2-\sigma_1} \ud y \ud^{\times}t \ll \int_{\F^{\times}} \int_{\F} \extnorm{\OFour_1(\Phi)(y,t)} \norm[t]^{1+\sigma_2-\sigma_1} \ud y \ud^{\times}t < \infty.
\end{multline*}
	We deduce the second relation similarly as above and conclude the proof.
\end{proof}

	Consider the special case $h \in \Cont_c^{\infty}(\F^{\times})$. By Lemma \ref{lem: VorTransDecompGL2} we have the formula with absolute convergence
\begin{equation} \label{eq: IntWtFourInt}
	h^{\OFour}(y) = \mu_1^{-1}(y) \norm[y] \int_{\F} \psi(xy) h(x) \mu_1(x)^{-1} \ud x.
\end{equation}
	For any test function $\phi \in \Sch(\F^{\times}) \subset \Sch(\F)$, we insert (\ref{eq: IntWtFourInt}), apply Lemma \ref{lem: VorTransDecompGL2} and Fubini to get
\begin{multline} \label{eq: GL2VorKernDisSplit1}
	\int_{\F} \phi(z) h^*(z) \mu_2(z) \norm[z]^{-1} \ud z = \int_{\F} \invOFour(\phi)(y) h^{\OFour}(y^{-1}) \mu_2^{-1}(y) \ud y \\
	= \lim_{C \to +\infty} \int_{C^{-1} \leq \norm[y] \leq C} \invOFour(\phi)(y) h^{\OFour}(y^{-1}) \mu_2^{-1}(y) \ud y \\
	= \lim_{C \to +\infty} \int_{\F} \phi(z) \int_{\F} \left( \int_{C^{-1} \leq \norm[y] \leq C} \psi(xy^{-1}+yz) \mu_1\mu_2^{-1}(y) \norm[y]^{-1} \ud y \right)  h(x) \mu_1(x)^{-1} \ud x \ud z.
\end{multline}

\begin{lemma} \label{lem: LaplacianCal}
	We calculate some technical derivatives as follows.
\begin{itemize}
	\item[(1)] Let $y > 0$ be a real variable. Let $x \in \R^{\times}$ and $s \in \C$ be parameter, and $g \in \Cont^{\infty}(\R_{>0})$. Consider the smooth function on $\R^{\times}$
	$$ a(y) = a(y; z, s) := g(y^{-1}) e^{2\pi i xy^{-1}} y^{s-1}. $$
	Then for any $n \in \Z_{\geq 0}$ there is a polynomial $P_n \in \Z[i,s][X;u_0,\cdots,u_n;v]$ of $n+3$ variables with coefficients in $\Z[i,s]$ such that
	$$ \partial_y^n a(y) = y^{-n} P_n(y^{-1}; g(y^{-1}), \cdots, g^{(n)}(y^{-1}); 2\pi i x) e^{-2\pi i xy^{-1}} y^{s-1}. $$
	Moreover, the polynomial $P_n$ is homogeneous of degree $1$ in $u_0, \cdots, u_n$, and of degree $n$ in $v$.
	\item[(2)] Let $y = \rho e^{i\theta}$ be a complex variable in the polar coordinates. Let $r>0$, $\alpha \in \R$, $m \in \Z$ and $s \in \C$ be parameters, and $g \in \Cont^{\infty}(\R_{>0})$. Consider the smooth function on $\C^{\times}$
	$$ a(y) = a(y; r, \alpha, m, s) := g(\rho^{-1}) e^{4\pi i r\rho^{-1} \cos(\alpha - \theta)} \rho^{2(s-1)} e^{i m \theta}. $$
	Let $\Delta$ be the Laplacian on $\R^2 \simeq \C$. Then for any $n \in \Z_{\geq 0}$ there is a polynomial $P_n \in \Z[i,s][X;u_0,\cdots,u_{2n};v_1,v_2]$ of $2n+4$ variables with coefficients in $\Z[i,s]$ such that
\begin{multline*} 
	\Delta^n a(y) = \rho^{-2n} \cdot P_n(\rho^{-1}; g(\rho^{-1}), \cdots, g^{(2n)}(\rho^{-1}); 4\pi i r \cos(\alpha-\theta), 4\pi i r \sin(\alpha-\theta)) \\
	\cdot e^{4\pi i r\rho^{-1} \cos(\alpha - \theta)} \rho^{2(s-1)} e^{i m \theta}. 
\end{multline*}
	Moreover, the polynomial $P_n$ is homogeneous of degree $1$ in $u_0, \cdots, u_{2n}$, and of total degree $2n$ in $v_1,v_2$.
\end{itemize}
\end{lemma}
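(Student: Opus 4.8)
The statement is an identity expressing an iterated derivative of an explicitly given function, so the natural route is induction on $n$, the only real content being the bookkeeping of how the auxiliary polynomial $P_n$ evolves under one more application of the relevant operator. In both parts the base case $n=0$ is immediate, with $P_0(X;u_0;v)=u_0$ (resp. $P_0(X;u_0;v_1,v_2)=u_0$), so the work is entirely in the inductive step.

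For part (1) the plan is to fix $x,s$, abbreviate $X=y^{-1}$, $v=2\pi i x$ and $E(y):=e^{2\pi i x y^{-1}}y^{s-1}$, and assume that $\partial_y^n a(y)=y^{-n}P_n(y^{-1};g(y^{-1}),\dots,g^{(n)}(y^{-1});v)E(y)$ with $P_n$ homogeneous of degree $1$ in $u_0,\dots,u_n$ and of $v$-degree $\le n$. Using $\partial_y(y^{-1})=-y^{-2}$, $\partial_y g^{(j)}(y^{-1})=-y^{-2}g^{(j+1)}(y^{-1})$ and $\partial_y E(y)=(-vy^{-2}+(s-1)y^{-1})E(y)$, the Leibniz rule immediately yields $\partial_y^{n+1}a(y)=y^{-(n+1)}P_{n+1}(y^{-1};g(y^{-1}),\dots,g^{(n+1)}(y^{-1});v)E(y)$ with the explicit recursion
\[ P_{n+1}=(s-1-n)P_n-XvP_n-X\,\partial_X P_n-X\sum_{j=0}^{n}u_{j+1}\,\partial_{u_j}P_n . \]
Since $P_n$ is $u$-linear, each $\partial_{u_j}P_n$ is free of the $u$'s, so $P_{n+1}$ is again $u$-linear, now in $u_0,\dots,u_{n+1}$; the only operation that raises the $v$-degree is multiplication by $Xv$, so $\deg_v P_{n+1}\le n+1$; and the new scalar $s-1-n$ lies in $\Z[s]\subset\Z[i,s]$. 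This closes the induction for (1).

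For part (2) I would work in polar coordinates, $\Delta=\partial_\rho^2+\rho^{-1}\partial_\rho+\rho^{-2}\partial_\theta^2$, and set $X=\rho^{-1}$, $v_1=4\pi i r\cos(\alpha-\theta)$, $v_2=4\pi i r\sin(\alpha-\theta)$, $E:=e^{4\pi i r\rho^{-1}\cos(\alpha-\theta)}\rho^{2(s-1)}e^{im\theta}$. Assuming $\Delta^n a=\rho^{-2n}P_n(\rho^{-1};g(\rho^{-1}),\dots,g^{(2n)}(\rho^{-1});v_1,v_2)E$ with $P_n$ $u$-linear in $u_0,\dots,u_{2n}$ and of total $v$-degree $\le 2n$, I would apply $\Delta$ and handle the three pieces in turn. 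On the polynomial factor $\partial_\theta$ acts as the rotation derivation $v_1\mapsto v_2,\ v_2\mapsto-v_1$, which preserves $v$-degree, while on $E$ it produces the scalar factors $\rho^{-1}v_2$ and $im$; and $\partial_\rho$ acts via $\partial_\rho X=-\rho^{-2}$, $\partial_\rho g^{(j)}(\rho^{-1})=-\rho^{-2}g^{(j+1)}(\rho^{-1})$, together with $\partial_\rho E=(-\rho^{-2}v_1+2(s-1)\rho^{-1})E$. The crucial observation is that each of $\partial_\rho^2(\rho^{-2n}\,\cdot\,)$, $\rho^{-1}\partial_\rho(\rho^{-2n}\,\cdot\,)$ and $\rho^{-2}\partial_\theta^2(\rho^{-2n}\,\cdot\,)$, once expanded, carries the common factor $\rho^{-2(n+1)}E$: differentiating any of $\rho^{-1}$, $g^{(j)}(\rho^{-1})$, $\rho^{2(s-1)}$ or the exponential by $\partial_\rho$ extrudes at least one further power of $\rho^{-1}$ (two for all but $\rho^{2(s-1)}$), so after factoring out $\rho^{-2(n+1)}E$ the remainder is again a polynomial $P_{n+1}$ in $\rho^{-1}$, in $u_0,\dots,u_{2n+2}$, and in $v_1,v_2$. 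Linearity of $\Delta$ keeps every monomial of $P_{n+1}$ carrying exactly one factor $g^{(\cdot)}(\rho^{-1})$, with index raised by at most $2$; each $\partial_\rho$ or $\partial_\theta$ hitting $E$ supplies at most one new factor of $v_1$ or $v_2$, so $\deg_v P_{n+1}\le 2n+2$; and the new scalars produced ($2(s-1)$, $2s-3$, $2n+1$, $im$) lie in $\Z[i,s]$. Assembling the three contributions gives the recursion for $P_{n+1}$ and closes the induction.

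The computations are routine in spirit, and I expect the only real obstacle to be the bookkeeping in part (2): confirming that the three summands of the polar Laplacian all extrude the same clean prefactor $\rho^{-2(n+1)}$ and that none of them spoils the $u$-linearity, the $v$-degree bound, or the coefficient ring $\Z[i,s]$. One could instead use $\Delta=4\,\partial_y\partial_{\bar y}$ and argue in the variables $y,\bar y$, which streamlines the power count of $\rho=\norm[y]$ at the cost of obscuring the angular structure; I would keep polar coordinates and simply be careful. Conceptually there is nothing deep here — the lemma is a preparatory computation, to be fed into the convergence analysis of the subsequent subsection.
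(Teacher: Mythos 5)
Your proof is correct and supplies precisely the inductive details that the paper leaves to the reader: the explicit recursion $P_{n+1}=(s-1-n)P_n-XvP_n-X\,\partial_X P_n-X\sum_{j}u_{j+1}\partial_{u_j}P_n$ in part (1), and the check in part (2) that each of the three summands of the polar Laplacian extrudes the common prefactor $\rho^{-2(n+1)}E$ while preserving $u$-linearity, the $v$-degree bound, and the coefficient ring $\Z[i,s]$. Note also that you have silently corrected a sign typo in the statement of part (1): the exponential factor in the conclusion should read $e^{2\pi i xy^{-1}}$ (matching the definition of $a(y)$, your $E(y)$, and the subsequent use in Lemma \ref{lem: GL2VorKernSplit} where the factor $\psi(xy^{-1})$ survives the integrations by parts), not $e^{-2\pi i xy^{-1}}$.
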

\begin{proof}
	The proof is a simple induction on $n$. We leave the details to the reader.
%
\end{proof}

\begin{lemma} \label{lem: GL2VorKernSplit}
	For any $C > 1$, define
	$$ K_C(x,z) := \mu_2(z)^{-1} \norm[z] \mu_1(x)^{-1} \norm[x] \int_{C^{-1} \leq \norm[y] \leq C} \psi(xy^{-1}+yz) \mu_1\mu_2^{-1}(y) \norm[y]^{-1} \ud y. $$
\begin{itemize}
	\item[(1)] We have the bound uniform in $C \geq 2$
	$$ \extnorm{K_C(x,z)} \ll \norm[z]^{1-\sigma_2} \norm[x]^{1-\sigma_1} \cdot
	\begin{cases}
	 \left( \frac{1+\norm[x]}{\norm[z]} + \frac{1+\norm[z]}{\norm[x]} \right) & \text{if } \F \text{ is archimedean} \\
	 \left( \norm[x] + \norm[x]^{-1} + \norm[z] + \norm[z]^{-1} \right) & \text{if } \F \text{ is non-archimedean}
	\end{cases}. 
	$$
	\item[(2)] The limit $C \to +\infty$ exists and defines a function smooth in $(x,z) \in \F^{\times} \times \F^{\times}$
	$$ K(x,z) := \lim_{C \to +\infty} K_C(x,z). $$
\end{itemize}
\end{lemma}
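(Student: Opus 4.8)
The plan is to treat the archimedean and non-archimedean cases by quite different devices: an integration-by-parts (oscillatory-integral) argument in the first, and a stabilization argument for local Gauss sums in the second. In both cases the first step is to strip off the elementary prefactor. Since $\norm[\mu_2(z)^{-1}]=\norm[z]^{-\sigma_2}$, $\norm[\mu_1(x)^{-1}]=\norm[x]^{-\sigma_1}$ and $\norm[(\mu_1\mu_2^{-1})(y)]=\norm[y]^{\sigma_1-\sigma_2}=\norm[y]^{2\sigma_1}$, write $\mu_1\mu_2^{-1}=\nu\cdot\norm^{2\sigma_1}$ with $\nu$ a unitary character; then $K_C(x,z)=\mu_2(z)^{-1}\norm[z]\,\mu_1(x)^{-1}\norm[x]\cdot I_C(x,z)$ with $I_C(x,z):=\int_{C^{-1}\le\norm[y]\le C}\psi(xy^{-1}+yz)\,\nu(y)\,\norm[y]^{2\sigma_1-1}\,\ud y$, and the asserted bound for $K_C$ becomes $\norm[I_C(x,z)]\ll(1+\norm[x])/\norm[z]+(1+\norm[z])/\norm[x]$ in the archimedean case (and the analogous $p$-adic bound). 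Splitting the domain at $\norm[y]=1$ and applying the measure-preserving substitution $y\mapsto y^{-1}$ to the piece $\norm[y]\le1$ — which swaps $x\leftrightarrow z$, $\sigma_1\mapsto-\sigma_1$, $\nu\mapsto\nu^{-1}$ — it suffices to bound $J_C(x,z):=\int_{1\le\norm[y]\le C}\psi(xy^{-1}+yz)\,\nu(y)\,\norm[y]^{2\sigma_1-1}\,\ud y$ by $\ll(1+\norm[x])/\norm[z]$ uniformly in $C$, and to prove $\lim_{C\to\infty}J_C$ exists and is smooth in $(x,z)$.

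For archimedean $\F$, I would integrate $J_C$ by parts in $y$ against the phase $\psi(yz)$ — once if $\F=\R$, twice if $\F=\C$. Set $a(y):=\psi(xy^{-1})\,\nu(y)\,\norm[y]^{2\sigma_1-1}$; on each ray $\pm\R_{>0}$ (resp.\ in polar coordinates on $\C^{\times}$) this is exactly of the shape treated in Lemma~\ref{lem: LaplacianCal}(1) (resp.\ (2)) with $g\equiv1$ and $s=2\sigma_1+it_0$, so that lemma gives $\norm[\partial_y^n a(y)]\ll_n(1+\norm[x])^n\norm[y]^{2\sigma_1-1-n}$ (resp.\ $\norm[\Delta^n a(y)]\ll_n(1+\norm[x])^n\norm[y]^{2\sigma_1-1-n}$) for $\norm[y]\ge1$, the bounded-in-$\norm[y]$ polynomial factors being harmless there. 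Because $\sigma_1<1/2$, one integration by parts makes the remaining $y$-integral over $\{\norm[y]\ge1\}$ absolutely convergent with bound $\ll(1+\norm[x])$; division by the gained factor $\norm[z]^{-1}$ yields this part's contribution $\ll(1+\norm[x])/\norm[z]$. The boundary term at $\norm[y]=1$ is likewise $\ll(1+\norm[x])/\norm[z]$ (a lower-order term), and the boundary term at $\norm[y]=C$ is $O(C^{\delta})$ with $\delta<0$ — this is where a second integration by parts is needed in the complex case to absorb the extra factor of the radius coming from the circular boundary. Thus $\norm[J_C(x,z)]\ll(1+\norm[x])/\norm[z]$ uniformly in $C$, proving part (1) in the archimedean case; and since the $\norm[y]=C$ boundary term tends to $0$ while the remaining $y$-integral over $\{\norm[y]\ge1\}$ is absolutely convergent, dominated convergence gives $\lim_{C\to\infty}J_C(x,z)$, hence the existence of $K(x,z)$.

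For smoothness in the archimedean case I would note that for any $k,l\ge0$ one may first perform $l+1$ integrations by parts and only then differentiate $\partial_x^k\partial_z^l$ under the $y$-integral: each $\partial_x$ produces a harmless extra factor $y^{-1}$, each $\partial_z$ consumes one surplus integration (and produces a bounded negative power of $z$ on compacta), and by Lemma~\ref{lem: LaplacianCal} the integrand stays dominated, locally uniformly for $(x,z)$ in compact subsets of $\F^{\times}\times\F^{\times}$; as $\norm[z]^{1-\sigma_2}\norm[x]^{1-\sigma_1}$ is $C^{\infty}$ on $\F^{\times}$, this gives $K\in C^{\infty}(\F^{\times}\times\F^{\times})$. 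For non-archimedean $\F$, I would instead write $\{C^{-1}\le\norm[y]\le C\}$ as the disjoint union of annuli $\{\norm[y]=q^{-n}\}$, $\norm[n]\le\log_q C$, and substitute $y=\varpi_{\F}^nu$ with $u\in\vO_{\F}^{\times}$ to express the $n$-th annular integral as a multiple of $\int_{\vO_{\F}^{\times}}\psi(x\varpi_{\F}^{-n}u^{-1}+z\varpi_{\F}^nu)\,\nu(u)\,\ud^{\times}u$. The input here is the standard vanishing of local Gauss sums: $\int_{\vO_{\F}^{\times}}\psi(\gamma u)\chi(u)\,\ud^{\times}u=0$ once $-v_{\F}(\gamma)$ exceeds a bound depending only on the conductors of $\psi$ and $\chi$; applying it to $u$ or, after $u\mapsto u^{-1}$, to $u^{-1}$ (according to which of $v_{\F}(x)-n$, $v_{\F}(z)+n$ is more negative) shows that all but boundedly many annuli — those with $n$ in finitely many intervals of bounded length determined by $v_{\F}(x)$ and $v_{\F}(z)$ — vanish. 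Hence $C\mapsto K_C(x,z)$ is eventually constant (giving the limit), the common value depends locally constantly on $(x,z)$ because the finitely many surviving annular integrals and the valuations selecting them are locally constant (giving smoothness), and the uniform bound follows by summing the boundedly many surviving annular contributions, each $\ll\norm[z]^{1-\sigma_2}\norm[x]^{1-\sigma_1}q^{-2n\sigma_1}$, over the admissible range of $n$.

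I expect the main obstacle to be the bookkeeping in the archimedean smoothness statement — keeping the $y$-integrals absolutely and locally uniformly convergent under repeated differentiation in $(x,z)$, for which Lemma~\ref{lem: LaplacianCal} is precisely tailored — and, secondarily, arranging the vanishing of the $\norm[y]=C$ boundary terms in the complex case, which forces the second integration by parts and is the one place where the two-dimensional nature of $\C$ genuinely intervenes. The non-archimedean case, by contrast, is essentially a routine consequence of the Gauss-sum vanishing once the substitution $y=\varpi_{\F}^nu$ is made.
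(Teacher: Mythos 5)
Your proof takes essentially the same approach as the paper: integration by parts (resp.\ Green's identity) against the $\psi(yz)$-phase in the archimedean cases, with the differentiated integrand controlled through Lemma~\ref{lem: LaplacianCal}, and Gauss-sum vanishing in the non-archimedean case. Two minor points distinguish your treatment. First, you split sharply at $\norm[y]=1$ and invert, while the paper inserts a smooth partition of unity $f(y)+g(y^{-1})=1$; that is why the paper's Lemma~\ref{lem: LaplacianCal} carries the $g$-variable — you are applying it with $g\equiv1$, which is fine but produces the extra boundary term at $\norm[y]=1$ that you correctly dispose of. Your caution that the complex case may need a second Green's identity to kill the circular boundary term at $\norm[y]=C$ is well-placed (the paper does it in one step by invoking $\Re(s)\le 2\RamCst<1/2$, which is more restrictive than the standing assumption $\RamCst<1/2$), but you should check that the double application still returns the stated polynomial bound in $(x,z)$, which it does once one remembers that $\norm[x]$ in the statement means $\norm[x]_\C$.

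Second, and this is the one point to tighten: in the non-archimedean case the surviving annuli occupy a single window $\min(\norm[x],\norm[z]^{-1})\ll q^{-n}\ll\max(\norm[x],\norm[z]^{-1})$, whose length $\asymp\norm[\log_q\norm[xz]]$ is \emph{not} uniformly bounded in $(x,z)$, contrary to your phrase ``boundedly many''. The bound nevertheless goes through because the per-annulus contribution $\asymp q^{-2n\sigma_1}$ sums geometrically to $\ll\max(\norm[x],\norm[z]^{-1})^{2\sigma_1}$ when $\sigma_1\ne0$, and when $\sigma_1=0$ the linear-in-valuation count $\norm[\log_q\norm[xz]]$ is still dominated by $\norm[x]+\norm[x]^{-1}+\norm[z]+\norm[z]^{-1}$. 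The stabilization (hence the limit) and local constancy (hence smoothness) arguments you give are exactly the paper's.
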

\begin{proof}
	We prove both (1) and (2) at once, distinguishing different cases of $\F$.

\noindent (\Rmnum{1}) $\F=\R$. We only treat the integral for $y>0$, the one for $y<0$ being similar. Take a smooth partition of unity, namely $f(y)+g(y^{-1})=1$ with $f,g \in \Cont^{\infty}(\R_{>0})$, and
	$$ f(y)=g(y) =  
	\begin{cases}
		1 & \text{if } 0 < y \leq 1/2 \\
		0 & \text{if } y \geq 2
	\end{cases}.
	$$	
	We can break the integral into two parts, which are similar to each other by a change of variables $y \mapsto y^{-1}$. Write $\mu_1\mu_2^{-1}(y)=y^s$ for $y>0$. Then $1\pm \Re(s) > 0$. It suffices to treat the following integral
	$$ I_C(x,z; g) := \int_0^C g(y^{-1}) \psi(xy^{-1}-yz) y^{s-1} \ud y, $$
	Note that $a(y) := g(y^{-1}) \psi(xy^{-1}) y^{s-1}$ is precisely the function considered in Lemma \ref{lem: LaplacianCal} (1). Hence the corresponding $P_n$ are smooth in $y$ with supported contained in $(1/2,+\infty)$, just like $g(y^{-1})$. By integration by parts, we get (for $C \geq 2$)
\begin{multline*} 
	I_C(x,z;g) = \int_0^C a(y) \frac{-1}{2\pi i z} \ud \left(e^{-2\pi i yz}\right) = -\frac{e^{2\pi i(C^{-1}x-Cz)} C^{s-1}}{2\pi i z} \\
	+ \frac{1}{2\pi i z} \int_0^C P_1(y^{-1}; g(y^{-1}), g'(y^{-1}); 2\pi i x) \cdot \psi(xy^{-1}-yz) y^{s-2} \ud y \ll \frac{1+\norm[x]}{\norm[z]}.
\end{multline*}
	This proves (1). To get (2), we apply integration by parts $n$ times
\begin{equation} \label{eq: VorKernGL2ACReal}
	\lim_{C \to +\infty} I_C(x,z) = \frac{1}{(2\pi i z)^n} \int_0^{+\infty} P_n(y^{-1}; g(y^{-1}), \cdots, g^{(n)}(y^{-1}); 2\pi i x) \cdot \psi(xy^{-1}-yz) y^{s-1-n} \ud y.
\end{equation} 
	The right hand side is absolutely convergent and smooth in $x$ and $n$ times differentiable in $z$. Hence the limit is smooth in $(x,z) \in \R^{\times} \times \R^{\times}$.
	
\noindent (\Rmnum{2}) $\F=\C$. Write $\mu_1\mu_2^{-1}(y) = \norm[y]_{\C}^{s-1} [y]^{m}$ for some $s \in \C$ with $\norm[\Re(s)] < 1$ and $[y]=y/\norm[y]$, $m \in \Z$. With similar smooth partition of unity we are reduced to treating the integral ($D_C := \left\{ y \in \C \ \middle| \ \norm[y] \leq C \right\}$)
	$$ I_C(x,z; g) := \int_{D_C} g(\norm[y]^{-1}) \psi(xy^{-1}-yz) \norm[y]_{\C}^{s-1} [y]^{m} \ud y. $$
	If we let $x=r e^{i\alpha}$ and $y=\rho e^{i\theta}$ in the polar coordinates, then $a(y) = g(\norm[y]^{-1}) \psi(xy^{-1}) \norm[y]_{\C}^{s-1} [y]^{m}$ is precisely the function considered in Lemma \ref{lem: LaplacianCal} (2). Hence the corresponding $P_n$ are smooth in $y$ with supported contained in $\left\{ y \in \C \ \middle| \ \norm[y] \geq 1/2 \right\}$, just like $g(\norm[y]^{-1})$. By Green's identity, we get (for $C \geq 2$)
\begin{multline*}
	I_C(x,z;g) = \int_{D_C} a(y) \frac{\Delta \psi(-yz)}{-16\pi^2 \norm[z]_{\C}} \ud y = - \frac{1}{16 \pi^2 \norm[z]_{\C}} \int_{D_C} \widetilde{P}_1(y) \psi(xy^{-1}-yz) \norm[y]_{\C}^{s-2} [y]^{m} \ud y \\
	- \frac{C^{2s-1}}{16 \pi^2 \norm[z]_{\C}} \int_0^{2\pi} \left\{ 4\pi i r \cos(\alpha - \theta) C^{-2} + 2(1-s) C^{-1} - 4\pi i \Re(ze^{i\theta}) \right\} \cdot \psi \left( z C e^{i\theta} - x^{-1} C^{-1} e^{-i\theta} \right) e^{im\theta} \ud \theta \\
	\ll \frac{1+\norm[x]_{\C}}{\norm[z]_{\C}}
\end{multline*}
	where we have written $\widetilde{P}_n(y) := P_n(\norm[y]^{-1}; g(\norm[y]^{-1}), \cdots, g^{(2n)}(\norm[y]^{-1}); 4\pi i r \cos(\alpha-\theta), 4\pi i r\sin(\alpha-\theta))$ for simplicity of notation and have used $\Re(s) \leq 2 \RamCst < 1/2$. To get (2), we apply Green's identity $n$ times
\begin{equation} \label{eq: VorKernGL2ACComp}
	\lim_{C \to +\infty} I_C(x,z;g) = \frac{1}{(-16 \pi^2 \norm[z]_{\C})^n} \int_{\C} \widetilde{P}_n(y) \psi(xy^{-1}-yz) \norm[y]_{\C}^{s-1-n} [y]^{m} \ud y.
\end{equation}
	The right hand side is absolutely convergent and smooth in $x$ and $n$ times differentiable in $z$. Hence the limit is smooth in $(x,z) \in \C^{\times} \times \C^{\times}$.
	
\noindent (\Rmnum{3}) $\F$ is non-archimedean. Let $y=\varpi^n y_0$ with $y_0 \in \vo^{\times}$. If $\norm[y] = q^{-n} \geq \norm[x]$, then $\psi(xy^{-1})=1$. The following Gauss integral is non-vanishing only if $\norm[y] \ll \norm[z]^{-1}$, for otherwise we may take $m \in \Z_{\geq 1}$ large so that $\mu_1\mu_2^{-1}(1+\varpi^m \vo)=1$ and average over $u \in \vo$ for $y \mapsto y(1+\varpi^m u)$ to see its vanishing
	$$ \int_{\varpi^n \vo^{\times}} \psi(-yz) \mu_1\mu_2^{-1}(y) \ud^{\times}y. $$
	Hence the integral defining $K_C(x,z)$ is in fact over $\norm[y] \ll \max(\norm[x], \norm[z]^{-1})$, and also over $\norm[y]^{-1} \ll \max(\norm[z], \norm[x]^{-1})$ by symmetry. Therefore the integral is over
	$$ \min(\norm[x], \norm[z]^{-1}) \ll \norm[y] \ll \max(\norm[x], \norm[z]^{-1}). $$
	The stated bound in (1) follows readily. The integral stabilizes in terms of $\norm[x]$ and $\norm[z]$, hence (2) follows readily.
\end{proof}

\noindent By the dominated convergence theorem and Lemma \ref{lem: GL2VorKernSplit} (1), we get for all $\phi \in \Sch(\F^{\times}), h \in \Cont_c^{\infty}(\F^{\times})$
	$$ \int_{\F} \phi(z) h^*(z) \mu_2(z) \norm[z]^{-1} \ud z = \int_{\F} \phi(z) \mu_2(z) \norm[z]^{-1} \int_{\F} K(x,z)  h(x) \norm[x]^{-1} \ud x \ud z $$
from (\ref{eq: GL2VorKernDisSplit1}). Here $K(x,z)$ is defined in Lemma \ref{lem: GL2VorKernSplit} (2). By the smoothness in $z \in \F^{\times}$, we get
\begin{equation} \label{eq: VHInt}
	h^*(z) = \int_{\F} K(x,z)  h(x) \norm[x]^{-1} \ud x, \quad \forall h \in \Cont_c^{\infty}(\F^{\times}).
\end{equation}

\begin{lemma} \label{lem: VorHKInd}
	(1) We have $K(x,z) = K(1,xz)$. 
	
\noindent (2) The Voronoi--Hankel transform $\VorH_{\mu_1 \boxplus \mu_2}$ is of convolution type with kernel given by
	$$ \VHF_{\mu_1 \boxplus \mu_2}(t) = \zeta_{\F}(1)^{-1} K(1,t) = \zeta_{\F}(1)^{-2} \mu_2(t)^{-1} \norm[t] \lim_{C \to +\infty} \int_{C^{-1} \leq \norm[y] \leq C} \psi(y^{-1}+yt) \mu_1\mu_2^{-1}(y) \ud^{\times} y. $$
\end{lemma}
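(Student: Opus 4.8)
The plan is to reduce both parts to Lemma~\ref{lem: GL2VorKernSplit} together with the integral identity \eqref{eq: VHInt}; the only genuine content is the homogeneity relation $K(x,z)=K(1,xz)$ of part~(1), after which part~(2) is pure bookkeeping with Haar measures.

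For part~(1), I would perform the multiplicative substitution $y\mapsto xy$ in the truncated integral $K_C(x,z)$ of Lemma~\ref{lem: GL2VorKernSplit}. Under it the phase $\psi(xy^{-1}+yz)$ becomes $\psi(y^{-1}+xyz)$, the factor $\mu_1\mu_2^{-1}(y)$ acquires an extra $\mu_1\mu_2^{-1}(x)$, the multiplicatively invariant measure $\norm[y]^{-1}\ud y$ is unchanged, and the domain $C^{-1}\le\norm[y]\le C$ is replaced by $C^{-1}\norm[x]^{-1}\le\norm[y]\le C\norm[x]^{-1}$. Using $\mu_1(x)^{-1}\norm[x]\cdot\mu_1\mu_2^{-1}(x)=\mu_2(x)^{-1}\norm[x]$ and $\mu_2(x)^{-1}\mu_2(z)^{-1}\norm[xz]=\mu_2(xz)^{-1}\norm[xz]$, the prefactors of $K_C(x,z)$ collapse to those of the truncated integral defining $K(1,xz)$; hence $K(x,z)=K(1,xz)$ \emph{provided} the limit in $C$ is insensitive to the precise expanding exhaustion of $\F^{\times}$. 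For non-archimedean $\F$ this is immediate, since the proof of Lemma~\ref{lem: GL2VorKernSplit}(\Rmnum{3}) shows the integrand is supported in a compact subset of $\F^{\times}$, so the truncation is eventually inert. For archimedean $\F$ I would invoke the absolutely convergent representations \eqref{eq: VorKernGL2ACReal} and \eqref{eq: VorKernGL2ACComp} obtained in the same proof after repeated integration by parts (resp.\ Green's identity): in those forms the integral converges absolutely and the boundary terms at the truncation vanish as $C\to\infty$, which both justifies the change of variables and shows the limit is exhaustion-independent. This exhaustion-independence is the \emph{main obstacle}; the rest of part~(1) is formal.

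For part~(2), I would start from \eqref{eq: VHInt}, which via the Godement-section representations \eqref{eq: VorWtFIntRGL2}, \eqref{eq: VorDualWtFIntRGL2} and Lemma~\ref{lem: VorTransDecompGL2} reads $\VorH_{\mu_1\boxplus\mu_2}(h)(z)=h^{*}(z)=\int_{\F}K(x,z)h(x)\norm[x]^{-1}\ud x$ for $h\in\Cont_c^{\infty}(\F^{\times})$. Since $h$ is supported on $\F^{\times}$ one has $\norm[x]^{-1}\ud x=\zeta_{\F}(1)^{-1}\ud^{\times}x$ there, and part~(1) turns the right-hand side into $\zeta_{\F}(1)^{-1}\int_{\F^{\times}}K(1,xz)h(x)\ud^{\times}x$. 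Defining $\VHF_{\mu_1\boxplus\mu_2}(t):=\zeta_{\F}(1)^{-1}K(1,t)$ --- a function locally integrable on $\F^{\times}$ because $K(1,\cdot)$ is smooth there by Lemma~\ref{lem: GL2VorKernSplit}(2) --- this is exactly the assertion of Definition~\ref{def: ConvTypeTrans} that $\VorH_{\mu_1\boxplus\mu_2}$ is of convolution type with convolution kernel $\VHF_{\mu_1\boxplus\mu_2}$. Finally I would substitute the explicit expression for $K_C(1,t)$ from Lemma~\ref{lem: GL2VorKernSplit} (the case $x=1$, where $\mu_1(1)=1$ and $\norm[1]=1$) and convert $\norm[y]^{-1}\ud y=\zeta_{\F}(1)^{-1}\ud^{\times}y$ once more to arrive at $\VHF_{\mu_1\boxplus\mu_2}(t)=\zeta_{\F}(1)^{-2}\mu_2(t)^{-1}\norm[t]\lim_{C\to+\infty}\int_{C^{-1}\le\norm[y]\le C}\psi(y^{-1}+yt)\mu_1\mu_2^{-1}(y)\ud^{\times}y$, as stated.
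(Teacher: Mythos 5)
Your argument is correct and follows the paper's own route. The paper disposes of part~(1) by remarking that $K(x,z)$ may be computed with truncations $a\le\norm[y]\le b$ where $a\to0^+$ and $b\to+\infty$ are allowed to vary independently, after which the substitution $y\mapsto yx$ yields the identity; you have reconstructed precisely this exhaustion-independence argument, correctly identifying it as the only non-formal point and justifying it via the compact support of the integrand in the non-archimedean case and via the absolutely convergent post--integration-by-parts representations \eqref{eq: VorKernGL2ACReal}, \eqref{eq: VorKernGL2ACComp} in the archimedean case. Part~(2) is, as you say, just \eqref{eq: VHInt} combined with part~(1) and the conversion $\norm[x]^{-1}\ud x=\zeta_{\F}(1)^{-1}\ud^{\times}x$, matching the paper.
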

\begin{proof}
	(1) It is clear from the proof that $K(x,z)$ can be defined with integrals over $a \leq \norm[y] \leq b$ with $a \to 0^+$ and $b \to +\infty$. Hence a change of variable $y \mapsto yx$ gives the desired equality. (2) is a direct consequence of (1) and \eqref{eq: VHInt}.
\end{proof}

\bibliographystyle{acm}
	
\bibliography{mathbib}
	
\end{document}